\documentclass[11pt]{amsart}

\usepackage[margin=1.02in]{geometry}
\usepackage{amsmath,amssymb,amsthm,mathtools}
\usepackage{enumitem}
\usepackage{microtype,xurl}
\usepackage[hidelinks]{hyperref}
\usepackage[nameinlink,noabbrev]{cleveref}

\numberwithin{equation}{section}

\hypersetup{
  pdftitle={Finite-Time Singularities of the KÃ¤hler--Ricci Flow on Fano Bundles},
  pdfauthor={Wangjian Jian and Jian Song},
  pdfsubject={Spectral splitting at the limiting time, Type-I curvature, sharp fibre diameter, and cylindrical tangent spaces at limiting points},
  pdfkeywords={Kahler--Ricci flow, spectral splitting, Type-I curvature, fibre diameter, collapsing fibration, tangent space at a limiting point, Fano bundle, projective bundle}
}

\newtheorem{theorem}{Theorem}[section]
\newtheorem{maintheorem}[theorem]{Theorem}
\newtheorem{proposition}{Proposition}[section]
\newtheorem{lemma}{Lemma}[section]
\newtheorem{corollary}{Corollary}[section]

\newtheorem{conjecture}{Conjecture}[section]
\theoremstyle{definition}
\newtheorem{definition}{Definition}[section]
\theoremstyle{remark}
\newtheorem{remark}{Remark}[section]

\crefname{theorem}{Theorem}{Theorems}
\Crefname{theorem}{Theorem}{Theorems}
\crefname{maintheorem}{Theorem}{Theorems}
\Crefname{maintheorem}{Theorem}{Theorems}
\crefname{conjecture}{Conjecture}{Conjectures}
\Crefname{conjecture}{Conjecture}{Conjectures}
\crefname{proposition}{Proposition}{Propositions}
\Crefname{proposition}{Proposition}{Propositions}
\crefname{lemma}{Lemma}{Lemmas}
\Crefname{lemma}{Lemma}{Lemmas}
\crefname{corollary}{Corollary}{Corollaries}
\Crefname{corollary}{Corollary}{Corollaries}
\crefname{claim}{Claim}{Claims}
\Crefname{claim}{Claim}{Claims}
\crefname{definition}{Definition}{Definitions}
\Crefname{definition}{Definition}{Definitions}
\crefname{remark}{Remark}{Remarks}
\Crefname{remark}{Remark}{Remarks}

\newcommand{\C}{\mathbb C}
\newcommand{\R}{\mathbb R}
\newcommand{\PP}{\mathbb P}
\newcommand{\Reg}{\operatorname{Reg}}
\newcommand{\Sing}{\operatorname{Sing}}
\newcommand{\Ric}{\operatorname{Ric}}
\newcommand{\Rc}{\operatorname{Rc}}
\newcommand{\Rm}{\operatorname{Rm}}
\newcommand{\Vol}{\operatorname{Vol}}
\newcommand{\Var}{\operatorname{Var}}
\newcommand{\inj}{\operatorname{inj}}
\newcommand{\tr}{\operatorname{tr}}
\newcommand{\supp}{\operatorname{supp}}
\newcommand{\dist}{\operatorname{dist}}
\newcommand{\Area}{\operatorname{Area}}
\newcommand{\Span}{\operatorname{span}}
\newcommand{\Dom}{\operatorname{Dom}}
\newcommand{\cR}{\mathcal R}
\newcommand{\cE}{\mathcal E}
\newcommand{\Id}{\operatorname{Id}}
\newcommand{\HZanalyticref}[1][]{
  \hyperref[thm:general-kahler-shrinker-package]{%
    Theorem~\ref*{thm:general-kahler-shrinker-package}#1}}

\title[K\"ahler--Ricci Flow on Fano Bundles]
{Finite-Time Singularities of the K\"ahler--Ricci Flow on Fano Bundles}
\author{Wangjian Jian$^*$}
\address{$^*$ Institute of Mathematics, Academy of Mathematics and Systems Science, Chinese Academy of Sciences, Beijing, 100190, China}
\email{wangjian@amss.ac.cn}
\author{Jian Song$^\dagger$}
\address{$^\dagger$ Department of Mathematics, Rutgers University, Piscataway, NJ 08854, USA}
\email{jiansong@math.rutgers.edu}
\thanks{Wangjian Jian is supported in part by NSFC grants 12422103,
12201610, 12371058, and 12288201; BJNSF grant JR25002; and National Key
R\&D Program of China grants 2023YFA1009900 and 2021YFA1003100. Jian
Song is supported in part by the National Science Foundation
grant DMS-2505575.}
\date{}

\begin{document}
\raggedbottom
\begin{abstract}
We study collapsing finite-time singularities of the unnormalized
K\"ahler--Ricci flow on Fano bundles arising in the analytic
minimal model program.  For a Fano bundle $X^n\rightarrow Y^m$, we prove a maximal splitting theorem by the K\"ahler-Ricci flow that every
tangent space at a fixed limiting point splits globally as
\((\C^m,g_{\rm E},J_0)\times(Z',d',J')\).  If the fibre has complex dimension one, we prove a
global Type-I bound for the full curvature tensor and show that the
ambient and intrinsic diameters of every fibre are uniformly comparable
to \(\sqrt{T-t}\). Furthermore, every tangent flow at any fixed limiting point is the round
shrinking cylinder \(\C^m\times\PP^1\).
\end{abstract}

\maketitle

\tableofcontents

\section{Introduction}

The K\"ahler--Ricci flow provides an analytic approach to classifications of K\"ahler spaces and their singularities.  The analytic minimal model program
(AMMP) was proposed and developed by Song--Tian as a metric counterpart
of the minimal model program with scaling
\cite{TianAMMP,SongTianThrough}.  At finite time, the flow is expected
to realize algebraic contractions and fibre-space structures, as well
as the contractions preceding flips, while continuing canonically on
the resulting mildly singular spaces.  On the long-time side,
Song--Tian constructed canonical
limiting metrics and measures in the semiample setting
\cite{SongTianSurfaces,SongTianCanonical} and proved uniform scalar
curvature bounds \cite{SongTianScalar}.  Their weak-flow theory gives
existence and uniqueness on projective varieties with klt
singularities and continuation through divisorial contractions and
flips whenever the corresponding algebraic operations exist
\cite{SongTianThrough}.  Canonical surgical contractions were established
in concrete settings by Song--Weinkove
\cite{SongWeinkoveContraction}.

Let \(X^n\) be a smooth projective manifold, and let
\begin{equation}\label{eq:ammp-flow-intro}
 \frac{\partial}{\partial t}\omega(t)=-\Ric(\omega(t)),
 \qquad 0\le t<T<\infty,
\end{equation}
be a maximal unnormalized K\"ahler--Ricci flow.  Suppose that its
limiting class
\[
 [\omega_T]=[\omega_0]-2\pi T c_1(X)
\]
is semiample.  After Stein factorization, this class induces a
holomorphic map with connected fibres
\begin{equation}\label{eq:ammp-map-intro}
                       \pi:X^n\longrightarrow Y^m,
\end{equation}
which we call the limiting morphism.  The dimension of \(Y\) gives the
trichotomy
\[
 \begin{array}{ccl}
 m=0      &:& \text{total collapse},\\
 0<m<n    &:& \text{collapse along a Fano fibration},\\
 m=n      &:& \text{the noncollapsing, birational case}.
 \end{array}
\]

Jian--Song--Tian formulated a geometric picture for these three types of
finite-time singularities \cite[Section~1]{JianSongTian}.  When \(m=0\),
the flow becomes extinct, and Perelman's estimates yield the
corresponding parabolically scale-invariant scalar-curvature, diameter,
entropy, and noncollapsing controls in the unnormalized flow
\cite{PerelmanEntropy,SesumTian}.  When \(0<m<n\), the Fano directions
collapse toward \(Y\), and parabolic blowups over a point of the base
are expected to converge to complete shrinking models carrying a
Mori-fibration structure.  When \(m=n\), the limiting morphism is
birational and the singularity should realize a divisorial contraction
or flip, with shrinking and expanding models joined through a common
asymptotic cone.

Jian--Song--Tian conjectured a Type-I scalar-curvature bound for every
finite-time solution and the Type-I diameter rate for the collapsing
fibres.  They proved Li--Yau and Harnack estimates for weighted Ricci
potentials, local Type-I scalar-curvature estimates relative to Ricci
vertices, and subsequential compactness of suitably based rescalings to
ancient flows whose time slices are normal analytic varieties.  For
Fano bundles, their estimates also control the scalar curvature and
diameter on a fibre selected by a Ricci vertex in each prescribed
relatively compact region of the regular base
\cite{JianSongTian}.

The compactness and structure theories of Bamler and their K\"ahler
refinements provide the framework for these limits
\cite{BamlerCompactness,BamlerStructure,HallgrenJian,
HallgrenJianSongTian}.  Hallgren--Zhang showed that the singular
K\"ahler--Ricci shrinkers arising in the relevant noncollapsed settings
carry polarized Fano-fibration structures and are normal klt varieties
\cite{HallgrenZhang}.  In complex dimension two,
Conlon--Hallgren--Ma proved that every noncollapsed finite-time
singularity is Type I \cite{ConlonHallgrenMa}.  Projective and Fano
bundles in the collapsing case were studied by
Song--Sz\'ekelyhidi--Weinkove and Fu--Zhang
\cite{FuZhang,SongSzekelyhidiWeinkove}.  Independently, Xu--Zhang
proved the Type-I cylindrical model for ruled surfaces
\cite{XuZhangRuled}.  Chen--Hallgren--Lavoyer described
shrinker--cone--expander transitions in the noncollapsing surface case
under a holomorphic-realization hypothesis
\cite{ChenHallgrenLavoyer}.  Related foundational work includes
uniqueness, degeneration, compactness, and regularity results for
K\"ahler--Ricci solitons and flows
\cite{TianZhuUniqueness,TianZhangDegeneration,
TianQSZhangCompactness}, and the relation between
Perelman's entropy and convergence of the Fano flow
\cite{TianZhangZhangZhuEntropy}.

For the normalized K\"ahler--Ricci flow on Fano manifolds,
Tian--Zhang proved the Hamilton--Tian conjecture in complex dimensions
\(n\le3\), while Bamler and Chen--Wang obtained arbitrary-dimensional
proofs by different compactness methods
\cite{BamlerHamiltonTian,ChenWangWeakCompactness,
TianZhangRegularity}.

The purpose of this paper is to study the intermediate case
\begin{equation}\label{eq:ammp-intermediate}
                         0<m<n.
\end{equation}
We prove a full-base-rank splitting theorem for projective bundles of
arbitrary positive relative dimension.  In relative complex dimension
one, we also prove Type-I curvature and sharp fibre-diameter estimates.

\subsection{A conjectural AMMP picture for smooth Fano fibrations}
\label{sec:intro-main-conjecture}

Suppose that the limiting morphism
\[
                  \pi:X^n\longrightarrow Y^m,\qquad 0<m<n,
\]
is a smooth projective morphism and that every fibre
\[
                         F_y=\pi^{-1}(y)
\]
is a smooth Fano manifold of complex dimension \(n-m\).  The fibres need
not be mutually biholomorphic or locally holomorphically trivial, so
their complex structures may vary with \(y\).

Fix \(q\in Y\), let \(\xi=(\mu_t)_{t<T}\) be a limiting point based at
\(q\) in the sense of
\cref{def:limiting-point,def:type-I-label}, and choose
\(\tau_i\searrow0\).  Consider the parabolic
rescalings
\begin{equation}\label{eq:intro-conjectural-rescaling}
 g_i(s)=\tau_i^{-1}g(T+\tau_i s),
 \qquad -T/\tau_i\le s<0.
\end{equation}
A subsequential pointed \(\mathbb F\)-limit of the corresponding
metric-flow pairs will be called a \emph{Type-I-scale blowup at \(\xi\)
based at \(F_q\)}.  This term refers only to the scale
\(\tau_i^{-1}\) and does not assume a Type-I curvature bound.  We call
\(F_q\) the central Fano manifold.  Here the word \emph{central} refers
to the chosen point \(q\), not to a singularity of the fibre.

The natural metric condition is K-polystability rather than strict
K-stability.  By the Yau--Tian--Donaldson theorem, it is equivalent to
the existence of a K\"ahler--Einstein metric on a smooth Fano manifold
\cite{BermanKPolystability,ChenDonaldsonSunIII,TianKStability}.  It allows
positive-dimensional automorphism groups and includes \(\PP^1\).

\begin{conjecture}[K-polystable smooth fibres]
\label{conj:intro-kpolystable-fibres}
Assume that every fibre \(F_y\) is K-polystable with respect to
\(-K_{F_y}\).  Then the finite-time K\"ahler--Ricci flow is Type I:
\begin{equation}\label{eq:intro-conjectural-type-I}
 \sup_{0\le t<T}\sup_{x\in X}
 (T-t)|\Rm(g(t))|_{g(t)}(x)<\infty.
\end{equation}
Moreover, fix \(q\in Y\), and let
\(\omega_{q,{\rm KE}}\in2\pi c_1(F_q)\) be a normalized
K\"ahler--Einstein metric,
\[
                 \Ric(\omega_{q,{\rm KE}})=\omega_{q,{\rm KE}}.
\]
Write \(g_{q,{\rm KE}}=2g_{\omega_{q,{\rm KE}}}\), so that
\(\Ric(g_{q,{\rm KE}})=\tfrac12g_{q,{\rm KE}}\).  After forgetting the
conjugate heat measures, every Type-I-scale blowup at \(\xi\) based at
\(F_q\) is isomorphic, as a K\"ahler metric flow, to
\begin{equation}\label{eq:intro-conjectural-central-product}
 \left(
   \C^m\times F_q,\,
   J_0\oplus J_{F_q},\,
   g_{\rm E}+(-s)g_{q,{\rm KE}}
 \right)_{s<0}.
\end{equation}
Thus the residual factor is the central fibre with its own complex
structure.
\end{conjecture}

The product in \eqref{eq:intro-conjectural-central-product} is unique
only up to product isometry.  The K\"ahler--Einstein representative is
unique up to the identity component of \(\operatorname{Aut}(F_q)\)
\cite{BandoMabuchi}; no canonical marking of the factors is asserted.

\subsection{Full-base-rank splitting at the limiting time}

The metric-flow slice \(M_T\) at the limiting time is recalled in
\cref{sec:prelim}.  A limiting point is an element \(\xi\in M_T\),
represented by a probability conjugate heat flow \((\mu_t)_{t<T}\); it
is fixed when chosen independently of the blowup scales.  This is made
precise in \cref{def:limiting-point}.  Its base point \(q_\pi(\xi)\) and
Type-I quantitative localization are defined in
\cref{def:type-I-label}, while tangent spaces at \(\xi\) are defined in
\cref{def:tangent-space-at-limiting-point}.
In the semiample setting, these properties follow from
\cref{lem:automatic-label}; they are not additional assumptions.

\begin{definition}
\label{def:intro-tangent-space-at-limiting-point}
Let \(\xi=(\mu_t)_{t<T}\) be a fixed limiting point.  Given
\(\tau_i\searrow0\), set
\[
 g_i(s)=\tau_i^{-1}g(T+\tau_i s),\qquad
 \mu_{i,s}=\mu_{T+\tau_i s},\qquad -T/\tau_i\le s<0.
\]
A \emph{tangent space at \(\xi\)} is any subsequential pointed
\(\mathbb F\)-limit of these rescaled metric-flow pairs.  Thus the term
denotes the full ancient pointed metric-measure flow, rather than an
ordinary linear tangent space, and it does not include a uniqueness
assertion.
\end{definition}

We will now study the collapsing behavior of Fano bundles. Let 
\begin{equation}\label{fb}
\pi: X^n \rightarrow Y^m
\end{equation} be a Fano bundle between two projective manifolds $X$ and $Y$, where the fibre is a Fano manifold. In our definition of a fibre bundle, we always assume that the bundle is Zariski local trivial, i.e., there exists an open  covering $\{ U_i\} $ of $Y$ in the Zariski topology such that the fibre bundle is trivial over each $U_i$.

\begin{maintheorem}
\label{thm:intro-splitting}
Let $\pi: X \rightarrow Y$ be a Fano bundle between two projective manifolds with $0<\dim_\mathbb{C} Y =m< \dim_\mathbb{C}X =n$.    Suppose the
limiting cohomology class of  \eqref{eq:ammp-flow-intro} is the pullback of 
 a K\"ahler class \(Y\) by $\pi$.  Let \(\xi\) be a
fixed limiting point based at \(q\in Y\).  Every tangent space at \(\xi\) is a complete K\"ahler-Ricci shrinker that 
admits a global holomorphic isometric splitting
\begin{equation}\label{eq:intro-minimal-splitting}
 (Z,d,J)\cong
 (\C^m,g_{\rm E},J_0)\times(Z',d',J'),
\end{equation}
where $Z'$ is a normal analytic normal variety with klt singularities. 
\end{maintheorem}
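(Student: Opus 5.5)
The strategy is to realize each tangent space at $\xi$ as a singular K\"ahler--Ricci shrinker via the existing compactness theory. Then I would produce $m$ independent parallel holomorphic vector fields on it, coming from the base $Y$, and split off $\C^m$ by a de Rham/Cheeger--Gromoll type argument adapted to $\mathbb F$-limits.

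\textbf{Step 1 (limit is a shrinker).} I would invoke Bamler's compactness and structure theory together with its K\"ahler refinement. The Type-I quantitative localization of $\xi$ (\cref{def:type-I-label}, automatic by \cref{lem:automatic-label}) guarantees that the rescaled pointed metric-flow pairs subconverge. Each limit is a metric flow whose regular part is a K\"ahler--Ricci flow with conjugate heat measure $\nu$. I would use the monotonicity of Perelman's pointed entropy $\mathcal W$ based at the fixed limiting point $\xi$: since $\xi$ is fixed independently of the scales, the entropy at scale $\tau_i$ converges to $\lim_{\tau\to0}\mathcal W_\xi(\tau)$. Hence the limit has constant pointed entropy, and by Bamler's rigidity it is a (singular) shrinking soliton. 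The noncollapsing needed for Bamler's theory is relative to the heat kernel measure, which is what the $\mathbb F$-framework provides even though fibres collapse. The Hallgren--Zhang theory \cite{HallgrenZhang}, as packaged in \HZanalyticref, then upgrades the limit to a complete K\"ahler--Ricci shrinker whose underlying space is a normal klt variety.

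\textbf{Step 2 (base directions become flat).} Since $[\omega_T]=\pi^*[\omega_Y]$ with $\omega_Y$ K\"ahler, write $\omega(t)=\omega_Y$-pullback $+(T-t)\chi+i\partial\bar\partial\varphi$, where $\chi$ represents $-2\pi c_1(X)$. Around $q$, take holomorphic coordinates $w^1,\dots,w^m$ on $Y$ centred at $q$, and pull back the functions $u^a_i=\tau_i^{-1/2}\pi^*w^a$. I need three properties of these functions:
\begin{enumerate}[label=(\roman*)]
\item they are holomorphic;
\item their gradients have $\nu$-locally bounded norm, which follows from a parabolic Schwarz lemma $\tr_{\omega}\pi^*\omega_Y\le C$ (Song--Tian type, available since the base class is K\"ahler);
\item $|\nabla\bar\nabla u|$ and the defect $|\langle\nabla u^a,\nabla u^b\rangle-h_{a\bar b}(q)|$ tend to zero in $L^2(\nu_{i,s})$.
\end{enumerate}
For (iii) I would derive an almost-monotone quantity from the evolution of $\tr_\omega\pi^*\omega_Y$, namely $(\partial_t-\Delta)\tr_\omega\pi^*\omega_Y\le -|\nabla\pi|^2$-type terms plus $C\tr$. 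Integrating against the conjugate heat measure over $[T-\tau_i,T-\tau_i/2]$ gives $\int|\nabla\partial\pi|^2\,d\mu\,dt\to0$ after rescaling. This works because $\tr_\omega\pi^*\omega_Y$ converges to $\tr_{\omega_Y}\omega_Y=m$ along $\mu_t$: the limit point is localized at $q$, and the $C^0$ convergence of the potential makes $\omega(t)$ horizontally close to $\pi^*\omega_Y$ in the averaged sense.

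\textbf{Step 3 (splitting).} Passing to the limit, I would obtain $m$ holomorphic functions $u^a$ on the regular part of $Z$ with $\nabla\bar\nabla u^a=0$, hence parallel gradients, and with $\langle\nabla u^a,\nabla u^b\rangle=h_{a\bar b}(q)$. Parallel vector fields on a shrinker split isometrically on the regular part, and then globally, using that the singular set has codimension $\ge4$ and that the limit is connected. The splitting is holomorphic because the fields are gradients of holomorphic functions. The shrinker structure restricts to the factor $\C^m$ as the Gaussian soliton. Normality and the klt property descend to $Z'$ since $Z\cong\C^m\times Z'$ and both properties are local and stable under products with smooth factors.

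\textbf{Main obstacle.} I expect the hardest step to be Step 2(iii): showing that the Hessian of the pulled-back base coordinates vanishes in the limit, uniformly in $L^2$ against the conjugate heat measure, \emph{without} a Type-I curvature bound. My first attempt would be the Chern--Lu inequality with the flow-adapted weight $\tr_\omega\pi^*\omega_Y-m$. I would try to prove that this weight is $o(1)$ in $L^1(\mu_t)$ as $t\to T$, using the fibre-bundle structure; Zariski local triviality allows a semi-flat reference metric near $q$.
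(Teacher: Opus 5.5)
\textbf{The gap is the full-rank step.} The only base-side estimate you actually have is the Schwarz bound \(\tr_{\omega}\pi^*\omega_Y\le C\). It controls the Gram matrix \(\langle du^a,d\overline{u^b}\rangle_{g_i}\) only from \emph{above}. This matrix is scale-invariant: it equals the unrescaled quotient cometric \(Q_t\). An upper bound gives Lipschitz limits, but nothing prevents an eigenvalue of \(Q_t\) from tending to zero near \(F_q\), i.e.\ a horizontal lift of an \(\eta\)-unit vector becoming long in \(g(t)\). In that case the limit map has rank \(r<m\), and you split off only \(\C^r\).

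Your argument that the Gram matrix tends to \(h_{a\bar b}(q)\) does not work, for two reasons.
\begin{itemize}
\item \(C^0\) convergence of potentials controls \(\omega(t)\) only weakly, on fixed open sets. But \(\mu_t\) is concentrated in a \(\sqrt{T-t}\)-tube around a single fibre.
\item Even \(\tr_\omega\pi^*\omega_Y\to m\) along \(\mu_t\) would not bound the smallest eigenvalue of \(Q_t\) from below.
\end{itemize}
What you need is a uniform horizontal upper bound \(S_t\le C\eta\) near \(q\), which is condition \((\mathrm Q)_U\). The paper takes it from Fu--Zhang's estimate. Over a principal affine product neighbourhood \(U=Y\setminus(s=0)\) of \(q\), take the product-horizontal lifts of generating vector fields and multiply them by \((\pi^*s)^\ell\); they extend to global holomorphic sections of \(T^{1,0}X\otimes\pi^*L^\ell\). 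The maximum principle (Bochner formula plus Schwarz) then bounds their \(g(t)\)-norms uniformly. This is exactly where Zariski-local triviality of the Fano bundle is used; in your proposal that hypothesis does no work. You also do not need the limit Gram matrix to equal \(h(q)\): any constant positive definite matrix suffices after a \(\mathrm{GL}(m,\C)\) normalization.

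\textbf{The Hessian step works, but not for the reason you give.} It is a genuinely different route from the paper's. From \((\partial_t-\Delta)\tr_\omega\pi^*\omega_Y\le-|\nabla\partial\pi|^2+C\) and \(0\le\tr_\omega\pi^*\omega_Y\le C\), the function \(t\mapsto\int_X\tr_\omega\pi^*\omega_Y\,d\mu_t-Ct\) is nonincreasing and bounded, so it converges as \(t\to T\). Its increment over \([T-\tau_i,T-\tau_i/2]\) therefore tends to zero. Up to a constant and an \(O(\tau_i)\) error, that increment bounds the rescaled energy of \(\nabla\partial(\tau_i^{-1/2}w\circ\pi)\). This is the relevant Hessian, not \(\nabla\bar\partial u\), which vanishes identically for holomorphic \(u\). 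No convergence \(\tr\to m\) is required.

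The paper instead passes the sharp Poincar\'e inequality \(\Var_{\mu_t}(\phi)\le2(T-t)\int|\nabla\phi|^2d\mu_t\) to a spectral gap \(1/2\) for the Friedrichs drift Laplacian. It then shows that Lipschitz holomorphic functions are exact half-modes and concludes from the Bochner equality. That route needs the capacity estimate for \(\Sing Z\) but handles every Lipschitz holomorphic function. Your dissipation argument is shorter and avoids the capacity and Friedrichs theory, but is tied to the base map.

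\textbf{Your Step 3 is underjustified.} Because \(\cR\) is incomplete, de~Rham plus ``codimension \(\ge4\)'' neither integrates the parallel fields through singular points nor identifies the completion and complex structure there. The paper does this in four moves:
\begin{itemize}
\item it extends the parallel holomorphic fields across \(\Sing Z\) using the ambient lower bound \(\omega\ge c\,\omega_{\C^L}\) and Riemann extension on the normal space;
\item it gets completeness of their flows from metric completeness;
\item it integrates the commuting complete flows to a holomorphic isometric \(\C^m\)-action;
\item it sets \(Z'=G^{-1}(0)_{\rm red}\).
\end{itemize}
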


\begin{remark}
The splitting theorem also holds locally over the regular part of an isotrivial Fano
fibration satisfying the Zariski-open-product property.  Suppose that a
limiting point is based at \(q\in Y^{\rm prod}\), the
Zariski-open-product locus defined in \cref{sec:prelim}.  Choose a
Zariski-open product region \(U_{\rm prod}\ni q\) and
\(q\in V\Subset U_{\rm prod}\).  Fu-Zhang's horizontal estimate gives
the required two-sided quotient control on \(V\), and the same splitting
follows; see
\cref{prop:zariski-product-horizontal,cor:local-full-rank-package}.
\end{remark}

\subsection{Type-I curvature and fibre diameter in relative dimension one}

The first theorem below proves the Type-I
curvature bound for rank-two projective bundles.  The second identifies
the exact collapse rate of every fibre.

\begin{maintheorem}
\label{thm:intro-type-I} Let $\pi: X\rightarrow Y$ be a $\mathbb{P}^1$-bundle between two projective manifolds. Suppose the
limiting cohomology class of  \eqref{eq:ammp-flow-intro} is the pullback of 
 a K\"ahler class on \(Y\) by $\pi$. Then there exit 
 \(0< c< C<\infty\) such that for any $0\leq t <T$ and $q\in Y$, we have 
 \smallskip
\begin{equation}\label{ti:eq:P1-regular-TypeI-front}
 \sup_{x\in X}
 (T-t)|\Rm(g(t))|_{g(t)}(x)\le C, 
\end{equation} 
\begin{equation}\label{ti:eq:P1-fibre-diameter-front}
 c\sqrt{T-t}
 \le \operatorname{diam}_{(X,g(t))}X_q
 \le \operatorname{diam}_{(X_q,g(t)|_{X_q})}X_q
 \le C\sqrt{T-t},
\end{equation}
where $X_q=\pi^{-1}(q)$ is the fibre of $X$ over $q$, $\operatorname{diam}_{(X,g(t))}X_q$ is the extrinsic diameter by the ambient distance on \(X\) and $\operatorname{diam}_{(X_q,g(t)|_{X_q})}X_q$ is 
 the intrinsic diameter by the induced metric on \(X_q\).
\end{maintheorem}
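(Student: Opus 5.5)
The plan is to work first in a local holomorphic trivialization and then assemble the estimates. Over a Zariski-open (hence, after shrinking, Euclidean) chart $U\subset Y$ we have $X|_U\cong U\times\PP^1$. The limiting class is $[\omega_T]=\pi^*[\omega_Y]$, so $[\omega(t)]=\pi^*[\omega_Y]+(T-t)\,2\pi c_1(X)$. Restricting to a fibre gives $[\omega(t)|_{X_q}]=(T-t)\,2\pi c_1(\PP^1)$, which already fixes the fibre area as $4\pi(T-t)$ times a constant.

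I would write $\omega(t)=\hat\omega_t+i\partial\bar\partial\varphi$ with the reference form
\[
 \hat\omega_t=\pi^*\omega_Y+(T-t)\,\chi ,
\]
where $\chi\in2\pi c_1(X)$ restricts to the Fubini--Study form on each fibre. The flow then becomes a parabolic complex Monge--Amp\`ere equation. From this I would collect the standard collapsing estimates: $|\varphi|\le C$, $|\dot\varphi|\le C$, and the Schwarz-lemma bound $\tr_{\omega(t)}\pi^*\omega_Y\le C$ (Song--Tian, Song--Sz\'ekelyhidi--Weinkove). I also expect the horizontal two-sided bound $C^{-1}\pi^*\omega_Y\le\omega(t)$ on horizontal directions, whose local version appears in \cref{prop:zariski-product-horizontal}. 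The key additional step is the fibrewise Type-I equivalence
\[
 C^{-1}(T-t)\,\omega_{FS}\le\omega(t)|_{X_q}\le C(T-t)\,\omega_{FS}.
\]
I would prove it with a parabolic maximum-principle argument for the quantity $\log\tr_{\hat\omega_t}\omega(t)-A\varphi$ adapted to the fibre direction, as in Song--Sz\'ekelyhidi--Weinkove. Relative dimension one is crucial here: the fibre metric is a single function times $\omega_{FS}$, and since the fibre area is exactly $4\pi(T-t)$, an upper bound on the density gives a lower bound by a Harnack/Moser argument on $\PP^1$.

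Once the metrics are uniformly equivalent to $\pi^*\omega_Y+(T-t)\omega_{FS}$, the diameter estimate \eqref{ti:eq:P1-fibre-diameter-front} follows readily. The intrinsic upper bound comes from the upper bound on the fibre metric. The ambient lower bound uses the horizontal bound together with a fibrewise lower bound on the full metric: any path joining two antipodal points of $X_q$ either stays near $X_q$, where its length is at least $c\sqrt{T-t}$ by the fibre lower bound, or it leaves, which costs length at least $c\sqrt{T-t}$ horizontally. The middle inequality in \eqref{ti:eq:P1-fibre-diameter-front} is trivial.

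For the curvature bound \eqref{ti:eq:P1-regular-TypeI-front} I would argue by contradiction via blowup. Suppose $(T-t_i)|\Rm|(x_i,t_i)\to\infty$. Rescale parabolically at the Type-I scale $T-t_i$ around $x_i$, using the uniform equivalence to the model $g_{\rm E}+(-s)g_{FS}$ on $\C^m\times\PP^1$ in suitable holomorphic coordinates. In these coordinates the rescaled solutions are uniformly equivalent to a smooth product metric, so the Evans--Krylov and Calabi-type higher-order estimates for the parabolic Monge--Amp\`ere equation give $C^\infty$ bounds on parabolic balls of unit size. This contradicts the curvature blowup.

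Equivalently, and more directly, I would apply a Calabi $C^3$ estimate for $S=|\nabla_{\hat g}g|^2_g$ together with Sherman--Weinkove style local curvature bounds on each rescaled domain $B_{\rm base}(\sqrt{T-t})\times\PP^1$. Uniformity in $q$ comes from compactness of $Y$. The main obstacle is the fibre lower bound and the uniformity of the $C^0$ equivalence, since the holomorphic trivialization is only local and the reference form $\chi$ has mixed horizontal--vertical components, whose cross terms must be shown to be $O(\sqrt{T-t})$ after rescaling. I would handle these by choosing $\chi$ from a Hermitian metric on the rank-two bundle, so that its cross terms are controlled by the bounded second fundamental form.
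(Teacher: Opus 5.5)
There is a genuine gap, and it is the step you yourself call key: the fibrewise Type-I equivalence $C^{-1}(T-t)\omega_{\rm FS}\le\omega(t)|_{X_q}\le C(T-t)\omega_{\rm FS}$. Your list of \emph{standard} estimates already assumes it. Take $\chi=\Ric(\Omega)$. Since $\hat\omega_t^n\approx(T-t)\Omega$, the natural normalization is $\dot\varphi=\log\bigl(\omega^n/((T-t)\Omega)\bigr)$. In product coordinates, $\omega^n$ equals $\lambda\det S$ up to a fixed smooth factor, where $\omega|_{X_q}=\lambda\,\omega_{\rm FS}$ and $S$ is the quotient metric (compare \cref{lem:general-block-volume}). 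The Schwarz lemma and Fu--Zhang bound $S$ above and below. Hence $|\dot\varphi|\le C$ is \emph{equivalent} to the fibrewise equivalence you want, and it is not a known estimate. The flow identities only give $(\partial_t-\Delta)\bigl((T-t)\dot\varphi+\varphi\bigr)=\tr_\omega\pi^*\omega_Y-m$, hence $|\dot\varphi|\le C/(T-t)$.

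Nor does the maximum principle for $\log\tr_{\hat\omega_t}\omega-A\varphi$ close. Let $\lambda_{\rm rel}$ be the fibre eigenvalue of $\omega$ relative to $\hat\omega_t$. In the fibre direction, $\partial_t\hat\omega_t=-\chi$ contributes a positive term of size $(T-t)^{-1}\lambda_{\rm rel}/\tr_{\hat\omega_t}\omega$. The fibre curvature of $\hat\omega_t$, also of order $(T-t)^{-1}$, compensates it only when $\lambda_{\rm rel}$ is already of order one. In the regime you must exclude, a term of size about $(T-t)^{-1}$ remains. A bounded $A\varphi$ cannot absorb it, and absorbing it with a multiple of $\log(T-t)$ costs exactly the factor $T-t$ you are trying to gain. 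This is why maximum-principle arguments of Song--Sz\'ekelyhidi--Weinkove and Fu--Zhang type give fibre collapse but not the sharp rate, which Jian--Song--Tian conjectured.

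Your lower bound is also unsupported. An upper bound on $\lambda$ plus the exact area $4\pi(T-t)$ only bounds from below the measure of the set where $\lambda\gtrsim T-t$. Moreover, $\lambda$ satisfies no fibrewise elliptic equation with controlled coefficients, since its Laplacian is governed by the unknown fibre curvature. So there is nothing for Harnack or Moser iteration to act on.

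If the $C^0$ model equivalence were granted, your later steps would be reasonable. The diameter bounds follow, and local Calabi or Sherman--Weinkove estimates on unit parabolic cylinders in rescaled product coordinates give the curvature bound. The proposal therefore reduces the theorem to an unproved statement at least as strong. The paper never proves such an equivalence. It first proves the Type-I curvature bound (\cref{ti:prop:moving-fibre}) by contradiction using Bamler's compactness theory, in two cases.
\begin{itemize}
\item \emph{Bounded drift.} Bad sequences are treated at the Type-I scale of a fixed limiting point. Splitting and the residual-curve classification make the tangent model smooth ($\C^{m+1}$ or $\C^m\times\PP^1$) with bounded curvature. Packing the fibre against Perelman's noncollapsing and the Type-I tube-volume bound then puts the whole moving fibre in a ball of bounded curvature.
\item \emph{Scale separation.} The paper uses a moving-pole limit with strictly negative entropy and a secondary tangent $\C^m\times\PP^1$. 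Exact zero-level capture of the whole fibre then gives an area contradiction: $8\pi$ versus $8\pi(1+\sigma_j)/\sigma_j$.
\end{itemize}
Only afterwards are the diameters derived (\cref{ti:prop:compact-local-fibre-diameter}). Type-I curvature, noncollapsing, the injectivity radius and Shi estimates give uniformly sized level disks in every fibre. The exact normalized area $8\pi$ then bounds the intrinsic diameter, and Stokes' theorem on a normal ball gives the ambient lower bound.
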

 
We would like to remark that the same conclusions hold for $\mathbb{P}^1$-fibrations away from singular fibres.

\begin{corollary}
\label{ti:cor:projective-P1}
Under the hypotheses of \cref{thm:intro-type-I}, the underlying
K\"ahler metric flow of every tangent space at every fixed limiting
point, after forgetting its distinguished conjugate heat measures, is
the round shrinking cylinder
\begin{equation}\label{ti:eq:intro-cylinder}
 \left(\C^m\times\PP^1,\,
 J_0\oplus J_{\rm std},\,
 g_{\rm E}+(-s)g_{S^2}\right)_{s<0},
 \qquad \Rc(g_{S^2})=\tfrac12g_{S^2}.
\end{equation}
\end{corollary}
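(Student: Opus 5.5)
The plan is to combine \cref{thm:intro-splitting} with the Type-I estimates of \cref{thm:intro-type-I} and then classify the residual factor. Fix a limiting point \(\xi\) based at \(q\in Y\), a sequence \(\tau_i\searrow0\), and a tangent space \((Z,d,J)\) arising as a pointed \(\mathbb F\)-limit of \(g_i(s)=\tau_i^{-1}g(T+\tau_i s)\). By \cref{thm:intro-splitting}, \(Z\) is a complete K\"ahler--Ricci shrinker and splits holomorphically and isometrically as \((\C^m,g_{\rm E},J_0)\times(Z',d',J')\). Since \(\dim_\C X=m+1\), the factor \(Z'\) is a normal analytic variety of complex dimension one. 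The task is therefore to identify \(Z'\), together with its flow, with the round shrinking \(\PP^1\).

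The first step upgrades convergence. The Type-I bound \eqref{ti:eq:P1-regular-TypeI-front} gives \(|\Rm(g_i(s))|\le C/|s|\) uniformly for \(s\le -\epsilon\). Bamler's structure theory, or Shi's estimates combined with Hamilton compactness on the regular part, then shows that \(Z\) has no singular set. Thus \(Z'\) is a smooth Riemann surface and the convergence is smooth, in the Cheeger--Gromov sense, on \(s<0\). Here I need noncollapsing at the rescaled scale. This follows from the \(\mathbb F\)-limit being a genuine metric flow of full dimension, or equivalently from Perelman's \(\kappa\)-noncollapsing at scale \(\sqrt{T-t}\) combined with the curvature bound. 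I expect the need for full dimension to be the main subtle point, because collapsing could a priori lower the limiting dimension.

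The second step shows that \(Z'\) is compact, with diameter comparable to \(\sqrt{-s}\). Let \(x_i\) be a point realizing the \(\mathbb F\)-convergence. By the \(W_1\)-localization of \(\mu_t\) near \(F_q\), provided by \cref{def:type-I-label} and \cref{lem:automatic-label}, \(x_i\) lies at \(g_i\)-distance \(O(1)\) from the fibre \(X_{q_i}\) for some \(q_i\to q\). By \eqref{ti:eq:P1-fibre-diameter-front}, the rescaled fibres \(X_{q_i}\) have diameter in \([c\sqrt{-s},C\sqrt{-s}]\). They converge to a compact set, and I claim this set is the slice \(\{0\}\times Z'\). To see this, note that the fibre directions are exactly the non-Euclidean ones. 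The Euclidean factor is produced by the base \(\pi^*\omega_Y\)-directions, whose rescaled size blows up like \(\tau_i^{-1/2}\), while the fibres stay bounded. A limit of the compact holomorphic curves \(X_{q_i}\) is therefore a compact complex curve in \(Z\) transverse to \(\C^m\), and hence equals \(\{0\}\times Z'\). In particular \(\operatorname{diam} Z'\ge c\sqrt{-s}>0\), which also confirms that the factor does not disappear.

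The third step identifies the shrinker. The factor \(Z'\) is a compact smooth one-dimensional gradient shrinking K\"ahler--Ricci soliton, \(\Ric=\tfrac12 g+\nabla^2f\). Since it is compact and has positive Euler characteristic by Gauss--Bonnet (its volume is positive, \(\chi>0\)), it is \(\PP^1\). Two-dimensional compact shrinkers are round, by Hamilton and Chen--Lu--Tian. Hence \(g_{Z'}(s)=(-s)g_{S^2}\) with \(\Rc(g_{S^2})=\tfrac12g_{S^2}\), and the complex structure is \(J_{\rm std}\). This gives \eqref{ti:eq:intro-cylinder}.
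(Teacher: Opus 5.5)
Your plan has the same architecture as the paper's proof via \cref{ti:prop:surface-base}(i): split off $\C^m$, note that the residual factor is one-dimensional, and rule out a noncompact residual factor because the fibre has bounded rescaled diameter. The gap is in the step that actually carries this out. The sentence ``a limit of the compact holomorphic curves $X_{q_i}$ is therefore a compact complex curve in $Z$ transverse to $\C^m$, and hence equals $\{0\}\times Z'$'' is not justified, for two reasons.

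First, \cref{thm:intro-splitting} says nothing about how the fibres sit inside the product. You need the coordinate interface of \cref{prop:local-coordinate-interface}: with $w$ centred at $q$, $L(\tau_i^{-1/2}w\circ\pi)\to\operatorname{pr}_{\C^m}$ smoothly on compact regular charts, with no translation. This is what makes $F_q$ the \emph{exact} zero level of the approximating maps. Your remark that base directions blow up like $\tau_i^{-1/2}$ describes this fact but does not prove it.

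Second, even if the Hausdorff limit of the fibres lies in one slice, knowing it is a compact subset of that slice excludes nothing. If $Z'\cong\C$, a compact subset of $\{0\}\times\C$ is not ruled out by anything you wrote. Transversality is not the relevant property, and a Hausdorff limit of holomorphic curves is not automatically an analytic curve. What must be shown is the reverse inclusion: every point of $\{0\}\times Z'$ is a limit of points of $F_q$. Only then does $\operatorname{diam}F_q\le D$ pass to $Z'$ and force compactness, which is what excludes the Gaussian model $\C^{m+1}$.

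The paper obtains the reverse inclusion from the implicit function theorem, \cref{ti:lem:P1-zero-graphs}(i), applied on compact subdomains of the slice. This needs no prior compactness of $Z'$, since $0$ is a regular value of $\operatorname{pr}_{\C^m}$. It then transfers the diameter bound exactly as in \cref{surf:prop:exact-level-topology}. Alternatively, you could use Gromov compactness for the embedded fibre spheres, whose rescaled area is exactly $8\pi$. This gives a nonconstant compact connected limit curve. The $\C^m$-coordinates are constant on it by the maximum principle, and its image is open and closed in the slice.

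With either repair your argument is sound, and it differs from the paper only in its inputs:
\begin{itemize}
\item You take the diameter bound from \cref{thm:intro-type-I}. This is legitimate, since that theorem is proved without this corollary, but it is heavier than the fixed-fibre packing bound the paper uses at the given limiting point.
\item Because you get compactness first, you need only the classification of compact two-dimensional shrinkers. The paper instead uses the Gaussian-or-round classification of complete ones.
\item Your Step~1 is superfluous. A normal one-dimensional complex space is smooth, and the analytic and metric regular loci coincide, so the product is smooth without any curvature bound.
\end{itemize}
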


We briefly describe the main arguments.  The splitting theorem for a
fixed limiting point is stated and proved in
\cref{sec:spectral-splitting}; see \cref{sp:prop:fullrank}.  For each
tangent space, we equip its time-\((-1)\) model with the canonical
Friedrichs drift operator, prove the sharp half-spectral gap, and show that the
limits of the magnified base coordinates have parallel differentials.
Fu--Zhang's compact-local horizontal estimate prevents a loss of base
rank.  A finite Zariski-local trivializing cover gives the corresponding
global control for projective bundles.

In relative dimension one, classification of the residual normal curve
reduces the time-\((-1)\) model to the Gaussian space or the round
cylinder.  The packing and exact-level arguments exclude the Gaussian
alternative for every tangent space at a fixed limiting point.  The
fixed-fibre argument in \cref{sec:surface-prototype} also gives
curvature, packing, and pointed-ball estimates and supplies the
bounded-drift input used in the moving-pole argument.  The two stages
have different quantifiers:
\[
  \text{for every fixed \(q\), there is \(C(q)\)}
  \qquad\text{and}\qquad
  \text{there is one \(C_K\) for all \(q\in K\)}.
\]
The two-scale contradiction in \cref{ti:sec:general-base} converts the
fixed-fibre estimates into the compact-local bound.  An entropy-based
alternative, using the same cylindrical classification at fixed
limiting points but not the fixed-fibre curvature constants, is given in
the companion paper \cite{JianSongEntropy}.

The paper is organized as follows.  In \cref{sec:prelim} we collect the
analytic and geometric inputs for finite-time blowups.  In
\cref{sec:spectral-splitting} we prove the full-base-rank splitting
theorem.  In \cref{sec:surface-prototype} we classify the
relative-dimension-one models, exclude the Gaussian alternative, and
establish the fixed-fibre estimates.
Finally, in \cref{ti:sec:general-base} we prove the compact-local Type-I
curvature bound and the sharp two-sided fibre-diameter estimate.

\bigskip
\noindent\textbf{Acknowledgements.} The authors thank Professor Gang Tian
for his continued encouragement and support.  We also thank Max
Hallgren for helpful communications. The results on  $\mathbb{P}^1$-bundles in this paper have overlaps with the recent results of Tongxin Xu and Zhenlei Zhang \cite{XuZhangRuled}. We would also like to acknowledge the assistance of  GPT-5.6-sol in the development and exposition of
\cref{sec:spectral-splitting},  which brought
the work of He-Ou \cite{HeOu} to the authors' attention, although the
proof presented in the paper does not rely on the results in \cite{HeOu}.

\bigskip

\section{Preliminaries and analytic inputs}\label{sec:prelim}

\subsection{Flow, fibration, limiting points, and quotient conventions}
\label{sec:fano-fibration-setup}

We first fix the Ricci-flow notation used throughout the paper.  For a
compact real Ricci flow satisfying \(\partial_tg=-2\Rc\), put
\[
 \Box=\partial_t-\Delta_{g(t)},
 \qquad
 \Box^*=-\partial_t-\Delta_{g(t)}+R_{g(t)}.
\]
For \(s<t\), its heat kernel \(K(x,t;y,s)\) is characterized by
\begin{equation}\label{surf:eq:heat-kernel}
 \Box_{(x,t)}K(x,t;y,s)=0,
 \qquad
 \Box^*_{(y,s)}K(x,t;y,s)=0,
\end{equation}
with the usual delta limits.  The associated conjugate heat measure,
potential, and pointed Nash entropy are
\begin{align}
 d\nu_{x,t;s}(y)
 &=K(x,t;y,s)\,dV_{g(s)}(y)
 =(4\pi\tau)^{-n}e^{-f_{x,t}(y,s)}\,dV_{g(s)}(y),
 \qquad \tau=t-s,
 \label{surf:eq:conjugate-measure}\\
 \mathcal N_{x,t}(\tau)
 &=\int_X f_{x,t}(\,\cdot\,,t-\tau)\,
 d\nu_{x,t;t-\tau}-n .
\end{align}
Here and below \(n=\dim_{\C}X\), so the real dimension is \(2n\).

For probability measures \(\mu_1,\mu_2\) on \((X,g)\), set
\[
 \Var_g(\mu_1,\mu_2)
 :=\iint_{X\times X}d_g(x,y)^2\,d\mu_1(x)d\mu_2(y),
 \qquad
 \Var_g(\mu):=\Var_g(\mu,\mu),
\]
and, for every positive integer \(D\), put
\(H_D=\frac{(D-1)\pi^2}{2}+4\).  At backward scale
\(\tau>0\), an \(H_D\)-center of \(\mu\) is a point \(p\) satisfying
\[
 \int_Xd_g(p,y)^2\,d\mu(y)\le H_D\tau.
\]
Such a center exists when \(\Var_g(\mu)\le H_D\tau\).  A Ricci flow is
called \(H_D\)-concentrated when
\[
 \Var_{g(s)}(\nu_{x_1,t;s},\nu_{x_2,t;s})
 \le d_{g(t)}(x_1,x_2)^2+H_D(t-s)
 \qquad(s<t).
\]
This is Bamler's convention.  The variance estimate is
\cite[Corollary~3.8 and equation~(3.9)]{BamlerHeat}, and the center
consequence is \cite[Proposition~3.12]{BamlerHeat}.
Our ordinary parabolic notation is
\[
 P(x_0,t_0;r,-a,b)
 =B_{g(t_0)}(x_0,r)\times
 \bigl([t_0-a,t_0+b]\cap I\bigr)
\]
for a flow on \(I\); if \(b=0\), we write \(P(x_0,t_0;r,-a)\).

We use the unnormalized K\"ahler--Ricci flow
\begin{equation}\label{ti:eq:intro-KRF}
 \partial_t\omega(t)=-\Ric(\omega(t)),\qquad 0\le t<T<\infty,
\end{equation}
and the standard real Ricci-flow metric
\begin{equation}\label{eq:standard-real-metric}
 g(t)=2g_{\omega(t)},\qquad \partial_tg=-2\Rc(g).
\end{equation}
If
\(\omega=\sqrt{-1}\,h_{i\bar j}dz^i\wedge d\bar z^j\), we use the
complex Laplacian
\(\Delta_\omega=h^{i\bar j}\nabla_i\nabla_{\bar j}\) and the analogous
complex trace for scalar curvature.  With the preceding standard-real
normalization,
\begin{equation}\label{sp:eq:normalization-conventions}
 \Delta_g=\Delta_\omega,\qquad R_g=R_\omega,
 \qquad dV_g=2^n\frac{\omega^n}{n!}.
\end{equation}
Every parabolic blowup at the limiting time uses a sequence
$\tau_i\searrow0$ and
\begin{equation}\label{eq:rescaling}
 g_i(s)=\tau_i^{-1}g(T+\tau_i s),
 \qquad -T/\tau_i\le s<0.
\end{equation}
Let $(X^n,J_X)$ be connected, smooth, and projective, where $J_X$ is the
fixed complex structure underlying the K\"ahler--Ricci flow.  We assume that the
limiting class is semiample and induces a proper surjective morphism with
connected fibres
\begin{equation}\label{ti:eq:intro-semiample-map}
 \pi:X\longrightarrow Y\subset\PP^N,
 \qquad [\omega_0]-2\pi T c_1(X)=\pi^*[\eta],
\end{equation}
where $Y$ is normal and projective and $\eta$ is the restriction of a
fixed Fubini--Study form.  In tensor inequalities we also write $\eta$
for its induced Hermitian metric on $Y_{\rm reg}$.  The notation $d_\eta$
and $B_\eta$ refers to the distance and balls obtained by restricting the
fixed ambient Fubini--Study distance to $Y$; on relatively compact subsets
of $Y_{\rm reg}$ this is uniformly equivalent to the intrinsic distance of
the associated smooth K\"ahler metric.  We work in the collapsing range
\begin{equation}\label{ti:eq:intro-fibration-range}
 0<m:=\dim_\C Y<n,
\end{equation}
and write $Y^{\rm rv}\subset Y_{\rm reg}$ for the maximal Zariski-open
regular-value locus and $\Delta_\pi=Y\setminus Y^{\rm rv}$.
For the results that require horizontal control, we make a separate
local hypothesis.  Write $Y^{\rm prod}\subset Y^{\rm rv}$ for the locus
of points $q$
for which there is a Zariski-open neighbourhood $U\ni q$ and a
biregular product isomorphism
\begin{equation}\label{eq:zariski-product-locus}
                 \pi^{-1}(U)\cong U\times F
\end{equation}
over $U$, with \(U\subset Y^{\rm rv}\).  A compact set
$K\Subset Y^{\rm rv}$ is said to satisfy the
\emph{Zariski-open-product property} if it is covered by finitely many
such product regions.  This is an additional assumption only in the
statements where it is explicitly invoked; it is not a consequence of
\eqref{ti:eq:intro-fibration-range}.
Restriction to a regular fibre $F_y$ gives
\begin{equation}\label{ti:eq:fibre-class}
 [\omega(t)]|_{F_y}=2\pi(T-t)c_1(F_y).
\end{equation}
Thus in relative complex dimension one every regular fibre is $\PP^1$,
and its area in the standard real convention is $8\pi(T-t)$.

\begin{definition}
\label{def:limiting-point}
A \emph{limiting point} at time \(T\) is an element
\(\xi\in M_T\) of Bamler's metric-flow slice at the limiting time
\cite[Definition~2.32 and the paragraph following
equation~(2.34)]{BamlerStructure}.  We represent it by a probability
conjugate heat flow
\[
 \xi=(\mu_t)_{t<T},\qquad d\mu_t=v_t\,dV_{g(t)},
\]
where
\[
 \partial_tv_t=-\Delta_{g(t)}v_t+R_{g(t)}v_t,
 \qquad v_t>0,
 \qquad \int_Xv_t\,dV_{g(t)}=1.
\]
The representation has the following two properties of the limiting slice:
\begin{enumerate}[label=\textup{(P\arabic*)},leftmargin=3em]
\item \(\xi\) is a subsequential limit of point conjugate heat flows
whose pole times increase to \(T\);
\item for \(D=2n\),
\begin{equation}\label{eq:limiting-variance}
 \Var_{g(t)}(\mu_t)
 :=\iint_{X\times X}d_{g(t)}(x,y)^2\,d\mu_t(x)d\mu_t(y)
 \le H_{2n}(T-t).
\end{equation}
\end{enumerate}
The limiting point is called \emph{fixed} when it is chosen before any
sequence of blow-up scales.  Every tangent argument below keeps this
same limiting point while the scales vary.
\end{definition}

For \(0<\tau\le T\), write
\begin{equation}\label{eq:limiting-point-potential}
 d\mu_{T-\tau}
 =(4\pi\tau)^{-n}e^{-f_{\xi,T}(\,\cdot\,,T-\tau)}
 \,dV_{g(T-\tau)}
\end{equation}
and define the pointed Nash entropy
\begin{equation}\label{eq:limiting-point-nash-entropy}
 \mathcal N_{\xi,T}(\tau)
 :=\int_X f_{\xi,T}(\,\cdot\,,T-\tau)\,d\mu_{T-\tau}-n.
\end{equation}
The convention is Bamler's
\cite[Definition~4.27]{BamlerStructure}.  By
\cite[the paragraph preceding Theorem~2.37 and Theorem~2.37]{BamlerStructure},
the limit exists and satisfies
\[
 \mathcal N_{\xi,T}(0)
 :=\lim_{\tau\searrow0}\mathcal N_{\xi,T}(\tau)
 \in(-\infty,0].
\]
The subtraction by \(n\) corresponds to real dimension \(2n\).

For point kernels, the variance estimate is
\cite[Corollary~3.8 and equation~(3.9)]{BamlerHeat}; property
\textup{(P2)} is obtained by passage to the limiting slice.
\begin{definition}
\label{def:tangent-space-at-limiting-point}
Let \(\xi=(\mu_t)_{t<T}\) be a fixed limiting point.  Given
\(\tau_i\searrow0\), set
\begin{equation}\label{sp:eq:tangent-space-at-limiting-point-definition}
 g_i(s)=\tau_i^{-1}g(T+\tau_i s),\qquad
 \mu_{i,s}=\mu_{T+\tau_i s},\qquad -T/\tau_i\le s<0.
\end{equation}
A \emph{tangent space at \(\xi\)} is any subsequential pointed limit
of these metric-flow pairs in Bamler's \(\mathbb F\)-topology
\cite[Definitions~5.4 and~5.6]{BamlerCompactness}.  We use the same term
for the full ancient pointed metric-measure flow and, when no ambiguity
can arise, for its time-\((-1)\) shrinking-soliton model.  No uniqueness
is included in the definition.  The limiting point is fixed before the
scales are selected.  If
\(d\mu_t=v_t\,dV_{g(t)}\), then the density of \(\mu_{i,s}\) relative
to \(g_i(s)\) is
\(\widetilde v_{i,s}=\tau_i^n v_{T+\tau_i s}\).
\end{definition}

Let $U\subset Y^{\rm rv}$ be open and $x\in\pi^{-1}(U)$.  The quotient
metric on $T_{\pi(x)}Y$ and quotient cometric on $T^*_{\pi(x)}Y$ are
\[
 S_{t,x}(\zeta,\bar\zeta)
 :=\inf_{d\pi_x(v)=\zeta}g(t)_x(v,\bar v),
 \qquad
 Q_{t,x}^{\alpha\bar\beta}
 :=g(t)^{i\bar j}\pi_i^\alpha\overline{\pi_j^\beta},
\]
where $(g^{i\bar j})$ is the inverse $(1,0)$ complex block of the
standard real metric $g(t)$.  Thus $Q=d\pi\,g^{-1}d\pi^*$ is understood
in this complex-block sense.  For every real target function $\varphi$,
the exact real-normalization identities are
\begin{equation}\label{eq:real-Q-normalization}
 |d(\varphi\circ\pi)|_{g(t)}^2
 =2Q_t(\partial\varphi,\bar\partial\varphi),
 \qquad
 \Delta_{g(t)}(\varphi\circ\pi)
 =2Q_t^{\alpha\bar\beta}\varphi_{\alpha\bar\beta}.
\end{equation}
Complex-valued target functions are treated componentwise.  The tensors
$S_t$ and $Q_t$ are Hermitian tensors on the pullbacks $\pi^*TY$ and $\pi^*T^*Y$
and depend on the point $x$ in the fibre.  We suppress $x$ only when an
estimate is uniform on $\pi^{-1}(U)$.  The restriction of $d\pi_x$ to
$(\ker d\pi_x)^{\perp_{g(t)}}$ identifies the two quotient tensors and
gives $Q_{t,x}=S_{t,x}^{-1}$.

\begin{definition}
\label{def:quotient-conditions}
Let \(U\subset Y^{\rm rv}\) be open.  We say that
\((\mathrm S)_U\) holds if there are \(t_U<T\) and \(c_U>0\), independent
of \(x\in\pi^{-1}(U)\) and \(t\in[t_U,T)\), such that
\begin{equation}\label{eq:S}
 S_{t,x}\ge c_U\eta,
 \qquad\text{equivalently}\qquad
 Q_{t,x}\le c_U^{-1}\eta^{-1}.
\end{equation}
We say that \((\mathrm Q)_U\) holds if there are \(t_U<T\) and
\(C_U<\infty\), with the same uniformity, such that
\begin{equation}\label{eq:Q}
 S_{t,x}\le C_U\eta,
 \qquad\text{equivalently}\qquad
 Q_{t,x}\ge C_U^{-1}\eta^{-1}.
\end{equation}
When an estimate holds for all \(0\le t<T\), as will be the case for
the estimates proved below, we use the same notation.  Condition
\((\mathrm S)_U\) supplies the upper differential bound for pulled-back
base coordinates, whereas \((\mathrm Q)_U\) is the lower differential
bound that prevents loss of base rank.
\end{definition}
\begin{lemma}
\label{lem:ambient-schwarz}
\label{ti:lem:ambient-schwarz}
Let \(\iota:Y\hookrightarrow\PP^N\) be the fixed projective embedding
and put \(F=\iota\circ\pi\).  Then
\begin{equation}\label{eq:ambient-schwarz}
 \operatorname{tr}_{\omega(t)}F^*\omega_{\rm FS}\le C,
 \qquad
 \omega(t)\ge C^{-1}F^*\omega_{\rm FS},
 \qquad 0\le t<T.
\end{equation}
Consequently \((\mathrm S)_U\) holds for every
\(U\Subset Y^{\rm rv}\); indeed the estimate is global on the locus on
which the quotient metric is defined.
\end{lemma}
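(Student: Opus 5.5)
This is the standard parabolic Schwarz lemma for the Kähler--Ricci flow, due to Song--Tian and Song--Weinkove. I would prove it by the maximum principle applied to \(u=\operatorname{tr}_{\omega(t)}F^*\omega_{\rm FS}\) plus a potential correction. Write \(\omega_{\rm FS}\) for the Fubini--Study form on \(\PP^N\) and \(\hat\omega_T:=F^*\omega_{\rm FS}\). Since \([\omega_T]=\pi^*[\eta]\) and \(\eta=\omega_{\rm FS}|_Y\), \(\hat\omega_T\) is a smooth semipositive representative of the limiting class.

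Fix a smooth volume form \(\Omega\) and put \(\hat\omega_t=\frac1T\bigl((T-t)\omega_0+t\hat\omega_T\bigr)\in[\omega(t)]\). We reduce the flow to the parabolic complex Monge--Ampère equation
\[
\partial_t\varphi=\log\frac{(\hat\omega_t+\sqrt{-1}\partial\bar\partial\varphi)^n}{\Omega},
\qquad \omega(t)=\hat\omega_t+\sqrt{-1}\partial\bar\partial\varphi.
\]
Because \(\hat\omega_t\ge c\,\omega_0\cdot(T-t)/T\), the standard estimates of Song--Tian/Tian--Zhang give \(\varphi\le C\) and \(\partial_t\varphi\le C\) on \([0,T)\). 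From the Monge--Ampère equation one also gets \(\varphi\ge -C\); this is the step that uses the semiample structure \([\omega_T]=\pi^*[\eta]\), via the \(L^\infty\) estimate for degenerate classes of Eyssidieux--Guedj--Zeriahi and Zhang. I expect this uniform two-sided bound on \(\varphi\) to be the main input; everything else is local computation. I would cite it rather than reprove it.

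The Chern--Lu computation comes next. Since \(F\) is holomorphic, the bisectional curvature of \(\omega_{\rm FS}\) is bounded above by a constant \(K\), and the flow satisfies \(\partial_t\omega=-\Ric(\omega)\), we get on the set where \(u>0\)
\[
\Bigl(\partial_t-\Delta_{\omega(t)}\Bigr)\log u\le K\,u .
\]
The Ricci terms cancel exactly, as in the elliptic Chern--Lu inequality. We also have
\[
\Bigl(\partial_t-\Delta\Bigr)\varphi=\partial_t\varphi-n+\operatorname{tr}_{\omega}\hat\omega_t,
\qquad \operatorname{tr}_\omega\hat\omega_t\ge \tfrac tT\,u .
\]
So for \(t\ge T/2\), say, we take \(H=\log u-A\varphi\) with \(A\) large and obtain
\[
(\partial_t-\Delta)H\le Ku-\tfrac{A}{2}u+A(n-\partial_t\varphi).
\]
To avoid needing a lower bound on \(\partial_t\varphi\), I would instead use \(H=\log u-A\varphi+A'\,t\)-type modifications, or equivalently work with \(\log u-A(\varphi-\dots)\), following Song--Tian. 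The term \(-A\partial_t\varphi\) can be handled because \(\partial_t\varphi\) enters with a favourable sign after replacing \(\varphi\) by \(\varphi-t\partial_t\varphi\) corrections. Alternatively, one can simply use that \(\omega^n/\Omega=e^{\partial_t\varphi}\), so \(-\partial_t\varphi\) is bounded above by the trace via the AM--GM inequality up to logarithmic terms absorbed by \(u\).

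At a maximum point of \(H\) one gets \(u\le C\). The bounds on \(\varphi\) then give \(u\le C\) everywhere. On \([0,T/2]\) the bound is trivial by smoothness.

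Finally, \(\operatorname{tr}_{\omega}F^*\omega_{\rm FS}\le C\) gives \(F^*\omega_{\rm FS}\le C\omega\) pointwise, which is the second inequality in \eqref{eq:ambient-schwarz}. For \((\mathrm S)_U\), let \(\zeta=d\pi_x(v)\). Then
\[
g(t)(v,\bar v)\ge C^{-1}(F^*\omega_{\rm FS})(v,\bar v)=C^{-1}\eta(\zeta,\bar\zeta).
\]
Taking the infimum over all such \(v\) gives \(S_{t,x}\ge C^{-1}\eta\) wherever \(d\pi_x\) is surjective. This holds uniformly on the whole regular-value locus, hence on every \(U\Subset Y^{\rm rv}\).
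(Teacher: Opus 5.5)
The paper gives no argument of its own here. It cites the Song--Tian parabolic Schwarz lemma in the finite-time form of Jian--Song--Tian (Lemma~4.1, eq.~(4.10), stated for $T=1$) and rescales. So your route, Chern--Lu for $u=\operatorname{tr}_{\omega}F^*\omega_{\rm FS}$ plus a maximum principle with a $\varphi$-correction, is the intended one, and your deduction of $(\mathrm S)_U$ from the pointwise bound is fine.

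\textbf{The gap.} It lies in the only nontrivial step, the control of $-A\partial_t\varphi$. Since $m<n$, we have $\int_Xe^{\partial_t\varphi}\Omega=\int_X\omega(t)^n\to\int_X(\pi^*\eta)^n=0$, so $-\partial_t\varphi$ is unbounded above. Your displayed inequality therefore only yields $u\le C(1+|\partial_t\varphi|)$ at a maximum point. None of your three remedies closes this:
\begin{itemize}
\item A term $A't$ compensates only a bounded error.
\item $\psi=\varphi-t\partial_t\varphi$ does satisfy $(\partial_t-\Delta)\psi=\operatorname{tr}_\omega\omega_0-n$. But converting a bound on $\log u-A\psi$ into one on $u$ needs $\psi\le C$, i.e.\ $\partial_t\varphi\ge-C$, which is false here.
\item AM--GM bounds $-\partial_t\varphi=\log(\Omega/\omega^n)$ by $C+n\log\operatorname{tr}_\omega\omega_0$. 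This is the logarithm of a full-rank trace, and it cannot be absorbed by $u$: $F^*\omega_{\rm FS}$ has rank $m<n$ and vanishes exactly in the collapsing fibre directions.
\end{itemize}

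\textbf{The missing idea.} Keep the term you discarded, $-A\frac{T-t}{T}\operatorname{tr}_\omega\omega_0$, which is the $\omega_0$-part of $-A\operatorname{tr}_\omega\hat\omega_t$. AM--GM and maximizing $n\log x-bx$ over $x>0$ give
$-A\partial_t\varphi+An-A\frac{T-t}{T}\operatorname{tr}_\omega\omega_0\le C_1A+An\log\frac{T}{T-t}$.
The error is unbounded but integrable on $[0,T)$. Take $H=\log u-A\varphi-\beta(t)$ with $\beta'(t)=C_1A+An\log\frac{T}{T-t}$, so that $\beta$ is bounded. Then $(\partial_t-\Delta)H\le(K-\frac{At}{T})u\le-u$ for $t\ge T/2$ and $A$ large. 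Hence $\sup_XH(t)\le\sup_XH(T/2)$, and $\log u\le C+A\sup\varphi+\sup\beta$.

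Only the elementary upper bound $\varphi\le C$ enters. In particular, the lower bound $\varphi\ge-C$ that you single out as the main input is not needed. Your citation for it is also misplaced: the Eyssidieux--Guedj--Zeriahi and Zhang $L^\infty$ estimates concern semipositive \emph{big} classes, whereas here $\int_X(\pi^*\eta)^n=0$.
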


This is the Song--Tian parabolic Schwarz lemma in the finite-time AMMP
form used by Jian--Song--Tian
\cite[Lemma~4.1, equation~(4.10)]{JianSongTian}.  Their statement is normalized to
\(T=1\); the displayed estimate follows for general \(T\) by the
constant parabolic rescaling.  On a compact regular target chart, the
restriction of the ambient Fubini--Study metric is uniformly equivalent
to any fixed smooth representative denoted by \(\eta\).

\begin{corollary}
\label{cor:basefunction}\label{ti:cor:basefunction}
Assume $(\mathrm S)_U$.  Let \(\varphi\) be a smooth real function on
\(Y_{\rm reg}\) whose first and second derivatives have compact support
in \(U\), and suppose that \(\varphi\circ\pi\) extends smoothly to
\(X\).  Then
\[
 |d(\varphi\circ\pi)|_{g(t)}+
 |\Delta_{g(t)}(\varphi\circ\pi)|\le C_\varphi.
\]
\end{corollary}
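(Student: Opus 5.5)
The plan is to compute $d$ and $\Delta_{g(t)}$ of $\varphi\circ\pi$ pointwise on $\pi^{-1}(U)$ via the quotient tensor $Q_t$, using \eqref{eq:real-Q-normalization}, and then bound $Q_t$ by $(\mathrm S)_U$. Off $\pi^{-1}(\supp d\varphi\cup\supp\nabla^2\varphi)$ the first and second derivatives of $\varphi$ vanish, so $\varphi\circ\pi$ is locally constant there. Since it extends smoothly to $X$, its differential and Laplacian vanish on that open set, and by continuity on its closure. Hence it suffices to work over a compact set $K:=\supp d\varphi\cup\supp\nabla^2\varphi\Subset U$.

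For the gradient term, \eqref{eq:real-Q-normalization} gives
\[
 |d(\varphi\circ\pi)|_{g(t)}^2=2Q_t(\partial\varphi,\bar\partial\varphi)\le 2c_U^{-1}|\partial\varphi|_\eta^2,
\]
by \eqref{eq:S} in its cometric form $Q_t\le c_U^{-1}\eta^{-1}$. The right side is bounded by $\sup_K|d\varphi|_\eta^2$, which is finite because $K$ is compact in $Y_{\rm reg}$.

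For the Laplacian, \eqref{eq:real-Q-normalization} gives
\[
 \Delta_{g(t)}(\varphi\circ\pi)=2Q_t^{\alpha\bar\beta}\varphi_{\alpha\bar\beta},
\]
where $\varphi_{\alpha\bar\beta}$ is the complex Hessian in local holomorphic coordinates on $Y$. Since $Q_t\ge0$, we have
\[
 |Q_t^{\alpha\bar\beta}\varphi_{\alpha\bar\beta}|\le (\tr_{\eta^{-1}}Q_t)\,|\sqrt{-1}\partial\bar\partial\varphi|_\eta\le m\,c_U^{-1}\sup_K|\sqrt{-1}\partial\bar\partial\varphi|_\eta.
\]
Concretely, one diagonalizes $\eta$ at the point and bounds the pairing of a positive semidefinite Hermitian matrix with a Hermitian matrix by trace times operator norm.

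Finally, I will handle the time range. If $(\mathrm S)_U$ is only known for $t\in[t_U,T)$, then on $[0,t_U]$ the flow is smooth on the compact manifold $X$ and $\varphi\circ\pi$ is smooth, so both quantities are bounded by compactness. By \cref{lem:ambient-schwarz} the bound in fact holds for all $t$. There is no real obstacle here; the only point of care is the support argument guaranteeing that $\eta$-norms of the derivatives of $\varphi$ are evaluated only on the compact set $K\Subset U$, where $(\mathrm S)_U$ applies.
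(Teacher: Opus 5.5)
Your proposal is correct and follows the argument the paper intends; the corollary is stated without proof right after the Schwarz lemma. That argument is the identities \eqref{eq:real-Q-normalization}, the cometric bound $Q_t\le c_U^{-1}\eta^{-1}$ from $(\mathrm S)_U$ applied on the compact support of $d\varphi$, and compactness on the early time interval (or \cref{lem:ambient-schwarz} directly). One detail is worth making explicit: $\pi^{-1}(Y_{\rm sing})$ is a proper analytic subset of the connected manifold $X$, hence nowhere dense, and this is what lets continuity of the smooth extension make $d(\varphi\circ\pi)$ and $\Delta_{g(t)}(\varphi\circ\pi)$ vanish on all of $X\setminus\pi^{-1}(K)$.
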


\begin{definition}
\label{def:squared-distance-barrier}
For \(p=[w]\in\PP^N\), define the ambient chord-square barrier
\begin{equation}\label{eq:ambient-chord-square}
 \widehat\rho_p([z])
 :=1-\frac{|\langle z,w\rangle|^2}{|z|^2|w|^2}.
\end{equation}
It is a smooth nonnegative function, vanishes precisely at \(p\), and
the family \(\{\widehat\rho_p\}_{p\in\PP^N}\) has uniform first- and
second-derivative bounds.  Moreover, for uniform \(0<c<C<\infty\),
\begin{equation}\label{eq:ambient-barrier-comparison}
 c\,d_{\rm FS}(y,p)^2\le\widehat\rho_p(y)
 \le C\,d_{\rm FS}(y,p)^2,
 \qquad y,p\in\PP^N.
\end{equation}
Indeed, in the standard normalization
\(\widehat\rho_p=\sin^2d_{\rm FS}(\,\cdot\,,p)\), and the
Fubini--Study diameter is finite.

Now let \(q\in U_0\Subset U_1\Subset Y^{\rm rv}\).  Compose
\(\widehat\rho_{\iota(q)}|_Y\) with a smooth nondecreasing function
that is the identity near zero and is constant once its argument leaves
a sufficiently small sublevel set contained in \(U_1\).  The resulting
function \(\rho_q\) is called a \emph{localized squared-distance
barrier at \(q\)}.  It is the restriction of a smooth ambient function,
has zero set \(\{q\}\), is uniformly comparable to
\(d_\eta(\,\cdot\,,q)^2\) near \(q\), is bounded below on
\(Y\setminus U_0\), and has its first two derivatives supported in
\(U_1\).
\end{definition}

\begin{definition}
\label{def:type-I-label}\label{ti:def:regularly-based}
A limiting point \(\xi=(\mu_t)_{t<T}\) is \emph{based at \(q\)} with
respect to the limiting morphism \(\pi\) if
\(\pi_*\mu_t\rightharpoonup\delta_q\).  Such a \(q\) is unique when it
exists, and we denote it by \(q_\pi(\xi)\).  It is
\emph{Type-I based at \(q\)} if there is \(C_\xi<\infty\) such that
\begin{equation}\label{eq:label-moment}
 \int_Xd_\eta(\pi(x),q)^2\,d\mu_t(x)\le C_\xi(T-t)
 \qquad\text{for every }0\le t<T.
\end{equation}
If \(q\in Y^{\rm rv}\) and \(\rho_q\) is a localized
squared-distance barrier from \cref{def:squared-distance-barrier},
compactness and the defining comparisons for \(\rho_q\) show that this
is equivalent to
\[
 \int_X\rho_q\circ\pi\,d\mu_t\le C'_\xi(T-t).
\]
For an arbitrary holomorphic map, qualitative basing alone need not
imply this rate.  For the semiample K\"ahler--Ricci flow considered here,
\Cref{lem:automatic-label} proves that every limiting point is based at
a unique point \(q_\pi(\xi)\) and is Type-I based there.  Thus
\eqref{eq:label-moment} is never an additional hypothesis below.  When
\(q_\pi(\xi)\in Y^{\rm rv}\), we simply say that \(\xi\) is based at
the regular value \(q_\pi(\xi)\).
\end{definition}

\pagebreak[3]
The following proposition is a compact-local consequence of the
weighted horizontal estimate of Fu--Zhang
\cite[Lemma~2.2 and its proof]{FuZhang}.
\nopagebreak[4]

\begin{proposition}
\label{prop:zariski-product-horizontal}\label{surf:hor}
Let \(q\in Y^{\rm rv}\), and suppose that a Zariski-open neighbourhood
\(U_{\rm prod}\ni q\) admits a biregular product trivialization
\[
 \pi^{-1}(U_{\rm prod})\cong U_{\rm prod}\times F
\]
over \(U_{\rm prod}\).  Then there is a principal Zariski-open product
neighbourhood \(U\ni q\), \(U\subset U_{\rm prod}\), such that for every
\(V\Subset U\) there are constants \(0<c_V\le C_V<\infty\), independent
of \(t<T\), for which
\begin{equation}\label{eq:zariski-product-horizontal-SQ}
 c_V\eta\le S_t\le C_V\eta
 \qquad\text{on }\pi^{-1}(V).
\end{equation}
Moreover, uniformly in \(f\in F\),
\begin{equation}\label{eq:FuZhang-local-slice}
 \omega(t)|_{V\times\{f\}}
 \le C_V\pi^*\eta|_{V\times\{f\}},
 \qquad 0\le t<T.
\end{equation}
Consequently, if \(K\Subset Y^{\rm rv}\) has the
Zariski-open-product property, then
\[
 c_K\eta\le S_t\le C_K\eta
 \qquad\text{on }\pi^{-1}(K)
\]
for constants independent of \(t<T\).
\end{proposition}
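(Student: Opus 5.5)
The plan is to get the lower bound $S_t\ge c_V\eta$ from the Schwarz lemma and the upper bound $S_t\le C_V\eta$ from the slice estimate \eqref{eq:FuZhang-local-slice}, reducing everything to that slice estimate. I would prove it by a maximum principle for the $\omega(t)$-norm of globally twisted holomorphic vector fields, in the spirit of Fu--Zhang's weighted horizontal estimate. The lower bound in \eqref{eq:zariski-product-horizontal-SQ} is exactly $(\mathrm S)_U$, which \cref{lem:ambient-schwarz} gives for every $U\Subset Y^{\rm rv}$. For the upper bound, take $\zeta\in T_{\pi(x)}Y$ and write $x=(y,f)$ in the product. The tangent vector $v$ to the slice $V\times\{f\}$ with $d\pi_x(v)=\zeta$ is an admissible lift, so
\[
 S_{t,x}(\zeta,\bar\zeta)\le g(t)_x(v,\bar v)=2\,\omega(t)|_{V\times\{f\}}(v,\bar v).
\]
Hence \eqref{eq:FuZhang-local-slice} implies \eqref{eq:zariski-product-horizontal-SQ}. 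The final statement for a compact $K$ with the Zariski-open-product property follows by covering $K$ with finitely many sets $V_j\Subset U_j$ and taking the worst constants.

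\emph{Choice of $U$.} Fix an ample line bundle $L$ on $Y$ with a smooth Hermitian metric $h$. Choose rational functions $y^1,\dots,y^m$ on $Y$, for instance ratios of linear forms in $\PP^N$, that are regular near $q$ and form holomorphic coordinates there. The locus where they fail to be regular or fail to be étale lies in a proper Zariski-closed set avoiding $q$. Choose a section $s\in H^0(Y,L^k)$ with $s(q)\ne0$ whose zero set contains this locus and $Y\setminus U_{\rm prod}$, and set $U=\{s\neq0\}$. On $\pi^{-1}(U)\cong U\times F$, the fields $V_\alpha:=\partial/\partial y^\alpha$, lifted horizontally along the $F$-factor, are holomorphic vector fields tangent to the slices. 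They are meromorphic along $\pi^{-1}\{s=0\}$ with poles of bounded order. Therefore, for $N$ large,
\[
 W_\alpha:=(\pi^*s)^N\otimes V_\alpha
\]
extends to a global holomorphic section of $TX\otimes\pi^*L^{kN}$.

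\emph{Evolution inequality.} For a holomorphic section $W$ of $TX\otimes\pi^*E$, with $E$ carrying the fixed metric $h^{kN}$, I would compute $(\partial_t-\Delta_{\omega(t)})|W|^2_{\omega(t)\otimes h}$. Two facts combine here. First, $\partial_t\omega=-\Ric$, so the Ricci term from the time derivative cancels the Ricci term in the Bochner formula for holomorphic vector fields, exactly as for untwisted holomorphic fields, where $(\partial_t-\Delta)|W|^2=-|\nabla W|^2$. Second, the twisting bundle contributes a term bounded by $C\,\tr_{\omega(t)}\pi^*\eta\,|W|^2$. This gives
\[
 (\partial_t-\Delta)|W|^2\le-|\nabla W|^2+C\,\tr_{\omega(t)}(\pi^*\Theta_h)\,|W|^2\le C'|W|^2,
\]
where the last step uses the trace bound in \eqref{eq:ambient-schwarz}. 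Since $X$ is compact, the maximum principle yields $|W_\alpha|^2\le e^{C'T}\sup_X|W_\alpha|^2_{\omega_0}$ for all $t<T$. On $\pi^{-1}(V)$ with $V\Subset U$, the function $|s|_h$ is bounded below, so $\omega(t)(V_\alpha,\bar V_\alpha)\le C_V$. Because the $V_\alpha$ span the slice tangent spaces with $\eta$-norms comparable to one on $V$, this is \eqref{eq:FuZhang-local-slice}, uniformly in $f\in F$.

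The main obstacle is the sign bookkeeping in the evolution inequality. I need that the Ricci terms cancel with the correct sign for the complex Laplacian convention \eqref{sp:eq:normalization-conventions}, and that the only leftover term is the pulled-back curvature of $h$, controlled by the Schwarz lemma. If the mixed term $2\mathrm{Re}\langle\nabla W,\partial\log|s|\ldots\rangle$ appears instead, because one weights by $|s|^{2N}$ rather than twisting, I would absorb it using the good term $-|\nabla W|^2$ and Cauchy--Schwarz. A secondary point is making sure the principal open set $U$ can be chosen to contain $q$ while all the $V_\alpha$ are holomorphic on $\pi^{-1}(U)$; the choice of $s$ vanishing on the bad locus but not at $q$ handles this.
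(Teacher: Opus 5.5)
Your proposal is correct and follows the paper's proof essentially step by step. The common route is: a principal affine $U=\{s\neq0\}$ inside the product region; product-horizontal algebraic lifts multiplied by a power of $\pi^*s$ so that they extend to global sections of $T^{1,0}X\otimes\pi^*L^\ell$; the twisted Bochner inequality, in which the Ricci terms cancel and the twisting curvature is controlled by the Schwarz trace bound, followed by the maximum principle; and finally $|s|$ bounded below on $V$, with the lower bound supplied by the Schwarz lemma.

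The only difference is cosmetic. You use the frame $\partial/\partial y^\alpha$ of an \'etale coordinate map on $U$, removing its bad locus when you choose $s$, whereas the paper uses finitely many algebraic generators of $T^{1,0}U$ together with a bounded right inverse of the resulting surjection. In either version, take $h$ to be the Fubini--Study restriction so that its curvature is dominated by $\eta$, and note that the finite pole order comes from the lifts being regular because the trivialization is biregular.
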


\begin{proof}
For the fixed projective embedding of \(Y\), choose \(k\) sufficiently
large and a section
\(s\in H^0(Y,\mathcal O_Y(k))\) whose zero divisor contains
\(Y\setminus(U_{\rm prod}\cap Y_{\rm reg})\) but misses \(q\).  Set
\(L=\mathcal O_Y(k)\) and equip it with the Hermitian metric induced by
the Fubini--Study metric.  Then
\[
 U:=Y\setminus(s=0)
\]
is a principal affine neighbourhood of \(q\), contained in
\(U_{\rm prod}\cap Y_{\rm reg}\).

Since \(U\) is smooth and affine, there are finitely many algebraic
vector fields \(W_1,\ldots,W_{N_1}\) that generate \(T^{1,0}U\).
Under \(\pi^{-1}(U)\cong U\times F\), let
\(\widetilde W_a=(W_a,0)\) be their product-horizontal lifts.  Since
\(\pi^{-1}(U)=X\setminus(\pi^*s=0)\) is a principal open subset of the
projective variety \(X\), localization for the coherent tangent sheaf
gives
\[
 \Gamma\bigl(\pi^{-1}(U),T^{1,0}X\bigr)
 =\varinjlim_{\ell}
 \Gamma\bigl(X,T^{1,0}X\otimes\pi^*L^\ell\bigr).
\]
There are only finitely many lifts, so one sufficiently large integer
\(\ell\) works for all of them: every
\[
 \mathcal V_a:=(\pi^*s)^\ell\widetilde W_a
\]
extends to a global holomorphic section of
\(T^{1,0}X\otimes\pi^*L^\ell\).

The curvature of the induced metric on \(L\) is a fixed multiple of
\(\eta\) on \(Y_{\rm reg}\).  The twisted-vector-field Bochner formula
along the K\"ahler--Ricci flow, followed by the global Schwarz estimate
\cref{lem:ambient-schwarz}, gives
\[
 (\partial_t-\Delta_{\omega(t)})|\mathcal V_a|^2
 \le C\,\tr_{\omega(t)}\pi^*\eta\,|\mathcal V_a|^2
 \le C'|\mathcal V_a|^2.
\]
The maximum principle on the compact manifold \(X\) therefore bounds
\(|\mathcal V_a|_{g(t)}\) uniformly for \(0\le t<T\).  This is a
twisted-vector-field variant of the weighted horizontal
maximum-principle argument of Fu--Zhang
\cite[Lemma~2.2 and its proof]{FuZhang}.

Now fix \(V\Subset U\).  Since \(|s|\) is bounded below on \(V\), all
\(\widetilde W_a\) are uniformly bounded on \(\pi^{-1}(V)\).  The
surjective bundle map
\[
 \C^{N_1}\times V\longrightarrow T^{1,0}V,\qquad
 (c_a)\longmapsto\sum_a c_aW_a
\]
has a right inverse whose norm is uniformly bounded on \(\overline V\).
Thus every \(\eta\)-unit vector at a point of \(V\) has a
product-horizontal lift of uniformly bounded \(g(t)\)-length.  The
lift lies in every slice \(V\times\{f\}\), uniformly in \(f\), proving
\eqref{eq:FuZhang-local-slice} and \(S_t\le C_V\eta\).  The reverse
bound is \((\mathrm S)_V\), supplied by
\cref{lem:ambient-schwarz}.  A finite refinement of a product cover of
\(K\), followed by taking the minimum and maximum of the resulting
constants, proves the last assertion.
\end{proof}

\begin{remark}[Projective bundles]
\label{rem:projective-from-product}
If \(X=\PP(E)\) with \(E\to Y\) holomorphic and \(Y\) projective, GAGA
makes \(E\) algebraic and hence Zariski locally trivial.  A finite
product cover and \cref{prop:zariski-product-horizontal} therefore give
\[
 c\eta\le S_t\le C\eta\qquad\text{on }X,\qquad 0\le t<T.
\]
This is the quotient estimate used in the projective-bundle theorem.
\end{remark}

\begin{lemma}
\label{lem:general-block-volume}
Put \(r=n-m\).  On every relatively compact Zariski product region on
which the horizontal estimate of
\cref{prop:zariski-product-horizontal} holds,
there are constants \(0<c'\le C'<\infty\) such that
\begin{equation}\label{eq:general-block-volume}
 c'\,\pi^*\eta^m\wedge\omega(t)^r
 \le \omega(t)^n
 \le C'\,\pi^*\eta^m\wedge\omega(t)^r.
\end{equation}
This follows from the block determinant identity and
\eqref{eq:zariski-product-horizontal-SQ}; compare
\cite[Lemma~9.2]{JianSongTian}.
\end{lemma}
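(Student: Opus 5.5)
The plan is to reduce \eqref{eq:general-block-volume} to a pointwise linear-algebra statement at each point \(x\in\pi^{-1}(V)\), where \(V\) is the relatively compact product region, and then feed in the two-sided quotient bound \eqref{eq:zariski-product-horizontal-SQ}. Fix \(x\) and \(t\), and split \(T^{1,0}_xX\) into the vertical subspace \(\mathcal V=\ker d\pi_x\), of dimension \(r\), and its \(\omega(t)\)-orthogonal complement \(\mathcal H\), of dimension \(m\). Choose a basis adapted to this splitting: \(e_1,\dots,e_m\) in \(\mathcal H\) and \(e_{m+1},\dots,e_n\) in \(\mathcal V\), orthonormal for \(\omega(t)\). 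In this basis, \(\omega(t)^n/n!\) is the standard volume form. Since \(\pi^*\eta\) kills every vertical vector, the wedge \(\pi^*\eta^m\wedge\omega(t)^r\) evaluated on the basis reduces, up to the combinatorial constant \(m!\,r!\,\binom{n}{m}\)-type factor, to
\[
\det\bigl(\pi^*\eta(e_\alpha,\bar e_\beta)\bigr)_{\alpha,\beta\le m}\cdot\det\bigl(\omega(t)(e_i,\bar e_j)\bigr)_{i,j>m}.
\]
This is the block determinant identity. It holds because any term of the expansion that feeds a vertical vector into a \(\pi^*\eta\) factor vanishes, so only the block-diagonal pairing survives.

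The second factor equals \(1\) by the choice of basis. The first factor is \(\det\) of \(\eta\) restricted to \(d\pi_x(\mathcal H)=T_{\pi(x)}Y\), computed in the basis \(d\pi_x(e_\alpha)\). As recalled after the definition of \(S_t,Q_t\), \(d\pi_x|_{\mathcal H}\) is an isometry from \((\mathcal H,\omega(t))\) onto \((T_{\pi(x)}Y,S_{t,x})\), up to the fixed factor \(2\) between \(g\) and \(\omega\). Hence the first factor is \(\det_{S_{t,x}}\eta\), the determinant of \(\eta\) relative to \(S_{t,x}\). The bound \(c_V\eta\le S_t\le C_V\eta\) shows that all eigenvalues of \(\eta\) relative to \(S_t\) lie in \([C_V^{-1},c_V^{-1}]\). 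The determinant is therefore pinched between \(C_V^{-m}\) and \(c_V^{-m}\), and this gives the two constants \(c',C'\) with
\[
\pi^*\eta^m\wedge\omega^r=\lambda\,\omega^n,\qquad \lambda\in[c'',C''].
\]
These constants are uniform in \(x\in\pi^{-1}(V)\) and \(t<T\).

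There is no genuine analytic obstacle; all the analysis has already been done in \cref{prop:zariski-product-horizontal}. The only point requiring care is the normalization bookkeeping: the factor \(2\) in \(g=2g_\omega\), the complex-block convention for \(S\), and the multinomial constant coming from \(\omega^n\) versus \(\eta^m\wedge\omega^r\). These only change the constants. I would also note that \(x\) ranges over a full fibre, which is where uniformity in the fibre direction is needed, and this is supplied by \eqref{eq:zariski-product-horizontal-SQ} holding on all of \(\pi^{-1}(V)\). Finally, the lower bound uses \((\mathrm S)\), i.e.\ \cref{lem:ambient-schwarz}, and the upper bound uses \((\mathrm Q)\), the Fu--Zhang horizontal estimate.
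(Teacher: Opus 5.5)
Your proposal is correct and follows the paper's route. The paper's one-line justification is exactly this pointwise block-determinant (Schur-complement) computation in a frame adapted to \(\ker d\pi_x\): it reduces the ratio \(\omega(t)^n/\pi^*\eta^m\wedge\omega(t)^r\) to the determinant of the quotient metric \(S_t\) relative to \(\eta\), which is then pinched by \eqref{eq:zariski-product-horizontal-SQ}. Your choice of an \(\omega(t)\)-orthogonal frame simply makes the Schur complement equal to the horizontal block, and you correctly attribute the left inequality to \((\mathrm S)\) and the right one to \((\mathrm Q)\).
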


\begin{corollary}
\label{cor:type-I-tube-volume}
Fix \(q\in V\), where \(V\Subset Y^{\rm rv}\) lies in a Zariski product
region on whose preimage \cref{lem:general-block-volume} applies.  If
\(t_i\nearrow T\), \(\tau_i=T-t_i\), and
\begin{equation}\label{eq:general-tube-definition}
 \Omega_i(R):=
 \pi^{-1}\!\left(B_\eta(q,R\sqrt{\tau_i})\right)
\end{equation}
then, for every fixed \(R<\infty\),
\begin{equation}\label{eq:general-tube-volume}
 \Vol_{\tau_i^{-1}g(t_i)}\Omega_i(R)\le C_R
\end{equation}
for all sufficiently large \(i\).
\end{corollary}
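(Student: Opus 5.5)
The plan is to integrate the volume form over the tube and use the block-volume comparison of \cref{lem:general-block-volume} to separate base and fibre contributions.

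First, since \(q\in V\) and \(V\Subset Y^{\rm rv}\), for \(i\) large the ball \(B_\eta(q,R\sqrt{\tau_i})\) lies in a fixed compact neighbourhood of \(q\) inside the product region. On \(\pi^{-1}\) of that neighbourhood \cref{lem:general-block-volume} applies. Writing \(r=n-m\) and using \eqref{sp:eq:normalization-conventions}, we have \(dV_{g(t_i)}=2^n\omega(t_i)^n/n!\). Hence
\[
 \Vol_{g(t_i)}\Omega_i(R)\le C\int_{\Omega_i(R)}\pi^*\eta^m\wedge\omega(t_i)^r .
\]

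Second, we apply Fubini along the fibration. The form \(\pi^*\eta^m\wedge\omega(t_i)^r\) involves only the \(\eta^m\) component in the base directions, so the vertical part of \(\omega(t_i)^r\) is what survives. Equivalently, we use the projection formula: the fibre integral \(\pi_*\bigl(\omega(t_i)^r\bigr)\) is a function on \(Y^{\rm rv}\) equal to
\[
 \int_{F_y}\omega(t_i)^r=\bigl(2\pi(T-t_i)\bigr)^r c_1(F_y)^r
\]
by \eqref{ti:eq:fibre-class}. This quantity is cohomological and locally constant in \(y\), since the fibres are smooth and deformation-equivalent over the connected product region. Therefore
\[
 \int_{\Omega_i(R)}\pi^*\eta^m\wedge\omega(t_i)^r
 =C_F\,\tau_i^{\,r}\int_{B_\eta(q,R\sqrt{\tau_i})}\eta^m
 \le C_F\,\tau_i^{\,r}\,C\,(R\sqrt{\tau_i})^{2m}.
\]
The last inequality holds because \(\eta\) is a smooth Kähler metric near the regular point \(q\), and \(d_\eta\) is uniformly equivalent there to the intrinsic distance, so small \(\eta\)-balls have Euclidean volume growth.

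Third, we rescale. The volume of \(\tau_i^{-1}g(t_i)\) in real dimension \(2n\) is \(\tau_i^{-n}\) times the \(g(t_i)\)-volume. This gives
\[
 \Vol_{\tau_i^{-1}g(t_i)}\Omega_i(R)\le C\,\tau_i^{-n}\tau_i^{\,r+m}R^{2m}=CR^{2m},
\]
since \(r+m=n\).

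The main point to check is the Fubini/projection-formula step. We must confirm that integrating the mixed form \(\pi^*\eta^m\wedge\omega^r\) over the preimage really reduces to the fibrewise cohomological integral, even though \(\omega(t_i)\) is not a product metric. It does: wedging with \(\pi^*\eta^m\), which is of top degree on the base, kills all horizontal components of \(\omega^r\). Only the restriction \(\omega(t_i)^r|_{F_y}\) then contributes, and its integral is topological.
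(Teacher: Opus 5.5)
Your proposal is correct and follows the paper's proof essentially verbatim: the upper bound of \cref{lem:general-block-volume}, fibre integration of \(\pi^*\eta^m\wedge\omega(t_i)^r\) via the fibre class \eqref{ti:eq:fibre-class}, the \(O_R(\tau_i^m)\) volume of the shrinking base ball, and the \(\tau_i^{-n}\) rescaling with \(n=m+r\). One small notational slip: the fibre integral should read \((2\pi\tau_i)^r\int_{F_y}c_1(F_y)^r\), not \((2\pi\tau_i)^r c_1(F_y)^r\).
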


\begin{proof}
The upper bound in \eqref{eq:general-block-volume}, fibre integration,
\eqref{ti:eq:fibre-class}, and
\(\Vol_\eta(B_\eta(q,R\sqrt{\tau_i}))=O_R(\tau_i^m)\) give
\(\int_{\Omega_i(R)}\pi^*\eta^m\wedge\omega(t_i)^r
=O_R(\tau_i^{m+r})\).  Multiplication by \(\tau_i^{-n}\), with
\(n=m+r\), yields
\eqref{eq:general-tube-volume}.
\end{proof}

\subsection{Base localization}
\label{sec:base-localization}

This subsection derives from the AMMP Schwarz estimate the quantitative
base localization of point conjugate kernels, limiting points, and
their centers.  In particular, basing is automatic in the present
semiample setting, as is Type-I basing.

\begin{lemma}
\label{surf:lem:base-localization}
\label{lem:automatic-label}\label{ti:lem:label}
There is \(C_0<\infty\), depending only on the fixed geometric data, such
that the following three statements hold.
\begin{enumerate}[label=\textup{(\roman*)},leftmargin=2.2em]
\item For the point kernel based at \((x_0,t_0)\), every
\(H_{2n}\)-center \(z_t\) at time \(t<t_0\) and backward scale
\(t_0-t\) satisfies
\[
 z_t\in\pi^{-1}\!\left(
 B_\eta\bigl(\pi(x_0),C_0\sqrt{t_0-t}\bigr)\right).
\]
\item Every limiting point \(\xi=(\mu_t)_{t<T}\) has a unique point
\(q_\pi(\xi)\in Y\) such that
\[
 \pi_*\mu_t\rightharpoonup\delta_{q_\pi(\xi)},
 \qquad
 \int_Xd_\eta(\pi(x),q_\pi(\xi))^2\,d\mu_t(x)
 \le C_0(T-t).
\]
Every \(H_{2n}\)-center \(z_t\) of \(\mu_t\) at backward scale
\(T-t\) satisfies
\[
 z_t\in\pi^{-1}\!\left(
 B_\eta\bigl(q_\pi(\xi),C_0\sqrt{T-t}\bigr)\right),
 \qquad t<T.
\]
\item In either of the preceding settings, put
\[
 (\lambda_t,t_{\rm pole},q_{\rm pole})
 =\begin{cases}
 (\nu_{x_0,t_0;t},t_0,\pi(x_0)),&\text{in \textup{(i)}},\\
 (\mu_t,T,q_\pi(\xi)),&\text{in \textup{(ii)}}.
 \end{cases}
\]
If, for a fixed finite \(A\ge0\), a point \(z_t\) satisfies
\begin{equation}\label{eq:A-centre-bound}
 \int_Xd_{g(t)}(z_t,x)^2\,d\lambda_t(x)
 \le A(t_{\rm pole}-t),
\end{equation}
then
\[
 d_\eta(\pi(z_t),q_{\rm pole})
 \le C_0(A)\sqrt{t_{\rm pole}-t}.
\]
Thus the center conclusions in \textup{(i)} and \textup{(ii)} are the
special case \(A=H_{2n}\).
\end{enumerate}
\end{lemma}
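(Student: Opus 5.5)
The plan is to turn the ambient Schwarz estimate into a Lipschitz bound for the map \(\pi\) and then transfer the \(g(t)\)-variance bounds on conjugate heat measures to \(\eta\)-moments on the base.

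\textbf{Step 1: Lipschitz bound.} By \cref{lem:ambient-schwarz}, \(\omega(t)\ge C^{-1}F^*\omega_{\rm FS}\) for \(F=\iota\circ\pi\). Since \(g(t)=2g_{\omega(t)}\), every \(g(t)\)-curve in \(X\) maps to a curve in \(\PP^N\) whose Fubini--Study length is at most \(C^{1/2}\) times its \(g(t)\)-length. Hence
\[
 d_\eta(\pi(x),\pi(y))\le C_1\,d_{g(t)}(x,y)
 \qquad (x,y\in X,\ 0\le t<T),
\]
with \(d_\eta\) the restricted ambient distance. No regularity of \(Y\) is needed here.

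\textbf{Step 2: part (iii) for point kernels.} Let \(\lambda_t=\nu_{x_0,t_0;t}\). The key step is a moment bound
\[
 \int_X d_\eta(\pi(x),\pi(x_0))^2\,d\lambda_t(x)\le C(t_0-t).
\]
I would prove it with the barrier \(u=\widehat\rho_{\iota(\pi(x_0))}\circ F\). This function is smooth on \(X\), and by the Schwarz bound \(\tr_{\omega}F^*\omega_{\rm FS}\le C\) together with uniform second-derivative bounds for \(\widehat\rho_p\) on \(\PP^N\), we get \(|\Delta_{g(t)}u|\le C_2\) uniformly in \(t\) and \(p\). The Hessian of \(\widehat\rho_p\) composed with \(F\) is controlled by \(\tr_\omega F^*\omega_{\rm FS}\), since \(\Delta(h\circ F)=\tr_\omega(F^*\nabla^2h)\) in the Kähler sense.

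Then
\[
 \frac{d}{dt}\int u\,d\lambda_t=\int \Delta u\,d\lambda_t\ge -C_2 .
\]
Integrating backward from \(t_0\), where \(\lambda\to\delta_{x_0}\) and \(u(x_0)=0\), gives \(\int u\,d\lambda_t\le C_2(t_0-t)\). The comparison \eqref{eq:ambient-barrier-comparison} then yields the moment bound.

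Now suppose \(z_t\) satisfies \eqref{eq:A-centre-bound}. For every \(x\),
\[
 d_\eta(\pi(z_t),q)^2\le 2C_1^2d_{g(t)}(z_t,x)^2+2d_\eta(\pi(x),q)^2 .
\]
Integrating against \(\lambda_t\) gives \(d_\eta(\pi(z_t),q)^2\le (2C_1^2A+2C)(t_0-t)\).

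\textbf{Step 3: part (i).} This is the special case \(A=H_{2n}\) of Step 2.

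\textbf{Step 4: part (ii).} By (P1), \(\xi\) is a weak limit of point flows \(\nu_{x_k,t_k;\cdot}\) with \(t_k\nearrow T\). Pass to a subsequence with \(\pi(x_k)\to q\in Y\) by compactness of \(Y\). The moment bound of Step 2, applied with base point \(\pi(x_k)\), involves continuous integrands, so it passes to the limit and gives
\[
 \int d_\eta(\pi(x),q)^2\,d\mu_t\le C(T-t).
\]
Consequently \(\pi_*\mu_t\rightharpoonup\delta_q\).

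Uniqueness of \(q\) follows because two such points \(q,q'\) would satisfy
\[
 d_\eta(q,q')^2\le 2\int\bigl(d_\eta(\pi(x),q)^2+d_\eta(\pi(x),q')^2\bigr)\,d\mu_t\to0 .
\]
The center statement for \(\mu_t\) is again Step 2's triangle argument with \(A=H_{2n}\), now using (P2) for the existence of centers.

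\textbf{Main obstacle.} The main obstacle is the backward integration identity \(\frac{d}{dt}\int u\,d\lambda_t=\int\Delta u\,d\lambda_t\) for the conjugate heat flow, including its behaviour at the pole. This is standard: \(\int u\,d\lambda_t\) is the solution of the forward heat equation with initial data \(u\), evaluated at \((x_0,t_0)\), and the maximum principle with \(\Box(u-C_2(t-t_0))\le0\)-type comparison gives the bound directly. Passing to the weak limit in Step 4 is routine.
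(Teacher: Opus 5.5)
Your proposal is correct and follows the paper's proof essentially step by step. The Schwarz estimate gives the uniform Lipschitz bound and the bounded Laplacian of the chord-square barrier $\widehat\rho_{\iota(\pi(x_0))}\circ F$, conjugate-heat duality from the pole gives the base moment bound, the triangle inequality gives (iii), and a subsequence of convergent pole images passes everything to the limiting point in (ii).

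One small slip: the duality identity is $\frac{d}{dt}\int u\,d\lambda_t=-\int\Delta_{g(t)}u\,d\lambda_t$, because $\partial_t(v_t\,dV_{g(t)})=-(\Delta v_t)\,dV_{g(t)}$. Since your Laplacian bound is two-sided, this does not affect the conclusion $\int u\,d\lambda_t\le C_2(t_0-t)$.
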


\begin{proof}
A local point-kernel analogue of \textup{(i)} appears in
\cite[Lemma~6.6]{JianSongTian}; their Lemma~6.7 localizes minimizing
\(\mathcal L\)-geodesics using the \(W_1\)--\(\mathcal L\) estimate of
their Lemma~3.6.  Those results are formulated for local tubular
neighbourhoods.  We give a global ambient-barrier proof because it
yields the quantitative second-moment estimate uniformly in the pole
and therefore also permits passage to limiting points in
\textup{(ii)}.

Let \(F=\iota\circ\pi:X\to\PP^N\), where \(\iota:Y\hookrightarrow\PP^N\)
is the fixed projective embedding, and use the ambient chord-square
barriers \(\widehat\rho_p\) from
\cref{def:squared-distance-barrier}.  In particular, their first two
derivatives and the comparison with squared Fubini--Study distance are
uniform in \(p\).

The global ambient Schwarz estimate is
\begin{equation}\label{eq:base-localization-ambient-Schwarz}
 \operatorname{tr}_{g(t)}F^*g_{\rm FS}\le C.
\end{equation}
Because \(F\) is holomorphic, tracing
\(\partial\bar\partial(\widehat\rho_p\circ F)\) produces no second
derivative of \(F\).  The uniform ambient Hessian bound and
\eqref{eq:base-localization-ambient-Schwarz} therefore give
\begin{equation}\label{eq:base-localization-barrier-bounds}
 |d(\widehat\rho_p\circ F)|_{g(t)}
 +|\Delta_{g(t)}(\widehat\rho_p\circ F)|\le C
\end{equation}
for every \(p\in\PP^N\) and \(0\le t<T\).  This calculation is entirely
on the smooth ambient projective space, so it remains valid when
\(\pi(x)\) is a singular or critical target point.

First consider the point conjugate heat kernel
$\nu_{x_0,t_0;t}$ based at $(x_0,t_0)$, and put
$h=\widehat\rho_{F(x_0)}\circ F$.  Its gradient and Laplacian are bounded
by \eqref{eq:base-localization-barrier-bounds}.  To display the
duality calculation, write
$d\nu_{x_0,t_0;t}=v_t\,dV_{g(t)}$.  Since
\[
 \partial_tv_t=-\Delta_{g(t)}v_t+R_{g(t)}v_t,
 \qquad
 \partial_t dV_{g(t)}=-R_{g(t)}dV_{g(t)},
\]
self-adjointness of $\Delta_{g(t)}$ on the compact manifold gives
\begin{align*}
 \frac{d}{dt}\int_Xh\,d\nu_{x_0,t_0;t}
 &=\int_Xh\bigl(-\Delta v_t+Rv_t\bigr)dV
   -\int_XhRv_t\,dV \\
 &=-\int_X(\Delta h)v_t\,dV
  =-\int_X\Delta_{g(t)}h\,d\nu_{x_0,t_0;t}.
\end{align*}
At the pole time the measure is $\delta_{x_0}$ and $h(x_0)=0$.
Integrating from $t$ to $t_0$ therefore yields
\[
 \int_Xh\,d\nu_{x_0,t_0;t}
 =\int_t^{t_0}\!\int_X\Delta_{g(s)}h\,
       d\nu_{x_0,t_0;s}\,ds
 \le C(t_0-t).
\]
The ambient barrier comparison and the convention that \(d_\eta\) is the
restricted ambient Fubini--Study distance give
$d_\eta(y,\pi(x_0))^2\le C\widehat\rho_{F(x_0)}(\iota(y))$ and hence
\[
 \int_X d_\eta(\pi(x),\pi(x_0))^2\,d\nu_{x_0,t_0;t}(x)
 \le C(t_0-t).
\]
The constant is uniform in the pole by the uniform ambient construction.

Suppose more generally that \(z_t\) satisfies
\eqref{eq:A-centre-bound} for the point kernel, with constant \(A\).
The ambient Schwarz estimate
\eqref{eq:base-localization-ambient-Schwarz} says that \(F\) is uniformly
Lipschitz from \((X,g(t))\) to \((\PP^N,g_{\rm FS})\), so
\[
 \int_Xd_\eta(\pi(z_t),\pi(x))^2\,d\nu_{x_0,t_0;t}(x)
 \le C A(t_0-t).
\]
For every $x$, the triangle inequality gives
\[
 d_\eta(\pi(z_t),\pi(x_0))^2
 \le 2d_\eta(\pi(z_t),\pi(x))^2
     +2d_\eta(\pi(x),\pi(x_0))^2.
\]
Integrating this constant left-hand side proves the point-kernel case of
part~\textup{(iii)}; setting \(A=H_{2n}\) proves the center conclusion
in part~\textup{(i)}.

For a limiting point, choose the point-kernel realization in the
definition of $M_T$, with pole spacetime points $(x_j,t_j)$ and
$t_j\nearrow T$.  Compactness of the projective target permits a
subsequence for which $q_j:=\pi(x_j)\to q'\in Y$, while the kernels
converge to $\mu_t$ at every fixed earlier time.  Applying the already proved
point-kernel estimate gives
\[
 \int_Xd_\eta(\pi(x),q_j)^2\,
 d\nu_{x_j,t_j;t}(x)\le C(t_j-t).
\]
For fixed \(t<T\), put
\[
 \nu_j:=\nu_{x_j,t_j;t},\qquad
 f_j:=d_\eta(\pi(\,\cdot\,),q_j)^2,
 \qquad
 f:=d_\eta(\pi(\,\cdot\,),q')^2.
\]
Since \(Y\subset\PP^N\) is compact,
\(D:=\operatorname{diam}_{d_\eta}Y<\infty\).  The triangle inequality gives
\[
 \|f_j-f\|_{C^0(X)}
 \le 2D\,d_\eta(q_j,q')\longrightarrow0.
\]
Thus \(f_j\to f\) uniformly.  Moreover, \(f\) is bounded and
continuous, \(\nu_j\rightharpoonup\mu_t\), and all these measures have
total mass one.  Therefore
\[
 \left|\int_Xf_j\,d\nu_j-\int_Xf\,d\mu_t\right|
 \le \|f_j-f\|_{C^0(X)}
 +\left|\int_Xf\,d\nu_j-\int_Xf\,d\mu_t\right|
 \longrightarrow0.
\]
Passing to the limit in the preceding moment estimate now yields
\[
 \int_Xd_\eta(\pi(x),q')^2\,d\mu_t(x)\le C(T-t).
\]
Thus $\pi_*\mu_t\rightharpoonup\delta_{q'}$ as $t\nearrow T$.  If a
second subsequence of pole images converged to $q''$, the same argument
would give $\pi_*\mu_t\rightharpoonup\delta_{q''}$ for the very same
one-parameter family of measures.  Weak limits in the compact Hausdorff
target are unique, so $q''=q'$.  Hence every convergent subsequence of
pole images has the same limit and compactness gives convergence of the
whole sequence.  Define this unique point to be $q_\pi(\xi)$.  The last
display is therefore exactly
\[
 \int_Xd_\eta(\pi(x),q_\pi(\xi))^2\,d\mu_t(x)
 \le C(T-t).
\]
If \(z_t\) satisfies \eqref{eq:A-centre-bound} for the limiting point,
the same ambient Schwarz inequality gives
\[
 \int_Xd_\eta(\pi(z_t),\pi(x))^2\,d\mu_t(x)
 \le C A(T-t).
\]
Integrating the same triangle inequality, now with
$q_\pi(\xi)$ in place of $\pi(x_0)$, proves the limiting-point case of
part~\textup{(iii)}.  Taking \(A=H_{2n}\) proves the center conclusion
in part~\textup{(ii)}.  The constants in \textup{(i)}--\textup{(ii)}
depend only on the fixed flow, the target metric, and the dimension; in
part~\textup{(iii)} they also depend on \(A\), but never on the chosen
limiting point.
\end{proof}

\begin{proposition}
\label{prop:automatic-anchored-point}
Fix \(x_0\in X\) and \(T_j\nearrow T\).  A subsequence of the point
conjugate heat kernels based at \((x_0,T_j)\) converges at every time
\(t<T\) to a limiting point based at \(\pi(x_0)\).  This limiting point
satisfies the Type-I target-moment estimate and may be fixed before any
tangent scale is chosen.  No uniqueness is asserted.
\end{proposition}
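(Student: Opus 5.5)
The plan has three steps: extract a limit of point conjugate heat kernels by weak compactness, check that the limit is an element of the limiting slice \(M_T\), and then read off the base point from \cref{lem:automatic-label}.

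\emph{Step 1 (compactness).} For each \(j\) and each \(t<T_j\), the kernels \(\nu_j(t):=\nu_{x_0,T_j;t}\) are probability measures on the compact manifold \(X\). I would take a countable dense set of times \(t_k\nearrow T\) and use weak-\(*\) compactness together with a diagonal argument to pass to a subsequence along which \(\nu_j(t_k)\) converges for every \(k\).

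To extend convergence to every \(t<T\), I would exploit the conjugate heat equation on \([0,t_0]\) with \(t_0<T\) fixed. For a smooth test function \(\phi\), let \(u\) solve the forward heat equation \(\Box u=0\) starting from \(\phi\). Then \(t\mapsto\int u\,d\nu_j(t)\) is constant, and the smooth flow on \([0,t_0]\) gives equicontinuity in \(t\) of \(\int\phi\,d\nu_j(t)\), uniformly in \(j\) once \(T_j>t_0\). The limit \(\mu_t\) is then a conjugate heat flow. It is smooth and positive, because for \(t<t'<t_0\) we have \(\mu_t=\int\nu_{y,t';t}\,d\mu_{t'}(y)\), and the kernels \(\nu_{y,t';t}\) are uniformly smooth in \(y\) on the compact region.

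\emph{Step 2 (membership in \(M_T\)).} I would argue that the resulting family is exactly an element of Bamler's limiting slice, i.e.\ property (P1), since \(M_T\) is defined as the set of such limits of point conjugate heat flows with pole times \(T_j\nearrow T\). Property (P2) follows from Bamler's variance bound \(\Var(\nu_{x_0,T_j;t})\le H_{2n}(T_j-t)\). Passing to the limit works because \(d_{g(t)}^2\) is continuous on \(X\times X\) and \(\nu_j(t)\otimes\nu_j(t)\rightharpoonup\mu_t\otimes\mu_t\).

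The step I expect to need the most care is matching Bamler's precise definition of \(M_T\), which may involve equivalence classes or a completion. Here I would simply cite the paragraph following his equation (2.34).

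\emph{Step 3 (base point and Type-I moment).} \Cref{lem:automatic-label}(ii) applies to the limit. In its proof the base point is the limit of the pole images \(\pi(x_j)\), and here the poles are all \(x_0\), so \(q_\pi(\xi)=\pi(x_0)\). Alternatively, one can pass directly to the limit, exactly as in that proof, in the point-kernel moment bound
\[
 \int_X d_\eta(\pi(x),\pi(x_0))^2\,d\nu_{x_0,T_j;t}(x)\le C_0(T_j-t).
\]
This gives the Type-I target-moment estimate with \(C_0\) independent of the subsequence.

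Finally, the construction uses only \(x_0\) and \(T_j\), with no reference to any blow-up scale, so the limiting point may be fixed before any tangent scales are chosen. Different subsequences may produce different limits, which is why no uniqueness is asserted.
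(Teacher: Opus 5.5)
Your proposal is correct and follows essentially the same route as the paper. The paper also makes a diagonal extraction at a countable set of earlier times, extends it to a conjugate heat flow by the reproduction formula, and passes the $H_{2n}$-variance bound to the limit. It identifies the base point by passing the uniform point-kernel moment estimate of \cref{surf:lem:base-localization}\textup{(i)} to the limit and then invoking part \textup{(ii)}. Your equicontinuity argument only fills in detail that the paper delegates to Bamler's construction of the limiting slice.
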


\begin{proof}
Bamler's construction of the limiting slice gives, after a diagonal
subsequence, a compatible family of positive probability conjugate
heat measures at all earlier rational times; the reproduction formula
extends this family to a conjugate heat flow.  It is a limiting point,
and the concentration estimate
passes to the limit
\cite[Definition~2.32 and the paragraph following
equation~(2.34)]{BamlerStructure}
\cite[Corollary~3.8 and equation~(3.9)]{BamlerHeat}.  The resulting
limiting point is chosen before any tangent scale.

It remains only to identify its base.  The point-kernel estimate in
\cref{surf:lem:base-localization}\textup{(i)} is uniform in \(j\) and
gives
\[
 \int_Xd_\eta(\pi(x),\pi(x_0))^2\,
 d\nu_{x_0,T_j;t}(x)\le C(T_j-t).
\]
Passing to the diagonal limit gives the same estimate with \(T-t\);
\cref{surf:lem:base-localization}\textup{(ii)} then identifies the
automatic base point with \(\pi(x_0)\).
\end{proof}

\subsection{Spacetime and convergence conventions; the sequence-level
K\"ahler shrinker theorem}
Following Kleiner--Lott \cite[Definition~1.2]{KleinerLott}, a
K\"ahler--Ricci flow spacetime is a tuple
\((\mathcal M,\mathfrak t,\partial_{\mathfrak t},g,\mathcal J)\) in
which \(\mathfrak t\) is a submersion,
\(\partial_{\mathfrak t}\mathfrak t=1\), \(g\) is a metric on
\(\ker d\mathfrak t\) satisfying
\[
 \mathcal L_{\partial_{\mathfrak t}}g=-2\Rc(g),
\]
and \(\mathcal J\) is time invariant and restricts to a K\"ahler
complex structure on each time slice.

An orbifold shrinking gradient K\"ahler--Ricci soliton of complex
dimension \(n\) is a K\"ahler orbifold \((\widehat X,\widehat g,J)\),
together with a smooth function \(f\) on its regular locus, such that
\begin{equation}\label{surf:eq:shrinker-equation}
 \Rc(\widehat g)+\nabla^2f=\tfrac12\widehat g,
 \qquad \mathcal L_{\nabla f}J=0.
\end{equation}
At an isolated singularity, its local group is a nontrivial finite
subgroup of \(U(n)\) acting freely on
\(\C^n\setminus\{0\}\).  When \(\nabla f\) is complete, let \(\Theta_s\)
solve
\[
 \partial_s\Theta_s=(-s)^{-1}\nabla f\circ\Theta_s,
 \qquad \Theta_{-1}=\mathrm{id}.
\]
Then
\begin{equation}\label{surf:eq:induced-shrinker-spacetime}
 \widehat g(s)=(-s)\Theta_s^*\widehat g,
 \qquad s<0,
\end{equation}
is its standard ancient K\"ahler--Ricci flow spacetime on
\(\Reg(\widehat X)\times(-\infty,0)\).

A correspondence for metric flows embeds their time slices
isometrically into common metric spaces.  Bamler's \(\mathbb F\)-distance
uses couplings of the distinguished measures whose backward heat-kernel
measures are close in \(W_1\), outside a set of times of arbitrarily
small measure; see
\cite[Definitions~5.4 and~5.6]{BamlerCompactness}.  For a metric-flow
slice \(\mathcal X_s\), write \(\mathcal R_s\) for its metric regular
locus.  Once a normal complex structure is present, \(\Reg Z\) and
\(\Sing Z\) denote the analytic regular and singular loci; the results
below identify these with the metric loci.

\begin{theorem}
\label{thm:general-kahler-shrinker-package}
Let \(n\ge2\).  For each \(j\), let
\[
 (M_j^{2n},g_j(s),J_j,\nu^j_s,x_j),\qquad -A_j\le s\le0,
\]
be a pointed compact K\"ahler--Ricci flow with its point conjugate heat
kernel based at \((x_j,0)\), in the standard real convention
  \(\partial_sg_j=-2\Rc(g_j)\), where $A_j\ge1$.  Assume:
\begin{enumerate}[label=\textup{(\roman*)},leftmargin=2.4em]
\item \(A_j\to\infty\), the pairs are uniformly \(H_{2n}\)-concentrated,
  and
  \[
            \inf_j\mathcal N^{g_j}_{x_j,0}(1)>-\infty;
  \]
\item the pointed metric-flow pairs converge in \(\mathbb F\) on every
  compact negative time interval to
  \((\mathcal Z,g_{\mathcal Z}(s),\nu^Z_s,z)\);
\item the limit is a connected noncollapsed metric soliton and convergence is smooth
  on its full regular spacetime;
\item on the regular convergence charts, \(J_j\) converges smoothly to a
  parallel K\"ahler complex structure \(J_Z\), while the distinguished
  point-kernel densities converge smoothly to the distinguished soliton
  density.
\end{enumerate}
Put \(Z:=(\mathcal Z_{-1},d_Z)\), let
\(\mathcal R:=\mathcal R_{-1}\), and denote the limiting regular
K\"ahler structure by \((g_Z,J_Z,\omega_Z)\).  Then the following two
conclusions hold.
\begin{enumerate}[label=\textup{(\alph*)},leftmargin=2.4em]
\item The space \(Z\) carries a compatible normal complex-analytic
structure with klt singularities.  Its analytic and metric topologies
agree, and its analytic regular locus is precisely \(\mathcal R\).
\item The form \(\omega_Z\) extends locally across \(\Sing Z\) as a
positive current with bounded potential.  Every \(p\in Z\) has a
neighbourhood \(U\), a closed holomorphic embedding
\(\iota_U:U\hookrightarrow\Omega_U\subset\C^L\), and \(c_U>0\) such that
\begin{equation}\label{eq:general-shrinker-ambient-lower}
 \omega_Z\ge c_U\,\iota_U^*\omega_{\C^L}
 \qquad\text{on }U\cap\mathcal R.
\end{equation}
\end{enumerate}
\end{theorem}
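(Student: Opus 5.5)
The plan is to follow the Donaldson--Sun / Liu--Sz\'ekelyhidi strategy, transplanted to the soliton setting as in Hallgren--Zhang \cite{HallgrenZhang}. I would first build holomorphic functions on $\mathcal R$ by weighted $L^2$ methods. Then I would use them to embed neighbourhoods of singular points, and finally read off normality, klt, and the ambient lower bound \eqref{eq:general-shrinker-ambient-lower} from uniform gradient control of the embedding functions.

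\textbf{Step 1: structure and tangent cones.} By Bamler's structure theory \cite{BamlerStructure}, hypothesis (i) (uniform concentration and the entropy lower bound) gives the standard regularity of the $\mathbb F$-limit $\mathcal Z$. It implies that $\Sing Z=Z\setminus\mathcal R$ has Minkowski codimension at least four. It also implies that every iterated metric tangent cone at $p\in Z$ is a metric cone $C(Y)$ whose regular part inherits, by (iv), a parallel complex structure. Such a cone is therefore a Ricci-flat K\"ahler cone off a codimension-four set. Noncollapsing, from (i) and (iii), gives a volume ratio bounded below at every point.

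\textbf{Step 2: holomorphic charts.} At a singular $p$, fix a tangent cone $C(Y)$, which is K\"ahler off a codimension-four set. Take its homogeneous holomorphic functions, which have polynomial growth and are obtained by the Donaldson--Sun argument on the cone. Transplant them to a small ball $B(p,r)\cap\mathcal R$ via the Gromov--Hausdorff approximation and smooth cut-offs. Correct the $\bar\partial$-error by H\"ormander's $L^2$ estimate with weight $\phi=f+\text{(cone distance weight)}$. The shrinker identity $\Rc+\nabla^2f=\tfrac12 g$ makes $\mathrm{Ric}+\sqrt{-1}\partial\bar\partial f=\tfrac12\omega_Z>0$, so the Bochner--Kodaira--Nakano positivity holds. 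The cut-off errors are supported near the codimension-four singular set and have small $L^2$ norm. This yields finitely many holomorphic functions $z_1,\dots,z_L$ on $U\cap\mathcal R$ that separate points and tangents. Bounded holomorphic functions extend continuously across $\Sing Z$, since codimension four is removable for them, and the three-circle (frequency) argument gives injectivity of $\iota_U=(z_1,\dots,z_L)$. This identifies $U$ with a closed analytic subset of $\Omega_U\subset\C^L$, with agreement of the analytic and metric topologies.

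\textbf{Step 3: normality, regular locus, and klt.} Normality follows from Riemann/Hartogs extension: every weakly holomorphic function is bounded and holomorphic on $\mathcal R$, hence extends because $\Sing Z$ has codimension at least $4$. That the analytic regular locus equals $\mathcal R$ follows from two facts. Metric-singular points have tangent cones that are not $\C^n$, so they are analytically singular by the $\varepsilon$-regularity of (iii). Conversely, $\mathcal R$ is smooth by (iv). For klt, take a local generator $\Omega$ of $mK_Z$ and compare $|\Omega|^{2/m}_{\omega_Z}$ with the volume form. Write the Ricci potential locally, using $\Rc=\tfrac12 g-\nabla^2 f$, with $f$ locally bounded by the soliton normalization. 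This bounds $|\Omega|^{2/m}\le C\,\omega_Z^n$, so $\int_U|\Omega|^{2/m}\le C\Vol(U)<\infty$, which is the klt condition.

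\textbf{Step 4: the ambient bound and bounded potential.} The embedding functions $z_a$ are holomorphic with $L^2$ control on a slightly larger ball. Since $\Delta_f z_a$ is controlled, a Cheng--Yau or heat-kernel mean-value inequality on the shrinker gives $|\nabla z_a|\le C_U$ on $U\cap\mathcal R$. This holds because the weighted Sobolev and mean-value inequalities pass to the noncollapsed limit. It gives $\tr_{\omega_Z}\iota_U^*\omega_{\C^L}=\sum_a|\partial z_a|^2\le C$, which is \eqref{eq:general-shrinker-ambient-lower}. For the bounded potential, I would solve $\omega_Z=\sqrt{-1}\partial\bar\partial u$ on a Stein neighbourhood. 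One route is to take $u=2f+\log$ of the volume ratio against $|\Omega|^{2/m}$, suitably mixed with $|z|^2$; bounding $u$ then requires two-sided bounds on $f$ and on that volume ratio. I expect the main obstacle to be the H\"ormander step at singular points, specifically controlling the weight near $\Sing Z$ uniformly enough that the transplanted functions separate points. If the direct weighted estimate fails, I would first try approximating by the smooth flows $g_j$ themselves, using (ii) and (iv), and apply H\"ormander on the compact K\"ahler manifolds $M_j$ before passing to the limit.
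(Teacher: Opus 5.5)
The paper does not reprove this statement. Its proof checks that (i)--(iv) place the time-\((-1)\) slice in the class of noncollapsed K\"ahler shrinker limits treated by Hallgren--Zhang, quotes their Theorem~B(i)--(iii) for (a)--(b), and notes that the factor \(2\) between the real metric and their K\"ahler normalization only changes \(c_U\). You instead try to rebuild that theorem along Donaldson--Sun/Liu--Sz\'ekelyhidi lines, and the outline has genuine gaps exactly where the content of (a)--(b) lies.

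\textbf{Steps 3--4 (klt and bounded potential).} You ``take a local generator \(\Omega\) of \(mK_Z\)''. But \(K_Z\) being \(\mathbb Q\)-Cartier is itself part of the klt assertion, and it is not automatic on a normal space. Even granting \(\Omega\), local boundedness of \(f\) does not bound \(\psi:=\log|\Omega|^{2/m}_{\omega_Z}\). On \(U\cap\mathcal R\) one only has \(\sqrt{-1}\partial\bar\partial(\psi+f)=\tfrac12\omega_Z\). This determines \(\psi\) only modulo a local potential of \(\omega_Z\) and a pluriharmonic function on the incomplete set \(U\cap\mathcal R\), and you control neither. Step~4 then derives the bounded potential from two-sided bounds on this same ratio, so Steps~3 and~4 are circular. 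The missing ingredient is a partial-\(C^0\)-type estimate. You need local holomorphic sections \(s\) of \(K_{\mathcal R}^{-m}\) with the following properties:
\begin{itemize}
\item they are produced by weighted \(L^2\) methods for the Hermitian metric built from \(\omega_Z^n\) and \(e^{-mf}\), whose curvature is \(\tfrac m2\omega_Z\);
\item they extend across \(\Sing Z\);
\item \(|s|\) is bounded above and below near each point.
\end{itemize}
That single estimate gives the \(\mathbb Q\)-Cartier property, the klt integrability of \(|\Omega|^{2/m}\) for \(\Omega=s^{-1}\), and the bounded local potential \(2(\psi+f)\) required in (b).

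\textbf{Regular locus.} Your identification of the analytic regular locus with \(\mathcal R\) is incomplete. \(\varepsilon\)-regularity says that a point whose tangent cone is \(\C^n\) is metrically regular. Its contrapositive only says that metrically singular points have non-Euclidean tangent cones. To conclude that these points are analytically singular, you need the converse: every tangent cone at an analytically smooth point is \(\C^n\). That requires a separate input, and your outline supplies none. Possible inputs are regularity for the shrinker equation at analytically smooth points once the potential and Ricci potential are bounded, or an identification of the tangent-cone volume density with an invariant of the germ.

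\textbf{Step 2 fallback.} Running H\"ormander on the \(M_j\) does not work as stated. The time slices of \(g_j\) are not solitons, and Perelman's monotonicity controls \(\Rc+\nabla^2 f_j-\tfrac{1}{2\tau}g_j\) (with \(\tau=-s\)) only in an integrated sense. So there is no pointwise positivity on \(M_j\) to exploit. Weighted \(L^2\) theory directly on the incomplete \(\mathcal R\) should instead be justified by zero-capacity cutoffs of the kind constructed in \cref{sp:lem:capacity}.

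Since you already defer to Hallgren--Zhang, the economical repair is the paper's: verify that (i)--(iv) are their hypotheses, cite their Theorem~B, and track the factor \(2\) in the normalization.
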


\begin{proof}
Hypotheses \textup{(i)--(iv)} put the time-$-1$ slice in the
noncollapsed K\"ahler-shrinker class of Hallgren--Zhang.  Their analytic
structure theorem gives exactly conclusions \textup{(a)--(b)}:
\cite[Theorem~B(i)--(iii)]{HallgrenZhang}.
Their K\"ahler metric convention differs from the real Ricci-flow
convention here by the fixed factor \(2\).  This changes only the
constant \(c_U\) in \eqref{eq:general-shrinker-ambient-lower}, and does
not change normality, the klt property, regular loci, topologies, or
local boundedness of a potential.
\end{proof}

\subsection{Tangent spaces at a fixed limiting point}
\label{sec:fixed-limiting-package}

Fix a limiting point \(\xi=(\mu_t)_{t<T}\) before selecting any
scales, and let \(\tau_i\searrow0\).  If
\(d\mu_{i,-1}=(4\pi)^{-n}e^{-f_{i,-1}}dV_{g_i(-1)}\), set
\(
 \mathcal N_i(1):=\int_Xf_{i,-1}\,d\mu_{i,-1}-n
\).
Parabolic scaling gives the first equality below, while Bamler's
tangent-entropy theorem gives the convergence:
\begin{equation}\label{sp:eq:entropycheck}
 \mathcal N_i(1)=\mathcal N_{\xi,T}(\tau_i)
 \longrightarrow\mathcal N_{\xi,T}(0)>-\infty.
\end{equation}
This is
\cite[the paragraph preceding Theorem~2.37 and Theorem~2.37]{BamlerStructure}.
Thus every limiting-point rescaling
has the entropy lower bound required by the noncollapsed regularity
theory.

For a tangent space at \(\xi\), write its time-\((-1)\) model as
\((Z,d,\mathcal R,g,J,f,\mathfrak m)\).  On its regular locus define
\begin{equation}\label{sp:eq:intro-rRm}
 r_{\rm Rm}(x)=\sup\bigl\{0<r\le1:
 B_g(x,r)\Subset\mathcal R\ \text{and}\
 \sup_{B_g(x,r)}|\Rm_g|\le r^{-2}\bigr\},
\end{equation}
and extend \(r_{\rm Rm}\) by zero on the singular set.  No curvature
derivative bound is part of this definition.

\begin{proposition}
\label{sp:prop:limiting-package}\label{prop:limiting-package}
Let \(n\ge2\).  Every tangent space at \(\xi\) is a metric soliton
flow with a distinguished probability measure.  Its time-\((-1)\) model
has the following properties.
\smallskip
\noindent\emph{Metric-flow conclusions.}
\begin{enumerate}[label=\textup{(H\arabic*)},leftmargin=3em,
series=limitingpackage]
\item The regular locus \(\mathcal R\) is connected and dense,
\(d|_{\mathcal R}\) is its intrinsic length distance, and \(Z\) is a
complete locally compact length space equal to the metric completion of
\(\mathcal R\).
\item There is a smooth function \(f\) on \(\mathcal R\) for which
\begin{equation}\label{sp:eq:shrinker}
 \Rc_g+\nabla^2f=\frac12g\qquad\text{on }\mathcal R;
\end{equation}
the distinguished probability measure satisfies
\begin{equation}\label{sp:eq:measure}
 d\mathfrak m=(4\pi)^{-n}e^{-f}\,dV_g\quad\text{on }\mathcal R,
 \qquad \mathfrak m(Z\setminus\mathcal R)=0,
 \qquad \mathfrak m(Z)=1,
\end{equation}
and \(\mathfrak m\) has finite second moment.
\item The tangent space is an exact self-similar metric soliton flow.
On regular convergence charts, in time-compatible gauges, the rescaled
metrics and distinguished densities converge smoothly to \(g\) and the
positive density in \eqref{sp:eq:measure}.
\item For every compact \(K\subset Z\) and every \(p\in(2,4)\), the
zero extension of \(r_{\rm Rm}\) is one-Lipschitz and
\begin{equation}\label{sp:eq:Minkowski}
 \Vol_g\bigl(\{r_{\rm Rm}<\varepsilon\}\cap K\cap\mathcal R\bigr)
 \le C_{K,p}\varepsilon^p,
 \qquad 0<\varepsilon<1;
\end{equation}
\end{enumerate}

\smallskip
\noindent\emph{K\"ahler conclusions.}
\begin{enumerate}[label=\textup{(H\arabic*)},leftmargin=3em,
resume=limitingpackage]
\item The regular soliton \((\mathcal R,g,J)\) is K\"ahler, \(J\) is
parallel and time invariant, and the rescaled complex structures converge
smoothly to \(J\) on regular convergence charts.
\item If \(n=2\), then \(Z\) is a K\"ahler orbifold and
\(\Sing Z\) consists of isolated orbifold points.
\end{enumerate}

\smallskip
\noindent\emph{Complex-analytic conclusions.}
\begin{enumerate}[label=\textup{(H\arabic*)},leftmargin=3em,
resume=limitingpackage]
\item The space \(Z\) is a normal klt complex space.  Its analytic and
metric topologies agree, and its analytic and metric regular loci
coincide.  The regular K\"ahler form extends locally across \(\Sing Z\)
as a positive current with bounded potential.  Every point has a local
closed analytic embedding \(V\hookrightarrow\Omega_V\subset\C^L\) such
that, on \(V\cap\mathcal R\),
\begin{equation}\label{sp:eq:ambientlower}
 \omega_g\ge c_V\omega_{\C^L}|_V.
\end{equation}
\item The function \(f\) extends uniquely to a locally Lipschitz function
on \(Z\).  The flow of \(\nabla f\) is complete, preserves
\(\mathcal R\), and extends to a one-parameter group of biholomorphisms
of \(Z\).
\end{enumerate}
\end{proposition}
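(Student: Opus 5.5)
The plan is to assemble the package from Bamler's compactness and structure theory for the rescaled pairs, the entropy bound \eqref{sp:eq:entropycheck}, and the K\"ahler analytic theorem \HZanalyticref. No new estimate is needed; the work is in checking hypotheses.

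\emph{Step 1: compactness and soliton structure.} Since \(\xi\) is fixed, the rescaled pairs \((X,g_i(s),\mu_{i,s})\) are uniformly \(H_{2n}\)-concentrated. This is \textup{(P2)} of \cref{def:limiting-point}, which is scale invariant. By \eqref{sp:eq:entropycheck} the Nash entropies \(\mathcal N_{\xi,T}(\tau_i\tau)\) at every fixed \(\tau>0\) converge to the same constant \(\mathcal N_{\xi,T}(0)>-\infty\). Bamler's compactness theorem then gives subsequential \(\mathbb F\)-convergence on compact negative time intervals. Constancy of the pointed entropy in \(\tau\) puts us in the equality case of Bamler's tangent-flow theorem \cite{BamlerStructure}, so the limit is a metric soliton. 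The distinguished measure is the conjugate heat measure of a self-similar limit, and it has the Gaussian form \eqref{sp:eq:measure} with \(f\) solving \eqref{sp:eq:shrinker}.

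Bamler's partial-regularity theory supplies the remaining metric conclusions. It gives \textup{(H1)}, namely connectedness and density of \(\mathcal R\), the length structure, and that \(Z\) is the completion of \(\mathcal R\). It gives \textup{(H3)}, smooth convergence on regular charts including the densities, and \textup{(H4)}, the Minkowski bound with exponent \(p<4\), from the codimension-four singular set estimate. The finite second moment of \(\mathfrak m\) in \textup{(H2)} follows from passing \textup{(P2)} to the limit.

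\emph{Step 2: K\"ahler structure.} The complex structure \(J_X\) is parallel for each \(g_i(s)\). On regular charts the smooth convergence lets \(J_X\) converge, after passing to subsequences via Arzel\`a--Ascoli, to a parallel time-invariant \(J\). This gives \textup{(H5)}.

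For \(n=2\) the singular set has codimension four, so it is isolated. Tangent cones at singular points are flat cones \(\C^2/\Gamma\), and Bamler's classification gives orbifold points, which is \textup{(H6)}. Compatibility of \(J\) with the finite group action comes from the K\"ahler property of the regular cone.

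\emph{Step 3: analytic structure.} We apply \HZanalyticref. To put ourselves in its point-kernel setting, we approximate \(\xi\) by point kernels via \textup{(P1)} with a diagonal argument. This uses poles at times \(t_j\nearrow T\) with \((T-t_j)/\tau_i\to0\), so \(A_j\to\infty\) and the entropy bound persists. Its hypotheses \textup{(i)--(iv)} are then exactly Steps 1--2, and its conclusions give \textup{(H7)}.

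For \textup{(H8)}, \(f\) is locally Lipschitz because \(|\nabla f|^2\le f\) on \(\mathcal R\), from the standard soliton identity \(R+|\nabla f|^2=f\) and \(R\ge0\). Since \(\mathcal R\) is dense with length metric, \(f\) extends uniquely. The vector field \(\nabla f\) is real holomorphic because \(\mathcal L_{\nabla f}J=0\) follows from \(J\)-invariance of the Hessian.

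Completeness of its flow and preservation of \(\mathcal R\) follow from the self-similarity in \textup{(H3)}: the flow is realized by the scaling isometries between time slices, \(\Theta_s\) of \eqref{surf:eq:induced-shrinker-spacetime}, which are isometries of metric slices and hence preserve regular loci. These maps are holomorphic on \(\mathcal R\) and locally bounded in the local embeddings. By normality, Riemann extension then extends them to biholomorphisms of \(Z\).

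The main obstacle is Step 3: identifying the limiting-point tangent flow with a point-kernel sequence so that \HZanalyticref\ applies. I would handle this with the diagonal selection just described, using the monotonicity of Nash entropy to keep the entropy bound uniform.
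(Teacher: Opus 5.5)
Your assembly of \textup{(H1)}--\textup{(H3)} and \textup{(H5)}--\textup{(H7)} follows the paper's route: Bamler's structure theory, the parallel-tensor argument, Bamler's four-dimensional orbifold theorem, and a point-kernel diagonal feeding \HZanalyticref. Two items, however, are not actually proved.

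First, \textup{(H4)} does not follow ``from the codimension-four singular set estimate''; the remark after \cref{sp:prop:automatic-scale} explains why. A dimension bound for $\Sing Z$ controls tubes about $\Sing Z$, whereas \eqref{sp:eq:Minkowski} concerns $\{r_{\rm Rm}<\varepsilon\}$, which may contain regular high-curvature regions away from $\Sing Z$. Moreover, Bamler's quantitative bounds are spacetime bounds, which on a general limit do not control one prescribed slice. What is needed is Bamler's inverse generalized-radius estimate \cite[Theorem~2.31]{BamlerStructure}, applied on a finite $P^*$-cover of the compact self-similar track of $K$, with the entropy lower bound at every centre transferred from the original compact flow. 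Then use \emph{exact} self-similarity, under which $\rho_s$ and $dV_{g(s)}$ scale explicitly. This turns the spacetime $L^p$ bound into $\int_{K\cap\cR}r_{\rm Rm}^{-p}\,dV_g<\infty$ on the time-$(-1)$ slice, and Chebyshev gives \eqref{sp:eq:Minkowski} for $p\in(2,4)$. The one-Lipschitz property of the zero extension also needs its own elementary argument, using $r_{\rm Rm}\le d(\cdot,\Sing Z)$.

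Second, your completeness argument in \textup{(H8)} is circular. The maps $\Theta_s$ of \eqref{surf:eq:induced-shrinker-spacetime} are defined only once $\nabla f$ is known to be complete. Bamler's self-similarity isometries map $\mathcal X_{-1}$ to the different slice $\mathcal X_s$, so to get self-maps of $Z$ you must compose them with worldlines of $\partial_{\mathfrak t}$ in the regular spacetime. Self-similarity does not prevent a regular worldline, equivalently a $\nabla f$-integral curve, from reaching $\Sing Z$ in finite time. The paper takes completeness, preservation of $\cR$, and the biholomorphic extension from \cite[Proposition~5.3(i)]{HallgrenZhang}.

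A self-contained repair uses \textup{(H7)}. The field $\nabla^{1,0}f$ is holomorphic with locally bounded length, so by \eqref{sp:eq:ambientlower} its ambient coefficients are bounded. It therefore extends across $\Sing Z$ as an ideal-preserving derivation, exactly as in Step~2 of the proof of \cref{thm:singular-product}. Its local flows are biholomorphisms, hence preserve $\Sing Z$ and $\cR$. The linear growth \eqref{sp:eq:preliminary-flow-growth}, Gronwall, and metric completeness of $Z$ then exclude finite-time escape.

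A smaller correction: with the probability normalization \eqref{sp:eq:measure} you only get $R+|\nabla f|^2-f=C_0$ for some constant $C_0$, not $R+|\nabla f|^2=f$. Also, $R\ge0$ has to come from the lower scalar-curvature bound of the original flow after rescaling. Your Lipschitz extension of $f$ still works with $f+C_0$ in place of $f$.
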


\begin{proof}
Items \textup{(H1)--(H3)} are consequences of Bamler's metric-flow and
tangent-soliton structure results
\cite[Theorems~2.4(a)--(c), 2.5, 2.14, 2.18, and~2.37]{BamlerStructure}.
Smooth density convergence and compatible time gauges come separately
from \cite[Theorem~9.31(a) and Lemma~9.33]{BamlerCompactness}.
Item \textup{(H4)} is \cref{sp:prop:automatic-scale}; it is derived from
Bamler's spacetime curvature-radius estimate and exact self-similarity.
Item \textup{(H5)} follows from Bamler's regular gauges and the
parallel-complex-structure argument of
\cite[Theorem~2.5 and its proof]{HallgrenJian}.  Item \textup{(H6)}
follows from Bamler's real four-dimensional orbifold theorem
\cite[Theorem~2.46]{BamlerStructure} and the local complex-structure
extension argument in the final paragraph of the proof of
\cite[Theorem~2.8]{ConlonHallgrenMa}, applied to \textup{(H5)}.
Item \textup{(H7)} is
\HZanalyticref[\textup{(a)--(b)}].  Item \textup{(H8)} follows from
Hallgren--Zhang as follows.

Suppose first that \(f\) is nonconstant.  The paragraph preceding
equation~(2.4) in Hallgren--Zhang gives the locally Lipschitz extension
of \(f\) to \(Z\), and the extension is unique because \(\mathcal R\)
is dense.  Their Theorem~B(iii) identifies the analytic regular locus
with \(\mathcal R\), while their Proposition~5.3(i) gives completeness
of \(\nabla f\) on \(\mathcal R\) and extension of its flow maps to
biholomorphisms of \(Z\)
\cite[Theorem~B(iii), the paragraph preceding equation~(2.4), and
Proposition~5.3(i)]{HallgrenZhang}.
The factor-two difference between the K\"ahler and real Ricci-flow
conventions only reparametrizes this flow.
Each extended flow map preserves the analytic regular locus and hence
preserves \(\mathcal R\).  The group identities hold on \(\mathcal R\)
and therefore on \(Z\) by continuity and density.  If \(f\) is constant,
then \(\nabla f=0\), so all the assertions in \textup{(H8)} are immediate.

For later use, we record the quantitative growth estimate.  On the
connected regular locus, the contracted Bianchi identity and the shrinker
equation give
\[
                  R+|\nabla f|^2-f=C_0.
\]
The scalar curvature of the original compact flow has a uniform lower
bound.  Rescaling at the limiting time and smooth regular-chart
convergence therefore give \(R\ge0\) on \(\mathcal R\).  With
\(\widehat f=f+C_0\),
\[
              \widehat f=R+|\nabla f|^2\ge|\nabla f|^2.
\]
For every \(\varepsilon>0\),
\[
 \left|\nabla\sqrt{\widehat f+\varepsilon}\right|
 =\frac{|\nabla f|}{2\sqrt{\widehat f+\varepsilon}}
 \le\frac12.
\]
Fix \(o,x\in\mathcal R\).  For every \(\delta>0\), the intrinsic-distance
assertion in \textup{(H1)} allows us to choose a piecewise smooth curve
\(\gamma_\delta\subset\mathcal R\) from \(o\) to \(x\) such that
\[
                  L_g(\gamma_\delta)\le d(o,x)+\delta.
\]
Then
\[
\begin{aligned}
 \sqrt{\widehat f(x)+\varepsilon}
 &\le \sqrt{\widehat f(o)+\varepsilon}
   +\int_{\gamma_\delta}
     \left|\nabla\sqrt{\widehat f+\varepsilon}\right|\,ds \\
 &\le \sqrt{\widehat f(o)+\varepsilon}
   +\frac12\bigl(d(o,x)+\delta\bigr).
\end{aligned}
\]
Letting first \(\delta\searrow0\) and then
\(\varepsilon\searrow0\), and using
\(|\nabla f|\le\sqrt{\widehat f}\), yields
\begin{equation}\label{sp:eq:preliminary-flow-growth}
             |\nabla f|(x)\le\frac12d(o,x)+B,
             \qquad B:=\sqrt{\widehat f(o)}.
\end{equation}

\end{proof}

\begingroup
\small
\medskip
\noindent\textbf{Sources and verification of the finite-time
tangent-space package.}\par\smallskip

\medskip\noindent\emph{Metric soliton, measure, and regular convergence.}

Bamler's Theorem~2.37 applies to rescalings of the same fixed
\(\xi\in M_T\) and, together with \eqref{sp:eq:entropycheck}, supplies the
metric-soliton model and noncollapse.  Theorems~2.5 and~2.14 give smooth
regular-spacetime convergence for a general limiting-point conjugate
heat flow;
Theorems~2.4(a)--(c) and~2.18 give density of the regular locus, zero
singular mass, the intrinsic completion, the normalized
shrinker equation, and self-similar representation
\cite[Theorems~2.4(a)--(c), 2.5, 2.14, 2.18, and 2.37]{BamlerStructure}.
Smooth convergence of the distinguished density is the conclusion of
\cite[Theorem~9.31(a)]{BamlerCompactness}; part~(f) of that theorem
concerns an additional conjugate flow and is not used.  Bamler's
metric-soliton description in Theorem~2.18 identifies the limit with the
smooth strictly positive soliton density
\((4\pi)^{-n}e^{-f}\) on the regular spacetime.
Time-vector-compatible regular gauges are furnished by
\cite[Lemma~9.33]{BamlerCompactness}.

The general conjugate-flow density and zero-singular-mass assertion are
recorded in the paragraph preceding Bamler's Theorem~2.10
\cite[Subsection~2.2, paragraph preceding Theorem~2.10]{BamlerStructure}.
The distinguished measure in Bamler's metric-soliton limit is a
probability measure, so \(\mathfrak m(Z)=1\).  The uniform
\(H_D\)-concentration, with \(D=2n\), in \textup{(P2)} is the
accompanying tightness
input in the pointed convergence and, after rescaling, gives
\[
 \iint d_{g_i(-1)}(x,y)^2\,d\mu_{i,-1}(x)d\mu_{i,-1}(y)\le H_D.
\]
Lower semicontinuity gives finite pairwise second moment on the limit,
which implies finite second moment about any fixed point.  These facts
verify \textup{(H1)--(H3)}.

\medskip\noindent\emph{Passage of the K\"ahler structure.}

Hallgren--Jian's printed Theorem~2.5(i)--(iv) is formulated for point
kernels on flows defined through time zero, not literally for the present
general limiting point.  We therefore use Bamler's gauges for general
limiting points
above and rerun only the parallel-complex-structure argument in the proof
of that theorem
\cite[Theorem~2.5(i)--(iv) and its proof]{HallgrenJian}.
On Bamler's regular exhaustion charts,
\(|J_i|_{g_i}=\sqrt{2n}\) and \(\nabla^{g_i}J_i=0\).  Smooth connection
convergence, local derivative bounds, and a diagonal
Arzel\`a--Ascoli argument give a smooth parallel limit \(J\) on every
regular component.  The identities \(J^2=-\Id\), integrability, K\"ahler
compatibility, and time invariance pass to the limit.  This proves
\textup{(H5)}; no broader conclusion of the printed Hallgren--Jian theorem
is used outside its hypotheses.

When \(n=2\), Bamler's real four-dimensional orbifold theorem
\cite[Theorem~2.46]{BamlerStructure}, together with the local
complex-structure extension argument in
\cite[the final paragraph of the proof of Theorem~2.8]{ConlonHallgrenMa},
gives
\textup{(H6)}.
That local argument uses only the parallel complex structure from
\textup{(H5)} and is therefore independent of the point-kernel
hypothesis in the statement of the cited theorem.

\medskip\noindent\emph{Analytic structure and the soliton flow.}

Property \textup{(P1)}, the \(W_1\)-contraction for conjugate heat flows
\cite[the paragraph following equation~(2.34)]{BamlerStructure}, and a
diagonal choice of pole times give a point-kernel realization of each
prescribed tangent space at a fixed limiting point.  The diagonal, including the
necessary control
of the pole-time shift, is written out in the proof of
\cref{sp:prop:automatic-scale}; that argument uses only
\textup{(H1)--(H3)}, \textup{(P1)}, the \(W_1\)-contraction, and the
entropy bound, and hence does not presuppose \textup{(H7)}.  Bamler's
regular convergence and the preceding
parallel-tensor argument then verify the four hypotheses of
\HZanalyticref.  Its conclusions \textup{(a)--(b)} give exactly
\textup{(H7)}.

For nonconstant \(f\), the paragraph preceding equation~(2.4),
Theorem~B(iii), and Proposition~5.3(i) of Hallgren--Zhang give
\textup{(H8)}: the locally Lipschitz extension of \(f\), completeness of
\(\nabla f\), preservation of the regular locus, and extension of the
flow to a one-parameter group of biholomorphisms
\cite[Theorem~B(iii), the paragraph preceding equation~(2.4), and
Proposition~5.3(i)]{HallgrenZhang}.
The constant-potential case is immediate.  The proof of
\cref{sp:prop:limiting-package} also records the elementary quantitative
estimate \eqref{sp:eq:preliminary-flow-growth} used in the spectral
argument below.

\medskip\noindent\emph{The fixed-slice quantitative curvature estimate.}

Property \textup{(H4)} is proved in
\cref{sp:prop:automatic-scale}.  Bamler's generalized parabolic radius
is defined after Theorem~2.29, and Theorem~2.31 gives its spacetime
inverse-radius estimate.  The lower entropy bound on the compact
self-similar track is obtained from the original compact flow by
\cite[Proposition~4.35 and equation~(4.34)]{BamlerStructure}, parabolic
invariance, and entropy convergence
\cite[Theorem~2.10]{BamlerStructure}.  Exact self-similarity
\cite[Theorems~2.18 and~2.37]{BamlerStructure} then passes the
spacetime integral to the time-\((-1)\) slice.  Comparing parabolic and
spatial radii and applying Chebyshev gives every exponent
\(p\in(2,4)\), but not the endpoint \(p=4\).  The internal smoothing
argument in \cref{sp:lem:capacity} turns this estimate into the
\(W^{1,\gamma}\)-cutoffs used by the spectral theory.
\par\medskip
\noindent\hfill\(\square\)
\endgroup

\begin{lemma}
\label{lem:regular-chart-localization}
Let \(\xi=(\mu_t)_{t<T}\) be a fixed limiting point based at
\(q\in Y^{\rm rv}\), and let \(\tau_i\searrow0\) realize a tangent
space at \(\xi\) with time-\((-1)\) regular locus \(\mathcal R\).  Choose
\(q\in U_0\Subset U_1\Subset Y^{\rm rv}\), and let \(z_i\) be an
\(H_{2n}\)-center of \(\mu_{T-\tau_i}\).

For every compact \(K\Subset\mathcal R\) and corresponding time-\((-1)\)
regular convergence maps \(\psi_i:K\to X\), there is \(C_K<\infty\)
such that, for all sufficiently large \(i\),
\[
 \psi_i(K)\subset\pi^{-1}(U_1),
 \qquad
 \sup_{x\in K}
 d_{\tau_i^{-1}g(T-\tau_i)}\bigl(z_i,\psi_i(x)\bigr)
 \le C_K.
\]
\end{lemma}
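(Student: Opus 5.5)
The distance bound comes first, and the inclusion \(\psi_i(K)\subset\pi^{-1}(U_1)\) then follows from it through the base localization of \cref{surf:lem:base-localization}.

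\emph{Step 1: bounded distance to the center.} The key fact is that \(\mathbb F\)-convergence identifies \(H_{2n}\)-centers with points of the limit. After rescaling, \(z_i\) is an \(H_{2n}\)-center of \(\mu_{i,-1}\) at scale \(1\). By the correspondence in \(\mathbb F\)-convergence (\cite[Definitions~5.4 and~5.6]{BamlerCompactness}), the measures \(\mu_{i,-1}\) converge in \(W_1\) inside the common metric space to \(\mathfrak m\). The second moment of \(\mathfrak m\) is finite by (H2), and the variance bound \(\le H_{2n}\) passes to the limit.

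Fix \(o\in\mathcal R\) and \(R\) large with \(\mathfrak m(B(o,R))\ge \tfrac12\). Suppose \(d_i(z_i,\psi_i(o))\to\infty\). Smooth regular convergence on \(K\cup\{o\}\) (H3) and \(W_1\)-closeness give \(\mu_{i,-1}(B_{g_i(-1)}(\psi_i(o),2R))\ge\tfrac14\) for large \(i\). Then
\[
 \int d_{g_i(-1)}(z_i,\cdot)^2\,d\mu_{i,-1}\ge \tfrac14\bigl(d_i(z_i,\psi_i(o))-2R\bigr)^2,
\]
which contradicts the center bound \(\le H_{2n}\). So \(d_i(z_i,\psi_i(o))\le C\). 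Since \(\psi_i\) is almost isometric on the compact set \(K\), we get \(d_i(\psi_i(o),\psi_i(x))\le \operatorname{diam}K+C\), and the second assertion follows from the triangle inequality.

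\emph{Step 2: localization in the base.} \cref{surf:lem:base-localization}(ii) gives \(d_\eta(\pi(z_i),q)\le C_0\sqrt{\tau_i}\). The ambient Schwarz estimate \cref{lem:ambient-schwarz} makes \(F=\iota\circ\pi\) uniformly Lipschitz from \((X,g(t))\) to \((\PP^N,g_{\rm FS})\). In the unrescaled metric,
\[
 d_{g(T-\tau_i)}\bigl(z_i,\psi_i(x)\bigr)\le C_K\sqrt{\tau_i},
\]
so \(d_\eta(\pi(\psi_i(x)),q)\le C(C_0+C_K)\sqrt{\tau_i}\to0\) uniformly on \(K\). Because \(q\in U_0\Subset U_1\) and \(d_\eta\) is the restricted ambient distance, \(\pi(\psi_i(K))\subset U_1\) for large \(i\).

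\emph{Main obstacle.} The delicate step is Step 1: making precise that the convergence maps \(\psi_i\) on \(\mathcal R\) are compatible with the \(\mathbb F\)-correspondence, so that \(W_1\)-closeness of the measures transfers to the chart points \(\psi_i(o)\). I would use Bamler's statement that regular convergence charts are induced by the correspondence (\cite[Theorem~9.31]{BamlerCompactness}), together with the fact that convergence holds at almost every time. One then perturbs the time \(-1\) slightly and uses the bounded distortion of distances under the smooth flow on compact regular sets to return to time \(-1\). If the slice \(-1\) is not a good time, one also uses that the \(H_{2n}\)-concentration estimate allows centers to be moved between nearby times at bounded distance cost.
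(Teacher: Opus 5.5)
Your proposal is correct. Step~2 matches the paper's argument: the paper also combines $\pi(z_i)\to q$ from \cref{surf:lem:base-localization}\textup{(ii)} with the Schwarz estimate. It phrases this as the lower bound $c^{1/2}\delta_0\tau_i^{-1/2}$ for the rescaled distance from $z_i$ to $X\setminus\pi^{-1}(U_1)$, which is the contrapositive of your Lipschitz estimate.

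Step~1 takes a different route to the key mass lower bound. You use a large ball $B(o,R)$ with $\mathfrak m(B(o,R))\ge\tfrac12$, which in general contains singular points. Transferring that mass to the approximating flows therefore forces you through the $\mathbb F$-correspondence: you need time-wise $W_1$-convergence at the slice $-1$, and you need $\psi_i(o)$ to be close to $o$ in the common metric space. That is exactly the ``main obstacle'' you flag. The paper avoids it entirely:
\begin{itemize}
\item It covers $K$ by finitely many small regular balls $B$ whose closures lie in convergence charts.
\item Smooth convergence of the distinguished densities to the strictly positive limiting density, \textup{(H3)}, gives $\mu_{i,-1}(\psi_i(B))\ge\delta>0$ directly.
\item Each $\psi_i(B)$ has uniformly bounded $g_i(-1)$-diameter, by joining points with paths inside the chart.
\item The centre inequality $\int d_{g_i(-1)}(z_i,\cdot)^2\,d\mu_{i,-1}\le H_{2n}$ then bounds the distance from $z_i$ to each ball separately, and hence to all of $K$.
\end{itemize}
This needs no correspondence at time $-1$, no time perturbation, and no global near-isometry of $\psi_i$. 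Your route uses only weak convergence of the measures rather than density convergence, but since \textup{(H3)} is available here, the local argument is cheaper and removes the obstacle.

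There is also a small imprecision in your passage from $o$ to $K$. Smooth chart convergence gives only an upper bound for $d_{g_i(-1)}(\psi_i(o),\psi_i(x))$, namely the length of a path inside the chart domain. So you need a connected compact regular set containing $K\cup\{o\}$, which exists because $\mathcal R$ is connected by \textup{(H1)}. The relevant constant is then the intrinsic diameter of that set, not $\operatorname{diam}K$. Alternatively, you could use the correspondence compatibility pointwise on $K$, or simply apply your mass argument ball by ball as the paper does.
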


\begin{proof}
In the rescaled metric \(g_i(-1)=\tau_i^{-1}g(T-\tau_i)\), the defining
center inequality becomes
\[
 \int_Xd_{g_i(-1)}(z_i,x)^2\,d\mu_{i,-1}(x)\le H_{2n}.
\]
Cover a fixed compact regular chart by finitely many smaller regular
balls whose closures remain in convergence charts.  Positivity of the
limiting density and smooth density and volume convergence give a
uniform lower bound \(\delta>0\) for the \(\mu_{i,-1}\)-mass of each
ball.  If the center of one such ball were at distance \(D_i\to\infty\)
from \(z_i\), a fixed-radius subball would contribute at least
\(\delta(D_i-r)^2\) to the preceding integral, a contradiction.  Hence
the whole compact chart remains at bounded rescaled distance from
\(z_i\).

Moreover, \cref{surf:lem:base-localization}\textup{(ii)} gives
\(\pi(z_i)\to q\), so \(\pi(z_i)\in U_0\) for large \(i\).  Put
\(\delta_0=d_\eta(\overline U_0,Y\setminus U_1)>0\).  The global AMMP
Schwarz estimate \cref{lem:ambient-schwarz} gives
\[
 d_{g(T-\tau_i)}(z_i,X\setminus\pi^{-1}(U_1))
 \ge c^{1/2}\delta_0.
\]
After rescaling, this lower bound is
\(c^{1/2}\delta_0\tau_i^{-1/2}\to\infty\).  The bounded chart must
therefore lie in \(\pi^{-1}(U_1)\).
\end{proof}

\begin{remark}[Logical separation from fixed limiting points]
\label{rem:general-secondary-separation}
\HZanalyticref\ supplies the analytic structure of a general K\"ahler
shrinker limit.  It is neither a hypothesis on the original flow nor a
source of metric-completion or curvature conclusions.  The moving-point
argument below verifies its hypotheses on one quantitative two-level
diagonal.  By contrast, \cref{prop:limiting-package} concerns tangent
spaces at a single fixed limiting point.
\end{remark}

\section{Spectral splitting at the limiting time}
\label{sec:spectral-splitting}

This section proves the full-base-rank splitting theorem in the AMMP
setting of \cref{sec:fano-fibration-setup}.  We use the flow, the limiting
morphism
\[
                         \pi:X^n\longrightarrow Y^m,
                         \qquad 0<m<n,
\]
the limiting points and their tangent spaces, the quotient tensors, and
conditions
\((\mathrm S)_U\) and \((\mathrm Q)_U\) from \cref{sec:prelim}.
The Song--Tian Schwarz estimate
\cref{lem:ambient-schwarz} makes \((\mathrm S)_U\) automatic, while the
Fu--Zhang estimate
\cref{prop:zariski-product-horizontal} supplies
\((\mathrm Q)_U\) on the Zariski-open product regions used in the
applications.

At a limiting scale \(\tau_i\searrow0\), base coordinates
\(a=(a^1,\ldots,a^m)\) centered at the automatic base point have the
natural magnification
\[
                         F_i=\tau_i^{-1/2}a\circ\pi.
\]
The goal is to prove that their limits furnish \(m\) independent
parallel holomorphic coordinates on the time-\((-1)\) model of every
tangent space at a limiting point.  We record
the precise local criterion that the intrinsic argument establishes.

\begin{proposition}
\label{sp:prop:fullrank}
Let \(q\in Y^{\rm rv}\).  Choose a relatively compact holomorphic
coordinate neighbourhood \(U\Subset Y^{\rm rv}\), coordinates
\(a=(a^1,\ldots,a^m):U\to\C^m\) with \(a(q)=0\), and open sets
\[
                         q\in U_0\Subset U_1\Subset U.
\]
Let \(\xi=(\mu_t)_{t<T}\) be a fixed limiting point based at \(q\).
If both \((\mathrm S)_U\) and \((\mathrm Q)_U\) hold, then the
time-\((-1)\) model of every tangent space at \(\xi\) splits globally,
isometrically, and biholomorphically as
\begin{equation}\label{sp:eq:fullproduct}
 (Z,d,J)\cong(\C^m,g_{\rm E},J_0)\times(Z',d',J').
\end{equation}
\end{proposition}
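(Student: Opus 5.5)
The plan is to construct $m$ complex-linear, parallel holomorphic functions on the time-$(-1)$ model from the magnified base coordinates, and then split them off. Fix $\tau_i\searrow0$ realizing the tangent space. Set $F_i=\tau_i^{-1/2}(a\circ\pi-a(\pi(z_i)))$, with $z_i$ an $H_{2n}$-center of $\mu_{T-\tau_i}$. A constant shift is harmless, and by \cref{surf:lem:base-localization}(ii) we have $|a(\pi(z_i))|\le C\sqrt{\tau_i}$, so the shift is bounded after magnification.

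In the rescaled metric $g_i(-1)$, condition $(\mathrm S)_U$ together with \cref{cor:basefunction} and \eqref{eq:real-Q-normalization} gives $|dF_i|_{g_i}\le C$. Moreover $\Delta_{g_i}F_i=\tau_i^{1/2}\cdot 2Q^{\alpha\bar\beta}a_{\alpha\bar\beta}\to0$, since the coordinates are holomorphic and $\Delta$ of a holomorphic function vanishes up to the bounded term from the chosen coordinates. By \cref{lem:regular-chart-localization} every compact $K\Subset\mathcal R$ is carried by $\psi_i$ into $\pi^{-1}(U_1)$ at bounded distance from $z_i$, so $F_i\circ\psi_i$ is uniformly bounded and Lipschitz on $K$. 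It is holomorphic with respect to $J_i\to J$ by (H5). Arzel\`a--Ascoli and a diagonal argument then give a limit $F=(F^1,\dots,F^m):\mathcal R\to\C^m$. This limit is $J$-holomorphic, globally Lipschitz, and has linear growth. By Lipschitz continuity and density (H1) it extends to $Z$.

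The key step is to show that $\nabla^2F^\alpha=0$. First I obtain an exact drift equation. Along the flow, $\Box(a\circ\pi)=-\Delta(a\circ\pi)$ is bounded. Pairing against the conjugate heat flow $\mu$, as in the proof of \cref{surf:lem:base-localization}, shows that $\int F_i\,d\mu_{i,s}$ and its time variation are controlled. Passing the self-similarity (H3) to the limit, the holomorphic $F$ satisfies $\Delta_fF=\Delta F-\langle\nabla f,\nabla F\rangle=-\tfrac12F$ in the weak sense. Here $F$ is viewed through the soliton vector field: $\nabla f$ generates dilation by (H8), and $F_i$ scale like $\tau^{-1/2}$ along the rescalings.

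Next I set up the Friedrichs extension of $\Delta_f$ on $L^2(\mathfrak m)$. The capacity cutoffs built from \eqref{sp:eq:Minkowski} with $p>2$ show that the singular set is negligible, so integration by parts holds for Lipschitz functions. I then prove the sharp spectral gap $\lambda_1(-\Delta_f)\ge\tfrac12$ on functions orthogonal to constants. This uses a weighted Bochner formula $\tfrac12\Delta_f|\nabla u|^2=|\nabla^2u|^2+\langle\nabla u,\nabla\Delta_fu\rangle+\tfrac12|\nabla u|^2$, which follows from \eqref{sp:eq:shrinker}, integrated with the cutoffs; the gradient growth bound \eqref{sp:eq:preliminary-flow-growth} controls the cutoff errors. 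Equality in the integrated Bochner identity for the eigenfunctions $\operatorname{Re}F^\alpha$ and $\operatorname{Im}F^\alpha$ forces $\nabla^2F^\alpha=0$ on $\mathcal R$. Hence each $\nabla\operatorname{Re}F^\alpha$ is parallel, and by holomorphy $\nabla\operatorname{Im}F^\alpha=J\nabla\operatorname{Re}F^\alpha$ is parallel as well.

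I then establish full rank using $(\mathrm Q)_U$. It gives $|dF_i(v)|$ bounded below on unit horizontal lifts, and more precisely the Gram matrix $\langle\nabla F_i^\alpha,\nabla\overline{F_i^\beta}\rangle=2Q^{\alpha\bar\beta}\ge C_U^{-1}\eta^{-1}$. This lower bound passes to the smooth limit on regular charts, so the constant Gram matrix of the $\nabla F^\alpha$ is nondegenerate. After a complex-linear change, the $\nabla\operatorname{Re}F^\alpha$ and $J\nabla\operatorname{Re}F^\alpha$ form $2m$ orthonormal parallel fields.

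Finally I split. The fields are Killing, and their flows are complete because $F$ is Lipschitz with linear growth and the flows preserve $\mathcal R$. This last point requires an argument: the $\Rm$-radius is invariant under isometric translation, and a Cheeger--Colding-style metric splitting via the lines $t\mapsto$ (flow of $\nabla\operatorname{Re}F^\alpha$) on the complete length space $Z$ supplies it. The map $x\mapsto(F(x),F^{-1}(0))$ gives an isometry $Z\cong\C^m\times Z'$ with $Z'=F^{-1}(0)$, biholomorphic because $F$ is holomorphic and $J$ is parallel. The analytic structure of $Z'$ then follows from (H7).

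The main obstacle is the spectral step on the singular space. One difficulty is justifying the integrated Bochner identity, since only $p<4$ Minkowski control is available, so the cutoff error terms involving $|\nabla^2u|$ must be absorbed carefully. The other is showing rigorously that the limit $F$ is an honest eigenfunction with eigenvalue exactly $\tfrac12$, rather than merely satisfying an inequality.
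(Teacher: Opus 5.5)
\textbf{Main gap: the eigen-equation.} The step you present as routine is the one that fails: the exact drift equation $\Delta_fF=-\tfrac12F$ modulo constants.

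Neither self-similarity of the tangent flow nor the $\tau^{-1/2}$ scaling of $F_i$ forces it.
\begin{itemize}
\item The limit $F$ is only known to be time-independent in the canonical gauge. The limits along $\tau_i$ and $\lambda\tau_i$ are tautologically rescalings of each other, so nothing makes $F$ homogeneous under the soliton flow $\Psi_a=\operatorname{Fl}^a_{\nabla f}$.
\item The conservation law $\Box(a\circ\pi)=0$ only controls the weighted mean.
\item The Type-I scaling gives only $\int|F|^2d\mathfrak m_s\le C(-s)$ and a bounded Gram matrix, not exact homogeneity.
\end{itemize}
That every Lipschitz holomorphic function on the shrinker is, modulo constants, a $\tfrac12$-mode is a Liouville-type rigidity statement, and it is exactly what has to be proved.

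Your ordering also obscures this. If the eigen-equation were available, the gap would be superfluous. For Lipschitz $h$ you have $|\nabla h|\le L$, and then $\tfrac12\Delta_f|\nabla h|^2=|\nabla^2h|^2$ with first-order cutoffs already kills the Hessian; this is \cref{sp:prop:hesszero}.

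In the paper the eigen-equation is a consequence of the gap, obtained as follows.
\begin{itemize}
\item The orbit $U(s)=u\circ\Psi_{-s}$ is an ancient energy solution of $\partial_sU=-AU$ for the Friedrichs generator.
\item Lipschitz growth, \eqref{sp:eq:flowdistance} and the finite second moment give $\|U(s)\|_2\le Ce^{-s/2}$.
\item Uniqueness of energy solutions gives $u=e^{sA}U(s)$.
\item Hence $\|\mathbf 1_{[b,\infty)}(A)u\|_2\le Ce^{s(b-1/2)}\to0$ as $s\to-\infty$ for every $b>\tfrac12$, and the gap leaves only the constants and the $\tfrac12$-eigenspace.
\end{itemize}

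\textbf{Secondary gap: the spectral gap itself.} Your Bochner-based proof of the gap has exactly the difficulty you flag, and it is not resolved. A general Friedrichs eigenfunction has no a priori gradient bound near $\Sing Z$. So the error term $\int|\nabla\eta_j|^2|\nabla u|^2$ is not controlled by cutoffs with $\nabla\eta_j\in L^\gamma$, $\gamma<4$. The bound \eqref{sp:eq:preliminary-flow-growth} only handles spatial infinity.

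The paper avoids second-order integration by parts on $Z$ altogether.
\begin{itemize}
\item On the smooth compact flow, $|\nabla P_{a,s}\phi|^2\le P_{a,s}|\nabla\phi|^2$ gives the sharp inequality $\Var_\nu(\phi)\le2(b-a)\int|\nabla\phi|^2d\nu$ for conjugate heat kernels (\cref{sp:lem:pointpoincare}).
\item This passes to $\mu_t$, and then to the tangent space for $\phi\in C_c^\infty(\mathcal R)$ by smooth regular-chart convergence.
\item It extends to $D(\mathcal E)$ by form-core density.
\end{itemize}

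\textbf{A possible alternative repair.} Since $\xi$ is one fixed conjugate heat flow, you could integrate the Bochner identity for $|\nabla(\chi a\circ\pi)|^2$ against $\mu_t$ on $[0,T)$, as in Step~2 of \cref{ti:lem:moving-pole-fibre-capture}. This bounds $\int_0^T\!\int|\nabla^2(\chi a\circ\pi)|^2d\mu_t\,dt$, so its tail over $[T-B\tau_i,T-A\tau_i]$ tends to zero and should give $\nabla dF=0$ directly. Neither route appears in your proposal.

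\textbf{Splitting step.} Cheeger--Colding is the wrong tool here: no synthetic Ricci lower bound is available on $Z$. Your claim that the flows preserve $\mathcal R$ presupposes that they already act on $Z$. What is needed is a complete, isometric, holomorphic $\C^m$-action. The paper obtains it by extending the parallel holomorphic fields across $\Sing Z$ via the ambient lower bound \eqref{sp:eq:ambientlower} and Riemann extension, and then using constant speed and metric completeness.
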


Two analytic issues arise.  First, the regular locus of a tangent space at a limiting point
may be incomplete, so complete-shrinker spectral theory does not apply
directly.  Second, the limiting coordinate functions initially exist only
on regular convergence charts.  The three subsections below organize the
proof into six steps:

\begin{enumerate}[label=\textup{(\arabic*)},leftmargin=3em]
\item derive the curvature-radius estimate, prove zero weighted
\(W^{1,2}\)-capacity of the singular set, and construct the canonical
Friedrichs drift operator;
\item pass the exact conjugate-kernel Poincar\'e inequality from the
fixed limiting point to obtain the sharp spectral gap \(1/2\);
\item use the complete soliton flow and a spectral cutoff to show that
Lipschitz holomorphic half-modes have vanishing real and imaginary
Hessians;
\item globalize the resulting parallel holomorphic map across the normal
klt singular set;
\item construct the magnified base-coordinate limit on every compact
regular chart;
\item use \((\mathrm Q)_U\) to retain rank \(m\), prove the criterion,
and record the normalized-coordinate and exact-zero-fibre consequences
needed later.
\end{enumerate}

We settle each singular-domain issue before using the corresponding smooth
calculation.  In particular, we neither apply the de Rham theorem to the
incomplete regular locus nor assert essential self-adjointness.  Until
\cref{sp:sec:fibrationmap}, fix one tangent space; the fibration re-enters
when we construct the magnified coordinates.

\subsection{Weighted drift analysis and spectral rigidity}
\label{sp:sec:capacity}
\label{sp:sec:poincare}
\label{sp:sec:spectral-rigidity}

\paragraph{Quantitative curvature control.}

\begin{proposition}
\label{sp:prop:automatic-scale}
Let \((Z,d,\mathcal R,g)\) be the time-\((-1)\) model of a tangent
space at a fixed limiting point, and put
\(N=\dim_{\R}\mathcal R=2n\).  Then,
for every compact \(K\subset Z\) and every \(p\in(2,4)\),
\begin{equation}\label{sp:eq:inverse-radius}
 \int_{K\cap\mathcal R}r_{\rm Rm}^{-p}\,dV_g<\infty.
\end{equation}
Consequently \eqref{sp:eq:Minkowski} holds, and the zero extension of
\(r_{\rm Rm}\) is one-Lipschitz on \(Z\).
\end{proposition}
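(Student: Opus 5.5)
We will obtain \eqref{sp:eq:inverse-radius} from Bamler's spacetime estimate for the inverse parabolic curvature radius. We then use exact self-similarity to pass it to the time-\((-1)\) slice. Finally, we compare parabolic and spatial radii to reach \(r_{\rm Rm}\).

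\emph{Step 1: uniform entropy on the limit.} By \eqref{sp:eq:entropycheck} the pointed Nash entropy at scale one is bounded below along the rescalings. Together with \cite[Proposition~4.35 and equation~(4.34)]{BamlerStructure} and parabolic invariance, this gives a uniform lower bound \(\mathcal N_{x,t}(r^2)\ge -Y_0\) for all basepoints in a compact spacetime region of the original flow at bounded rescaled distance from the centers \(z_i\). Entropy convergence \cite[Theorem~2.10]{BamlerStructure} passes this bound to every point of the limit over \(t\in[-2,-1/2]\).

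\emph{Step 2: spacetime integral bound.} On such a region, \cite[Theorem~2.31]{BamlerStructure} applied to the generalized parabolic radius \(\tilde r_{\rm Rm}\) (defined after Theorem~2.29) gives, for every \(p<4\), a bound of the form \(\int_{-2}^{-1/2}\int_{K_s\cap\mathcal R_s}\tilde r_{\rm Rm}^{-p}\,dV\,ds\le C\) on compact sets. By \cite[Theorems~2.18 and~2.37]{BamlerStructure}, the tangent flow is exactly self-similar: \(\mathcal R_s\) is obtained from \(\mathcal R_{-1}\) by the soliton diffeomorphisms composed with the metric dilation by \((-s)\). For \(s\in[-2,-1/2]\) these identifications distort radii and volumes by bounded factors. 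The integrand is therefore comparable, up to constants and an enlargement of \(K\), to the time-\((-1)\) integrand. Fubini over \(s\) then yields a bound on \(\int_{K'\cap\mathcal R}\tilde r_{\rm Rm}(\cdot,-1)^{-p}\,dV_g\).

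\emph{Step 3: from parabolic to spatial radius, then Minkowski and Lipschitz.} The parabolic radius controls \(|\Rm|\) on a backward parabolic neighbourhood contained in the regular spacetime. Restricting to the time-\((-1)\) slice gives a ball of comparable radius in \(\mathcal R\) with the same curvature bound, so \(r_{\rm Rm}\ge c\,\min(\tilde r_{\rm Rm},1)\). This proves \eqref{sp:eq:inverse-radius}. Chebyshev's inequality then gives \eqref{sp:eq:Minkowski} for every \(p\in(2,4)\). For the Lipschitz property, suppose \(x,y\in\mathcal R\) and \(r<r_{\rm Rm}(x)\). Then \(B(y,r-d(x,y))\subset B(x,r)\), and the intrinsic-distance part of \textup{(H1)} shows that this ball is still compactly contained in \(\mathcal R\). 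Hence \(r_{\rm Rm}(y)\ge r_{\rm Rm}(x)-d(x,y)\). On the singular set, \(r_{\rm Rm}\) is zero, and a point \(x\) with \(r_{\rm Rm}(x)>d(x,y)\) would have \(y\in B(x,r)\Subset\mathcal R\). So the zero extension is \(1\)-Lipschitz.

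The main obstacle is Step 2. The difficulty is to verify that Bamler's estimate applies at fixed limiting points rather than point kernels, through the diagonal point-kernel realization via \textup{(P1)} and \(W_1\)-contraction. It is also delicate to transfer the spacetime integral to a single slice while keeping compact sets uniform. The endpoint \(p=4\) is lost in the passage from the spacetime estimate to Chebyshev, which is why the statement only claims \(p<4\).
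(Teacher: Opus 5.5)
Your proposal is correct and follows essentially the same route as the paper: a uniform entropy lower bound at all limit centres, Bamler's Theorem~2.31 applied on a finite cover of the compact self-similar track of \(K\), exact self-similarity to pass the spacetime integral to the time-\((-1)\) slice, the comparison of the parabolic radius with the spatial radius (\(\widehat r_{\rm Rm}\le C\rho_s\)), Chebyshev's inequality, and the elementary Lipschitz argument using \(r_{\rm Rm}\le d(\cdot,\Sing Z)\). The differences are minor. The paper compares the radii slice by slice before transporting, so only the spatial radius has to be transported, and its scaling is immediate from the slice isometries. The paper also carries out explicitly the diagonal point-kernel realization that you only flag as the obstacle. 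Finally, the restriction \(p<4\) comes from Theorem~2.31 itself, whose parameter must satisfy \(\varepsilon>0\), and not from the Chebyshev step.
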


\begin{proof}[Proof of \cref{sp:prop:automatic-scale}]
We keep separate Bamler's generalized parabolic curvature radius and
the spatial radius used in this paper.  Write the exact self-similar
tangent flow as
\[
 (\mathcal Z_s,d_s,\mathcal R_s,g(s))_{s<0},\qquad
 (Z,d,\mathcal R,g)=(\mathcal Z_{-1},d_{-1},\mathcal R_{-1},g(-1)),
\]
and abbreviate \(Z_s:=\mathcal Z_s\).  For
\((x,s)\in\mathcal R_s\), let
\(\widehat r_{\rm Rm}(x,s)=r'_{\rm Rm}(x,s)\) be Bamler's generalized
radius \cite[the definition following Theorem~2.29]{BamlerStructure}.
Define the
uncapped spatial radius on each regular time slice by
\begin{equation}\label{sp:eq:uncapped-radius}
 \rho_s(x)=\sup\bigl\{r>0:B_{g(s)}(x,r)\Subset\cR_s,
              \ \sup_{B_{g(s)}(x,r)}|\Rm_{g(s)}|\le r^{-2}\bigr\}.
\end{equation}
Thus \(r_{\rm Rm}=\min\{1,\rho_{-1}\}\).

The proof has three stages.  First we apply Bamler's spacetime
inverse-radius estimate on a finite cover of the self-similar track of a
compact set.  Second, exact self-similarity turns the spacetime integral
into an integral on the prescribed time slice.  Third, Chebyshev's
inequality gives the Minkowski estimate, and the definition of the
spatial radius gives the Lipschitz property.  The second stage is the
reason exact self-similarity is essential.

We first check the entropy hypothesis at every center used below.  By
the limiting-point realization property \textup{(P1)} and the
\(W_1\)-contraction for conjugate heat flows
\cite[the paragraph following equation~(2.34)]{BamlerStructure}, first
pass to a subsequence with \(T/\tau_j>j+1\).  This does not change the
prescribed tangent space.  We may then choose point-kernel poles
\((y_i,T_i)\) diagonally so that
\[
 \bar g_i(s)=\tau_i^{-1}g(T_i+\tau_i s),
 \qquad T_i\nearrow T,
 \qquad \lambda_i:=\frac{T-T_i}{\tau_i}\longrightarrow0,
\]
and the associated point-kernel flows converge on every compact
negative time interval to the prescribed tangent space.  To obtain the
diagonal, at stage \(j\) use the scale \(\tau_j\), choose
\(T_j>T-\tau_j/j\), and require at the latest time that
\begin{equation}\label{sp:eq:point-kernel-diagonal}
 \tau_j^{-1/2}d_{W_1}^{g(T-\tau_j/j)}
 \left(\nu_{y_j,T_j;T-\tau_j/j},\mu_{T-\tau_j/j}\right)<j^{-1}.
\end{equation}
Thus \(\lambda_j<j^{-1}\), and \(W_1\)-contraction controls the error
on the whole interval \([-j,-j^{-1}]\).  After relabeling and letting
\(j\to\infty\), this gives the assertion.  In
particular,
\[
                         \bar g_i(s)=g_i(s-\lambda_i),
\]
so the vanishing time translation does not change the chosen tangent
space.
Since the original flow is smooth on the compact manifold at time zero,
Bamler's uniform pointed-entropy estimate
\cite[Proposition~4.35 and equation~(4.34)]{BamlerStructure} yields a
constant \(Y_0<\infty\), depending only on the original flow and \(T\).
Every admissible pointed Nash entropy used below is at least \(-Y_0\).
Indeed, for a fixed spacetime point \((z,s)\) of
the limit and a fixed admissible scale \(r\), choose corresponding
points \(z_i\) in the realizing flows.  Parabolic invariance gives
\[
 \mathcal N^{\bar g_i}_{z_i,s}(r^2)
 =\mathcal N^g_{z_i,T_i+\tau_i s}(\tau_i r^2)\ge-Y_0
\]
for all large \(i\).  Entropy convergence
\cite[Theorem~2.10]{BamlerStructure} passes this bound to the limit.
Thus every center and scale in the finite cover below satisfies the
hypothesis of Theorem~2.31 with the same constant \(Y_0\).  No
point-versus-soliton entropy comparison is used.

Fix a compact \(K\subset Z\) and let
\(I=[-7/8,-3/4]\Subset(-1,0)\).  The self-similar representation supplies
homeomorphisms \(\overline\Theta_s:Z_s\to Z\), whose restrictions
\(\Theta_s:\cR_s\to\cR\) are diffeomorphisms, such that
\begin{equation}\label{sp:eq:selfsimilar-radius-volume}
 \rho_s(\Theta_s^{-1}x)=(-s)^{1/2}\rho_{-1}(x),\qquad
 (\Theta_s)_*dV_{g(s)}=(-s)^{N/2}dV_g.
\end{equation}
This is the regular-locus form of Bamler's exact soliton
self-similarity
\cite[Theorems~2.18 and~2.37]{BamlerStructure}.  Put
\(K_s=\overline\Theta_s^{-1}K\).  The self-similar homeomorphisms depend
continuously on \(s\), so the corresponding full spacetime track is the
continuous image of \(K\times I\) and is compact.  Put
\begin{equation}\label{sp:eq:p-delta}
                         p=4-2\delta,
                         \qquad \delta=\frac{4-p}{2}>0.
\end{equation}
For each spacetime point \(\mathfrak z=(z,s_z)\) of this track, choose
\(r_{\mathfrak z}>0\) small enough that
\[
 [s_z-\delta^{-1}r_{\mathfrak z}^2,
   s_z+r_{\mathfrak z}^2]\Subset(-\infty,0)
\]
and that
\[
 U_{\mathfrak z}:=\operatorname{Int}
 P^*(\mathfrak z;r_{\mathfrak z}/2)
\]
contains \(\mathfrak z\).  Here $P^*(\mathfrak z;r)$ denotes Bamler's
Wasserstein parabolic neighbourhood and is unrelated to the pullback
notation $\pi^*[\eta]$ for the limiting morphism.  The definition of
\(P^*\) uses a closed time interval.  After decreasing
\(r_{\mathfrak z}\), the strict Wasserstein inequality and continuity
imply
\(\mathfrak z\in\operatorname{Int}
P^*(\mathfrak z;r_{\mathfrak z}/2)\)
in the natural spacetime topology; see
\cite[Section~4.5]{BamlerStructure}.  Moreover,
\cite[Proposition~4.25(b)]{BamlerStructure} gives
\(U_{\mathfrak z}\subset P^*(\mathfrak z;r_{\mathfrak z})\).
Compactness supplies a finite subcover
\(U_{\mathfrak z_j}\), \(1\le j\le J\), where
\(\mathfrak z_j=(z_j,s_j)\); put
\(r_j:=r_{\mathfrak z_j}\).  Apply Theorem~2.31 of
\cite{BamlerStructure} on each \(P^*(\mathfrak z_j;r_j)\), with its
parameter \(\varepsilon\) equal to \(\delta\).  The uniform entropy bound
and \eqref{sp:eq:p-delta} give the scale-invariant estimate
\begin{equation}\label{sp:eq:Bamler-local-inverse-radius}
 \int_{[s_j-r_j^2,s_j+r_j^2]\cap I}
 \int_{P^*(\mathfrak z_j;r_j)\cap\cR_s}
       \widehat r_{\rm Rm}(x,s)^{-p}\,dV_{g(s)}(x)\,ds
 \le C(N,Y_0,\delta)r_j^{N+2-p},
\end{equation}
because \(N-2+2\delta=N+2-p\).  The radius and time normalizations in
the definition of \(P^*\) are precisely those of the cited theorem.
Summing these \(J\) enlarged-neighborhood estimates bounds the
nonnegative integral over the track.  Therefore
\begin{equation}\label{sp:eq:parabolic-radius-tube}
 \int_I\!\int_{K_s\cap\cR_s}
       \widehat r_{\rm Rm}(x,s)^{-p}\,dV_{g(s)}(x)\,ds<\infty.
\end{equation}

The central time-slice conditions in the definition of
\(\widehat r_{\rm Rm}\) imply
\begin{equation}\label{sp:eq:radius-comparison}
                 \widehat r_{\rm Rm}(x,s)\le C_N\rho_s(x)
                 \qquad (x\in\cR_s);
\end{equation}
one may take \(C_N=1\) when the same curvature threshold is used.  This
comparison follows directly from the central-slice condition in the
definition of \(r'_{\rm Rm}\)
\cite[the definition following Theorem~2.29, including
assertion~(a)]{BamlerStructure}.
Combining
\eqref{sp:eq:selfsimilar-radius-volume}--\eqref{sp:eq:radius-comparison} and
using Tonelli's theorem gives, with extended-real integrals,
\begin{align}
\infty
&>\int_I\!\int_{K_s\cap\cR_s}\widehat r_{\rm Rm}^{-p}
                       \,dV_{g(s)}\,ds \notag\\
&\ge C_N^{-p}
 \left(\int_I(-s)^{(N-p)/2}\,ds\right)
 \int_{K\cap\cR}\rho_{-1}^{-p}\,dV_g.\label{sp:eq:slice-radius-Lp}
\end{align}
The time factor is finite and strictly positive, so the last spatial
integral is finite.  Since
\(r_{\rm Rm}^{-p}=\max\{1,\rho_{-1}^{-p}\}\le
1+\rho_{-1}^{-p}\), and compact subsets have finite regular volume
(also immediate from \eqref{sp:eq:measure} and local boundedness of \(f\)),
\eqref{sp:eq:inverse-radius} follows.  Chebyshev's inequality now gives
\begin{equation}\label{sp:eq:automatic-chebyshev}
 \operatorname{Vol}_g\bigl(\{r_{\rm Rm}<\varepsilon\}\cap K\cap\cR\bigr)
 \le\varepsilon^p\int_{K\cap\cR}r_{\rm Rm}^{-p}\,dV_g
 \le C_{K,p}\varepsilon^p,
\end{equation}
which is \eqref{sp:eq:Minkowski}.  Notice that no endpoint \(p=4\) is
claimed.

It remains to prove the Lipschitz assertion.  If
\(d(x,y)\ge r_{\rm Rm}(x)\), then
\(r_{\rm Rm}(y)\ge r_{\rm Rm}(x)-d(x,y)\) is trivial.  Otherwise choose
\(d(x,y)<a<r_{\rm Rm}(x)\).  For every
\(0<b<a-d(x,y)\),
\[
 \overline{B_g(y,b)}\subset B_g(x,a)\Subset\cR,
 \qquad \sup_{B_g(y,b)}|\Rm_g|\le a^{-2}\le b^{-2}.
\]
Thus \(r_{\rm Rm}(y)\ge b\).  Letting \(b\nearrow a-d(x,y)\), then
\(a\nearrow r_{\rm Rm}(x)\), and interchanging \(x,y\), proves that
\(r_{\rm Rm}\) is one-Lipschitz on \(\cR\).  Moreover
\begin{equation}\label{sp:eq:radius-to-singular-set}
                 r_{\rm Rm}(x)\le d(x,\Sing Z),
\end{equation}
because a larger ball cannot be compactly contained in \(\cR\).  Hence
the zero extension of \(r_{\rm Rm}\) is one-Lipschitz across
\(\Sing Z\).
\end{proof}

\begin{remark}[Why the quantitative theorem is essential]
Qualitative codimension four of \(\Sing Z\) alone does not imply
\eqref{sp:eq:Minkowski}: the sublevel set of the curvature radius need not
be a metric tubular neighborhood of the singular set.  The additional
ingredient is Bamler's inverse parabolic-radius estimate.  Exact
self-similarity is also essential, since a spacetime \(L^p\) estimate
on a general limit need not control a prescribed time slice.
\end{remark}

The derived estimate \textup{(H4)}, rather than qualitative codimension
alone, is what removes the singular set from the first-order energy
theory.

\begin{remark}[Boundary of the smooth citations]\label{sp:rem:smooth-citations}
The results of Cheng--Zhou \cite{ChengZhou} and He--Ou \cite{HeOu}
assume a complete smooth space, whereas \(\cR\) may be incomplete.
These papers are cited only as methodological antecedents; no result
from either paper is invoked.  The
closed-form gap, ancient-orbit cutoff, and equality case required below
are proved here.
\end{remark}

The preceding quantitative estimate supplies precisely the exponent
\(p>2\) needed to remove the singular set from first-order weighted
energy arguments.

We now construct the operator on which the spectral argument will be
performed.  The regular locus \(\cR\) may be incomplete, so integration
by parts there is not automatic even though its metric completion is
complete.  Saying that \(\Sing Z\) has zero weighted two-capacity means
that functions vanishing near \(\Sing Z\) can approximate the constant
one in weighted \(W^{1,2}\).  This provides first-derivative cutoffs for
the closed Dirichlet form; it does not provide graph-norm cutoffs for a
second-order operator.

In the capacity and operator arguments below, all unlabelled integrals
are with respect to \(\mathfrak m\), unless another measure is displayed.

\begin{lemma}
\label{sp:lem:capacity}
For every compact $K\subset Z$ and every \(\gamma\in(2,4)\), there are
locally Lipschitz functions
$\eta_j:Z\to[0,1]$ whose restrictions lie in $C^\infty(\cR)$, such that
\begin{align}
 0\le\eta_j\le1,\qquad
 &\eta_j=0\text{ near }K\cap\Sing Z,\qquad
 \eta_j\longrightarrow1\text{ locally uniformly on }K\cap\cR,
 \label{sp:eq:capcutoff}\\
 &\int_K\bigl(|1-\eta_j|^2+|\nabla\eta_j|^2\bigr)\,d\mathfrak m
   \longrightarrow0.
 \label{sp:eq:capenergy}
\end{align}
Moreover,
\begin{equation}\label{sp:eq:q-capenergy}
                  \int_K|\nabla\eta_j|^\gamma\,d\mathfrak m\longrightarrow0.
\end{equation}
There is also a sequence $\beta_j\in C_c^\infty(\cR)$ with
\begin{equation}\label{sp:eq:globalcutoff}
 0\le\beta_j\le1,\qquad \beta_j\to1\quad\mathfrak m\text{-a.e.},
 \qquad
 \|1-\beta_j\|_{L^2(\mathfrak m)}^2+
 \int|\nabla\beta_j|^2\,d\mathfrak m\longrightarrow0.
\end{equation}
\end{lemma}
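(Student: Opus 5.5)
The construction works at the scale of the curvature radius. Fix a compact \(K\subset Z\) and \(\gamma\in(2,4)\), choose \(p\in(\gamma,4)\), and let \(\chi:\R\to[0,1]\) be a smooth nondecreasing function with \(\chi=0\) on \((-\infty,1]\), \(\chi=1\) on \([2,\infty)\), and \(|\chi'|\le2\). Set
\[
 \widetilde\eta_j:=\chi\bigl(j\,r_{\rm Rm}\bigr).
\]
By \cref{sp:prop:automatic-scale}, the zero extension of \(r_{\rm Rm}\) is one-Lipschitz on \(Z\). Hence \(\widetilde\eta_j\) is Lipschitz with \(|\nabla\widetilde\eta_j|\le2j\). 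It vanishes on \(\{r_{\rm Rm}<1/j\}\), and this set contains a neighbourhood of \(\Sing Z\) by \eqref{sp:eq:radius-to-singular-set}. Finally, \(\widetilde\eta_j=1\) wherever \(r_{\rm Rm}\ge2/j\). Since \(r_{\rm Rm}\) is continuous and positive on \(\cR\), it is bounded below on compact subsets of \(K\cap\cR\), and the local uniform convergence \(\widetilde\eta_j\to1\) follows.

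For the energy estimates, note that the gradient is supported in the annulus \(A_j=\{1/j\le r_{\rm Rm}<2/j\}\cap K\). The Minkowski bound \eqref{sp:eq:Minkowski} gives \(\Vol_g(A_j)\le C_{K,p}(2/j)^p\). Since \(f\) is locally Lipschitz on \(Z\) by (H8), the density \(e^{-f}\) is bounded on \(K\), so \(\mathfrak m\)-volumes and \(g\)-volumes are comparable on \(K\). Therefore
\[
 \int_K|\nabla\widetilde\eta_j|^\gamma\,d\mathfrak m\le C\,j^{\gamma}j^{-p}\to0,
\]
and the same computation with exponent \(2<p\) gives \(\int_K|\nabla\widetilde\eta_j|^2\,d\mathfrak m\to0\). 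For the zeroth-order term, \(|1-\widetilde\eta_j|^2\le\mathbf 1_{\{r_{\rm Rm}<2/j\}}\), whose \(\mathfrak m\)-mass on \(K\) is \(O(j^{-p})\).

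Smoothness on \(\cR\) is the one point needing care, because \(r_{\rm Rm}\) is only Lipschitz there. I would replace \(r_{\rm Rm}\) by a smooth function \(\widetilde r\) on \(\cR\) with
\[
 \tfrac12 r_{\rm Rm}\le\widetilde r\le 2r_{\rm Rm},\qquad |\nabla\widetilde r|\le C.
\]
Such a function can be built by mollifying \(r_{\rm Rm}\) at scale \(\varepsilon r_{\rm Rm}(x)\) using a partition of unity subordinate to the Whitney-type cover of \(\cR\) by the balls \(B(x,r_{\rm Rm}(x)/10)\). On these balls the curvature bound gives uniformly controlled geometry after rescaling, and the one-Lipschitz property makes the mollification errors scale-invariant. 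Extending \(\widetilde r\) by zero on \(\Sing Z\) keeps it Lipschitz. The estimates above then hold for \(\eta_j:=\chi(j\widetilde r)\) with the thresholds shifted by the factor \(4\), which changes only constants. This regularization step is the only genuinely technical part; everything else is Chebyshev bookkeeping.

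For \(\beta_j\), I would combine these cutoffs with spatial cutoffs at infinity. Fix \(o\in\cR\) and let \(\phi_R=\psi(d(o,\cdot)/R)\), with \(\psi\) smooth, equal to \(1\) on \([0,1]\), vanishing on \([2,\infty)\), and \(|\psi'|\le 2\). Then
\[
 \int|\nabla\phi_R|^2\,d\mathfrak m\le 4R^{-2}\to0,
\]
because \(\mathfrak m\) is a probability measure, and \(\|1-\phi_R\|^2_{L^2(\mathfrak m)}\to0\) by dominated convergence. Since \(Z\) is complete and locally compact, the closed ball \(K_R=\overline B(o,2R)\) is compact. On \(K_R\) I take the cutoff \(\eta^{(R)}_{j}\) from the first part with \(j=j(R)\) large enough that its energy on \(K_R\) is below \(R^{-1}\). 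The product \(\phi_R\eta^{(R)}_{j(R)}\) has compact support in \(\cR\), because it vanishes near \(\Sing Z\cap K_R\) and outside \(K_R\). Next, mollify \(\phi_R\), which is only Lipschitz, inside \(\cR\), where the support is a compact subset of a smooth manifold; this is standard with arbitrarily small \(W^{1,2}(\mathfrak m)\) error. The resulting \(\beta_R\in C_c^\infty(\cR)\) satisfies \(0\le\beta_R\le1\). The estimate \(|\nabla(\phi\eta)|^2\le2|\nabla\phi|^2+2|\nabla\eta|^2\) gives \eqref{sp:eq:globalcutoff}. Convergence \(\mathfrak m\)-a.e. holds because \(\mathfrak m(\Sing Z)=0\) and \(\beta_R\to1\) pointwise on \(\cR\).
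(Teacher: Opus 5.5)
Your proof is correct and follows the paper's argument. Like the paper, you use the cutoffs \(\chi(r_{\rm Rm}/\varepsilon)\), controlled by the one-Lipschitz property and the Minkowski bound with \(p\in(\gamma,4)\), and then multiply a radial cutoff at scale \(R\) (energy \(O(R^{-2})\) since \(\mathfrak m(Z)=1\)) by a singular cutoff and smooth inside \(\cR\).

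The one difference is the smoothing. You regularize \(r_{\rm Rm}\) once into a smooth comparable \(\widetilde r\) with bounded gradient, whereas the paper mollifies the cutoff itself on the transition region with \(W^{1,\gamma}\)-error \(o(\varepsilon^{p-\gamma})\). Your \(\widetilde r\) needs no scale-invariant geometry: the standard partition-of-unity approximation of a Lipschitz function with \(C^0\)-error \(\tfrac12 r_{\rm Rm}\) already produces it.

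For pointwise convergence of \(\beta_R\), also force \(j(R)\to\infty\) and use \(L^\infty\)-small smoothing. Alternatively, pass to a subsequence using the \(L^2\) convergence.
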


\begin{proof}
\emph{Step 1: the quantitative Lipschitz cutoff.}
Fix \(\gamma\in(2,4)\) and choose \(p\in(\gamma,4)\).  By
\cref{sp:prop:automatic-scale}, the zero extension of
$r_{\rm Rm}$ is one-Lipschitz on all of $Z$; in particular,
$r_{\rm Rm}(x)\le d(x,\Sing Z)$.  Choose a smooth
$\chi:[0,\infty)\to[0,1]$ which is zero on $[0,1]$, one on
$[2,\infty)$, and satisfies $|\chi'|\le2$, and put
\[
                    \eta_\varepsilon=
                    \chi(r_{\rm Rm}/\varepsilon).
\]
On a fixed compact set, $f$ is bounded because it is locally Lipschitz.
Thus the weighted and unweighted regular volumes are uniformly comparable
there.  By \eqref{sp:eq:Minkowski},
\begin{equation}\label{sp:eq:capacityestimate}
 \int_K|\nabla\eta_\varepsilon|^\gamma\,d\mathfrak m
 \le C_K\varepsilon^{-\gamma}
 \operatorname{Vol}_g(\{r_{\rm Rm}<2\varepsilon\}\cap K)
 \le C_{K,p,\gamma}\varepsilon^{p-\gamma}\longrightarrow0.
\end{equation}
Moreover, \(1-\eta_\varepsilon\) is supported in
\(\{r_{\rm Rm}<2\varepsilon\}\), and hence
\begin{equation}\label{sp:eq:capacity-L2}
 \int_K|1-\eta_\varepsilon|^2\,d\mathfrak m
 \le C_K\operatorname{Vol}_g
 \bigl(\{r_{\rm Rm}<2\varepsilon\}\cap K\cap\cR\bigr)
 \le C_{K,p}\varepsilon^p\longrightarrow0.
\end{equation}
Every compact subset of $\cR$ has a positive lower bound for
$r_{\rm Rm}$, so the convergence to one is locally uniform there.

\emph{Step 2: an internal smooth approximation preserving the collar.}
Choose \(K\Subset\operatorname{int}K'\) with \(K'\) compact.  The
global transition region is contained in the regular open set
\[
 \{\tfrac12\varepsilon<r_{\rm Rm}<3\varepsilon\}.
\]
Cover it by a locally finite family of regular coordinate balls and
mollify \(\eta_\varepsilon\) there using nonnegative mollifiers and a
convex smooth partition of unity.
Only finitely many balls meet \(K'\).  Leave the
function unchanged on
\(\{r_{\rm Rm}\le\tfrac34\varepsilon\}\) and on
\(\{r_{\rm Rm}\ge3\varepsilon\}\).  Convexity preserves the range
\([0,1]\) and the constant zero and one collars.  By density of smooth
functions in weighted \(W^{1,\gamma}\), choose the finitely many
relevant mollification scales so that the
\(W^{1,\gamma}(K',\mathfrak m)\)-error is at most
\(o(\varepsilon^{p-\gamma})\).  Denote the resulting function by
\(\chi_\varepsilon\).

Thus \(\chi_\varepsilon|_{\cR}\) is smooth, its zero extension is locally
Lipschitz on \(Z\), it vanishes on a neighbourhood of
\(K\cap\Sing Z\), and it equals one on every fixed compact subset of
\(\cR\) for all sufficiently small \(\varepsilon\).  Equations
\eqref{sp:eq:capacityestimate}--\eqref{sp:eq:capacity-L2} and the
chosen approximation imply
\[
 \int_K|\nabla\chi_\varepsilon|^\gamma\,d\mathfrak m\longrightarrow0,
 \qquad
 \int_K|1-\chi_\varepsilon|^2\,d\mathfrak m\longrightarrow0.
\]
H\"older's inequality, using \(\mathfrak m(K)<\infty\), then gives
\[
 \int_K|\nabla\chi_\varepsilon|^2\,d\mathfrak m
 \le \mathfrak m(K)^{1-2/\gamma}
 \left(\int_K|\nabla\chi_\varepsilon|^\gamma\,d\mathfrak m\right)^{2/\gamma}
 \longrightarrow0.
\]
Taking \(\varepsilon=\varepsilon_j\downarrow0\) proves
\eqref{sp:eq:capcutoff}--\eqref{sp:eq:q-capenergy}.

\emph{Step 3: radial exhaustion and diagonal approximation.}
For the global sequence, first note that \textup{(H1)} makes $Z$ a
complete locally compact length space.  Hence $Z$ is proper, so its
closed bounded balls are compact.  Fix $o\in\cR$ and choose a Lipschitz
distance cutoff $\theta_R$ which is one on $B(o,R)$, zero outside
$B(o,2R)$, and satisfies $|\nabla\theta_R|\le2/R$.  Multiply it by one
of the preceding smooth singular cutoffs $\chi_\varepsilon$ on
$\overline{B(o,2R)}$.  The product
\(v_{R,\varepsilon}=\theta_R\chi_\varepsilon\) is compactly supported a
positive distance from $\Sing Z$.  Range-preserving smoothing with
nonnegative local mollifiers and a convex partition of unity produces a
function in $C_c^\infty(\cR)$ with values in $[0,1]$ and arbitrarily
small $L^\infty$ and $W^{1,2}$ error.  Before this approximation, the
energy of $v_{R,\varepsilon}$ is bounded by twice
the singular-cutoff energy plus
\[
              \frac{8}{R^2}\mathfrak m(B(o,2R))\le\frac8{R^2}.
\]
Its \(L^2\) error also satisfies
\[
 \|1-v_{R,\varepsilon}\|_2^2
 \le 2\int_{B(o,R)}|1-\chi_\varepsilon|^2\,d\mathfrak m
     +2\mathfrak m\bigl(Z\setminus B(o,R)\bigr).
\]
Choose $R_j\to\infty$ so that
$\mathfrak m(Z\setminus B(o,R_j))\to0$.  For each fixed $R_j$, choose
$\varepsilon_j\le j^{-1}$ so that the singular-cutoff $L^2$ error and
energy on $B(o,2R_j)$ are both at most $2^{-j}$.  Smooth
$v_{R_j,\varepsilon_j}$ as above to obtain
$\beta_j\in C_c^\infty(\cR)$, $0\le\beta_j\le1$, with
\[
 \|\beta_j-v_{R_j,\varepsilon_j}\|_{L^\infty}
 +\|\beta_j-v_{R_j,\varepsilon_j}\|_{W^{1,2}(\mathfrak m)}
 \le2^{-j}.
\]
Together with the radial energy bound $8/R_j^2$, this proves both norm
limits in \eqref{sp:eq:globalcutoff}.  For every fixed $x\in\cR$, positivity
of $r_{\rm Rm}(x)$ and $R_j\to\infty$, $\varepsilon_j\to0$ give
$v_{R_j,\varepsilon_j}(x)=1$ for all large $j$, so the $L^\infty$
approximation gives $\beta_j(x)\to1$.  Since
$\mathfrak m(\Sing Z)=0$, the asserted almost-everywhere convergence
follows as well.
\end{proof}

The local cutoffs above remove the singular set on a fixed compact set.
The global cutoffs \(\beta_j\) additionally cut off spatial infinity.
The probability normalization \(\mathfrak m(Z)=1\) is what makes the
radial energy cost tend to zero.

Define on the complex Hilbert space $L^2(Z,\mathfrak m)$
\begin{equation}\label{sp:eq:E0}
 \cE_0(\phi,\psi)=
 \int_{\cR}\langle\nabla\phi,\nabla\overline\psi\rangle\,d\mathfrak m,
 \qquad \phi,\psi\in C_c^\infty(\cR).
\end{equation}
Here the Riemannian tangent-vector pairing is extended complex
bilinearly, and we
use the convention
\[
 \langle v,w\rangle_{L^2(\mathfrak m)}
 =\int_Zv\overline w\,d\mathfrak m;
\]
thus the \(L^2\) inner product is linear in its first argument.
Because $\mathfrak m(\Sing Z)=0$, we identify
$L^2(Z,\mathfrak m)=L^2(\cR,\mathfrak m)$.  On the smooth second-countable
manifold $\cR$, endowed with its smooth positive density,
$C_c^\infty(\cR)$ is dense in $L^2(\cR,\mathfrak m)$.  The gradient
operator on $C_c^\infty(\cR)$ is closable: if $\phi_j\to0$ and
$\nabla\phi_j\to V$ in $L^2$, testing against compactly supported smooth
vector fields gives $V=0$ distributionally.  Hence $\cE_0$ is closable.
Denote its closure by
$(\cE,D(\cE))$ and its nonnegative self-adjoint generator by $A$:
\begin{equation}\label{sp:eq:representation}
 \cE(u,v)=\langle Au,v\rangle_{L^2}
 \qquad(u\in\Dom A,\ v\in D(\cE)).
\end{equation}
Equivalently,
\[
 \Dom A=\bigl\{u\in D(\cE):\text{there is }h\in L^2
 \text{ such that }\cE(u,v)=\langle h,v\rangle
 \text{ for every }v\in D(\cE)\bigr\},
 \qquad Au=h.
\]
On $C_c^\infty(\cR)$, $A=-\Delta_f$, where, in the real normalization
fixed in \eqref{sp:eq:normalization-conventions},
$\Delta_f=\Delta_g-\langle\nabla^g f,\nabla^g\!\cdot\rangle_g$.
Thus \(A\) is nonnegative and the forward weighted heat semigroup is
\(e^{-tA}\), \(t\ge0\).

\begin{lemma}
\label{sp:lem:kernel}
The constant function one lies in $D(\cE)$ and $\cE(1,1)=0$.
Moreover, $\cR$ is connected and
\begin{equation}\label{sp:eq:kernel}
                              \ker A=\C1.
\end{equation}
\end{lemma}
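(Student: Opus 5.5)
The plan has three steps. First, show that the constant function lies in the form domain. Second, identify $\ker A$ with the functions of zero energy. Third, prove such functions are locally constant on a connected $\cR$.

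\emph{Step 1: $1\in D(\cE)$.} Use the global cutoffs $\beta_j\in C_c^\infty(\cR)$ of \cref{sp:lem:capacity}. By \eqref{sp:eq:globalcutoff}, $\beta_j\to1$ in $L^2(\mathfrak m)$. The bilinearity of the form gives
\[
\cE_0(\beta_j-\beta_k,\beta_j-\beta_k)\le 2\int|\nabla\beta_j|^2\,d\mathfrak m+2\int|\nabla\beta_k|^2\,d\mathfrak m\longrightarrow0 .
\]
So $(\beta_j)$ is Cauchy in the graph norm of $\cE_0$. Closedness of $\cE$ then places the limit $1$ in $D(\cE)$, with $\cE(1,1)=\lim_j\int|\nabla\beta_j|^2\,d\mathfrak m=0$. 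The probability normalization $\mathfrak m(Z)=1$ enters here through $1\in L^2$ and the radial cutoff cost.

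\emph{Step 2: $\ker A$ equals the zero-energy functions.} Let $u\in\ker A$. Then $\cE(u,u)=\langle Au,u\rangle=0$. Conversely, let $u\in D(\cE)$ with $\cE(u,u)=0$. Cauchy--Schwarz for the nonnegative form gives $\cE(u,v)=0$ for all $v\in D(\cE)$, so $u\in\Dom A$ with $Au=0$. Applied to $u=1$, this gives $\C1\subset\ker A$.

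\emph{Step 3: zero-energy functions are constant.} Let $u\in D(\cE)$ with $\cE(u,u)=0$, and choose $\phi_j\in C_c^\infty(\cR)$ with $\phi_j\to u$ in $L^2$ and $\nabla\phi_j\to V$ in $L^2$. By the closability argument, $V$ is the distributional gradient of $u$ on $\cR$, and $\int|V|^2\,d\mathfrak m=\cE(u,u)=0$. Hence $u$ has vanishing distributional gradient on the smooth manifold $\cR$, where the density $(4\pi)^{-n}e^{-f}$ is smooth and positive. So $u$ is a.e.\ constant on each connected component of $\cR$.

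It remains to show that $\cR$ is connected. This is asserted in (H1) of \cref{sp:prop:limiting-package}, which comes from Bamler's structure theory. One can also see it from (H7): $Z$ is a normal complex space whose metric and analytic topologies agree, and $\cR=\Reg Z$. The regular locus of a connected normal complex space is connected, because its complement is an analytic subset of codimension at least two and so does not disconnect. The space $Z$ itself is connected as a length space. I expect this connectedness input to be the only genuine point of the argument, and I will simply invoke (H1) for it, with the normality remark as a backup.

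Since $\mathfrak m(\Sing Z)=0$, it follows that $u$ is constant $\mathfrak m$-a.e., which proves \eqref{sp:eq:kernel}.
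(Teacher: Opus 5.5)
Your proposal is correct and follows essentially the same argument as the paper. Both show that the global cutoffs $\beta_j$ are form-Cauchy with limit $1$, use Cauchy--Schwarz for the closed form and the representation theorem to get $A1=0$, and conclude that a zero-energy function is a.e.\ constant because $\cR$ is connected by \textup{(H1)}. Your extra remark that the regular locus of a connected normal space is connected matches the reasoning the paper uses in Step~1 of the singular product theorem, but it is not needed here.
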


\begin{proof}
By \eqref{sp:eq:globalcutoff},
\[
 \|\beta_j-\beta_k\|_2\le
 \|1-\beta_j\|_2+\|1-\beta_k\|_2,
 \qquad
 \cE_0(\beta_j-\beta_k,\beta_j-\beta_k)^{1/2}
 \le\|\nabla\beta_j\|_2+\|\nabla\beta_k\|_2.
\]
Thus $(\beta_j)$ is form-Cauchy with limit $1$, so
$1\in D(\cE)$ and $\cE(1,1)=0$.  Cauchy--Schwarz for the closed form
then gives $\cE(1,v)=0$ for every $v\in D(\cE)$.  By the representation
theorem, $1\in\Dom A$ and $A1=0$.

By \textup{(H1)}, the regular locus \(\cR\) is connected.  If
\(u\in\ker A\), then
\[
 0=\langle Au,u\rangle=\cE(u,u)
   =\int_{\cR}|\nabla u|^2\,d\mathfrak m.
\]
Thus $u$ is almost everywhere constant on the connected regular locus,
and hence is a constant element of $L^2(Z,\mathfrak m)$.  Together with
$A1=0$, this proves \eqref{sp:eq:kernel}.
\end{proof}

\begin{remark}[Scope of the Friedrichs construction]
The construction supplies the Friedrichs generator, includes constants
in its form domain, and permits first-order integration by parts across
the singular set.  It does not assert essential self-adjointness of the
minimal second-order operator, uniqueness among Markovian extensions,
or equality with every maximal Sobolev domain.  None of these stronger
properties is used.
\end{remark}

Thus the weighted drift Laplacian has a canonical self-adjoint
realization.  We next obtain its sharp spectral gap directly from the
original smooth Ricci flow.

\medskip
\noindent\emph{The sharp tangent-space Poincar\'e inequality.}\par\smallskip

The intrinsic spectral result proved in this subsection is the following.

\begin{theorem}
\label{sp:thm:spectral-rigidity}
Let \(\xi=(\mu_t)_{t<T}\) be a fixed limiting point of a compact
finite-time K\"ahler--Ricci flow of complex dimension \(n\ge2\), and
let \((Z,d,\cR,g,J,f,\mathfrak m)\) be the time-\((-1)\) model of a
tangent space at \(\xi\).  Thus \(Z\) has
the finite-time tangent-space package of
\cref{sp:prop:limiting-package}: it is the intrinsic completion
of its K\"ahler regular locus, is normal and klt, carries the normalized
shrinker measure and complete soliton flow, and has smooth
metric--complex--density convergence and the local ambient metric lower
bound.  Let \(A\) be the nonnegative
self-adjoint operator associated with the closure of
\[
 \cE_0(\phi,\psi)=
 \int_{\cR}\langle\nabla\phi,\nabla\overline\psi\rangle\,d\mathfrak m,
 \qquad \phi,\psi\in C_c^\infty(\cR).
\]
Then
\begin{equation}\label{sp:eq:globalgap}
                     \sigma(A)\subset\{0\}\cup[1/2,\infty),
                     \qquad\ker A=\C1.
\end{equation}
If \(u:Z\to\C\) is holomorphic and globally \(d\)-Lipschitz, then
\(u\in L^2(Z,\mathfrak m)\), and
\begin{equation}\label{sp:eq:halfmode}
 A\left(u-\int_Zu\,d\mathfrak m\right)
 =\frac12\left(u-\int_Zu\,d\mathfrak m\right),
\end{equation}
and
\begin{equation}\label{sp:eq:hesszero}
          \nabla^2\Re u=\nabla^2\Im u=0\qquad\text{on }\cR.
\end{equation}
Here \(A\) is the canonical Friedrichs form generator; no
essential-self-adjointness assertion is made for the minimal operator on
the incomplete regular locus.
\end{theorem}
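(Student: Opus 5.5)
The plan is to prove the gap \eqref{sp:eq:globalgap} first, by transplanting a Poincar\'e inequality from the smooth flow. Then I show that a Lipschitz holomorphic function, after subtracting its mean, lies exactly in the $1/2$-eigenspace. Finally, the Hessian vanishing follows from the equality case of a weighted Bochner identity.

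\emph{Step 1: the gap.} For a point conjugate heat kernel on a compact Ricci flow, the Hein--Naber/Bamler Poincar\'e inequality at backward scale $\tau$ reads
\[
\int|\phi-\textstyle\int\phi\,d\nu|^2\,d\nu\le 2\tau\int|\nabla\phi|^2\,d\nu .
\]
Using the point-kernel realization \eqref{sp:eq:point-kernel-diagonal} of the tangent space, at time $-1$ this holds with $\tau=1+\lambda_i\to1$. Take $\phi\in C_c^\infty(\cR)$ and transplant it by the regular convergence maps $\psi_i$ of \textup{(H3)}, so that supports stay in the charts. Smooth convergence of metrics and densities then gives
\[
\int|\phi-\bar\phi|^2\,d\mathfrak m\le 2\,\cE_0(\phi,\phi).
\]
By closure, this extends to $D(\cE)$. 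Since $1\in D(\cE)$ by \cref{sp:lem:kernel}, the min--max principle on $(\C1)^\perp$ gives $\sigma(A)\setminus\{0\}\subset[1/2,\infty)$. Together with $\ker A=\C1$, this proves \eqref{sp:eq:globalgap}.

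\emph{Step 2: $u$ is a half-mode.} Since $u$ is Lipschitz and $\mathfrak m$ has finite second moment by \textup{(H2)}, we have $u\in L^2$. On $\cR$ the function $u$ is holomorphic, so $\Delta_gu=0$ and $-\Delta_fu=\langle\nabla f,\nabla u\rangle$. By \textup{(H8)}, $\nabla^{1,0}f$ is a holomorphic vector field, so $w:=\langle\nabla f,\nabla u\rangle$ is again holomorphic. By \eqref{sp:eq:preliminary-flow-growth}, $w$ has at most quadratic growth, so $w\in L^2$.

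Using the cutoffs $\eta_j,\beta_j$ of \cref{sp:lem:capacity}, I will show that $u\in\Dom A$ and $Au=w$. The point is that $|\nabla u|$ is bounded, so the error $\int|\nabla\beta_j||\nabla u||\psi|$ tends to zero.

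Next, the complete soliton flow $\Theta$ of \textup{(H8)} acts by biholomorphisms, and on holomorphic functions $e^{-tA}$ should coincide with composition along the backward flow of $\nabla f$. Write $u-\bar u=P_{1/2}u+P_{>1/2}u$ with spectral projections. The gap forces
\[
\|e^{-tA}P_{>1/2}u\|\le e^{-\lambda t}\|P_{>1/2}u\|,\qquad \lambda>1/2 .
\]
Conversely, self-similarity and the global Lipschitz bound should show that $u\circ\Theta$ changes at most linearly in the spatial scale. This is the statement that degree is at most one, and it bounds the $t\to-\infty$ growth of the backward orbit. I will make this precise with a spectral cutoff $\mathbf 1_{(1/2+\delta,\infty)}(A)$ applied to the orbit. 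Comparing the two rates should give $P_{>1/2}u=0$, hence \eqref{sp:eq:halfmode}.

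This comparison is the main obstacle. It must be done on the incomplete $\cR$ using only form-domain integration by parts. Identifying the backward semigroup orbit with the flow pullback requires uniqueness of $L^2$ solutions, which I would obtain from the Friedrichs form and the $\beta_j$ cutoffs.

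\emph{Step 3: Hessians vanish.} Let $v=\Re(u-\bar u)$, and similarly for the imaginary part; $v$ is real pluriharmonic with $Av=\tfrac12v$. The weighted Bochner identity with $\Rc+\nabla^2f=\tfrac12g$ gives
\[
\int(\Delta_fv)^2=\int|\nabla^2v|^2+\tfrac12\int|\nabla v|^2 .
\]
I will justify it with the $W^{1,\gamma}$ cutoffs ($\gamma\in(2,4)$), local Caccioppoli bounds for $\nabla^2v$, and bounded $|\nabla v|$. Substituting $\Delta_fv=-\tfrac12v$ and $\int|\nabla v|^2=\tfrac12\int v^2$ yields $\int|\nabla^2v|^2=\tfrac14\int v^2-\tfrac14\int v^2=0$. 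This gives \eqref{sp:eq:hesszero} on the connected regular locus $\cR$.
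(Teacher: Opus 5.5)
Your proposal is correct and follows the paper's proof in all three stages. The heat-kernel Poincar\'e inequality is transplanted through smooth metric--density convergence to give the gap; the ancient orbit $U(s)=u\circ\Psi_{-s}$ ($\Psi_a$ the flow of $\nabla f$), with $\|U(s)\|_{L^2}\le Ce^{-s/2}$ coming from $|\nabla f|\le\tfrac12 r+B$, is identified with the forward Friedrichs semigroup by energy-solution uniqueness and then eliminated by the cutoff $\mathbf 1_{[b,\infty)}(A)$ with $b>1/2$; and Bochner plus capacity cutoffs gives the Hessian vanishing.

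The only deviations are inessential. The paper integrates the pointwise identity $\tfrac12\Delta_f|\nabla h|^2=|\nabla^2h|^2$ against $\beta^2$, so ordinary $W^{1,2}$ cutoffs and $|\nabla h|\le L$ suffice, without Caccioppoli or $W^{1,\gamma}$ input. Your auxiliary step $u\in\Dom A$ is never needed; if you keep it, note that $\langle\nabla f,\nabla u\rangle$ grows only linearly, which is what puts it in $L^2$ given just a finite second moment.
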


The \(L^2\) assertion follows from the finite second moment of
\(\mathfrak m\): a globally Lipschitz function has at most linear
growth.  In particular, every weighted mean appearing above is finite.

For a probability measure \(\mu\) and \(\phi\in L^2(\mu)\), set
\begin{equation}\label{eq:function-variance}
 \Var_\mu(\phi)
 :=\int\left|\phi-\int\phi\,d\mu\right|^2d\mu.
\end{equation}
We shall use the elementary identities
\[
 \Var_\mu(\phi)
 =\int|\phi|^2\,d\mu-\left|\int\phi\,d\mu\right|^2
 =\inf_{c\in\C}\int|\phi-c|^2\,d\mu,
\]
where the infimum is attained at \(c=\int\phi\,d\mu\).

We next prove
\begin{equation}\label{sp:eq:target-poincare-guide}
 \Var_{\mathfrak m}(\phi)
 \le2\int_{\cR}|\nabla\phi|^2\,d\mathfrak m.
\end{equation}
For \(A=-\Delta_f\), this is the spectral lower bound \(A\ge1/2\) on
the orthogonal complement of the constants.  The proof follows the
concrete chain
\[
 \begin{gathered}
 \text{point conjugate heat-kernel limits}
 \Longrightarrow\text{fixed limiting point}\\
 \Longrightarrow\text{time-\((-1)\) tangent-space model}
 \Longrightarrow\text{Friedrichs spectral gap}.
 \end{gathered}
\]
The sharp constant comes from Ricci-flow heat propagation, not from a
formal integration by parts across \(\Sing Z\).

Let $P_{a,b}$ denote the forward heat propagator from time $a$ to time
$b$ on a smooth compact Ricci flow satisfying
\eqref{eq:standard-real-metric}.  Thus
$u(s)=P_{a,s}\phi$ solves $\partial_su=\Delta_{g(s)}u$.

\begin{lemma}
\label{sp:lem:pointpoincare}
Let $d\nu_{x,b;a}$ be the conjugate heat-kernel probability at time $a$
with pole $(x,b)$, where $a<b<T$.  Then every smooth real or complex
function $\phi$ satisfies
\begin{equation}\label{sp:eq:pointP}
 \int\left|\phi-\int\phi\,d\nu_{x,b;a}\right|^2d\nu_{x,b;a}
 \le2(b-a)\int|\nabla\phi|_{g(a)}^2\,d\nu_{x,b;a}.
\end{equation}
\end{lemma}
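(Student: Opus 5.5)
This is the standard heat-kernel Poincaré inequality along Ricci flow (Hein--Naber, Bamler). We run the semigroup interpolation argument, using the gradient estimate \(|\nabla P_{s,b}\phi|^2\le P_{s,b}|\nabla\phi|^2\).

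\emph{Step 1: reduction and set-up.} By splitting into real and imaginary parts, and since both sides of \eqref{sp:eq:pointP} are additive over them, it suffices to treat real smooth \(\phi\). Put \(u(s)=P_{a,s}\phi\) for \(s\in[a,b]\), so \(\partial_su=\Delta_{g(s)}u\). The conjugate heat measures satisfy \(\int h\,d\nu_{x,b;s}=\int (P_{s,b}h)\,d\nu_{x,b;b}=(P_{s,b}h)(x)\) for any smooth \(h\). Consider
\[
 \Phi(s):=\int u(s)^2\,d\nu_{x,b;s},\qquad s\in[a,b].
\]
Its endpoint values are \(\Phi(a)=\int\phi^2\,d\nu_{x,b;a}\) and \(\Phi(b)=u(b)(x)^2=\bigl(\int\phi\,d\nu_{x,b;a}\bigr)^2\). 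Hence the left side of \eqref{sp:eq:pointP} equals \(\Phi(a)-\Phi(b)\).

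\emph{Step 2: differentiate \(\Phi\).} This uses the same duality computation as in the proof of \cref{surf:lem:base-localization}: for smooth \(w(s)\),
\[
 \frac{d}{ds}\int w\,d\nu_{x,b;s}=\int(\partial_s-\Delta_{g(s)})w\,d\nu_{x,b;s}.
\]
Since \((\partial_s-\Delta)u^2=-2|\nabla u|^2\), we get
\[
 \Phi(a)-\Phi(b)=2\int_a^b\!\int|\nabla u(s)|^2_{g(s)}\,d\nu_{x,b;s}\,ds.
\]

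\emph{Step 3: gradient monotonicity.} Under \(\partial_tg=-2\Rc\), a solution of the heat equation satisfies Bochner's identity with the Ricci terms cancelling exactly:
\[
 (\partial_s-\Delta)|\nabla u|^2=-2|\nabla^2u|^2\le0.
\]
Applying the duality formula to \(w=|\nabla u|^2\) shows that \(s\mapsto\int|\nabla u(s)|^2d\nu_{x,b;s}\) is nonincreasing. So each integrand in Step 2 is at most its value at \(s=a\), namely \(\int|\nabla\phi|^2_{g(a)}d\nu_{x,b;a}\). Integrating over \([a,b]\) gives the factor \(2(b-a)\), which is \eqref{sp:eq:pointP}.

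\emph{Main obstacle.} The only delicate point is justifying differentiation under the integral near \(s=b\), where \(\nu_{x,b;s}\to\delta_x\). Since everything is smooth on the compact manifold on \([a,b']\) for \(b'<b\), we first prove the identity on \([a,b']\). We then let \(b'\to b\), using continuity of \(u\) and weak convergence \(\nu_{x,b;b'}\to\delta_x\). The pointwise Bochner cancellation is a standard computation in the \(\partial_tg=-2\Rc\) normalization fixed in \eqref{eq:standard-real-metric}.
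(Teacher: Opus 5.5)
Your proof is correct and is essentially the paper's argument: your $\Phi(s)=\int u(s)^2\,d\nu_{x,b;s}$ is exactly the paper's $G(s)=P_{s,b}(u(s)^2)(x)$, and your identity $\Phi(a)-\Phi(b)=2\int_a^b\int|\nabla u(s)|^2\,d\nu_{x,b;s}\,ds$ is the paper's integrated formula for $G'$. The only cosmetic difference is in the gradient step: the paper bounds $|\nabla u(s)|^2\le P_{a,s}|\nabla\phi|^2$ pointwise by the maximum principle and then applies the semigroup law, while you integrate the same Bochner inequality $(\partial_s-\Delta)|\nabla u|^2=-2|\nabla^2u|^2\le0$ against the conjugate kernel to get monotonicity of $\int|\nabla u(s)|^2\,d\nu_{x,b;s}$, which is the dual form of the same estimate.
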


\begin{proof}
It is enough to prove the real assertion and then apply it to real and
imaginary parts, or to use the Hermitian polarization.  Put
 $u(s)=P_{a,s}\phi$ and
\[
                    G(s)=P_{s,b}(u(s)^2)(x).
\]
The derivative of a propagator with respect to its lower time is
\[
 \frac d{ds}P_{s,b}\psi(s)=
 P_{s,b}\bigl((\partial_s-\Delta_{g(s)})\psi(s)\bigr).
\]
Consequently
\begin{equation}\label{sp:eq:Gprime}
                         G'(s)=-2P_{s,b}|\nabla u(s)|^2(x).
\end{equation}
Under \eqref{eq:standard-real-metric}, the Ricci terms cancel in the
gradient evolution:
\begin{equation}\label{sp:eq:gradientheat}
 (\partial_s-\Delta_{g(s)})|\nabla u|^2=-2|\nabla^2u|^2\le0.
\end{equation}
The maximum principle gives
\[
                 |\nabla u(s)|^2\le P_{a,s}|\nabla\phi|^2.
\]
Integrating \eqref{sp:eq:Gprime}, and using the semigroup law, yields
\begin{align*}
 P_{a,b}(\phi^2)(x)-(P_{a,b}\phi(x))^2
 &=2\int_a^bP_{s,b}|\nabla P_{a,s}\phi|^2(x)\,ds\\
 &\le2(b-a)P_{a,b}|\nabla\phi|^2(x).
\end{align*}
By forward--conjugate duality this is \eqref{sp:eq:pointP}.
\end{proof}

\begin{lemma}
\label{sp:lem:tangentP}
For the fixed limiting point \(\xi=(\mu_t)_{t<T}\) in
\cref{sp:thm:spectral-rigidity}, every $t<T$ and every smooth $\phi$ on
$X$ satisfies
\begin{equation}\label{sp:eq:limiting-point-P}
 \Var_{\mu_t}(\phi)
 \le2(T-t)\int_X|\nabla\phi|_{g(t)}^2\,d\mu_t.
\end{equation}
If \((Z,d,\cR,g,\mathfrak m)\) is the time-\((-1)\) model of any tangent
space at \(\xi\), then
\begin{equation}\label{sp:eq:tangentP}
 \int_Z\left|\phi-\int_Z\phi\,d\mathfrak m\right|^2d\mathfrak m
 \le2\int_{\cR}|\nabla\phi|_g^2\,d\mathfrak m
 \qquad(\phi\in C_c^\infty(\cR)).
\end{equation}
\end{lemma}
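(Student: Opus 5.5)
The plan has two parts: first pass \cref{sp:lem:pointpoincare} to the limiting point, then pass the resulting inequality to the tangent space by rescaling and smooth regular convergence.

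\emph{First inequality.} By property \textup{(P1)}, choose poles \((x_j,t_j)\) with \(t_j\nearrow T\) such that \(\nu_{x_j,t_j;t}\rightharpoonup\mu_t\) at each fixed \(t<T\). For smooth \(\phi\) on the compact manifold \(X\), apply \cref{sp:lem:pointpoincare} with \(a=t\) and \(b=t_j\):
\[
 \int\phi^2\,d\nu_{x_j,t_j;t}-\Bigl(\int\phi\,d\nu_{x_j,t_j;t}\Bigr)^2
 \le2(t_j-t)\int|\nabla\phi|^2_{g(t)}\,d\nu_{x_j,t_j;t}.
\]
The functions \(\phi\), \(\phi^2\) and \(|\nabla\phi|^2_{g(t)}\) are continuous on \(X\), so weak convergence passes every term to the limit. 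Using \(t_j\to T\), this gives \eqref{sp:eq:limiting-point-P}. For complex \(\phi\), apply the estimate to the real and imaginary parts and add.

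\emph{Rescaling.} In the rescaled flow \(g_i(-1)=\tau_i^{-1}g(T-\tau_i)\) we have \(|\nabla\phi|^2_{g_i(-1)}=\tau_i|\nabla\phi|^2_{g(T-\tau_i)}\). Hence \eqref{sp:eq:limiting-point-P} at \(t=T-\tau_i\) becomes
\[
 \Var_{\mu_{i,-1}}(\phi)\le2\int|\nabla\phi|^2_{g_i(-1)}\,d\mu_{i,-1},
\]
which holds uniformly in \(i\), with constant exactly \(2\).

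\emph{Transfer to the tangent space.} Fix \(\phi\in C_c^\infty(\cR)\) with support in a compact \(K\Subset\cR\). Let \(\psi_i:K'\to X\) be the time-\((-1)\) regular convergence maps on a compact neighbourhood \(K'\) of \(K\), which by \textup{(H3)} are diffeomorphisms onto their images with \(\psi_i^*g_i(-1)\to g\) smoothly. Also by \textup{(H3)}, the densities of \(\mu_{i,-1}\) converge smoothly to \((4\pi)^{-n}e^{-f}\). Define \(\phi_i=\phi\circ\psi_i^{-1}\) on \(\psi_i(K')\), extended by zero; this is smooth on \(X\) because \(\operatorname{supp}\phi\Subset K'\). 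Smooth convergence then gives
\[
 \int\phi_i\,d\mu_{i,-1}\to\int\phi\,d\mathfrak m,\qquad
 \int|\phi_i|^2\,d\mu_{i,-1}\to\int|\phi|^2\,d\mathfrak m,
\]
\[
 \int|\nabla\phi_i|^2_{g_i(-1)}\,d\mu_{i,-1}\to\int|\nabla\phi|^2_g\,d\mathfrak m.
\]
Taking limits in the rescaled inequality, and using \(\Var=\int|\phi|^2-|\int\phi|^2\), yields \eqref{sp:eq:tangentP}.

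\emph{Main obstacle.} The delicate step is the identification of measures in the transfer. The convergence of \(\mu_{i,-1}\) to \(\mathfrak m\) must be the smooth pushforward density convergence supplied by \textup{(H3)} (via \cite[Theorem~9.31(a)]{BamlerCompactness}) in gauges compatible with the fixed time \(-1\). No mass of \(\mu_{i,-1}\) on \(\psi_i(K')\) may be lost. Because \(\phi_i\) vanishes outside \(\psi_i(K)\), only convergence on that compact regular chart is needed; behaviour near \(\Sing Z\) or at infinity plays no role. This is why the statement is restricted to \(\phi\in C_c^\infty(\cR)\).
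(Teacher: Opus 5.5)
Your proposal is correct and follows the paper's proof essentially step for step. You pass \cref{sp:lem:pointpoincare} to \(\mu_t\) through the point-kernel realization \textup{(P1)} on the compact manifold, rescale to obtain the uniform constant \(2\) at time \(-1\), and transfer to the tangent space through a compactly supported pushforward \(\phi_i\) using smooth metric and density convergence on a regular chart. The only difference is cosmetic: the paper multiplies by a cutoff \(\chi\) equal to one near \(\operatorname{supp}\phi\) when defining \(\phi_i\). That cutoff is redundant, and your direct zero extension makes it unnecessary.
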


\begin{proof}
By property \textup{(P1)}, $\mu_t$ is the weak limit, at the fixed time
$t$, of point conjugate kernels with pole times $b_j\nearrow T$.
Apply \cref{sp:lem:pointpoincare} with $a=t$, and pass to the limit.  Since
$X$ is compact and $\phi$ and $|\nabla\phi|^2$ are continuous, all three
integrals converge.  This proves \eqref{sp:eq:limiting-point-P}.

Set $t_i=T-\tau_i$.  Since
$g_i(-1)=\tau_i^{-1}g(t_i)$,
\[
       |\nabla\phi|_{g(t_i)}^2
       =\tau_i^{-1}|\nabla\phi|_{g_i(-1)}^2.
\]
Thus \eqref{sp:eq:limiting-point-P} becomes
\begin{equation}\label{sp:eq:scaledP}
 \Var_{\mu_{i,-1}}(\phi)
 \le2\int_X|\nabla\phi|_{g_i(-1)}^2\,d\mu_{i,-1}.
\end{equation}

Now fix $\phi\in C_c^\infty(\cR)$.  Choose regular exhaustion domains
$K_1\Subset\operatorname{int}K_2\Subset\cR$ with
$\supp\phi\Subset\operatorname{int}K_1$, and choose
$\chi\in C_c^\infty(\operatorname{int}K_2)$ equal to one on $K_1$.
Use regular convergence embeddings $\Phi_i:K_2\to X$ and define
$\phi_i$ on $\Phi_i(K_2)$ by
$\phi_i\circ\Phi_i=\chi\phi$.  Since $\chi\phi$ vanishes near
$\partial K_2$, its pushforward extends smoothly by zero to $X$ and
remains supported in the convergence region.  Smooth
metric and density convergence gives
\begin{align*}
 \int\phi_i\,d\mu_{i,-1}&\longrightarrow\int\phi\,d\mathfrak m,\\
 \int|\phi_i|^2\,d\mu_{i,-1}&\longrightarrow\int|\phi|^2\,d\mathfrak m,\\
 \int|\nabla\phi_i|_{g_i(-1)}^2\,d\mu_{i,-1}
 &\longrightarrow\int_{\cR}|\nabla\phi|_g^2\,d\mathfrak m.
\end{align*}
Passing \eqref{sp:eq:scaledP} to the limit proves \eqref{sp:eq:tangentP}.
Notice that the inequality comes from a uniform prelimit estimate; it is
not inferred from metric convergence alone.
\end{proof}

\begin{proposition}
\label{sp:prop:gap}
Inequality \eqref{sp:eq:tangentP} extends to every $u\in D(\cE)$, and the
Friedrichs generator satisfies \eqref{sp:eq:globalgap}.
\end{proposition}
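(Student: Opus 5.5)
The plan is to extend \eqref{sp:eq:tangentP} from \(C_c^\infty(\cR)\) to the form domain by density, and then read off the spectral statement from the min--max principle.

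\emph{Step 1: extension to \(D(\cE)\).} By construction \((\cE,D(\cE))\) is the closure of \(\cE_0\) on \(C_c^\infty(\cR)\). So for \(u\in D(\cE)\) there are \(\phi_j\in C_c^\infty(\cR)\) with \(\phi_j\to u\) in \(L^2(\mathfrak m)\) and \(\cE(\phi_j-u,\phi_j-u)\to0\). Then \(\int\phi_j\,d\mathfrak m\to\int u\,d\mathfrak m\) and \(\Var_{\mathfrak m}(\phi_j)\to\Var_{\mathfrak m}(u)\). Here we use that \(\mathfrak m\) is a probability measure, so \(L^2\) convergence controls the mean, and that \(\Var_{\mathfrak m}(\phi)=\|\phi\|_2^2-|\int\phi|^2\) is continuous in \(L^2\). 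Also \(\cE(\phi_j,\phi_j)\to\cE(u,u)\). Passing to the limit in \eqref{sp:eq:tangentP} for \(\phi_j\) gives
\[
 \Var_{\mathfrak m}(u)\le2\,\cE(u,u),\qquad u\in D(\cE).
\]
No cutoff or capacity argument is needed, because \eqref{sp:eq:tangentP} was proved on the core itself.

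\emph{Step 2: the gap.} By \cref{sp:lem:kernel}, \(1\in\Dom A\) with \(A1=0\). Since \(A\) is self-adjoint, the orthogonal complement \(H_0=\{1\}^\perp\) is invariant under \(A\), and \(D(\cE)\cap H_0\) is the form domain of the part \(A_0=A|_{H_0}\). The invariance follows from \(\langle Au,1\rangle=\cE(u,1)=0\). For \(u\in D(\cE)\cap H_0\), the mean \(\int u\,d\mathfrak m=\langle u,1\rangle\) vanishes, so Step 1 becomes \(\|u\|_2^2\le2\,\cE(u,u)=2\|A^{1/2}u\|_2^2\). This says the quadratic form of \(A_0\) is bounded below by \(\tfrac12\|u\|^2\). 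By the variational characterization of the bottom of the spectrum, \(\sigma(A_0)\subset[1/2,\infty)\). Since \(L^2=\C1\oplus H_0\) reduces \(A\), we get \(\sigma(A)=\{0\}\cup\sigma(A_0)\subset\{0\}\cup[1/2,\infty)\). The identity \(\ker A=\C1\) is already \eqref{sp:eq:kernel}.

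The only point needing care is the reduction argument: one must check that the spectral projection onto \(\C1\) commutes with \(A\). This holds because \(1\) is an eigenvector of the self-adjoint \(A\), so there is no real obstacle. All the substantive work, namely the uniform prelimit Poincar\'e inequality with the sharp constant, was already carried out in \cref{sp:lem:tangentP}.
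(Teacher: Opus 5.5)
Your proposal is correct and follows essentially the same route as the paper. You extend \eqref{sp:eq:tangentP} to $D(\cE)$ by form-norm approximation from the core $C_c^\infty(\cR)$, using continuity of the variance in $L^2(\mathfrak m)$. You then restrict to $\{1\}^\perp$, which reduces $A$ because $1\in\Dom A$ with $A1=0$, to get $\cE(u,u)\ge\tfrac12\|u\|_2^2$ and hence $\sigma(A)\subset\{0\}\cup[1/2,\infty)$, with $\ker A=\C1$ taken from \cref{sp:lem:kernel}. You simply spell out the reduction step and the variance continuity more explicitly than the paper's brief appeal to the representation and spectral theorems.
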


\begin{proof}
Form-core approximation and convergence in $L^2$ extend
\eqref{sp:eq:tangentP} to $D(\cE)$.  By \cref{sp:lem:kernel}, constants lie
in the form domain and are exactly the kernel.  On the orthogonal
complement of constants, the extended inequality reads
\begin{equation}\label{sp:eq:formgap}
                           \cE(u,u)\ge\frac12\|u\|_2^2.
\end{equation}
The representation theorem for closed forms and the spectral theorem
therefore put the spectrum of $A$ on this complement in
$[1/2,\infty)$.  Together with \eqref{sp:eq:kernel}, this is
\eqref{sp:eq:globalgap}.
\end{proof}

This proves the spectral-gap part of the rigidity theorem.  To extract
geometry from the gap, we now show that every globally Lipschitz
holomorphic function lies exactly in the lowest nonconstant eigenspace.

\paragraph{Lipschitz holomorphic functions and the half mode.}

Let \(u:Z\to\C\) be globally Lipschitz and holomorphic.  We want to
prove that its nonconstant part is a \(1/2\)-eigenfunction of
\(A=-\Delta_f\).  Since \(u\) is not compactly supported and \(\cR\) is
incomplete, this does not follow from a formal spectral expansion.
Instead, the complete soliton flow
\(\Psi_a=\operatorname{Fl}_{\nabla f}^a\) produces the ancient family
\[
                         U(s,x)=u(\Psi_{-s}x),
                         \qquad s\le0.
\]
Holomorphicity makes \(u\) harmonic on \(\cR\), so locally
\(\partial_sU=\Delta_fU=-AU\).  The proof proceeds in five stages:
control growth under \(\Psi_a\), verify the local equation, remove the
singular set and infinity in the energy identity, identify \(U\) with
the forward Friedrichs semigroup, and use the sharp gap followed by the
Bochner equality case.

\begin{lemma}
\label{sp:lem:flowgrowth}
There are $o\in\cR$ and $B<\infty$ such that, with
$r(x)=d(o,x)$,
\begin{equation}\label{sp:eq:fupper}
                         |\nabla f|(x)\le\frac12r(x)+B
                         \qquad(x\in\cR).
\end{equation}
If $\Psi_a=\operatorname{Fl}_{\nabla f}^a$, then for every $a\ge0$,
\begin{equation}\label{sp:eq:flowdistance}
                     r(\Psi_a x)+2B\le e^{a/2}(r(x)+2B)
\end{equation}
after increasing $B$ if necessary.
\end{lemma}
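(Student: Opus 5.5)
The first estimate \eqref{sp:eq:fupper} is exactly \eqref{sp:eq:preliminary-flow-growth}, recorded in the proof of \cref{sp:prop:limiting-package}. That proof fixes $o\in\cR$ and sets $B=\sqrt{\widehat f(o)}$, where $\widehat f=f+C_0=R+|\nabla f|^2\ge|\nabla f|^2$. It then integrates the bound $|\nabla\sqrt{\widehat f+\varepsilon}|\le\frac12$ along almost-minimizing curves in $\cR$, which exist by the intrinsic-distance assertion in \textup{(H1)}. So we only quote it, possibly enlarging $B$ so that $B\ge1$.

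For \eqref{sp:eq:flowdistance}, fix $x\in\cR$ and set $\gamma(a)=\Psi_a x$. By \textup{(H8)} the flow is complete and preserves $\cR$, so $\gamma$ is a smooth curve in $\cR$ defined for all $a\ge0$. Its speed is $|\gamma'(a)|=|\nabla f|(\gamma(a))$. Since $d|_{\cR}$ is the intrinsic length distance, the function $h(a):=r(\gamma(a))$ is locally Lipschitz, and for $a'>a$
\[
 h(a')-h(a)\le d(\gamma(a),\gamma(a'))\le\int_a^{a'}|\nabla f|(\gamma(\sigma))\,d\sigma\le\int_a^{a'}\Bigl(\tfrac12h(\sigma)+B\Bigr)d\sigma .
\]
Put $k=h+2B$. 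Then $k'\le\frac12k$ almost everywhere, in the sense of upper Dini derivatives. A Gr\"onwall argument applied to $e^{-a/2}k(a)$, which is nonincreasing, gives $k(a)\le e^{a/2}k(0)$. This is \eqref{sp:eq:flowdistance}, with the same $B$ (or the enlarged one).

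The only delicate point is that $r$ is measured with the completed distance $d$ on $Z$, while the curve lives in $\cR$. The estimate uses only the inequality $d\le$ (length of a curve in $\cR$), which holds because $d|_{\cR}$ is the induced length distance. It does not require geodesics to avoid $\Sing Z$. A second point is that $\Psi_a$ is the flow of $\nabla f$ for the time-$(-1)$ metric $g$, so the bound \eqref{sp:eq:fupper} applies pointwise along $\gamma$ without any rescaling. If one also wants the statement on all of $Z$, one extends by continuity of the biholomorphic extension in \textup{(H8)} and density of $\cR$. The main obstacle is only this bookkeeping of Dini derivatives, and no curvature input is needed.
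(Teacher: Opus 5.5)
Your proposal is correct and follows the paper's proof. You quote \eqref{sp:eq:fupper} from the proof of \cref{sp:prop:limiting-package}, use \textup{(H8)} for completeness of the flow, bound the growth of $r$ along an integral curve by $|\nabla f|$ using the intrinsic-length property, and apply Gr\"onwall to $r+2B$; your integral form of the Dini bound only spells out the same step, and no enlargement of $B$ is actually needed.
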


\begin{proof}
Estimate \eqref{sp:eq:fupper} is
\eqref{sp:eq:preliminary-flow-growth}, established in the proof of
\cref{sp:prop:limiting-package}.  By
\cref{sp:prop:limiting-package}\textup{(H8)}, \(\Psi_a\) is defined for
every \(a\in\mathbb R\).  Along an integral curve of
\(\nabla f\), the upper Dini derivative of \(r\) is at most
\(|\nabla f|\).  Hence
\[
                 \frac d{da}(r(\Psi_ax)+2B)
                 \le\frac12(r(\Psi_ax)+2B).
\]
Gronwall's inequality proves \eqref{sp:eq:flowdistance}.
\end{proof}

\begin{lemma}
\label{sp:lem:energysolution}
Write \(D(\cE)^*\) for the continuous anti-dual of the form domain.
With the preceding first-slot-linear convention, the standard Gelfand
triple is
\[
                 D(\cE)\subset L^2(\mathfrak m)\subset D(\cE)^*.
\]
Let $u:Z\to\C$ be holomorphic and globally Lipschitz.  For $s\le0$ set
\begin{equation}\label{sp:eq:Udef}
                           U(s,x)=u(\Psi_{-s}x).
\end{equation}
Then
\begin{equation}\label{sp:eq:Ugrowth}
                         \|U(s)\|_{L^2(\mathfrak m)}\le Ce^{-s/2},
                         \qquad s\le0,
\end{equation}
and on every finite interval $[s_0,s_1]\subset(-\infty,0]$,
\begin{equation}\label{sp:eq:energyspace}
 U\in L^2([s_0,s_1];D(\cE)),\qquad
 \partial_sU+A_{\rm form}U=0\quad\text{in }D(\cE)^*,
\end{equation}
where $\langle A_{\rm form}v,\phi\rangle=\cE(v,\phi)$ is the form
operator.  On $\Dom A$, $A_{\rm form}$ agrees with the self-adjoint
operator $A$.
Moreover $s\mapsto U(s)$ is continuous into $L^2(\mathfrak m)$ and
\begin{equation}\label{sp:eq:semigroupidentity}
                              U(0)=e^{sA}U(s),
                              \qquad s<0.
\end{equation}
\end{lemma}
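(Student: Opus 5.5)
The plan has four steps: growth, a localized energy identity, identification with the weak solution, and uniqueness to get the semigroup identity.

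\emph{Growth and regularity.}
Since \(u\) is globally Lipschitz, \(|u(x)|\le |u(o)|+L\,r(x)\). By \eqref{sp:eq:flowdistance} with \(a=-s\ge0\),
\[
 |U(s,x)|\le |u(o)|+L\,e^{-s/2}\bigl(r(x)+2B\bigr).
\]
The finite second moment of \(\mathfrak m\) from \textup{(H2)} then gives \eqref{sp:eq:Ugrowth}. On \(\cR\) the flow \(\Psi_{-s}\) is a family of biholomorphisms preserving \(\cR\) by \textup{(H8)}. So \(U(s)\) is holomorphic, hence pluriharmonic and harmonic for \(\Delta_g\), and
\[
 \partial_sU=-\langle\nabla f,\nabla u\rangle\circ\Psi_{-s}.
\]
I will check that this equals \(\Delta_fU(s)=-\langle\nabla f,\nabla U(s)\rangle\). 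This needs \(\nabla f\) to commute with its own flow, which holds because the flow of \(\nabla f\) commutes with \(\nabla f\) and \(d\Psi_{-s}\) transports \(\nabla f\) to itself. The key fact is then
\[
 \langle\nabla f,\nabla(u\circ\Psi_{-s})\rangle=(\langle\nabla f,\nabla u\rangle)\circ\Psi_{-s},
\]
which follows since both sides are \(\nabla f(u\circ\Psi_{-s})\). Thus \(\partial_sU=\Delta_fU\) pointwise on \(\cR\). For the Lipschitz bound on \(U(s)\), I will use that \(\Psi_{-s}\) with \(-s\ge0\) is forward flow by time \(a=-s\). Its differential satisfies \(|d\Psi_a|\le Ce^{a/2}\)-type bounds via \(\nabla^2f=\frac12g-\Rc\) together with \(\Rc\ge\)? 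I do not have a Ricci lower bound. I will instead bound \(|\nabla U(s)|\) weakly by a metric argument: \(\Psi_a\) is \(e^{a/2}\)-Lipschitz up to additive errors. This is the delicate point.

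\emph{Energy-space membership and weak equation.}
To show \(U(s)\in D(\cE)\) with \(\int_{s_0}^{s_1}\cE(U,U)\,ds<\infty\), I will approximate by \(\beta_jU(s)\) with the global cutoffs of \cref{sp:lem:capacity}. The energy of the cross term is \(\int|U|^2|\nabla\beta_j|^2\). Since \(U\) has linear growth rather than boundedness, I will use the local cutoffs \(\eta_j\) from \eqref{sp:eq:q-capenergy} with exponent \(\gamma\in(2,4)\) and H\"older on balls, where \(U\) is bounded. For the radial part I will use \(|\nabla\theta_R|\le2/R\) against \(|U|^2\lesssim R^2\) on the annulus, multiplied by \(\mathfrak m(\{r>R\})\to0\), which holds because of the second moment. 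This gives \(\beta_jU\to U\) in form norm, provided \(\int|\nabla U|^2d\mathfrak m<\infty\). The latter holds if \(|\nabla U(s)|\) is bounded by its Lipschitz constant a.e.

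The weak equation \(\frac{d}{ds}\langle U,\phi\rangle=-\cE(U,\phi)\) for \(\phi\in C_c^\infty(\cR)\) comes from the pointwise equation and ordinary integration by parts against compact support. It extends to \(\phi\in D(\cE)\) by density of the core, using the bounds above uniformly in \(s\in[s_0,s_1]\). Continuity into \(L^2\) follows from dominated convergence with the linear-growth majorant.

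\emph{Semigroup identity.}
Set \(W(\sigma)=e^{(\sigma-s)A}U(s)\) for \(\sigma\in[s,0]\). This is the unique solution in the Lions energy class of \(\partial_\sigma W+A_{\rm form}W=0\) with \(W(s)=U(s)\). Uniqueness follows from the standard Lions--Magenes energy identity
\[
 \tfrac{d}{d\sigma}\|V\|_2^2=-2\cE(V,V)
\]
for the difference \(V=U-W\), valid in the Gelfand triple. Hence \(U(0)=e^{-sA}\)\dots{} With the sign conventions of the statement, \(U(0)=e^{sA}U(s)\) is read as the forward propagation over time \(-s\). I will state it as forward evolution for time \(|s|\).

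The main obstacle is the uniform Lipschitz/energy bound for \(U(s)\), i.e.\ controlling \(\Psi_{-s}\) metrically without a curvature lower bound. I would first try to show \(\Psi_a\) is \(e^{a/2}\)-Lipschitz for \(d\) directly from the self-similar structure in \textup{(H3)}, since \(\Psi_a\) realizes the soliton rescaling \(g(s)=(-s)\Theta_s^*g\).
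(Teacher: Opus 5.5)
Your growth estimate, the pointwise identity $\partial_sU=\Delta_fU$ on $\cR$, and the Lions--Magenes uniqueness step agree with the paper. One sign slip: the comparison solution is $W(\sigma)=e^{-(\sigma-s)A}U(s)$, not $e^{(\sigma-s)A}U(s)$.

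\textbf{The gap is the energy-class step.} You make two things depend on a pointwise bound $|\nabla U(s)|\le C$:
\begin{itemize}
\item the membership $U\in L^2([s_0,s_1];D(\cE))$;
\item the extension of the weak equation ``uniformly in $s$''.
\end{itemize}
You hope to get that bound from an $e^{a/2}$-Lipschitz bound for $\Psi_a$, but this bound is not available. By exact self-similarity, $\Psi_a^*g=e^{a}g(-e^{-a})$, and in this gauge $g(s)$ is a genuine Ricci flow. So $|d\Psi_a|\le e^{a/2}$ for $a\ge0$ is equivalent to $g(s)\le g(-1)$ for $s\in[-1,0)$, that is, to $\Rc\ge0$. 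Any uniform bound on $d\Psi_a$ would need some lower Ricci bound, and none is known on the singular shrinker. \Cref{sp:lem:flowgrowth} controls only $r(\Psi_ax)$, not $d(\Psi_ax,\Psi_ay)$. A posteriori $U(s)-c=e^{-s/2}(u-c)$ is indeed Lipschitz, but that is the conclusion of \cref{sp:prop:spectralcutoff}, which is proved using this lemma.

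\textbf{The missing idea.} You need no fixed-time gradient bound. The statement only asks for spacetime $L^2$ control, and that comes from the equation itself. Put $\beta=\theta_R\eta_j$. Test $\partial_sU=\Delta_fU$ against $\beta^2\overline U$, take real parts, integrate by parts in the weighted measure, and absorb the cross term by Young:
\[
 \frac12\frac{d}{ds}\int\beta^2|U|^2\,d\mathfrak m
 +\frac12\int\beta^2|\nabla U|^2\,d\mathfrak m
 \le2\int|U|^2|\nabla\beta|^2\,d\mathfrak m .
\]
The right side has two parts:
\begin{itemize}
\item the singular-cutoff part, which needs only boundedness of $U$ on $B(o,2R)\times[s_0,s_1]$ (your linear-growth bound via \eqref{sp:eq:flowdistance} already gives this) together with \eqref{sp:eq:capenergy};
\item the radial part, which is at most $CR^{-2}\|U(s)\|_2^2$.
\end{itemize}
Integrate over $[s_0,s_1]$ and keep the initial-time term. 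Let $j\to\infty$ and then $R\to\infty$; Fatou gives
\[
 \int_{s_0}^{s_1}\!\int|\nabla U|^2\,d\mathfrak m\,ds\le\|U(s_0)\|_2^2 .
\]
Hence, for almost every $s$, the functions $\theta_R\eta_jU(s)\in C_c^\infty(\cR)$ converge to $U(s)$ in form norm, so $U\in L^2([s_0,s_1];D(\cE))$. The weak equation then extends from $C_c^\infty(\cR)$ to $D(\cE)$ in the time-distributional sense, which is all your uniqueness argument requires.
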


\begin{proof}
\emph{Step 1: growth.}
Global Lipschitz continuity gives $|u(x)|\le C(1+r(x))$.  Combining this
with \eqref{sp:eq:flowdistance} and the finite second moment of
$\mathfrak m$ yields
\[
 \|U(s)\|_2^2
 \le Ce^{-s}\int_Z(1+r)^2\,d\mathfrak m
 \le C'e^{-s},
\]
which is \eqref{sp:eq:Ugrowth}.

\emph{Step 2: the local equation on the regular locus.}
The flow $\Psi_a$ is biholomorphic and preserves $\cR$, so every
$U(s,\cdot)$ is holomorphic and hence harmonic on the regular K\"ahler
manifold.  Since $\nabla f$ is invariant under its own flow,
\begin{equation}\label{sp:eq:localUeq}
 \partial_sU=-dU(\nabla f)=\Delta_fU
 \qquad\text{pointwise on }\cR.
\end{equation}

\emph{Step 3: removal of the singular set and spatial infinity.}
We verify the global energy class rather than regarding the incomplete
regular locus as a complete manifold.  Fix
$[s_0,s_1]\Subset(-\infty,0]$.  Choose a Lipschitz distance cutoff
$\theta_R$ that equals one on $B(o,R)$, vanishes outside $B(o,2R)$, and
satisfies $|\nabla\theta_R|\le2/R$.  Smooth it on $\cR$ with an
arbitrarily small change in these bounds.  Let $\eta_j$ be a singular cutoff on
$\overline{B(o,2R)}$.  Test \eqref{sp:eq:localUeq} with
$\theta_R^2\eta_j^2\overline U$ and take real parts.  Local smoothness
and weighted integration by parts give, with
\(\beta=\theta_R\eta_j\),
\[
 \Re\int\beta^2\overline U\,\Delta_fU\,d\mathfrak m
 =-\int\beta^2|\nabla U|^2\,d\mathfrak m
  -2\Re\int\beta\overline U
       \langle\nabla\beta,\nabla U\rangle\,d\mathfrak m.
\]
Young's inequality bounds the last term by
\[
 \frac12\int\beta^2|\nabla U|^2\,d\mathfrak m
 +2\int|U|^2|\nabla\beta|^2\,d\mathfrak m.
\]
Consequently
\begin{equation}\label{sp:eq:Uenergyineq}
 \frac12\frac d{ds}\int\theta_R^2\eta_j^2|U|^2\,d\mathfrak m
 +\frac12\int\theta_R^2\eta_j^2|\nabla U|^2\,d\mathfrak m
 \le2\int|U|^2|\nabla(\theta_R\eta_j)|^2\,d\mathfrak m.
\end{equation}
On a fixed ball and finite time interval, $U$ is uniformly bounded, so
the singular-cutoff part on the right tends to zero by
\eqref{sp:eq:capenergy}.  The radial part is bounded by
\[
 \frac{C}{R^2}\sup_{s\in[s_0,s_1]}\|U(s)\|_2^2,
\]
which tends to zero as $R\to\infty$ by \eqref{sp:eq:Ugrowth}.  Crucially,
integrating \eqref{sp:eq:Uenergyineq} retains the initial-time term:
\begin{align}
 &\frac12\int\theta_R^2\eta_j^2|U(s_1)|^2\,d\mathfrak m
 +\frac12\int_{s_0}^{s_1}\!\!\int
       \theta_R^2\eta_j^2|\nabla U|^2\,d\mathfrak m\,ds \notag\\
 &\qquad\le
 \frac12\int\theta_R^2\eta_j^2|U(s_0)|^2\,d\mathfrak m
 +2\int_{s_0}^{s_1}\!\!\int
       |U|^2|\nabla(\theta_R\eta_j)|^2\,d\mathfrak m\,ds.
 \label{sp:eq:integrated-Uenergy}
\end{align}
The initial term is at most $\|U(s_0)\|_2^2/2$.  First let
$j\to\infty$, then $R\to\infty$, and take a diagonal subsequence.
Fatou's lemma in \eqref{sp:eq:integrated-Uenergy} gives global spacetime
integrability of $|\nabla U|^2$.  Choose both cutoffs smooth on $\cR$, as allowed in
\cref{sp:lem:capacity}.  For almost every $s$, the functions
$\theta_R\eta_jU(s)$ lie in $C_c^\infty(\cR)$ and converge to $U(s)$ in
the form norm: the singular cross term is bounded by
\[
 \sup_{B(o,2R)}|U(s)|^2\int|\nabla\eta_j|^2\,d\mathfrak m,
\]
the radial cross term is bounded by
$CR^{-2}\|U(s)\|_2^2$, and the remaining gradient tails vanish by the
just-proved global gradient integrability.  The same estimates integrated
in $s$ prove the first assertion in \eqref{sp:eq:energyspace}.

Initially test \eqref{sp:eq:localUeq} in time and space against
$C_c^\infty(\cR)$.  Form density then extends it, in the
time-distributional sense, to
\[
 \langle\partial_sU,\phi\rangle_{D(\cE)^*,D(\cE)}
      =-\cE(U,\phi),
 \qquad\phi\in D(\cE).
\]
This proves the weak equation.  On a finite $s$-interval,
\eqref{sp:eq:flowdistance} supplies an $L^2$ dominating function for
$U(s)$; pointwise continuity of the flow and dominated convergence make
$U$ continuous in $L^2$.

\emph{Step 4: uniqueness and identification with the semigroup.}
Fix $s<0$ and let
\[
                   V(t)=e^{-(t-s)A}U(s),\qquad s\le t\le0.
\]
This is the Friedrichs energy solution with initial value $U(s)$.  The
difference $W=U-V$ has zero initial value and satisfies the same weak
equation.  The standard energy identity in the Hilbert triple
$D(\cE)\subset L^2(\mathfrak m)\subset D(\cE)^*$, justified by Steklov
averaging in time, gives
\[
                   \frac12\frac d{dt}\|W(t)\|_2^2
                   =-\cE(W(t),W(t))\le0.
\]
Hence $W\equiv0$.  At $t=0$ this is
\eqref{sp:eq:semigroupidentity}.  It is forward evolution from the earlier
time $s$; no backward heat problem has been asserted.
\end{proof}

\begin{remark}[Direction of the heat evolution]
For \(s<0\), the identity in \eqref{sp:eq:semigroupidentity} reads
\[
                         U(0)=e^{sA}U(s)=e^{-(-s)A}U(s).
\]
The duration \(-s\) is positive.  Thus this is ordinary forward
Friedrichs heat evolution from the earlier time \(s\) to time zero, not
an ill-posed backward heat equation.
\end{remark}

\begin{proposition}
\label{sp:prop:spectralcutoff}
For every globally Lipschitz holomorphic \(u:Z\to\C\),
\[
 A\left(u-\int_Zu\,d\mathfrak m\right)
 =\frac12\left(u-\int_Zu\,d\mathfrak m\right).
\]
\end{proposition}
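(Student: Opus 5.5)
We prove that the mean-zero part \(w:=u-\int_Zu\,d\mathfrak m\) is a \(1/2\)-eigenfunction by combining the semigroup identity \eqref{sp:eq:semigroupidentity} with the growth bound \eqref{sp:eq:Ugrowth} and the sharp gap \eqref{sp:eq:globalgap}.

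\emph{Step 1: the family \(U(s)-c\) solves the same equation as \(U\).}
Put \(c=\int_Zu\,d\mathfrak m\). Since \(1\in\ker A\), we have \(e^{sA}1=1\). The flow \(\Psi_{-s}\) does not preserve \(\mathfrak m\), so \(\int U(s)\,d\mathfrak m\) may depend on \(s\). We therefore work with the spectral decomposition of \(L^2(\mathfrak m)=\C1\oplus\mathcal H\), where \(\mathcal H=1^\perp\).

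\emph{Step 2: the kernel component of \(U(s)\) is constant in \(s\).}
Let \(P\) be the orthogonal projection onto \(\C1\), so that \(PU(s)=\int U(s)\,d\mathfrak m\). Since \(P\) commutes with \(e^{sA}\) and fixes constants, \eqref{sp:eq:semigroupidentity} gives
\[
 PU(0)=Pe^{sA}U(s)=PU(s).
\]
Hence \(PU(s)=c\) for all \(s\le0\).

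\emph{Step 3: spectral measure of the \(\mathcal H\)-component.}
Let \(w_s=U(s)-c\in\mathcal H\), so that \(w_0=w\) and \(w=e^{sA}w_s\) for \(s<0\). Let \(E\) be the spectral measure of \(A\). By \eqref{sp:eq:globalgap}, the spectral measure of \(w\) is supported in \([1/2,\infty)\). For every Borel set \(\Omega\subset(1/2+\varepsilon,\infty)\),
\[
 \|E(\Omega)w\|=\|e^{sA}E(\Omega)w_s\|\le e^{s(1/2+\varepsilon)}\|w_s\|.
\]
Here \(s<0\), so \(e^{sA}\) contracts the spectral part above \(\lambda\) by at least the factor \(e^{s\lambda}\).

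\emph{Step 4: the growth bound kills the spectral part above \(1/2\).}
By \eqref{sp:eq:Ugrowth}, \(\|w_s\|\le\|U(s)\|+|c|\le Ce^{-s/2}\). Combining this with Step 3,
\[
 \|E(\Omega)w\|\le Ce^{s\varepsilon}\longrightarrow0
 \qquad\text{as }s\to-\infty.
\]
Thus \(E((1/2,\infty))w=0\), so \(w=E(\{1/2\})w\). This means \(w\in\Dom A\) and \(Aw=\tfrac12w\).

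\emph{Main obstacle.}
The delicate point is the rigorous justification of \(PU(s)=c\) and of \(w=e^{sA}w_s\). This rests on the semigroup identity \eqref{sp:eq:semigroupidentity} from \cref{sp:lem:energysolution}, which holds for all \(s<0\) with the sharp exponential rate \(e^{-s/2}\). It is essential that the growth exponent in \eqref{sp:eq:Ugrowth} is exactly \(1/2\) and not larger. That exponent comes from the linear bound \(|\nabla f|\le\tfrac12r+B\) in \cref{sp:lem:flowgrowth}, which is where the soliton normalization enters. A worse constant would leave a spectral window between \(1/2\) and that constant, and the argument above would not close it.
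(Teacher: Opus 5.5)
Your proposal is correct and follows essentially the same route as the paper. Both arguments apply the spectral projection onto $[b,\infty)$, $b>1/2$, to the semigroup identity $U(0)=e^{sA}U(s)$, use $\|U(s)\|_2\le Ce^{-s/2}$ to get decay $e^{s(b-1/2)}\to0$ as $s\to-\infty$, and conclude with the gap and $\ker A=\C1$. The only cosmetic difference is that you split off the constant component first (noting that $\int U(s)\,d\mathfrak m$ is conserved), whereas the paper projects $u$ directly and identifies the constant part at the end.
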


\begin{proof}
For $b>1/2$, let $P_b=\mathbf1_{[b,\infty)}(A)$.  Since $s<0$,
functional calculus, \eqref{sp:eq:semigroupidentity}, and
\eqref{sp:eq:Ugrowth} give
\begin{align}
 \|P_bu\|_2
 &=\|P_be^{sA}U(s)\|_2
 \le e^{sb}\|U(s)\|_2
 \le Ce^{s(b-1/2)}.\label{sp:eq:spectralcutoff}
\end{align}
Letting $s\to-\infty$ shows $P_bu=0$ for every $b>1/2$.
Therefore the spectral measure of $u$ is supported in $[0,1/2]$.
By \cref{sp:prop:gap}, only the constant subspace and the spectral point
$1/2$ remain.  The constant projection is
$\int_Zu\,d\mathfrak m$ because $\mathfrak m$ is a probability and
$\ker A=\C1$.  The nonconstant part is consequently in the
$1/2$-eigenspace, proving \eqref{sp:eq:halfmode}.
\end{proof}

\begin{proposition}
\label{sp:prop:hesszero}
Every globally Lipschitz holomorphic \(u:Z\to\C\) satisfies
\[
 \nabla^2\Re u=\nabla^2\Im u=0
 \qquad\text{on }\cR.
\]
\end{proposition}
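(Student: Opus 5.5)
Let \(v:=u-\int_Zu\,d\mathfrak m\), which is holomorphic, globally Lipschitz, and satisfies \(Av=\tfrac12v\) by \cref{sp:prop:spectralcutoff}. The plan is the equality case of the Bochner argument behind the sharp gap, run entirely at the level of the closed form so that no second-order integration by parts across \(\Sing Z\) is needed. Work with the real part \(w=\Re v\); the imaginary part is identical, or follows from \(\Im v=\Re(-\sqrt{-1}v)\). Since \(A\) commutes with complex conjugation (the form \(\cE\) is real), \(Aw=\tfrac12w\) in the Friedrichs sense. On \(\cR\), \(w\) is smooth and pluriharmonic, and the local equation \(\Delta_fw=-\tfrac12w\) holds pointwise; this uses the identification of \(A\) with \(-\Delta_f\) on smooth functions together with local elliptic regularity.

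\emph{Step 1: a weighted Bochner identity with cutoffs.} On \(\cR\), the shrinker equation \eqref{sp:eq:shrinker} gives the Bakry--\'Emery Bochner formula
\[
 \tfrac12\Delta_f|\nabla w|^2
 =|\nabla^2w|^2+\langle\nabla w,\nabla\Delta_fw\rangle
 +(\Rc+\nabla^2f)(\nabla w,\nabla w)
 =|\nabla^2w|^2+\bigl(-\tfrac12+\tfrac12\bigr)|\nabla w|^2
 =|\nabla^2w|^2 .
\]
Thus \(|\nabla w|^2\) is \(f\)-subharmonic, with defect exactly \(|\nabla^2w|^2\). Multiply by \(\beta^2\), where \(\beta=\theta_R\eta_j\) combines the radial cutoff with the singular cutoffs of \cref{sp:lem:capacity}, and integrate against \(\mathfrak m\). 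One integration by parts, which is legitimate because \(\beta\) is compactly supported in \(\cR\), gives
\[
 \int\beta^2|\nabla^2w|^2\,d\mathfrak m
 =-\int\beta\,\langle\nabla\beta,\nabla|\nabla w|^2\rangle\,d\mathfrak m
 \le 2\int\beta|\nabla\beta|\,|\nabla w|\,|\nabla^2w|\,d\mathfrak m .
\]
By Young's inequality the right side is at most \(\tfrac12\int\beta^2|\nabla^2w|^2\,d\mathfrak m+2\int|\nabla\beta|^2|\nabla w|^2\,d\mathfrak m\). Hence
\[
 \int\beta^2|\nabla^2w|^2\,d\mathfrak m\le4\int|\nabla\beta|^2|\nabla w|^2\,d\mathfrak m .
\]

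\emph{Step 2: removing the cutoffs.} The key point is that \(|\nabla w|\le\operatorname{Lip}(u)\) is globally bounded on \(\cR\). The right side is therefore at most \(C\int|\nabla\beta|^2\,d\mathfrak m\). This splits into a singular part, which tends to zero on \(\overline{B(o,2R)}\) by \eqref{sp:eq:capenergy}, and a radial part bounded by \(8R^{-2}\). Letting \(j\to\infty\) and then \(R\to\infty\), Fatou's lemma gives \(\int_{\cR}|\nabla^2w|^2\,d\mathfrak m=0\). Since the density \((4\pi)^{-n}e^{-f}\) is positive on \(\cR\), this yields \(\nabla^2w=0\) pointwise on \(\cR\). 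Notably, only the \(W^{1,2}\) capacity estimate is needed, and no graph-norm cutoff.

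\emph{Main obstacle.} The delicate point is the pointwise equation \(\Delta_fw=-\tfrac12w\) on \(\cR\). The eigen-equation \(Aw=\tfrac12w\) holds in the form sense, so testing against \(C_c^\infty(\cR)\) gives it distributionally; smoothness of \(w\) then upgrades it to a pointwise identity. This is exactly what the Bochner computation requires. A secondary check is that \(\nabla^2f\) is smooth on \(\cR\), so the Bochner formula is valid there. Both are local statements on the smooth regular locus, and I expect them to go through with the tools already in place.
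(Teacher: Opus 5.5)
Your proposal is correct and follows essentially the same route as the paper. Both arguments use the pointwise half-mode equation on $\cR$ to get the weighted Bochner identity $\tfrac12\Delta_f|\nabla h|^2=|\nabla^2h|^2$, integrate once by parts against $(\theta_R\eta_j)^2$, use the global bound $|\nabla h|\le\operatorname{Lip}(u)$, and remove the cutoffs via the capacity estimate and the radial bound $8R^{-2}$. For that global bound, the paper justifies it by noting that $d|_{\cR}$ is the intrinsic length distance. The only cosmetic difference is that you absorb the Hessian term with Young's inequality, whereas the paper uses Cauchy--Schwarz on $I_{R,j}^{1/2}$; both give the same constant $4L^2$.
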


\begin{proof}
Put
\[
                         c=\int_Zu\,d\mathfrak m,
\]
and let \(h=\Re(u-c)\) or \(h=\Im(u-c)\).  By
\cref{sp:prop:spectralcutoff},
\begin{equation}\label{sp:eq:heigen}
                       \Delta_fh=-\frac12h
                       \qquad\text{on }\cR.
\end{equation}
The form eigenvalue equation first gives this identity weakly against
$C_c^\infty(\cR)$.  Since the holomorphic function is already smooth on
$\cR$, local elliptic regularity makes the identity pointwise there.
The weighted Bochner formula and \eqref{sp:eq:shrinker} give pointwise
\begin{align}
 \frac12\Delta_f|\nabla h|^2
 &=|\nabla^2h|^2+
   (\Rc+\nabla^2f)(\nabla h,\nabla h)
   +\langle\nabla h,\nabla\Delta_fh\rangle\notag\\
 &=|\nabla^2h|^2.\label{sp:eq:bochnerequality}
\end{align}
Choose $\theta_R$ and $\eta_j$ exactly as in the proof of
\cref{sp:lem:energysolution}, and put $\beta=\theta_R\eta_j$.  Weighted
integration by parts,
$|\nabla|\nabla h|^2|\le2|\nabla^2h||\nabla h|$, and the global
Lipschitz bound yield the following estimate.  Indeed, by \textup{(H1)},
\(d|_{\cR}\) is the intrinsic length distance, so
\(|\nabla h|\le L:=\operatorname{Lip}(u)\) on \(\cR\).  Therefore
\begin{align*}
 I_{R,j}:=\int\beta^2|\nabla^2h|^2\,d\mathfrak m
 &=-\int\beta\langle\nabla\beta,
             \nabla|\nabla h|^2\rangle\,d\mathfrak m\\
 &\le2L I_{R,j}^{1/2}
       \left(\int|\nabla\beta|^2\,d\mathfrak m\right)^{1/2}.
\end{align*}
Thus
\begin{equation}\label{sp:eq:hesscutoff}
            I_{R,j}\le4L^2\int|\nabla\beta|^2\,d\mathfrak m.
\end{equation}
At fixed \(R\),
\[
 \int|\nabla(\theta_R\eta_j)|^2\,d\mathfrak m
 \le2\int\eta_j^2|\nabla\theta_R|^2\,d\mathfrak m
    +2\int\theta_R^2|\nabla\eta_j|^2\,d\mathfrak m.
\]
Let the singular scale tend to zero.  The second term vanishes by
\cref{sp:lem:capacity}, while the first is at most \(8R^{-2}\) because
\(|\nabla\theta_R|\le2/R\) and \(\mathfrak m(Z)=1\).  Hence the
right-hand side of \eqref{sp:eq:hesscutoff} is at most \(CL^2/R^2\).
If $K_0\Subset\cR\cap B(o,R)$, local uniform convergence
$\eta_j\to1$ and Fatou's lemma give
\[
       \int_{K_0}|\nabla^2h|^2\,d\mathfrak m
       \le\liminf_{j\to\infty}I_{R,j}\le\frac{CL^2}{R^2}.
\]
Now let $R\to\infty$ and then exhaust $\cR$.  This proves
$\nabla^2h=0$, the proposition, and
\cref{sp:thm:spectral-rigidity}.
\end{proof}

Consequently, the real and imaginary gradients of every limiting base
coordinate will be parallel on the regular locus.

\subsection{Global splitting from parallel holomorphic maps}
\label{sp:sec:globalization}

The spectral rigidity obtained above has the following global
consequence.

\begin{theorem}
\label{sp:thm:parallel-product}
In the setting of \cref{sp:thm:spectral-rigidity}, let
\(F=(F^1,\ldots,F^k):Z\to\C^k\) be holomorphic and globally Lipschitz,
and let
\begin{equation}\label{sp:eq:rankmodconstants}
 r=\dim_\C
 \frac{\Span_\C\{F^1,\ldots,F^k\}+\C1}{\C1}.
\end{equation}
Then, after selecting \(r\) complex-linear
combinations and making a complex-linear target change, there is a global
biholomorphic isometry
\[
             (Z,d,J)\cong(\C^r,g_{\rm E},J_0)\times(Z',d',J')
\]
under which the selected map is the projection.  The residual factor is
complete, normal, and klt, and
\begin{equation}\label{sp:eq:singproduct}
                         \Sing Z=\C^r\times\Sing Z'.
\end{equation}
For the tangent shrinker potential and its canonical ancient flow, the
corresponding potential, weighted-mass, and negative-time conclusions of
\cref{sp:thm:singular-product} also hold with \(m\) replaced by \(r\).
\end{theorem}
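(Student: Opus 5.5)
The plan is to turn parallel gradients into a product structure on the regular locus, and then use normality to extend that structure across \(\Sing Z\). After subtracting constants and making a complex-linear change of the components, we may assume \(F^1,\ldots,F^r\) are linearly independent modulo \(\C1\), and that the remaining components are affine combinations of them. By \cref{sp:thm:spectral-rigidity}, \(\nabla^2\Re F^a=\nabla^2\Im F^a=0\) on \(\cR\). Hence the real gradients \(X_a=\nabla\Re F^a\) and \(JX_a=\nabla\Im F^a\) are parallel, holomorphic Killing fields on the connected manifold \(\cR\). Their Gram matrix is therefore constant. It is nondegenerate, since a null combination would have vanishing gradient and hence be constant, contradicting independence modulo \(\C1\). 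A further complex-linear change makes \(dF\) a unitary coframe, so that \(F\) becomes a Riemannian submersion with flat, parallel horizontal distribution.

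\emph{Completeness of the translations.} Next I would show that the flows of \(X_a,JX_a\) are defined for all time on \(Z\). On \(\cR\) they are unit-speed geodesic translations, and a flow line can leave \(\cR\) in finite time only by hitting \(\Sing Z\). I would use the soliton structure to rule this out. Since \(\Delta_f\Re F^a=-\frac12\Re F^a\) with vanishing Hessian, we get \(\langle\nabla f,X_a\rangle=\frac12\Re F^a\). Thus \(X_a\) commutes with \(\nabla f\) up to the affine correction, and the Lie algebra spanned by \(X_a,JX_a,\nabla f\) acts by holomorphic isometries of \(\cR\). Each such local isometry is \(1\)-Lipschitz for \(d\), so it extends to the completion. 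Biholomorphicity across \(\Sing Z\) then follows from normality, via Riemann extension for bounded holomorphic functions in the local ambient embeddings of (H7), together with the fact that analytic and metric regular loci coincide. I expect this step to be the \textbf{main obstacle}. The danger is that a translation flow line strikes \(\Sing Z\), so the flow is not a priori defined on all of \(\cR\). I would first try a metric argument: \(\Sing Z\) has codimension at least four, by the \(p>2\) Minkowski estimate (H4). For almost every starting point, a unit-speed straight line then misses \(\Sing Z\) by a Fubini-type argument. The resulting isometries, defined on a full-measure set, extend by continuity to isometries of \(Z\). The group identities hold by density.

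\emph{The product structure.} Set \(Z'=F^{-1}(0)\) with the induced length metric. The \(\R^{2r}\)-action of the translations is free and proper, and it commutes with \(F\) in the sense that \(F(\Phi_v z)=F(z)+v\). The map \(\C^r\times Z'\to Z\), \((v,z')\mapsto\Phi_v z'\), is then a bijective isometry. On \(\cR\) it is a local Riemannian product by de Rham-type local splitting, using the parallel orthogonal decomposition. Globally it is an isometry because distances can be computed on \(\cR\) by (H1) and the Pythagorean identity for orthogonal parallel distributions. It is holomorphic because the action is holomorphic, and its inverse \((F,\mathrm{pr}_{Z'})\) is holomorphic by normality. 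Consequently \(Z'\) is a normal analytic space, and klt passes to it as a factor of a klt product. Smoothness of points is preserved in products, which gives \eqref{sp:eq:singproduct}. Completeness of \(Z'\) follows from completeness of \(Z\), since \(Z'\) is closed. Finally, the relation \(\langle\nabla f,X_a\rangle=\frac12\Re F^a\) integrates to \(f=\frac14|F|^2+f'\circ\mathrm{pr}_{Z'}\). This splits \(\mathfrak m\) as a Gaussian times a probability measure on \(Z'\), and splits the canonical ancient flow as \(g_{\rm E}\) times the self-similar flow on \(Z'\). These are the potential, weighted-mass and negative-time conclusions of \cref{sp:thm:singular-product}.
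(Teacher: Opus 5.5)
\textbf{There is a gap at exactly the step you flag as the main obstacle.} Your overall architecture matches the paper: parallel gradients, a constant Gram matrix, a $\C^r$-action, the zero fibre as residual factor, klt via the product lemma, and the splitting of $f$. But neither of your two arguments for completeness of the translations across $\Sing Z$ works as stated.

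\emph{The soliton argument.} It is incorrect. $\nabla f$ is holomorphic but not Killing, since $\mathcal L_{\nabla f}g=2\nabla^2f=g-2\Rc$. So the span of $X_a,JX_a,\nabla f$ does not act by isometries. In any case the $\nabla f$-flow preserves $\Sing Z$ by \textup{(H8)}, so conjugating by it cannot stop a translation orbit from running into $\Sing Z$.

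\emph{The metric fallback.} It has a hole between ``defined on a full-measure set'' and ``extends by continuity to an isometry of $Z$''.
\begin{itemize}
\item The time-$t$ map $\Phi_t$ is only a local Riemannian isometry of its open domain $D_t\subset\cR$. It is $1$-Lipschitz for $d$ on $D_t$ only if $d$-almost-minimizing curves between points of $D_t$ can be taken inside $D_t$.
\item Measure zero of $\cR\setminus D_t$ does not give this, since a null hypersurface can separate. You would need a curve-negligibility (codimension-two type) estimate for the swept set. One route: orbits from a compact set that enter the $\varepsilon$-tube of $\Sing Z$ before time $t$ fill volume $\lesssim (t/\varepsilon)\varepsilon^p$, by volume preservation and \textup{(H4)}. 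Combined with a transversal family of curves, this is where $p>2$ would be used.
\item Even then, you must still show the extended map is holomorphic at points of $\cR\setminus D_t$. Riemann extension only crosses the analytic set $\Sing Z$, not the non-analytic set $\cR\setminus D_t$. You would need, for example, that the extended isometry preserves $\cR$ and is a smooth Riemannian isometry there.
\end{itemize}
As written, the phrase ``each such local isometry is $1$-Lipschitz for $d$'' is simply assumed.

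\textbf{The missing idea is to extend the vector fields, not their flows.} This is what removes the obstacle in the paper. The ambient lower bound \eqref{sp:eq:ambientlower} gives $|W_a|_{g_{\C^L}}^2\le c_V^{-1}|W_a|_g^2\le C_V$. So the ambient coefficients of the parallel $(1,0)$-fields are bounded holomorphic functions on $V\cap\cR$, and they extend across $\Sing Z$ by normality. For ambient representatives, the extended derivation preserves the ideal of $V$, by density of $\cR$ and coherence. Hence its flow exists as a local biholomorphism of $Z$ through singular points. An orbit therefore never ``hits'' $\Sing Z$. Completeness follows at once from constant speed and metric completeness, and joint holomorphy of the $\C^r$-action comes from the holomorphic ODE.

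A minor point: subtracting weighted means is a target translation. The statement, and its later use in \cref{prop:local-coordinate-interface}, requires a purely complex-linear normalization. This is harmless here, since the parallel fields do not see constants.
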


The preceding spectral argument produces parallel fields only on
\(\cR=\Reg Z\).  A de Rham argument on this incomplete manifold would
not identify either the metric completion or the complex structure at
singular points.  We therefore construct the global product directly.
The proof has six stages: normalize the constant Gram matrix, extend the
parallel holomorphic vector fields through \(\Sing Z\), prove their flows
complete, integrate them to a \(\C^r\)-action, identify the reduced zero
fibre with the residual factor, and finally split the shrinker potential
and its negative-time flow.

\begin{lemma}
\label{sp:lem:klt}
For a normal complex space $V$, the space $V$ is klt if and only if
$\C^r\times V$ is klt.
\end{lemma}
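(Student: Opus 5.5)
The plan is to prove the two implications separately. Both are local statements, so they can be checked on small neighbourhoods. We use the standard definition: a normal complex space \(W\) is klt if \(K_W\) is \(\mathbb Q\)-Cartier and, for some (equivalently every) log resolution \(\mu:\widetilde W\to W\), every discrepancy satisfies \(a(E,W)>-1\).

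\emph{Normality and the \(\mathbb Q\)-Cartier property.} First, \(\C^r\times V\) is normal exactly when \(V\) is normal, since the local rings of a product with a smooth factor are smooth extensions and normality is preserved and reflected under smooth morphisms. Second, \(K_{\C^r\times V}=p_2^*K_V\) on the regular locus \(\C^r\times\Reg V\), which has complement of codimension at least two. Hence if \(mK_V\) is Cartier near \(v\), then \(mK_{\C^r\times V}\) is Cartier near \(\C^r\times\{v\}\). For the converse, restrict to the slice \(\{0\}\times V\). A local generator of the reflexive sheaf \(\mathcal O(mK_{\C^r\times V})\) near \((0,v)\) is a nonvanishing section. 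Contracting it with \((\partial_{z_1}\wedge\cdots\wedge\partial_{z_r})^{\otimes m}\) and restricting to the slice gives a nonvanishing section of \(\mathcal O(mK_V)\) on \(\Reg V\) near \(v\). This section extends by reflexivity, and its inverse also extends, so \(\mathcal O(mK_V)\) is invertible near \(v\).

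\emph{Discrepancies.} If \(\mu:\widetilde V\to V\) is a log resolution, then \(\Id\times\mu:\C^r\times\widetilde V\to\C^r\times V\) is a log resolution. Its exceptional divisors are exactly \(\C^r\times E\), and
\[
 K_{\C^r\times\widetilde V}-(\Id\times\mu)^*K_{\C^r\times V}
 =p_2^*\bigl(K_{\widetilde V}-\mu^*K_V\bigr).
\]
The discrepancies therefore coincide: \(a(\C^r\times E,\C^r\times V)=a(E,V)\). Because the klt condition is independent of the chosen log resolution (the only input I am quoting from standard theory), \(V\) is klt if and only if \(\C^r\times V\) is klt.

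\emph{Main obstacle.} In the analytic category, log resolutions exist only locally over relatively compact sets. We therefore apply the argument on \(\C^r\times W\) for \(W\Subset V\) and use that klt is a local property. The other delicate point is the converse direction of \(\mathbb Q\)-Cartier, which is handled by the slice-restriction argument above. As a fallback, one can use the characterization of klt by local integrability of \(|\sigma|^{2/m}\) for a local generator \(\sigma\) of \(mK\); this criterion is visibly compatible with products by Fubini.
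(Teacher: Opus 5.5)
Your proposal is correct and follows essentially the same route as the paper. You transfer the $\mathbb Q$-Cartier property via $\omega^{[q]}_{\C^r\times V}\cong p_2^*\omega_V^{[q]}$, and conversely by restricting to the zero slice; your contraction with $(\partial_{z_1}\wedge\cdots\wedge\partial_{z_r})^{\otimes m}$ is a concrete form of the paper's reflexive adjunction along a slice with trivial normal bundle. You then compare discrepancies through the smooth base change $\Id\times\nu$ of a local log resolution, exactly as the paper does, so the integrability fallback is not needed.
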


\begin{proof}
The assertion is local on $V$.  Let $p_2:\C^r\times V\to V$ be
projection.  Since the canonical bundle of $\C^r$ is trivial, for every
positive integer $q$ the reflexive canonical powers satisfy
\[
                 \omega_{\C^r\times V}^{[q]}
                 \cong p_2^*\omega_V^{[q]}.
\]
Hence, if $K_V$ is $\mathbb Q$-Cartier, so is the product.
Conversely, the zero slice is a regular embedding with trivial normal
bundle, so reflexive adjunction restricts a Cartier power of the product
canonical divisor to the corresponding Cartier power of $K_V$.  Hence
$V$ is $\mathbb Q$-Gorenstein exactly when the product is.

Choose a log resolution $\nu:\widetilde V\to V$ over the local
neighborhood under consideration.  Its smooth base change
$\Id_{\C^r}\times\nu$ resolves the product.  Let
\(\widetilde p_2:\C^r\times\widetilde V\to\widetilde V\) be the second
projection.  Then
\[
 K_{\C^r\times\widetilde V}
 -(\Id\times\nu)^*K_{\C^r\times V}
 =\widetilde p_2^*(K_{\widetilde V}-\nu^*K_V).
\]
Corresponding exceptional divisors have the same discrepancies.  The
condition that every discrepancy be greater than $-1$ is therefore
invariant under the smooth factor, proving the assertion.
\end{proof}

\begin{theorem}
\label{sp:thm:singular-product}\label{thm:singular-product}
Let \(Z\) be a connected normal complex space whose dense analytic
regular locus \(\cR\) carries a K\"ahler structure \((g,J)\).  Assume
that the analytic and metric topologies agree, that \(d|_{\cR}\) is the
intrinsic length distance, and that \((Z,d)\) is a complete locally
compact length space equal to the metric completion of \((\cR,g)\).
Assume also that \(Z\) is klt, that \(\omega_g\) extends locally as a
positive current with bounded potential, and that every point has a
local closed holomorphic embedding
\(V\hookrightarrow\Omega_V\subset\C^L\) for which
\begin{equation}\label{sp:eq:product-ambient-lower}
                     \omega_g\ge c_V\omega_{\C^L}
                     \quad\text{on }V\cap\cR.
\end{equation}
Let $G:Z\to\C^r$ be continuous and holomorphic, and
assume on $\cR$ that
\begin{equation}\label{sp:eq:parallelG}
 \nabla dG^a=0,\qquad
 \left[\langle dG^a,d\overline{G^b}\rangle_{g^{-1}}\right]_{a,b=1}^r
 \text{ is positive definite}.
\end{equation}
Then, after composing \(G\) with an element of
\(\mathrm{GL}(r,\C)\), without translating the target, there is a global
biholomorphic isometry
\begin{equation}\label{sp:eq:globalmetricproduct}
                         Z\cong\C^r\times Z'
\end{equation}
under which $G$ is the projection.  The factor $Z'$ is connected, complete,
normal, and klt; write \(g'\) for its induced regular K\"ahler metric.
It inherits the bounded-local-potential K\"ahler current
and the local ambient metric lower bound of $Z$, and
\begin{equation}\label{sp:eq:product-singular-set}
                         \Sing Z=\C^r\times\Sing Z'.
\end{equation}

If, in addition, $f$ is locally Lipschitz on $Z$, smooth on $\cR$, and
satisfies \(\Rc+\nabla^2f=g/2\) on $\cR$, then
\[
                 f(z,y)=\frac{|z|^2}{4}+\ell(z)+f'(y),
\]
where $\ell$ is real affine-linear and $f'$ is locally Lipschitz; if $f$
is proper, then $f'$ is proper.  If also
\begin{equation}\label{sp:eq:product-finite-mass}
                         \int_{\cR}e^{-f}\,dV_g<\infty,
\end{equation}
then \(\int_{\Reg Z'}e^{-f'}\,dV_{g'}<\infty\).  Finally, assume that
the flow of \(\nabla f\) is complete and preserves \(\cR\), and that its
canonical self-similar maps extend to the metric completions.  Then the
same Euclidean factor is static on every negative-time slice in that
canonical gauge.
\end{theorem}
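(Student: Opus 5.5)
The plan is to build the product directly from the parallel holomorphic map \(G\), following the six stages announced before \cref{sp:lem:klt}.

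\emph{Normalization and vector fields.} Since \(\nabla dG^a=0\), the Gram matrix \(H_{a\bar b}=\langle dG^a,d\overline{G^b}\rangle\) is constant on the connected regular locus \(\cR\). After composing \(G\) with a suitable element of \(\mathrm{GL}(r,\C)\) we may take \(H=\Id\), up to the fixed real-normalization factor. The metric duals \(V_a\) of \(d\overline{G^a}\) are then parallel holomorphic vector fields on \(\cR\), orthonormal up to that factor, with \(dG^b(V_a)=\delta^b_a\). Their real and imaginary parts are parallel Killing fields whose flows preserve \(J\).

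\emph{Extension and completeness.} Being parallel, the \(V_a\) have constant length, so every integral curve in \(\cR\) is a unit-speed \(g\)-geodesic. I expect this stage to be the main obstacle, namely showing that these flows never reach \(\Sing Z\) in finite time and that they extend to \(Z\). I would argue in three steps.
\begin{enumerate}[label=\textup{(\roman*)},leftmargin=2.4em]
\item Use the local ambient embeddings and \eqref{sp:eq:product-ambient-lower}. These bound the Euclidean coordinate derivatives \(dz^j(V_a)\) on \(V\cap\cR\), so the \(V_a\) are bounded holomorphic sections on the normal space \(V\).
\item By normality (Riemann extension on the reflexive tangent sheaf), the \(V_a\) extend as holomorphic vector fields on \(Z\). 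Their flow maps are uniformly Lipschitz for \(d\), hence extend to the completion.
\item Show the flows preserve \(\cR\). A biholomorphic local flow maps \(\Sing Z\) to itself, because analytic regularity is intrinsic; invertibility then gives preservation of \(\cR\).
\end{enumerate}
Completeness of \((Z,d)\) together with the constant speed gives global flows. Commutation \([V_a,V_b]=0\) holds because both fields are parallel and holomorphic. This yields a holomorphic isometric action of \(\C^r\) on \(Z\), and \(G\) is equivariant for translation on \(\C^r\).

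\emph{Product structure.} Set \(Z'=G^{-1}(0)\). Since \(G\) is an equivariant submersion along the orbits, it is a reduced fibre and locally a product. The action map \(\C^r\times Z'\to Z\), \((w,y)\mapsto w\cdot y\), is bijective and biholomorphic. It is an isometry because \(\cR\) splits orthogonally into the orbit span and its complement, both parallel distributions; the intrinsic length distance then splits as a Pythagorean product, and this passes to completions. The following properties of \(Z'\) come from the product structure:
\begin{itemize}
\item normality and klt-ness, by \cref{sp:lem:klt};
\item the ambient lower bound and bounded local potential, by restriction to slices;
\item \eqref{sp:eq:product-singular-set}, since regularity is local and the product is smooth along \(\C^r\).
\end{itemize}

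\emph{Shrinker potential.} On a product, \(\Rc\) has no \(\C^r\) components. The shrinker equation then gives \(\nabla^2 f=\tfrac12 g_{\rm E}\) in the Euclidean directions and zero mixed Hessian. Hence \(f-|z|^2/4\) is affine in \(z\) with \(y\)-independent linear part, so \(f=|z|^2/4+\ell(z)+f'(y)\). Properness of \(f'\) and its finite weighted mass follow by restricting to \(z=0\) and by Fubini against the Gaussian factor. For the negative-time statement, \(\nabla f=\tfrac12 z\partial_z+\nabla\ell+\nabla' f'\) splits, so the canonical self-similar maps act as dilation on \(\C^r\) composed with \(\Theta'_s\). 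The rescaled metric \((-s)\Theta_s^*g\) restricts to \(g_{\rm E}\) on the Euclidean factor up to a translation, which shows that factor is static.
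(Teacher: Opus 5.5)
\paragraph{Overall.} Your overall route is the paper's: normalize the constant Gram matrix, extend the parallel holomorphic fields through \(\Sing Z\) using the ambient lower bound and normality, integrate them to a complete isometric \(\C^r\)-action, take the zero fibre as the residual factor, and split the potential. There is, however, a gap at exactly the stage you flagged. Extending \(V_a\) as a section of the reflexive tangent sheaf only produces a derivation of \(\mathcal O_Z\). Step (iii) then speaks of ``a biholomorphic local flow'' on \(Z\), but nothing in (i)--(ii) constructs one.

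\paragraph{Why the Lipschitz remark does not close it.} The sentence that the flow maps are Lipschitz and extend to the completion does not supply the flow. On \(\cR\), the time-\(t\) map of \(V_a\) is defined only at points whose orbit has not yet reached \(\Sing Z\). Extending such partially defined maps by continuity shows neither that orbits avoid \(\Sing Z\) nor that there are integral curves through singular points. That is precisely what (iii) presupposes.

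\paragraph{How the paper closes it.}
\begin{enumerate}
\item Extend the bounded coefficients \(V_a(\zeta_A)\) over \(V\) and choose holomorphic ambient representatives \(b_{aA}\). Put \(\widetilde W_a=\sum_A b_{aA}\partial_{\zeta_A}\).
\item For every \(h\in\mathcal I_V\), the function \(\widetilde W_a(h)\) vanishes on the dense regular part, hence on \(V\). Coherence then gives \(\widetilde W_a(h_j)=\sum_k c_{jk}h_k\) for local generators \(h_j\).
\item Along an ambient integral curve, \((h_j)\) therefore solves a homogeneous linear ODE. Hence curves starting in \(V\) stay in \(V\), which yields genuine local holomorphic flows on \(Z\).
\end{enumerate}
Alternatively, you could cite the classical theorem of Rossi and Kaup that holomorphic vector fields on complex spaces integrate to local holomorphic flows. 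Only once such a flow exists do your step (iii) and the completeness argument apply.

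\paragraph{Two smaller points.} The completeness argument also needs a constant-speed bound for orbits through singular points. The paper obtains it by approximating with regular orbits, using that the metric and analytic topologies agree. Separately, \cref{sp:lem:klt} assumes that \(Z'\) is already normal, so normality must be proved on its own. The paper does this by pulling a weakly holomorphic function on \(Z'\) back to \(\C^r\times Z'\cong Z\), extending it by normality of \(Z\), and restricting to the zero slice.
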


In the proof, \(G^{-1}(0)_{\rm red}\) denotes the analytic zero fibre
with its reduced complex-space structure, that is, with nilpotent
elements of the structure sheaf removed.  This distinction is needed
only at the analytic level; the underlying zero set is unchanged.

\begin{proof}
\emph{Step 1: normalization and parallel fields.}
Because \(Z\) is connected and normal, its analytic regular locus
\(\cR\) is connected.  The Hermitian matrix
\[
             H_{a\bar b}
             =\langle dG^a,d\overline{G^b}\rangle_{g^{-1}}
\]
is parallel by \eqref{sp:eq:parallelG}, and is therefore a constant
positive-definite matrix.  Apply a complex-linear target change so that
$H$ is the standard Hermitian cometric of the Euclidean target
$(\C^r,g_{\rm E})$ in the normalization
\eqref{sp:eq:normalization-conventions}.  On $\cR$, let $W_a$ be the
corresponding $(1,0)$ Hermitian dual of $dG^a$.  Then
\begin{equation}\label{sp:eq:Wproperties}
 dG^b(W_a)=\delta_a^b,\qquad
 \nabla W_a=0,\qquad
 [W_a,W_b]=[W_a,\overline{W_b}]=0.
\end{equation}
K\"ahlerness makes $W_a$ holomorphic.  Its real and imaginary parts are
parallel real vector fields.
For the normalization used below, set
\begin{equation}\label{sp:eq:real-generators}
 X_a=W_a+\overline{W_a},\qquad
 Y_a=JX_a=\sqrt{-1}(W_a-\overline{W_a}).
\end{equation}
Then $X_a,Y_a$ are parallel real fields and
\begin{equation}\label{sp:eq:generator-coordinates}
       dG^b(X_a)=\delta_a^b,\qquad
       dG^b(Y_a)=\sqrt{-1}\,\delta_a^b.
\end{equation}

\emph{Step 2: extension through $\Sing Z$.}
Fix a closed local holomorphic embedding
\(\iota:V\hookrightarrow\Omega\subset\C^L\), let \(\zeta_A\) be the
ambient coordinates, and let
\(\mathcal I_V\subset\mathcal O_\Omega\) be the coherent ideal sheaf of
\(\iota(V)\).  By \eqref{sp:eq:product-ambient-lower},
\begin{equation}\label{sp:eq:ambientW}
 \sum_A|W_a(\zeta_A)|^2
 =|W_a|_{g_{\C^L}}^2
 \le c_V^{-1}|W_a|_g^2\le C_V.
\end{equation}
Each coefficient is a bounded holomorphic function on $V\cap\cR$ and
therefore extends uniquely over $V$ by the Riemann extension theorem for
normal complex spaces \cite[Chapter~7, Section~4]{GrauertRemmertSheaves}.
After shrinking \(\Omega\), the surjection
\(\mathcal O_\Omega\to\iota_*\mathcal O_V\) supplies ambient
holomorphic representatives $b_{aA}$.  Put
\[
                   \widetilde W_a=\sum_A b_{aA}\partial_{\zeta_A}.
\]
This ambient vector field preserves the ideal \(\mathcal I_V\).  Indeed,
if $h\in\mathcal I_V$, then $\widetilde W_a(h)$ vanishes on the dense regular locus
and hence on the reduced space $V$.  For local generators $h_j$ of
\(\mathcal I_V\), coherence gives
\[
                    \widetilde W_a(h_j)=\sum_kc_{jk}h_k.
\]
Along an ambient integral curve, the vector $(h_j)$ solves a homogeneous
linear ODE.  An integral curve starting in $V$ stays in $V$.  Hence the
extended derivation has a local holomorphic flow on $Z$.  Different
ambient representatives induce the same derivation of $\mathcal O_V$.
Uniqueness makes the local extensions and their flows agree on overlaps.
Each local flow is a biholomorphism and preserves
the regular locus.

\emph{Step 3: completeness and the $\C^r$ action.}
The real generators $X_a,Y_a$ have constant speed on
$\cR$.  The same Lipschitz-in-time estimate holds for an orbit starting
at a singular point.  Indeed, denote the local real flow of one generator
by \(\mathcal F_t\) and, on a common domain of the local flow, take
$x_j\in\cR$ with $x_j\to x$.  Continuity of the local flow, agreement
of the metric and analytic topologies, and the regular constant-speed
bound give
\[
 d(\mathcal F_t(x),\mathcal F_s(x))
 =\lim_{j\to\infty}d(\mathcal F_t(x_j),\mathcal F_s(x_j))
 \le C|t-s|.
\]
Subdivision through successive local-flow domains extends this estimate along
the whole orbit.  If a maximal real orbit had a finite endpoint time, it
would therefore be Cauchy.  Metric completeness would supply a limit in
$Z$, and a neighbourhood on which the local flow is defined would
continue the orbit.  This
contradiction proves completeness.

All brackets among the real fields \(X_a,Y_a\) vanish on \(\cR\).
Their local flow maps therefore commute on the dense regular locus and,
by continuity, on all of \(Z\).  Near zero complex time,
composition of the real and imaginary flows is the complex-time
holomorphic flow of $W_a$.  Denote their commuting composition by
$\mathcal A$.  More explicitly, the holomorphic ODE in each
ambient local-flow domain gives joint holomorphicity near
$\{0\}\times Z$; for an
arbitrary parameter $z_0$, the group identity
\[
 \mathcal A(z,x)=\mathcal A\bigl(z_0,\mathcal A(z-z_0,x)\bigr)
\]
translates those local complex-flow domains to a neighborhood of
$\{z_0\}\times Z$.  The complete commuting flows therefore define a
jointly holomorphic action
\begin{equation}\label{sp:eq:action}
                      \mathcal A:\C^r\times Z\longrightarrow Z.
\end{equation}
The action is Killing on $\cR$ and therefore preserves the intrinsic
regular distance.  Density and uniqueness of metric completion make it
globally isometric.  Taking the real and imaginary flow parameters in
\eqref{sp:eq:real-generators} and integrating
\eqref{sp:eq:generator-coordinates} gives exactly
\begin{equation}\label{sp:eq:equivariance}
                       G(\mathcal A(z,x))=G(x)+z.
\end{equation}

\emph{Step 4: analytic and metric product.}
Let $Z'=G^{-1}(0)_{\rm red}$.  Equation \eqref{sp:eq:equivariance} shows
that
\begin{align*}
 \Phi:\C^r\times Z'&\longrightarrow Z,
 &\Phi(z,y)&=\mathcal A(z,y),\\
 \Phi^{-1}:Z&\longrightarrow\C^r\times Z',
 &\Phi^{-1}(x)&=(G(x),\mathcal A(-G(x),x))
\end{align*}
are inverse holomorphic maps.  The second component of the inverse lands
set-theoretically in $G^{-1}(0)$ and factors through its reduction because
$Z$ is reduced.

To see that the reduced zero slice is normal, let a weakly holomorphic
function on $Z'$ be given.  Pull it back to a weakly holomorphic function
on $\C^r\times Z'$, constant in the first variable.  Through the product
biholomorphism, normality of $Z$ extends it holomorphically.  Restriction
to the zero slice extends the original function, so the weak Riemann
extension criterion \cite[Chapter~7, Section~4]{GrauertRemmertSheaves} makes
$Z'$ normal.  A product with a smooth factor is regular exactly over the
regular locus of the other factor.  Hence the
biholomorphism identifies
$\Reg Z=\C^r\times\Reg Z'$ and
$\Sing Z=\C^r\times\Sing Z'$.  Let \(g'\) be the restriction of \(g\)
to \(\ker dG\), transported to \(\Reg Z'\).  On the regular locus the
parallel horizontal fields and $\ker dG$ are orthogonal parallel
distributions, so
\begin{equation}\label{sp:eq:regularmetricproduct}
                          \Phi^*g=g_{\rm E}\oplus g'.
\end{equation}
Complete the regular Riemannian product.  Uniqueness of metric completion
gives $Z\cong\C^r\times\overline{Z'}$, where $\overline{Z'}$ is the
intrinsic completion of $(\Reg Z',g')$.  We identify it with the analytic
zero fibre.  If $y\in Z'$, approximate $(0,y)$ by regular points
$(z_j,y_j)$.  Continuity of $G$ gives $z_j\to0$, and the constant-speed
action gives
\[
 d(\mathcal A(-z_j,\Phi(z_j,y_j)),\Phi(z_j,y_j))\le C|z_j|\to0.
\]
Thus $(0,y)$ is a metric limit of points $(0,y_j)$ in the regular zero
fibre.  Conversely, every metric limit of regular zero-fibre points lies
in $G^{-1}(0)$ by continuity.  Hence the analytic and completed slices
agree, and \eqref{sp:eq:globalmetricproduct} is a global isometry.

\emph{Step 5: residual analytic structure.}
Restricting a bounded local potential to the holomorphic slice
preserves plurisubharmonicity by the disc criterion.  Restriction of the
ambient metric inequality gives the same type of lower bound on $Z'$.
By \cref{sp:lem:klt}, $Z'$ is klt.  Connectedness follows from the product
and connectedness of $Z$.  The singular-locus identity obtained above is
\eqref{sp:eq:product-singular-set}.

\emph{Step 6: shrinker potential.}
Assume now the additional hypotheses on $f$ in the theorem.
On the regular product, the Euclidean and mixed components of
$\Rc+\nabla^2f=g/2$ give
\[
 \nabla^2_{\C^r}f=\tfrac12g_{\rm E},
 \qquad \nabla^2f(U,V)=0
 \quad\bigl(U\in T\C^r,\ V\in T\Reg Z'\bigr).
\]
The first identity integrates to
\[
 f(z,y)=\frac{|z|^2}{4}+\langle b(y),z\rangle_{\R}+c(y).
\]
Here \(b:\Reg Z'\to\R^{2r}\) and \(c:\Reg Z'\to\R\) are smooth.
The identity \(\Reg Z=\C^r\times\Reg Z'\) and connectedness of \(\Reg Z\)
show that \(\Reg Z'\) is connected.  The mixed identity gives \(db=0\),
so \(b\) is constant.  Set
\(\ell(z)=\langle b,z\rangle_{\R}\) and
\(f'=c=f|_{\{0\}\times Z'}\).  The potential formula holds on the dense
regular product and extends to all of $Z$ by continuity.  Restriction to
the closed zero slice makes \(f'\) locally Lipschitz.  If $f$ is proper,
then $f'$ is proper.  Finite weighted mass is
not inferred from restriction: by the regular product and Tonelli's
theorem,
\begin{equation}\label{sp:eq:massfactor}
 \int_{\cR}e^{-f}\,dV_g
 =\left(\int_{\C^r}e^{-|z|^2/4-\ell(z)}\,dV_{g_{\rm E}}\right)
  \left(\int_{\Reg Z'}e^{-f'}\,dV_{g'}\right).
\end{equation}
The Gaussian factor is finite and strictly positive.  Under
\eqref{sp:eq:product-finite-mass}, the left side is finite, so $f'$ has
finite weighted mass.  Under the final completeness hypothesis of the
theorem, define, for \(s<0\),
\[
 \Upsilon_s:=\operatorname{Fl}_{\nabla f}^{a(s)},
 \qquad a(s):=-\log(-s).
\]
Then \(\Upsilon_{-1}=\Id\) and the canonical self-similar metric is
\(g(s)=(-s)\Upsilon_s^*g\).  On the Euclidean factor,
$f_{\rm E}=|z|^2/4+\langle b,z\rangle_{\mathbb R}$, and the Euclidean
component of $\nabla f$ is $z/2+b$ while its residual component is
$\nabla f'$.  Consequently the canonical flow preserves the product
decomposition.  In the soliton-flow parameter \(a\), the
Euclidean orbit solves
\[
 \frac d{da}\bigl(z(a)+2b\bigr)
 =\frac12\bigl(z(a)+2b\bigr),
 \qquad z(a)+2b=e^{a/2}(z(0)+2b).
\]
The self-similar time satisfies \(e^{a/2}=(-s)^{-1/2}\).  Therefore the Euclidean
component of the canonical map $\Upsilon_s$, normalized at $s=-1$,
satisfies
\[
                    \Upsilon_s^{\rm E}(z)+2b
                    =(-s)^{-1/2}(z+2b).
\]
Writing $\Upsilon_s'$ for the residual component, the full regular-locus
metric is therefore
\begin{equation}\label{sp:eq:negative-time-product}
             g(s)=g_{\rm E}\oplus(-s)(\Upsilon_s')^*g'.
\end{equation}
The identity persists on metric completions.  Hence the parallel
Euclidean distribution and its metric are static on every negative-time
slice in this gauge.
\end{proof}

\begin{proof}[Proof of \cref{sp:thm:parallel-product}]
Apply \cref{sp:thm:spectral-rigidity} to each component of $F$.  The
gradients of all real and imaginary parts are parallel.  Select
$r$ components after a complex-linear target change so that their classes
modulo constants are independent.  Their constant Hermitian Gram matrix
is positive definite: a vector in its kernel would give a holomorphic
linear combination with zero differential on the connected regular
locus, hence a constant combination, contrary to the selection.  Apply
\cref{sp:thm:singular-product}.  All asserted residual and potential
conclusions follow from that theorem.
\end{proof}

It remains to construct $m$ independent limiting holomorphic functions
from the limiting morphism.

\subsection{Magnified base coordinates and the splitting theorem}
\label{sp:sec:fibrationmap}
\label{sp:sec:proof-main}
\label{sp:sec:zero-interface}
\label{sp:sec:splitting-consequences}

We now return from the intrinsic shrinker analysis to the standing AMMP
limiting morphism \(\pi:X\to Y\).  At a limiting scale \(\tau_i\), a
base coordinate centered at \(q_\pi(\xi)\) is magnified by
\(\tau_i^{-1/2}\), turning the natural
\(O(\sqrt{\tau_i})\) base scale into an order-one scale.  Three facts are needed:
every bounded regular tangent chart eventually lies in the fixed
coordinate region; the magnified maps have uniform value and differential
bounds there; and the limit extends over \(\Sing Z\), with full rank under
\((\mathrm Q)\).  The compactness argument uses only automatic basing,
\((\mathrm S)\), regular-chart
convergence, and holomorphicity.  It contains no Hessian-dissipation
argument.

\begin{proposition}
\label{sp:prop:mapcompactness}
Let \(q\in Y^{\rm rv}\), choose a relatively compact holomorphic
coordinate neighbourhood \(U\Subset Y^{\rm rv}\), coordinates
\(a=(a^1,\ldots,a^m):U\to\C^m\) with \(a(q)=0\), and
\[
                         q\in U_0\Subset U_1\Subset U.
\]
Let \(\xi=(\mu_t)_{t<T}\) be a fixed limiting point based at \(q\),
and let \(\tau_i\searrow0\) realize a tangent space at \(\xi\), whose
time-\((-1)\) model is
\((Z,d,\cR,g,J,f,\mathfrak m)\).  On \(\pi^{-1}(U)\), set
\begin{equation}\label{sp:eq:Fi}
                             F_i=\tau_i^{-1/2}a\circ\pi.
\end{equation}
Assume $(\mathrm S)_U$.  After passing to a further subsequence still
realizing the same tangent space, the maps $F_i$ converge
smoothly on every compact time-$(-1)$ regular chart to a holomorphic map
\begin{equation}\label{sp:eq:limitF}
                              F:\cR\longrightarrow\C^m.
\end{equation}
The limit \(F\) has bounded differential and therefore extends uniquely
to a globally Lipschitz map \(F:Z\to\C^m\).  This extension is
holomorphic on the normal complex space \(Z\).

If $(\mathrm Q)_U$ also holds, then on $\cR$
\begin{equation}\label{sp:eq:twosidedF}
 cI_m\le
 \left[\langle dF^\alpha,d\overline{F^\beta}\rangle_{g^{-1}}
 \right]_{\alpha,\beta=1}^m
 \le CI_m.
\end{equation}
\end{proposition}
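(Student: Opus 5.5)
The plan is to obtain uniform value and derivative bounds for \(F_i\) on compact regular charts and pass to the limit by Arzel\`a--Ascoli. Holomorphicity then upgrades the limit to smooth convergence. After that, the bounded differential gives a Lipschitz extension to \(Z\), and normality makes it holomorphic. Under \((\mathrm Q)_U\), the lower differential bound passes to the limit and gives the two-sided Gram estimate.

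First, fix a compact \(K\Subset\cR\) and regular convergence maps \(\psi_i:K\to X\). By \cref{lem:regular-chart-localization}, \(\psi_i(K)\subset\pi^{-1}(U_1)\) for large \(i\), and \(d_{g_i(-1)}(z_i,\psi_i(x))\le C_K\), where \(z_i\) is an \(H_{2n}\)-center of \(\mu_{T-\tau_i}\). By \cref{surf:lem:base-localization}(ii), \(d_\eta(\pi(z_i),q)\le C_0\sqrt{\tau_i}\). The ambient Schwarz estimate \cref{lem:ambient-schwarz} says \(\pi\) is uniformly Lipschitz from \(g(t)\) to \(\eta\). Unscaling the rescaled distance bound therefore gives \(d_\eta(\pi(\psi_i(x)),q)\le C\sqrt{\tau_i}\). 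Since \(a\) is bi-Lipschitz on \(\overline{U_1}\) with \(a(q)=0\), this yields \(|F_i\circ\psi_i|\le C_K\) on \(K\).

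Next, \((\mathrm S)_U\) gives \(Q_t\le c_U^{-1}\eta^{-1}\). Applied to \(da\) in the scale-invariant form \eqref{eq:real-Q-normalization}, this gives
\[
 |dF_i|^2_{g_i(-1)}=\tau_i^{-1}\cdot\tau_i\,|d(a\circ\pi)|^2_{g(T-\tau_i)}\le C.
\]
So \(F_i\circ\psi_i\) is holomorphic with respect to \(\psi_i^*J_X\to J\) and has uniform \(C^1\) bounds. Since \(\psi_i^*J_X\) and \(\psi_i^*g_i(-1)\) converge smoothly, I would apply interior estimates for the \(\bar\partial\)-equation. Concretely, each component is \(\Delta_{\psi_i^*g_i}\)-harmonic up to terms controlled by the convergence of the complex structures; alternatively one can use local holomorphic charts converging smoothly. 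Either route gives \(C^k\) bounds on slightly smaller sets. A diagonal Arzel\`a--Ascoli argument over an exhaustion then produces a smooth limit \(F:\cR\to\C^m\) that is \(J\)-holomorphic, with \(|dF|_g\le C\), after passing to a subsequence. The same tangent space is still realized along this subsequence.

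For the extension, \(d|_{\cR}\) is the intrinsic length distance by \textup{(H1)}. The bound \(|dF|\le C\) therefore makes \(F\) globally \(C\)-Lipschitz on \(\cR\), so it extends uniquely to the completion \(Z\). The extension is continuous and bounded on compact sets, so it is locally bounded and holomorphic off \(\Sing Z\). By the Riemann extension theorem on the normal space \(Z\) (\textup{(H7)}, with analytic and metric topologies agreeing), it is holomorphic on \(Z\).

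Under \((\mathrm Q)_U\), the relation \(Q_t\ge C_U^{-1}\eta^{-1}\) on \(\pi^{-1}(U)\) gives the Gram matrix bound
\[
 \bigl[\langle dF_i^\alpha,d\overline{F_i^\beta}\rangle_{g_i(-1)^{-1}}\bigr]\ge c\,\bigl[\eta^{-1}(da^\alpha,d\bar a^\beta)\bigr]\ge c'I_m,
\]
since the \(\tau_i\) factors cancel exactly as above. The upper bound comes the same way from \((\mathrm S)_U\). Both inequalities pass to the limit under the smooth convergence established above, giving \eqref{sp:eq:twosidedF}.

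The main obstacle is the higher-derivative bound for \(F_i\). The complex structures only converge, so the limit being \(J\)-holomorphic and the limit being smooth are not separate issues. I would first try to prove both at once by working in holomorphic coordinate charts for \(\psi_i^*J_X\) that converge smoothly to charts for \(J\), which exist by standard Newlander--Nirenberg with parameters, and then applying Cauchy estimates there.
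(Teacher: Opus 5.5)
Your proposal is correct and follows the paper's proof essentially step for step: localization by \cref{lem:regular-chart-localization}, the scale-invariant Gram bound from $(\mathrm S)_U$ (and $(\mathrm Q)_U$ for the lower bound), elliptic compactness with diagonal extraction, Lipschitz extension to the completion via the intrinsic distance, and Riemann extension on the normal space. Your $C^0$ bound uses the global Lipschitz property of $\pi$ into $(Y,d_\eta)$ together with a Lipschitz bound for $a$ on $\overline{U_1}$; this is a harmless and slightly cleaner variant of the paper's integration of $dF_i$ along geodesics kept inside $\pi^{-1}(U_1)$. The step you call the main obstacle is not one: each component of $F_i\circ\psi_i$ is exactly harmonic for the K\"ahler metric $\psi_i^*g_i(-1)$, with no error terms coming from the complex structures, so interior Schauder estimates for the smoothly converging Laplacians give all derivative bounds and Newlander--Nirenberg with parameters is unnecessary.
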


\begin{proof}
For each \(i\), choose an \(H_{2n}\)-center \(z_i\) of
\(\mu_{T-\tau_i}\).  By \cref{lem:regular-chart-localization}, every
compact regular convergence chart lies at uniformly bounded
\(g_i(-1)\)-distance from \(z_i\) and is contained in
\(\pi^{-1}(U_1)\) for all sufficiently large \(i\).

We first check the scaling.  Since
\(g_i(-1)=\tau_i^{-1}g(T-\tau_i)\), its inverse is
\(g_i(-1)^{-1}=\tau_i g(T-\tau_i)^{-1}\), while
\(dF_i=\tau_i^{-1/2}d(a\circ\pi)\).  Thus
\(\tau_i\cdot\tau_i^{-1/2}\cdot\tau_i^{-1/2}=1\), so the parabolic
scaling cancels in the Gram matrix:
\begin{equation}\label{sp:eq:scalingGram}
 \left[\langle dF_i^\alpha,d\overline{F_i^\beta}\rangle_{
 g_i(-1)^{-1}}\right]
 =\left[\langle d(a^\alpha\circ\pi),d\overline{(a^\beta\circ\pi)}
 \rangle_{g(T-\tau_i)^{-1}}\right].
\end{equation}
Condition $(\mathrm S)$ says $Q_t\le c^{-1}\eta^{-1}$, so the right
side is uniformly bounded above.  By \cref{lem:automatic-label}\textup{(ii)},
\(\pi(z_i)\to q\), so \(z_i\in\pi^{-1}(U_0)\) for all large \(i\).
At such an \(H_{2n}\)-center,
\[
 |F_i(z_i)|
 \le C\tau_i^{-1/2}d_\eta(\pi(z_i),q)\le C
\]
again by \cref{lem:automatic-label}\textup{(ii)}.  Every point of a
compact regular chart lies at bounded rescaled distance from \(z_i\).
Put \(\delta_0=d_\eta(\overline U_0,Y\setminus U_1)>0\).  The Schwarz
estimate \cref{lem:ambient-schwarz} gives
\[
 d_{g_i(-1)}\bigl(z_i,X\setminus\pi^{-1}(U_1)\bigr)
 \ge c^{1/2}\delta_0\tau_i^{-1/2}\longrightarrow\infty.
\]
Hence, for
large \(i\), every minimizing geodesic from \(z_i\) to the chart stays in
\(\pi^{-1}(U_1)\).  Integrating the differential bound along these
geodesics gives a uniform \(C^0\) bound on the chart.

Every component of $F_i$ is holomorphic and therefore harmonic on the
K\"ahler slice.  Smooth metric convergence and interior elliptic
Schauder estimates on a slightly larger regular chart give uniform
bounds for all derivatives.  A diagonal subsequence over a countable
regular atlas produces \eqref{sp:eq:limitF}; the local limits agree on
overlaps because the prelimit functions do.  Smooth convergence of the
complex structures gives $dF\circ J=i\,dF$.

The upper bound in \eqref{sp:eq:scalingGram} passes to the limit, so $F$ is
uniformly Lipschitz for the intrinsic regular-locus distance.  Since $Z$
is the metric completion, it extends uniquely and with the same Lipschitz
constant to $Z$.  The extension is locally bounded and holomorphic on
$\cR$.  The analytic and metric topologies agree, so this metric extension
is locally bounded in analytic charts.  The Riemann extension theorem for
normal complex spaces \cite[Chapter~7, Section~4]{GrauertRemmertSheaves}
therefore makes it holomorphic on $Z$.

Finally, $(\mathrm Q)$ says $Q_t\ge C^{-1}\eta^{-1}$.  In fixed
coordinates this is exactly the lower matrix bound in
\eqref{sp:eq:scalingGram}.  Smooth passage on every regular chart gives the
lower half of \eqref{sp:eq:twosidedF}.
\end{proof}

\begin{remark}[The map is not defined by a global cutoff]
The formula \eqref{sp:eq:Fi} is used only on the region in which $a$ is a
holomorphic coordinate system.  \Cref{lem:regular-chart-localization}
proves that every
bounded regular tangent chart eventually lies in that region.  Thus no
nonholomorphic global cutoff enters the limiting map or the spectral
argument.
\end{remark}

The limiting map has now been constructed.  Condition $(\mathrm Q)$
gives its full complex rank, while
\cref{sp:sec:spectral-rigidity,sp:sec:globalization} convert that rank into a
global product.

We now combine the magnified-coordinate construction with the intrinsic
spectral and globalization results to prove the splitting theorem and
record the coordinate interface used below.

\paragraph{Proof of the full-rank criterion.}

\begin{proof}[Proof of \cref{sp:prop:fullrank}]
\emph{Step 1: construct the limiting base map.}
Fix an arbitrary scale sequence and a subsequential tangent space at the
same fixed limiting point.  Passing to a further subsequence does not
change this tangent space.  By \cref{sp:prop:mapcompactness}, the
magnified coordinate maps
converge on every compact regular chart to a globally Lipschitz
holomorphic map
\[
                              F=(F^1,\ldots,F^m):Z\to\C^m.
\]

\emph{Step 2: make the limiting differentials parallel.}
Apply \cref{sp:thm:spectral-rigidity} to each component of \(F\).  After
subtracting its weighted mean, every component is an exact
\(1/2\)-eigenfunction of the Friedrichs drift operator.  Its real and
imaginary Hessians vanish on \(\cR\).  Hence all real and complex
gradients of the components of \(F\) are parallel.

\emph{Step 3: verify full base rank.}
The lower half of \eqref{sp:eq:twosidedF}, which comes from
\((\mathrm Q)\), says that the Hermitian Gram matrix
\[
 H_{\alpha\bar\beta}
 =\langle dF^\alpha,d\overline{F^\beta}\rangle_{g^{-1}}
\]
is positive definite.  Since the differentials are parallel, \(H\) is
constant.  Thus the components are linearly independent modulo constants
and have complex rank exactly \(m\).

\emph{Step 4: globalize the product.}
Apply \cref{sp:thm:singular-product}.  It gives a global biholomorphic
isometry \(Z\cong\C^m\times Z'\).  The residual, potential,
singular-set, and negative-time conclusions recorded below follow from
the same theorem.  Since the original tangent space was arbitrary, the
time-\((-1)\) model of every tangent space at the fixed limiting point
has this splitting.

\end{proof}

\paragraph{Consequences under compact-local product control.}

The spectral proof used only the two quotient estimates near
\(q_\pi(\xi)\).  On a Zariski-open product region,
\((\mathrm S)\) follows from the limiting-class Schwarz estimate and
\((\mathrm Q)\) follows from Fu--Zhang's product-slice estimate.  We
record the geometric splitting and then the centered-coordinate interface
needed in the curvature section.  Coordinate convergence and the exact-zero
identity are consequences of the splitting theorem, not hypotheses.

\begin{corollary}
\label{cor:local-full-rank-package}
Assume the semiample limiting-class setting of
\cref{sec:fano-fibration-setup}, let \(q\in Y^{\rm prod}\), and let
\(\xi=(\mu_t)_{t<T}\) be a fixed limiting point based at \(q\).  The
time-\((-1)\) model of every tangent space at \(\xi\) splits globally,
biholomorphically, and isometrically as
\begin{equation}\label{ti:eq:rigid-product}
                         Z\cong\C^m\times Z',
                         \qquad \dim_\C Z'=n-m.
\end{equation}
The factor \(Z'\) is complete, normal, and klt; its K\"ahler current has
bounded local potentials and a local ambient lower bound.  Write \(f_Z\)
for the normalized tangent shrinker potential.  At time \(-1\),
\begin{equation}\label{ti:eq:rigid-potential}
 f_Z(z,y)=\frac{|z|^2}{4}+\ell(z)+f'(y),
\end{equation}
where \(\ell\) is real affine-linear and \(f'\) is locally Lipschitz.
Moreover,
\begin{equation}\label{ti:eq:rigid-weighted-mass}
                 \int_{\Reg Z'}e^{-f'}\,dV_{g'}<\infty,
\end{equation}
and, writing \((\mathcal X_s,g_\infty(s))\) for the tangent flow, there
is a residual ancient flow \(g'(s)\) such that in the canonical gauge
\begin{equation}\label{ti:eq:rigid-flow}
 (\mathcal X_s,g_\infty(s))
 \cong(\C^m,g_{\rm E})\times(Z',g'(s)),\qquad s<0.
\end{equation}
\end{corollary}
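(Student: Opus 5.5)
The plan is to reduce \cref{cor:local-full-rank-package} to \cref{sp:prop:fullrank} and then read off the remaining conclusions from \cref{sp:thm:singular-product}. Since \(q\in Y^{\rm prod}\), there is a Zariski-open product region \(U_{\rm prod}\ni q\). \cref{prop:zariski-product-horizontal} supplies a principal product neighbourhood \(U'\ni q\). Inside \(U'\) I choose a relatively compact holomorphic coordinate ball \(U\Subset U'\cap Y^{\rm rv}\) with coordinates \(a\), \(a(q)=0\), and nested sets \(q\in U_0\Subset U_1\Subset U\). Applying \cref{prop:zariski-product-horizontal} with \(V=U\) gives \(c_U\eta\le S_t\le C_U\eta\) on \(\pi^{-1}(U)\), uniformly in \(t<T\); this is \((\mathrm S)_U\) and \((\mathrm Q)_U\) together. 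Independently, \((\mathrm S)_U\) also follows from \cref{lem:ambient-schwarz}. \cref{lem:automatic-label} shows that \(\xi\) is Type-I based at \(q\), so all hypotheses of \cref{sp:prop:fullrank} hold. It yields, for every tangent space at \(\xi\), the global biholomorphic isometry \(Z\cong\C^m\times Z'\). Counting dimensions gives \(\dim_\C Z'=n-m\).

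The remaining assertions come from the second half of \cref{sp:thm:singular-product}, applied to the map \(G=F\) built in \cref{sp:prop:mapcompactness}. Its structural hypotheses on \(Z\) are exactly \textup{(H1)} and \textup{(H7)} of \cref{sp:prop:limiting-package}, so \(Z'\) is complete, normal and klt, with bounded local potentials and the local ambient lower bound. For the potential formula \eqref{ti:eq:rigid-potential}, I feed in \textup{(H2)} and \textup{(H8)}: \(f_Z\) is locally Lipschitz, smooth on \(\cR\), and satisfies the shrinker equation. Finite weighted mass of \(f'\) follows from \eqref{sp:eq:measure}, since \(\mathfrak m(Z)=1\) gives \(\int_\cR e^{-f}dV_g=(4\pi)^n<\infty\), and then \eqref{sp:eq:massfactor} applies. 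For the flow statement \eqref{ti:eq:rigid-flow}, \textup{(H8)} says the flow of \(\nabla f_Z\) is complete, preserves \(\cR\), and extends to biholomorphisms of \(Z\). The exact self-similarity in \textup{(H3)} identifies the tangent flow with the canonical gauge \(g(s)=(-s)\Upsilon_s^*g\). Formula \eqref{sp:eq:negative-time-product} then gives \(g_\infty(s)=g_{\rm E}\oplus g'(s)\) with \(g'(s):=(-s)(\Upsilon'_s)^*g'\).

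I expect the main obstacle to be the last step. One issue is the Euclidean factor: the canonical map acts on it by the affine dilation \(z+2b\mapsto(-s)^{-1/2}(z+2b)\). The slice metric is nonetheless \(g_{\rm E}\), because \((-s)\) times the pullback of \(g_{\rm E}\) under a dilation by \((-s)^{-1/2}\) is again \(g_{\rm E}\). So the identification is up to that affine product isometry, which is harmless for an isomorphism statement. The other issue is that the canonical self-similar maps must extend to the metric completions, which \cref{sp:thm:singular-product} requires. Here I would use the extension of the flow of \(\nabla f_Z\) to biholomorphisms of \(Z\) from \textup{(H8)}, together with Bamler's identification of the time slices \(\mathcal X_s\) as completions of \(\mathcal R_s\).
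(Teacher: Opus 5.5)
Your proposal is correct and follows the paper's own proof. Both obtain \((\mathrm S)_U\) and \((\mathrm Q)_U\) on a coordinate neighbourhood of \(q\) from \cref{prop:zariski-product-horizontal} and the Type-I basing from \cref{lem:automatic-label}, apply \cref{sp:prop:fullrank} to get the splitting, and read the residual, potential, weighted-mass, and negative-time assertions off \cref{sp:thm:singular-product} for the limiting base map; your explicit check of that theorem's extra hypotheses via \textup{(H2)}, \textup{(H3)}, and \textup{(H8)} only spells out what the paper leaves implicit.
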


\begin{proof}
By \cref{lem:automatic-label}, \(\xi\) has the parabolic
target-moment estimate at its base point \(q\).  The two quotient bounds
on nested relatively compact neighbourhoods of \(q\) are supplied by
\cref{prop:zariski-product-horizontal}.  The full-rank spectral criterion
\cref{sp:prop:fullrank} therefore gives
\eqref{ti:eq:rigid-product}.  In the proof of that criterion, the
singular global product theorem \cref{sp:thm:singular-product} is applied
to the limiting base map.  Its residual, potential, weighted-mass, and
ancient-flow conclusions give the remaining assertions.
\end{proof}

\begin{proposition}
\label{prop:local-coordinate-interface}
In the setting of the preceding corollary, let
\(w=(w^1,\ldots,w^m)\) be holomorphic coordinates centered at
\(q\), and let \(\tau_i\searrow0\) realize a tangent space at \(\xi\).
After passing to a further subsequence realizing the same tangent space,
there is a
choice of the product isometry in \eqref{ti:eq:rigid-product} and a
matrix \(L\in\mathrm{GL}(m,\C)\), depending on the chosen tangent-space
realization and on \(w\) but independent of \(i\), such that
\begin{equation}\label{ti:eq:rigid-coordinate-limit}
 L\!\left(\tau_i^{-1/2}w\circ\pi\right)
 \longrightarrow \operatorname{pr}_{\C^m}
\end{equation}
smoothly on every compact regular convergence chart.  The normalization
is complex linear and contains no translation.  Write
\(F_q:=\pi^{-1}(q)\).  Wherever the coordinate chart is defined,
\begin{equation}\label{ti:eq:rigid-exact-zero}
 (w\circ\pi)^{-1}(0)=F_q,
 \qquad
 Z'=\operatorname{pr}_{\C^m}^{-1}(0)_{\rm red}.
\end{equation}
The second identity is understood under the product isometry just chosen.
\end{proposition}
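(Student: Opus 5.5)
We will obtain \eqref{ti:eq:rigid-coordinate-limit} by running \cref{sp:prop:mapcompactness} with the coordinates \(a=w\), restricted to a relatively compact coordinate neighbourhood \(U\) of \(q\) on which \(w\) is defined and \((\mathrm S)_U\), \((\mathrm Q)_U\) hold. These quotient bounds are supplied by \cref{lem:ambient-schwarz} and \cref{prop:zariski-product-horizontal}, since \(q\in Y^{\rm prod}\). After passing to a subsequence realizing the same tangent space, the maps \(F_i=\tau_i^{-1/2}w\circ\pi\) converge smoothly on compact regular charts to a globally Lipschitz holomorphic map \(F:Z\to\C^m\), and by \eqref{sp:eq:twosidedF} its Gram matrix is positive definite.

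\cref{sp:thm:spectral-rigidity} then makes \(dF\) parallel. Applying \cref{sp:thm:singular-product} to \(G=F\) gives \(L\in\mathrm{GL}(m,\C)\) and a product isometry under which \(LF=\operatorname{pr}_{\C^m}\). The key point is that the theorem only allows a linear change, ``without translating the target''. Since \(L\) acts on the limit, linear convergence gives \(L F_i\to LF=\operatorname{pr}_{\C^m}\) smoothly on every compact regular chart. The matrix \(L\) is fixed once the subsequence and the limit \(F\) are fixed, so it is independent of \(i\).

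For the first identity in \eqref{ti:eq:rigid-exact-zero}, note that \(w\) is a coordinate chart centred at \(q\), so \(w(y)=0\) exactly when \(y=q\). Hence \((w\circ\pi)^{-1}(0)=\pi^{-1}(q)=F_q\) on the domain of the chart, which is immediate. For the second identity, Step~4 of the proof of \cref{sp:thm:singular-product} defines \(Z'=G^{-1}(0)_{\rm red}\) with \(G=LF\). Because \(LF=\operatorname{pr}_{\C^m}\) under the chosen isometry, \(Z'=\operatorname{pr}_{\C^m}^{-1}(0)_{\rm red}\).

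The main obstacle is that \cref{sp:thm:singular-product} requires the theorem's own \(G\) to be the projection with no translation. So we must check that the product isometry produced there is the one used in \eqref{ti:eq:rigid-product}. We will handle this by choosing the isometry from that theorem, as the statement allows. Any translation that might be needed to centre \(F\) is unnecessary: \(\Phi\) is built from the action \(\mathcal A\) on \(G^{-1}(0)\), so no centring condition on \(F\) itself is required.
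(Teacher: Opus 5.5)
Your proposal is correct and follows the paper's proof essentially step for step. The paper likewise takes the unshifted $q$-centred maps $\tau_i^{-1/2}w\circ\pi$ and obtains their convergence from \cref{sp:prop:mapcompactness}, with the quotient bounds supplied by \cref{prop:zariski-product-horizontal}. Spectral rigidity then makes the limit parallel with a constant positive-definite Gram matrix, and \cref{sp:thm:singular-product} supplies the translation-free normalization $L$ with $LF=\operatorname{pr}_{\C^m}$ and residual factor $G^{-1}(0)_{\rm red}$. The only difference is cosmetic: you apply that theorem to $F$ and read off $L$, while the paper first fixes $L$ from Step~1 of its proof and applies it to $G=LF$.
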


\begin{proof}
Center the maps at the fixed base point \(q\), rather than at the
auxiliary \(H_{2n}\)-centers used for pointed compactness.  The
normalization \(w(q)=0\) then makes the original fibre \(F_q\) the exact
zero set at every scale.
Shrink the coordinate domain, if necessary, and choose nested
neighbourhoods \(q\in U_0\Subset U_1\Subset U\) inside a product region.
The estimates \((\mathrm S)_U\) and \((\mathrm Q)_U\) follow from
\cref{prop:zariski-product-horizontal}.
\Cref{sp:prop:mapcompactness} gives, after a further subsequence
representing the same time-\((-1)\) metric model, convergence of the
unshifted maps
\[
 F_i=\tau_i^{-1/2}w\circ\pi\longrightarrow F
\]
on compact regular charts.  Its limiting Hermitian Gram matrix is
positive definite by \((\mathrm Q)_U\).  By
\cref{sp:thm:spectral-rigidity}, the real and imaginary Hessians of the
components of \(F\) vanish, so the Gram matrix is also constant.  Choose
\(L\in\mathrm{GL}(m,\C)\) to be the linear normalization used in
Step~1 of the proof of \cref{sp:thm:singular-product}, and apply that
theorem to \(G=LF\).  In its proof, the complete commuting horizontal
flows satisfy
\[
 G(\mathcal A(z,x))=G(x)+z,
\]
the residual factor is \(G^{-1}(0)_{\rm red}\), and the product
coordinate is \(G\) itself.  Consequently
\[
 LF_i\longrightarrow G=\operatorname{pr}_{\C^m}
\]
without a target translation.  Since \(L\) is invertible and \(w(q)=0\),
\[
 (LF_i)^{-1}(0)=(w\circ\pi)^{-1}(0)=F_q
\]
wherever the coordinate chart is defined.  This proves both assertions.
\end{proof}

\begin{proof}[Proof of \cref{thm:intro-splitting}]
By GAGA, \(E\) is algebraic.  An algebraic vector bundle is Zariski
locally trivial, so its projectivization is biregularly a product over a
Zariski-open neighbourhood of \(q\).  Hence
\(q\in Y^{\rm prod}\), and the product conclusion of
\cref{cor:local-full-rank-package} is precisely
\eqref{eq:intro-minimal-splitting}.
\end{proof}

The normalized-coordinate convergence and exact zero-level identity
above are the precise interface used in the curvature argument of
\cref{sec:surface-prototype}.

\section{Limiting curve models and fixed-fibre estimates in relative dimension one}
\label{sec:surface-prototype}\label{sec:surface-fixed-fibre}
\label{ti:sec:surface-base}\label{sec:general-fixed-fibre}

\subsection{Limiting points and rescaled base maps}
\label{surf:sec:setup-map}
\label{ti:sec:P1-horizontal}

We work in the semiample limiting-morphism setting of
\cref{sec:fano-fibration-setup} in relative complex dimension one:
\[
                         \pi:X^{m+1}\longrightarrow Y^m,
                         \qquad m\ge1.
\]
Thus \(X\) is smooth and projective, \(Y\) is normal and projective, and
\(\pi\) is the limiting morphism.  Let
\(\omega(t)\), \(0\le t<T\), be the maximal finite-time solution of the
unnormalized K\"ahler--Ricci flow.  As before,
\[
 g(t)=2g_{\omega(t)},
 \qquad \partial_tg(t)=-2\Rc(g(t)).
\]
The limiting class satisfies
\[
                  [\omega_0]-2\pi T c_1(X)=\pi^*[\eta]
\]
for a K\"ahler form \(\eta\) on \(Y\).  Every regular fibre
\(F_y=\pi^{-1}(y)\) is \(\PP^1\).  Indeed, on a regular fibre the
relative tangent sequence gives
\[
 0\longrightarrow TF_y\longrightarrow TX|_{F_y}
 \longrightarrow T_yY\otimes\mathcal O_{F_y}\longrightarrow0,
 \qquad c_1(X)|_{F_y}=c_1(TF_y),
\]
and the limiting class identity makes \(F_y\) a Fano curve.

After the constant parabolic normalization
$\widetilde\omega(s)=T^{-1}\omega(Ts)$, we may and do take $T=1$; we then
drop the tilde and absorb $T^{-1}\eta$ into the notation $\eta$.  Thus the
flow is defined for $0\le t<1$ and its limiting class satisfies
\[
 [\omega_0]-2\pi c_1(X)=\pi^*[\eta].
\]
In this section
\[
                         Y^\circ:=Y^{\rm rv},\qquad
                         X^\circ:=\pi^{-1}(Y^{\rm rv}).
\]

The first four subsections assemble the common package for both the
fixed-fibre argument and the moving-fibre uniformization in
\cref{ti:sec:general-base}.  They construct the limiting base maps,
classify the one-dimensional residual tangent, establish smooth pointed
capture and bounded-drift packing, and record the exact-zero and
exact-area tools.  The last two subsections close the fixed-fibre
estimate.  When \(m=1\), the direct orbifold-removal argument gives the
complex-surface case.  The final subsection uses full-rank splitting at
the limiting time for every \(m\ge1\) and proves the cylindrical classification
at a fixed limiting point needed in \cref{ti:sec:general-base}.

Within this section the variable $t<0$ in $g_{i,t}$ denotes rescaled
time; when rescaled and original times occur together, the latter are
denoted by $s_i$.

Fix \(q\in Y^{\rm prod}\).  By definition there is a Zariski-open
product neighbourhood \(U_{\rm prod}\ni q\).  After shrinking it
Zariski-openly, choose a principal product neighbourhood \(U_q\ni q\)
and a biregular product isomorphism over \(U_q\),
\[
 \pi^{-1}(U_q)\simeq U_q\times\PP^1.
\]
Set \(\omega_Y:=\eta\) and let
\(g_Y=2g_{\omega_Y}\) be its standard real metric.  Let
\[
 P_q:U_q\times\PP^1\longrightarrow\PP^1
\]
be the projection to the fibre factor; the submanifolds
\(P_q^{-1}(z)=U_q\times\{z\}\) are the product-horizontal base slices.

All horizontal and volume inputs have already been established in
\cref{sec:prelim}.  More precisely,
\cref{prop:zariski-product-horizontal} gives, on every
\(U_1\Subset U_q\),
\begin{equation}\label{eq:fixed-fibre-SQ}
 c_{U_1}\eta\le S_t\le C_{U_1}\eta,
 \qquad
 C_{U_1}^{-1}\eta^{-1}\le Q_t\le c_{U_1}^{-1}\eta^{-1},
 \qquad 0\le t<1.
\end{equation}
Its product-slice clause also gives, uniformly in \(z\in\PP^1\),
\begin{equation}\label{eq:fixed-fibre-product-slice}
 \omega(t)|_{P_q^{-1}(z)}\le C_{U_1}\omega_Y
 \qquad\text{on }U_1.
\end{equation}
Finally, the detailed Schur-complement computation is
\cref{lem:general-block-volume}, and the resulting uniform Type-I
tube-volume estimate is \cref{cor:type-I-tube-volume}.  Thus the surface and arbitrary-base
arguments use one common preliminary statement.

From now on, we fix the point $q\in Y^{\rm prod}$, and use the notation
above.

Fix \(x_0\in F_q:=\pi^{-1}(q)\) and choose a subsequential anchored
limiting point \(\xi=(\mu_t)_{0\le t<1}\), obtained from point conjugate
heat kernels with spatial pole \(x_0\) and pole times tending to \(1\).
This limiting point is fixed throughout the construction below that
uses \(x_0\); no uniqueness is asserted.

Consider any sequence \(t_i\nearrow1\).  Define its Type-I rescalings by
\[
 M_i:=X,\qquad
 g_{i,t}:=(1-t_i)^{-1}g(1+(1-t_i)t),\qquad
 \nu_t^i:=\mu_{1+(1-t_i)t},
 \qquad t\in[-T_i,0),
\]
where \(T_i=(1-t_i)^{-1}\to\infty\).  By
\cref{prop:limiting-package}, after possibly passing to a subsequence,
we obtain the $\mathbb{F}$-convergence
within a metric-flow correspondence $\mathfrak C$,
\begin{equation}
(M_i,(g_{i,t})_{t\in (-T_i,0)},(\nu_t^i)_{t\in (-T_i,0)})\xrightarrow[i\,\to\,\infty]{\mathbb{F},\,\mathfrak{C}} (\mathcal{X},(\nu_{x_{\infty};t})_{t\,\in\,(-\infty,0)})
\end{equation}
where the metric flow pair $({\mathcal{X}},({\nu}_{x_{\infty};t})_{t\in (-\infty,0)})$ is modeled on the normal klt shrinking gradient
K\"ahler--Ricci soliton
$(\widehat X,\widehat g,J_{\widehat X},f_{\widehat X})$ supplied by
\cref{prop:limiting-package}.  When \(m=1\), hence \(n=2\),
\cref{prop:limiting-package}\textup{(H6)} identifies this model as a
K\"ahler orbifold with isolated singularities.  We abbreviate
\[
 (\hat X,\hat g,J_{\hat X},f_{\hat X})
 :=(\widehat X,\widehat g,J_{\widehat X},f_{\widehat X}).
\]
We denote the time-$t$ slice by $\mathcal X_t$, its regular locus by
$\mathcal R_t:=\Reg(\mathcal X_t)$, and its smooth regular-slice metric
by $\hat g_t:=\widehat g(t)$, where $\widehat g(t)$ is the self-similar
family in \eqref{surf:eq:induced-shrinker-spacetime}.
First, we prove the following proposition.
\begin{proposition}\label{surf:conofPhi}
Fix \(t<0\).  There are adapted centres \(p_i\), converging in a fixed
regular chart, such that
\[
 \int_Xd_{g_{i,t}}(p_i,y)^2\,d\nu_t^i(y)\le C_t(-t).
\]
Put \(b_i=\pi(p_i)\), and let \(w=(w^1,\ldots,w^m)\) be holomorphic
coordinates near \(q\).  After passing to a subsequence, the maps
\[
 W_i\circ\pi,\qquad
 W_i(y)=(1-t_i)^{-1/2}\bigl(w(y)-w(b_i)\bigr),
\]
converge smoothly on compact subsets of \(\mathcal R_t\) to a
holomorphic map
\[
 \Pi_t:\mathcal R_t\longrightarrow\C^m.
\]
Thus \(\Pi_t\) is the pointed limit of
\[
 \pi:(X,g_{i,t},p_i)\longrightarrow
 \bigl(Y,(1-t_i)^{-1}g_Y,b_i\bigr).
\]
\end{proposition}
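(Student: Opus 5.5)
The plan is to run the argument of \cref{sp:prop:mapcompactness} at an arbitrary fixed negative time \(t\) instead of \(t=-1\). The only change is that we recenter at the base image of an adapted centre instead of at \(q\).

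\emph{Choosing the centres.} In the rescaled flow \(g_{i,t}=(1-t_i)^{-1}g(1+(1-t_i)t)\), the measure \(\nu^i_t=\mu_{1+(1-t_i)t}\) sits at original backward scale \((1-t_i)(-t)\) from the limiting time. Property \textup{(P2)} therefore gives \(\Var_{g_{i,t}}(\nu^i_t)\le H_{2n}(-t)\). Hence there exist \(H_{2n}\)-centres \(p'_i\) with
\[
 \int d_{g_{i,t}}(p'_i,y)^2\,d\nu^i_t\le H_{2n}(-t).
\]
These need not converge in a regular chart, so we fix a regular point \(x\in\mathcal R_t\) together with a small regular ball \(B\ni x\) of positive limiting mass, and take \(p_i=\Phi_i(x)\) under the regular convergence maps. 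As in the proof of \cref{lem:regular-chart-localization}, positivity of the mass of \(B\) under \(\nu^i_t\) shows that \(d_{g_{i,t}}(p_i,p'_i)\) is bounded. The triangle inequality then gives the stated moment bound with some \(C_t\). By construction the \(p_i\) converge in a fixed regular chart.

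\emph{Base localization and the coordinate region.} Applying \cref{surf:lem:base-localization}\textup{(iii)} with \(A=C_t(-t)/(-t)\), at original pole \(T=1\) and original time \(1+(1-t_i)t\), gives
\[
 d_\eta(b_i,q)\le C\sqrt{(1-t_i)(-t)}\to0 .
\]
In particular \(b_i\) eventually lies in \(U_0\Subset U_1\Subset U\), with \(w\) a coordinate system on \(U\). The Schwarz estimate \cref{lem:ambient-schwarz} shows that the \(g_{i,t}\)-distance from \(p_i\) to \(X\setminus\pi^{-1}(U_1)\) is at least \(c(1-t_i)^{-1/2}\to\infty\). Consequently every compact regular chart, which lies at bounded distance from \(p_i\), is eventually contained in \(\pi^{-1}(U_1)\), and \(W_i\circ\pi\) is defined and holomorphic there.

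\emph{Uniform bounds and the limit.} By the same scaling cancellation as in \eqref{sp:eq:scalingGram}, the rescaled Gram matrix of \(W_i\circ\pi\) in \(g_{i,t}\) is a bounded multiple of \(Q\) at the original time, up to the factor \((-t)\) coming from the time slice. Condition \((\mathrm S)_{U}\) therefore gives a uniform gradient bound, depending on \(t\). We have \(W_i(\pi(p_i))=0\), and we integrate the gradient bound along minimizing geodesics from \(p_i\), which remain inside \(\pi^{-1}(U_1)\). This yields uniform \(C^0\) bounds on each compact regular chart. Each component is holomorphic, hence harmonic for the Kähler slice metric. Combining this with smooth convergence of the metrics and of the complex structures (H5), interior Schauder estimates give bounds on all derivatives. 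Arzelà--Ascoli and a diagonal argument over a countable regular atlas then produce a smooth limit \(\Pi_t\) on \(\mathcal R_t\), and the relation \(d\Pi_t\circ J=i\,d\Pi_t\) passes to the limit.

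\emph{Main obstacle.} The delicate point is the first step, namely arranging that the centres both satisfy the moment bound and converge in a fixed regular chart. This is why we replace the abstract \(H_{2n}\)-centres by convergence-map images of a regular point. The rest of the argument is the verbatim time-\(t\) version of \cref{sp:prop:mapcompactness}.
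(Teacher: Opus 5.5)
Your proposal is correct and follows essentially the paper's route: a regular chart of positive limiting mass plus the variance bound give chart-convergent centres, \cref{surf:lem:base-localization}\textup{(iii)} localizes \(b_i\), the Schwarz bound together with \(W_i(b_i)=0\) gives uniform \(C^0\) control, and harmonicity, elliptic estimates and a diagonal argument finish. The only variation is that you take \(p_i=\Phi_i(x)\) and use the triangle inequality against an \(H_{2n}\)-centre, whereas the paper averages \(p\mapsto\int d_{g_{i,t}}(p,y)^2\,d\nu_t^i(y)\) over the chart image \(E_i\) to pick \(p_i\in E_i\) directly and then passes to a subsequence. There is one harmless slip: no factor \((-t)\) enters the Gram matrix, since the factors in \(g_{i,t}^{-1}=(1-t_i)g(s_i)^{-1}\) and \(dW_i=(1-t_i)^{-1/2}dw\) cancel exactly, so the gradient bound is in fact independent of \(t\).
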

\begin{proof}
Fix a relatively compact regular coordinate ball
$E\Subset\mathcal R_t$ with smooth boundary and
$\nu_{x_\infty;t}(E)>0$.  Let $E_i$ be its image under a regular
convergence embedding.  Smooth convergence of the distinguished densities in
\cref{prop:limiting-package} gives
$\nu_t^i(E_i)\ge\delta>0$ for all large $i$.  Put
\[
 M_i(p):=\int_Xd_{g_{i,t}}(p,y)^2\,d\nu_t^i(y).
\]
Fubini and $H_{2m+2}$-concentration give
\[
 \int_{E_i}M_i(p)\,d\nu_t^i(p)
 \le \Var_{g_{i,t}}(\nu_t^i)\le H_{2m+2}(-t).
\]
Hence one may choose $p_i\in E_i$ with
$M_i(p_i)\le H_{2m+2}\delta^{-1}(-t)$.  After a further subsequence, the
preimages of $p_i$ converge in $\overline E$ to a point
$p_\infty\in\mathcal R_t$.  Thus $p_i$ converges in the correspondence by
construction.  Set $C_t:=H_{2m+2}\delta^{-1}$.

The conjugate flow \(\nu^i\) is obtained by rescaling the fixed anchored
limiting point \(\xi=(\mu_t)_{t<1}\), whose automatic base point is \(q\) by
\cref{prop:automatic-anchored-point}.  In the unrescaled flow, the
preceding adapted-center bound is at time
\[
s_i=1+(1-t_i)t
\]
and, since $g_{i,t}=(1-t_i)^{-1}g(s_i)$, has the form
\[
 \int_Xd_{g(s_i)}(p_i,y)^2\,d\mu_{s_i}(y)
 \le C_t(1-s_i).
\]
Applying \cref{surf:lem:base-localization}(iii) to this limiting point,
with its measure evaluated at \(t=s_i\), \(T=1\), and \(A=C_t\), first
gives the following inclusion with \(B_\eta\).  On the fixed regular
coordinate neighbourhood, \(d_\eta\) and the intrinsic \(g_Y\)-distance
are uniformly equivalent.  Absorbing that equivalence into \(C_0\), we
obtain
\[
p_i\in
\pi^{-1}\left(
   B_{g_Y}
\left(
q,\,
C_0(1-s_i)^{1/2}
\right)
\right).
\]
Therefore
\[
d_{g_Y}(b_i,q)
\leq
C_0(1-t_i)^{1/2}(-t)^{1/2}.
\]
Multiplying by the rescaling factor \((1-t_i)^{-1/2}\), we get
\[
d_{(1-t_i)^{-1} g_Y}(b_i,q)
\leq
C_0(-t)^{1/2}.
\]
Thus \(b_i\to q\) in \(g_Y\), while
\(d_{(1-t_i)^{-1}g_Y}(b_i,q)\) remains uniformly bounded.

Since \(q\in Y^{\rm prod}\), for all sufficiently large \(i\), we have
$b_i\in U_q$. Choose a holomorphic coordinate system
$w=(w^1,\ldots,w^m)$ centered at $q$ and
normalize $dw_q$ so that $g_Y(q)$ is the Euclidean Hermitian metric. On the shrinking
coordinate balls set
\[
 W_i(y)=(1-t_i)^{-1/2}\bigl(w(y)-w(b_i)\bigr).
\]
Choose a relatively compact holomorphic coordinate ball
$V\Subset U_q$ with $q\in V$. Then for every fixed radius
\(R<\infty\), we have
\[
B_{(1-t_i)^{-1} g_Y}(b_i,R)=B_{ g_Y}(b_i,(1-t_i)^{1/2} R)\Subset V
\]
for all sufficiently large \(i\). Thus the pointed base manifolds
$(Y,(1-t_i)^{-1} g_Y,b_i)$ converge smoothly on compact subsets to
$(T_qY,g_Y(q),0)\simeq \mathbb C^m$.

By the parabolic Schwarz lemma, in the form
\cref{lem:ambient-schwarz}, there exists a uniform
constant \(C<\infty\) such that
\begin{equation}\label{surf:nablaPhiupb}
|d\pi|^2_{g_{i,t},\,(1-t_i)^{-1} g_Y}
=
\operatorname{tr}_{(1-t_i)^{-1} g(s_i)}\pi^*((1-t_i)^{-1} g_Y)
=
\operatorname{tr}_{g(s_i)}\pi^*g_Y
\leq C.
\end{equation}
Here and below, \(|d\pi|\) is the real Hilbert--Schmidt norm and
\(\operatorname{tr}_g\) is the ordinary real trace.  For a holomorphic
map this real trace is twice the corresponding complex Hermitian trace;
that fixed factor is absorbed into \(C\) when the parabolic Schwarz
estimate is written in its complex convention.

Therefore the map
\[
\pi:(X,g_{i,t})\longrightarrow (Y,(1-t_i)^{-1} g_Y)
\]
is uniformly Lipschitz, independently of \(i\). In particular, if a sequence of points \(x_i\in X\) stays at bounded \(g_{i,t}\)-distance from \(p_i\), then the sequence \(\pi(x_i)\) stays at bounded \((1-t_i)^{-1} g_Y\)-distance from \(b_i\).

By construction, $p_i$ converges in a regular convergence chart to
$p_\infty\in\mathcal R_t$.  Given $K\Subset\mathcal R_t$, use
connectedness of the regular locus to choose a connected compact regular
set $K'\Subset\mathcal R_t$ whose interior contains
$K\cup\{p_\infty\}$.
\Cref{prop:limiting-package} supplies, for all sufficiently large
$i$, embeddings $\psi_i$ from an open neighborhood of $K'$ into $X$,
compatible with the correspondence, such that
$\psi_i^*g_{i,t}\to\hat g_t$ smoothly on $K'$ and the pulled-back complex
structures converge smoothly there.  Set $\pi_i:=\pi$ and consider the maps
\[
\pi_i\circ\psi_i:K'\longrightarrow (Y,(1-t_i)^{-1} g_Y,b_i).
\]
Composing with the target coordinate \(W_i\), we get holomorphic maps
\[
F_i
:=
W_i\circ \pi\circ\psi_i
:
K'\longrightarrow\mathbb C^m.
\]

Compatibility of the charts with the correspondence gives
\[
 \sup_{x\in K'}d_{g_{i,t}}\bigl(\psi_i(x),p_i\bigr)\le C_{K'}.
\]
The maps are uniformly Lipschitz, and the target normalization was chosen
so that $W_i(b_i)=0$. Hence the maps $F_i$ are uniformly bounded on $K'$.

Each component of \(F_i\) is harmonic for
\(\psi_i^*g_{i,t}\).  The local \(C^0\)-bound and interior elliptic
estimates for the smoothly converging metrics give uniform derivative
bounds on \(K\).  After passing to a subsequence, \(F_i\) therefore
converges smoothly on \(K\).  Passing the Cauchy--Riemann equations for
the smoothly converging pulled-back complex structures to the limit
shows that the limiting map is holomorphic; denote it by
\[
F_\infty:K\longrightarrow\mathbb C^m.
\]
A diagonal argument over a compact exhaustion of \(\mathcal R_t\)
produces a holomorphic map
\[
\Pi_t:\mathcal R_t\longrightarrow\mathbb C^m
\]
such that
\[
W_i\circ\pi\circ\psi_i
\longrightarrow
\Pi_t
\]
smoothly on every compact subset of \(\mathcal R_t\). Thus \(\Pi_t\) is
the pointed smooth limit of \(\pi\) on the regular part of the time slice
\(\mathcal X_t\). This completes the proof.
\end{proof}

We now recenter the spectral map at the base point of a fixed limiting
point.  The preceding map is centred at the adapted points
\(b_i=\pi(p_i)\);
that centring is natural for pointed convergence but does not preserve the
exact fibre \(F_q\).  For splitting and exact-level arguments we instead
use coordinates \(w=(w^1,\ldots,w^m)\) centred at \(q\) and put
\begin{equation}\label{ti:eq:P1-base-maps}
 F_i=\tau_i^{-1/2}w\circ\pi,\qquad
 \tau_i\searrow0.
\end{equation}
Let \(\xi=(\mu_t)_{t<1}\) be any fixed limiting point based at \(q\),
chosen before the scale sequence.  Its base point is automatic, and
\cref{ti:lem:label} supplies
\begin{equation}\label{ti:eq:P1-base-moment}
 \int_Xd_\eta(\pi(x),q)^2\,d\mu_t(x)\le C_\xi(1-t).
\end{equation}
No basing or quantitative moment condition is being added as a
hypothesis.

Let \(g_i(s)=\tau_i^{-1}g(1+\tau_i s)\).  After passing to a
subsequence realizing a tangent space, write
\((Z,d,g_Z,J_Z)\) for its time-\((-1)\) K\"ahler metric model.
Then
\cref{prop:local-coordinate-interface} gives one
\(L\in\mathrm{GL}(m,\C)\) and a continuous holomorphic map \(G:Z\to\C^m\) such
that
\begin{equation}\label{ti:eq:P1-map-convergence}
 G_i:=LF_i\longrightarrow G
\end{equation}
smoothly on compact regular convergence charts in the time-\((-1)\)
model.  At \(s=-1\), with \(t=1-\tau_i\),
\[
 g_i(s)^{-1}=\tau_i g(t)^{-1},
 \qquad
 dF_i^\alpha=\tau_i^{-1/2}d(w^\alpha\circ\pi).
\]
Hence
\begin{align}
 \left\langle dF_i^\alpha,d\overline{F_i^\beta}\right\rangle_{g_i(s)^{-1}}
 &=
 \left\langle d(w^\alpha\circ\pi),
 d\overline{(w^\beta\circ\pi)}\right\rangle_{g(t)^{-1}}.
 \label{eq:fixed-fibre-Gram-scaling}
\end{align}
Thus \eqref{eq:fixed-fibre-SQ} passes to the limit with no power of
\(\tau_i\).  On \(\Reg Z\),
\begin{equation}\label{eq:common-parallel-map}
 \nabla dG=0,\qquad
 c|a|^2\le |d(a\cdot G)|_{g_Z}^2\le C|a|^2
 \quad(a\in\C^m).
\end{equation}
The upper bound and the intrinsic-length property of \(\Reg Z\) make
\(G\) globally Lipschitz after completion.  The normalization \(L\) is
complex linear and contains no translation.  Consequently
\begin{equation}\label{ti:eq:P1-exact-level}
 G_i^{-1}(0)=F_i^{-1}(0)=F_q
\end{equation}
wherever the coordinate chart is defined.  This distinction between the
\(b_i\)-centred maps and the \(q\)-centred maps will be used in the final
exact-level argument.

\subsection{Residual curve models and smooth tangent spaces at limiting points}
\label{ti:sec:P1-classification}

The full-rank splitting theorem at the limiting time confines every possible
singularity to a normal complex one-dimensional factor.  The following
classification is the common model-theoretic input for the packing
argument below and for the secondary tangent in
\cref{ti:sec:general-base}.  At this stage
both possibilities remain; the exact-level argument in
\cref{ti:sec:arbitrary-base-fixed-fibre} will later exclude the Gaussian
one for a fixed limiting point.

\begin{proposition}
\label{ti:prop:P1-models}
Let \(Z\) be a connected complete normal klt complex space of dimension
\(m+1\), whose regular locus carries a K\"ahler metric \(g_Z\) with
complex structure \(J_Z\).  Let \(f_Z\) be locally Lipschitz on \(Z\),
smooth on \(\Reg Z\), and satisfy
\[
 \Rc(g_Z)+\nabla^2f_Z=\frac12g_Z
 \qquad\text{on }\Reg Z.
\]
Assume the finite-time
tangent-space package used in \cref{thm:singular-product}: its analytic and metric
regular loci agree, \(Z\) is the metric completion of its regular locus,
the restricted distance is intrinsic, the K\"ahler current has bounded
local potentials and the required local ambient lower bound, and the
soliton measure
\[
 d\mathfrak m=(4\pi)^{-(m+1)}e^{-f_Z}\,dV_{g_Z}
 \quad\text{on }\Reg Z,
 \qquad \mathfrak m(Z\setminus\Reg Z)=0,
 \qquad \mathfrak m(Z)=1,
\]
is normalized to be a probability measure.  Suppose there is a globally
Lipschitz holomorphic map
\[
 H:Z\longrightarrow\C^m
\]
such that on \(\Reg Z\)
\begin{equation}\label{eq:abstract-parallel-base-map}
 \nabla dH=0,\qquad
 c|a|^2\le |d(a\cdot H)|_{g_Z}^2\le C|a|^2
 \quad(a\in\C^m).
\end{equation}
Then there is a fixed \(L\in\mathrm{GL}(m,\C)\), with no target
translation, such that
\begin{equation}\label{eq:abstract-residual-product}
 Z\cong\C^m\times\Sigma,\qquad LH=\operatorname{pr}_{\C^m},
\end{equation}
where \(\Sigma\) is a complete smooth connected Riemann surface.  In particular,
\begin{equation}\label{ti:eq:P1-two-models}
 Z\cong\C^{m+1}
 \qquad\text{or}\qquad
 Z\cong\C^m\times\PP^1,
\end{equation}
where the first model is Gaussian and the sphere is round with
\(\Rc=\frac12g\).  In these zero-preserving coordinates,
\((LH)^{-1}(0)\) is respectively \(\C\) or \(\PP^1\).  Define the
normalized Nash entropy by
\[
 \mathcal N(Z):=\int_{\Reg Z}f_Z\,d\mathfrak m-(m+1).
\]
If \(\mathcal N(Z)<0\), only the spherical model occurs.
\end{proposition}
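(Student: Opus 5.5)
The plan is to first apply \cref{thm:singular-product} to \(G:=H\). The parallelism \(\nabla dH=0\) and the two-sided Gram bound \eqref{eq:abstract-parallel-base-map} are exactly hypothesis \eqref{sp:eq:parallelG}, because the constant Gram matrix is positive definite. All the remaining structural hypotheses of that theorem are assumed: completeness, the intrinsic distance, klt, bounded local potentials, and the ambient lower bound. It therefore supplies \(L\in\mathrm{GL}(m,\C)\), with no translation, and a biholomorphic isometry \(Z\cong\C^m\times\Sigma\) under which \(LH=\operatorname{pr}_{\C^m}\). Here \(\Sigma=(LH)^{-1}(0)_{\rm red}\) is connected, complete, normal and klt of complex dimension one. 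A normal complex space of dimension one is smooth, so \(\Sigma\) is a smooth connected Riemann surface and \(\Sing Z=\C^m\times\Sing\Sigma=\emptyset\). The potential part of the same theorem gives \(f_Z=|z|^2/4+\ell(z)+f'(y)\), and \(\Sigma\) has finite weighted mass \(\int_\Sigma e^{-f'}dV_{g'}<\infty\). Moreover \(\Rc(g')+\nabla^2f'=\tfrac12g'\) on \(\Sigma\), since the residual block of the shrinker equation restricts to the factor.

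The second step classifies complete two-dimensional gradient shrinkers \((\Sigma,g',f')\) that are smooth, complete and of finite weighted volume. I would use the known classification: a complete two-dimensional gradient shrinking Ricci soliton is the Gaussian plane \(\R^2\), the round \(S^2\), or \(\R\PP^2\), and the last is excluded because \(\Sigma\) is a Riemann surface and hence orientable. To keep this self-contained, I would argue as follows. In real dimension two, \(\Rc=\tfrac{R}{2}g\), so \(\nabla^2f'=\tfrac{1-R}{2}g'\). Hence \(\nabla f'\) is a conformal, indeed holomorphic, gradient field, and \(J\nabla f'\) is Killing.

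If \(\Sigma\) is compact, then either Hamilton's result or Kazdan--Warner type integration gives that \(f'\) is constant, so \(g'\) is Einstein with \(R=1\), and \(\Sigma\) is the round \(\PP^1\) with \(\Rc=\tfrac12g\). If \(\Sigma\) is noncompact, Chen's \(R\ge0\) for complete shrinkers holds here, since \(Z\) is a limit of flows with \(R\) bounded below. Combining this with the rotational symmetry coming from the Killing field and the fact that \(f'\) is proper (which follows from the Cao--Zhou quadratic growth), the ODE analysis forces \(R\equiv0\) and the Gaussian plane \(\C\).

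This classification step is the main obstacle. Its difficulty lies in justifying the noncompact rotationally symmetric ODE exclusion, that is, showing there is no complete nonflat noncompact solution, or alternatively in citing the classification cleanly. One could also rely on Hamilton's two-dimensional classification together with the fact that a shrinker with bounded curvature has finite weighted volume.

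Finally, the two models give \((LH)^{-1}(0)=\{0\}\times\Sigma\), which is \(\C\) or \(\PP^1\) respectively. For the entropy statement, I would compute \(\mathcal N\) on the product. The entropy is additive over the product, and the Euclidean factor contributes \(0\). For the Gaussian model \(\C^{m+1}\), the whole space is Gaussian and \(\mathcal N=0\). Therefore \(\mathcal N(Z)<0\) excludes the Gaussian model and leaves only the spherical one. For the round cylinder, a direct computation gives \(\mathcal N=\log(2/e)<0\), which confirms consistency.
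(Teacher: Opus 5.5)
Your proposal is correct and follows essentially the paper's route. You apply \cref{thm:singular-product} to \(H\), use that a normal complex curve is smooth, pass to the residual shrinker equation with finite weighted area, invoke the two-dimensional shrinker classification, and exclude the Gaussian model because its normalized Nash entropy vanishes. For the classification step, the paper cites Bernstein--Mettler for bounded scalar curvature and then Petersen--Wylie's weighted-\(L^2\) classification, and it handles the flat and spherical quotients by a universal-cover argument; your citation of the full Gaussian/\(S^2\)/\(\R\PP^2\) list subsumes that quotient step. Rely on that citation rather than on your fallbacks. The noncompact ODE step is only asserted, not proved. The Hamilton route also runs the wrong way: the needed input is bounded curvature, which finite weighted volume does not supply but Bernstein--Mettler does.
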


\begin{proof}
Apply \cref{thm:singular-product} to
\eqref{eq:abstract-parallel-base-map}.  It gives
\eqref{eq:abstract-residual-product}, where \(\Sigma\) is a complete
normal complex curve.  A one-dimensional normal complex analytic space
is nonsingular: every local ring is a one-dimensional Noetherian
integrally closed local domain, hence a discrete valuation ring and
therefore regular.  Thus \(\Sigma\) is a smooth Riemann surface.  The
analytic product is smooth; equality of the analytic and metric regular
loci then shows that no metric singularity remains.

The vertical part of the product shrinker equation is
\begin{equation}\label{ti:eq:P1-residual-shrinker}
 \Rc_{g_\Sigma}+\nabla^2f_\Sigma=\frac12g_\Sigma.
\end{equation}
The Euclidean--Euclidean component gives
\(\nabla_z^2f_Z=\frac12g_{\rm E}\), while the mixed component makes the
Euclidean linear term independent of \(\Sigma\).  Hence
\begin{equation}\label{eq:residual-potential-splitting}
 f_Z(z,y)=\frac{|z|^2}{4}+\ell(z)+f_\Sigma(y)
\end{equation}
for a real affine-linear function \(\ell\).  Finite mass of the
normalized soliton measure, completion of the square in \(z\), and
Fubini imply
\begin{equation}\label{ti:eq:P1-weighted-area}
 \int_\Sigma e^{-f_\Sigma}\,dA_{g_\Sigma}<\infty.
\end{equation}

Bernstein--Mettler's classification gives bounded scalar curvature for
the complete two-dimensional gradient soliton
\cite[Corollary~1]{BernsteinMettler}.  Hence the scalar
curvature lies in \(L^2(e^{-f_\Sigma}dA)\) by
\eqref{ti:eq:P1-weighted-area}.  Petersen--Wylie's shrinking-surface
classification under this weighted \(L^2\) hypothesis implies that the
residual shrinker is flat or compact and round
\cite[Appendix~A, especially Corollary~5 and the paragraph following
it]{PetersenWylie}.

In the flat case, lift to the Euclidean universal cover.  Equation
\eqref{ti:eq:P1-residual-shrinker} gives
\[
 \widetilde f_\Sigma(x)=\frac14|x-a|^2+c.
\]
Every deck transformation preserves this function and fixes \(a\), so
a nonidentity deck transformation cannot act freely.  Thus
\(\Sigma\cong\C\).  In the nonflat case the universal cover is the round
\(S^2\).  The complex orientation excludes \(\mathbb{RP}^2\), and no
nontrivial orientation-preserving finite group acts freely on \(S^2\).
Therefore \(\Sigma\cong\PP^1\), with
\(\Rc(g_\Sigma)=\frac12g_\Sigma\).

Finally, the normalized Gaussian shrinker has Nash entropy zero.  Indeed,
after completing the square and normalizing the measure,
\[
 f(x)=|x-a|^2/4,\qquad
 R=0,\qquad
 \int|\nabla f|^2\,d\nu=\int f\,d\nu=m+1,
\]
so \(\mathcal N(Z)=0\).  Strictly negative Nash entropy therefore
excludes the Gaussian model.
\end{proof}

\begin{corollary}
\label{cor:fixed-point-two-models}
Let \(\xi\) be a fixed limiting point based at
\(q=q_\pi(\xi)\in Y^{\rm prod}\).  Let \(Z\) denote the underlying
time-\((-1)\) K\"ahler metric model of any tangent space at \(\xi\).
Then \(Z\) is globally smooth and is one of the following two models:
\begin{equation}\label{ti:eq:P1-split}
 Z\cong\C^{m+1}
 \qquad\text{or}\qquad
 Z\cong\C^m\times\PP^1.
\end{equation}
In the complex-linear normalization of
\cref{prop:local-coordinate-interface}, the limiting map is
\(G=\operatorname{pr}_{\C^m}\), its zero slice is respectively \(\C\) or \(\PP^1\),
and the ancient tangent flow has globally bounded curvature on every
compact negative-time interval.
\end{corollary}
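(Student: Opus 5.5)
The argument feeds the fixed-limiting-point package into the abstract classification \cref{ti:prop:P1-models}, and then reads off the remaining assertions from the product structure. The two main inputs are already available.

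\emph{Hypotheses of \cref{ti:prop:P1-models}.} Since \(q\in Y^{\rm prod}\), the quotient bounds \eqref{eq:fixed-fibre-SQ} hold near \(q\) by \cref{prop:zariski-product-horizontal}. Then \cref{prop:local-coordinate-interface} produces, after passing to a subsequence realizing the same tangent space, a matrix \(L\in\mathrm{GL}(m,\C)\) and a limit \(G=\lim LF_i\). By \cref{sp:prop:mapcompactness}, this limit is a globally Lipschitz holomorphic map \(Z\to\C^m\). By \cref{sp:thm:spectral-rigidity}, it satisfies \(\nabla dG=0\) on \(\Reg Z\). The two-sided Gram bound \eqref{eq:common-parallel-map} passes to the limit from \eqref{eq:fixed-fibre-Gram-scaling}. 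The structural hypotheses on \(Z\) are supplied by \cref{prop:limiting-package}: normal klt, agreement of the analytic and metric regular loci, intrinsic completion, bounded local potentials, the ambient lower bound \eqref{sp:eq:ambientlower}, the shrinker equation with locally Lipschitz potential, and the normalized probability measure.

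\emph{Classification and normalization.} With \(n=m+1\), \cref{ti:prop:P1-models} applies with \(H=G\). It yields \(Z\cong\C^m\times\Sigma\) with \(\Sigma\) smooth, hence \(Z\) is globally smooth. It also identifies the two alternatives \eqref{ti:eq:P1-split}. The normalization \(L\) of \cref{prop:local-coordinate-interface} is the same linear change used in Step~1 of \cref{sp:thm:singular-product}, so we may take \(G=\operatorname{pr}_{\C^m}\) with no translation. The zero slice \(G^{-1}(0)_{\rm red}=Z'=\Sigma\) is then \(\C\) or \(\PP^1\), respectively.

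\emph{Curvature bound.} For the ancient flow, \eqref{ti:eq:rigid-flow} together with \eqref{sp:eq:negative-time-product} gives, in the canonical gauge,
\[
g(s)=g_{\rm E}\oplus(-s)(\Upsilon'_s)^*g_\Sigma .
\]
In the Gaussian case the flow is flat. In the cylindrical case \(\Sigma\) is compact, and \(f_\Sigma\) is constant because the round metric has \(\Rc=\tfrac12 g\). Hence \(\Upsilon'_s\) is an isometry, and \(|\Rm(g(s))|=(-s)^{-1}\sup|\Rm_{g_\Sigma}|\), which is bounded on every compact interval \([s_0,s_1]\subset(-\infty,0)\).

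\emph{Expected obstacle.} The delicate point is ensuring that the tangent flow identified by \(\mathbb F\)-convergence is exactly this canonical self-similar flow on all negative times, not only the time-\((-1)\) slice. I would handle this by invoking the exact self-similarity in (H3) and the final clause of \cref{sp:thm:singular-product}, whose hypotheses are met by (H8).
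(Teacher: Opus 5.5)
Your proposal is correct and follows essentially the same route as the paper. You obtain the centred parallel base map and the splitting from \cref{cor:local-full-rank-package,prop:local-coordinate-interface}, verifying their inputs in more detail than the paper does, then apply \cref{ti:prop:P1-models} to get the two smooth models, and read off the slab curvature bound from the self-similar product flow. Your explicit curvature computation (flat in the Gaussian case; constant $f_\Sigma$ and $|\Rm(g(s))|=(-s)^{-1}\sup|\Rm_{g_\Sigma}|$ in the cylindrical case) simply spells out the paper's one-line assertion.
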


\begin{proof}
The base point and Type-I moment are automatic by \cref{ti:lem:label}.
The full-rank splitting
\cref{cor:local-full-rank-package,prop:local-coordinate-interface}
gives \(Z\cong\C^m\times\Sigma\), together with the centred projection
map and finite residual weighted mass.  Apply
\cref{ti:prop:P1-models}.  Both resulting products are smooth and their
self-similar ancient flows have globally bounded curvature on each
\([-A,-A^{-1}]\Subset(-\infty,0)\).
\end{proof}

\begin{corollary}
\label{surf:global-bounded-curvature}
\label{surf:split-bounded-curvature}
Let $(\hat X,\hat g_s)$ be the underlying metric flow of a tangent space
at a fixed limiting point whose base lies in $Y^{\rm prod}$, in the
relative-dimension-one setting.  For every $A>1$,
\begin{equation}\label{surf:eq:split-slab-curvature}
 K_A:=\sup_{s\in[-A,-A^{-1}]}\sup_{\hat X}
 |\Rm_{\hat g_s}|_{\hat g_s}<\infty.
\end{equation}
The number $K_A$ may depend on the realized tangent-space model, but is
fixed before
the packing number $N$ is chosen.
\end{corollary}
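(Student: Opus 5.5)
The plan is to deduce this from \cref{cor:fixed-point-two-models} together with the exact self-similarity of the tangent flow.

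First, by \cref{cor:fixed-point-two-models}, the time-\((-1)\) model \(\hat X\) is globally smooth. It is isometric either to the Gaussian \(\C^{m+1}\) or to \(\C^m\times\PP^1\) with the round sphere normalized by \(\Rc=\frac12 g\). In the first case \(\Rm\equiv0\). In the second case the curvature is that of the round factor, a fixed constant \(K_{-1}\). Either way, \(\sup_{\hat X}|\Rm_{\hat g_{-1}}|<\infty\). Since \(Z\) is smooth, there is no singular set at which the supremum could blow up, and the residual factor is compact or flat. So the bound at \(s=-1\) is immediate.

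Next I would propagate this bound to other times. By \cref{sp:prop:limiting-package}(H3),(H8) and \eqref{surf:eq:induced-shrinker-spacetime}, the canonical flow is \(\hat g_s=(-s)\Theta_s^*\hat g_{-1}\), where \(\Theta_s\) is a diffeomorphism of the smooth space \(\hat X\); here the flow of \(\nabla f\) is complete. Pulling back by a diffeomorphism preserves the supremum of \(|\Rm|\), and scaling by \((-s)\) multiplies it by \((-s)^{-1}\). Hence
\[
 \sup_{\hat X}|\Rm_{\hat g_s}|_{\hat g_s}=(-s)^{-1}\sup_{\hat X}|\Rm_{\hat g_{-1}}|,
\]
and on \([-A,-A^{-1}]\) this is at most \(A\sup|\Rm_{\hat g_{-1}}|=:K_A<\infty\). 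In the product case one can check this directly from \eqref{ti:eq:rigid-flow}: the metric is \(g_{\rm E}+(-s)g_{S^2}\), so \(|\Rm|=(-s)^{-1}|\Rm_{g_{S^2}}|\).

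The only point needing care is that the underlying metric flow of the \(\mathbb F\)-limit coincides on every slice with this smooth self-similar family, and not merely on the regular part. Since \(\Sing\) is empty at time \(-1\) and the self-similar homeomorphisms \(\overline\Theta_s\) map \(\mathcal R_s\) onto \(\mathcal R\), every slice is smooth, so the identification holds everywhere. The dependence of \(K_A\) is only through the chosen model and \(A\); it involves no packing parameter, which justifies the final remark about its order of choice.
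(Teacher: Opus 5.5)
Your proposal is correct and follows the paper's proof: you use \cref{cor:fixed-point-two-models} to reduce the time-\((-1)\) model to \(\C^{m+1}\) or \(\C^m\times\PP^1\), both of which have bounded curvature, and then self-similarity gives \(\sup|\Rm_{\hat g_s}|=(-s)^{-1}\sup|\Rm_{\hat g_{-1}}|\), which is bounded on \([-A,-A^{-1}]\). Your additional remarks, that every slice is smooth and that \(K_A\) depends only on the model and \(A\), are consistent with the paper and justify the remark on the order of choices.
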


\begin{proof}
By \cref{cor:fixed-point-two-models}, the time-\((-1)\) model is
\(\C^{m+1}\) or \(\C^m\times\PP^1\), both of which have the same conclusion.
Self-similarity
and curvature scaling give
\[
 \sup_{\hat X}|\Rm_{\hat g_s}|_{\hat g_s}
 =(-s)^{-1}\sup_{\hat X}|\Rm_{\hat g_{-1}}|_{\hat g_{-1}},
\]
which is uniformly finite on $[-A,-A^{-1}]$.
\end{proof}

\subsection{Smooth pointed capture and bounded-drift packing}

\begin{lemma}
\label{surf:locsmoothcon}
Let
\[
 (X,g_i(s))\longrightarrow(\widehat X,\hat g_s),\qquad s<0,
\]
be smooth spacetime convergence on an exhaustion of a complete smooth
limit, with time-preserving convergence embeddings \(\psi_i\) defined
on domains \(U_i\Subset\widehat X\times(-\infty,0)\).  Write
\(U_{i,t}=U_i\cap(\widehat X\times\{t\})\) and \(\psi_{i,t}\) for the
time-\(t\) maps.  Fix \(t_0<0\), \(D>0\), an integer \(k\ge0\), and
\(\varepsilon>0\).  If \(p_i\to p_\infty\) in the correspondence at
time \(t_0\), then there is a relatively compact smooth domain
\(\Omega\Subset\widehat X\) such that, for all large \(i\),
\[
\overline{B_{g_i(t_0)}(p_i,D)}
\Subset
\psi_{i,t_0}(\Omega)
\Subset
\psi_{i,t_0}(U_{i,t_0}),
\]
\[
\left\|
\psi_{i,t_0}^{*}g_i(t_0)-\hat g_{t_0}
\right\|_{C^k(\overline\Omega,\hat g_{t_0})}
\leq\varepsilon.
\]
\end{lemma}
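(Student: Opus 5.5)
The plan is to choose \(\Omega\) as a slightly enlarged ball around \(p_\infty\) in the complete smooth limit, and then show that the prelimit \(D\)-ball around \(p_i\) is trapped inside its image. Since \((\widehat X,\hat g_{t_0})\) is complete and smooth, \(\overline{B_{\hat g_{t_0}}(p_\infty,3D)}\) is compact. Take \(\Omega\) to be a relatively compact smooth domain with
\[
\overline{B_{\hat g_{t_0}}(p_\infty,3D)}\subset\Omega,
\]
for instance a smoothed sublevel set of a smoothed distance function. Because the domains \(U_{i,t_0}\) exhaust \(\widehat X\) at time \(t_0\), we have \(\overline\Omega\Subset U_{i,t_0}\) for all large \(i\). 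The smooth spacetime convergence then gives
\[
\|\psi_{i,t_0}^*g_i(t_0)-\hat g_{t_0}\|_{C^k(\overline\Omega)}\to0 .
\]
This is already the second assertion, and it yields \(\psi_{i,t_0}(\Omega)\Subset\psi_{i,t_0}(U_{i,t_0})\).

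Next I place \(p_i\) well inside \(\psi_{i,t_0}(\Omega)\). Convergence \(p_i\to p_\infty\) in the correspondence, combined with smooth convergence, means we can write \(p_i=\psi_{i,t_0}(x_i)\) with \(x_i\to p_\infty\). This is the sense used for the adapted centres in \cref{surf:conofPhi}. So \(d_{\hat g_{t_0}}(x_i,p_\infty)<D/2\) for large \(i\). Now choose \(\varepsilon'\) small enough that
\[
(1+\varepsilon')^{-1}\cdot\tfrac52 D>\tfrac{3D}{2}.
\]
For large \(i\), the pulled-back metrics \(\psi_{i,t_0}^*g_i(t_0)\) are \((1+\varepsilon')\)-bilipschitz to \(\hat g_{t_0}\) on \(\overline\Omega\).

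The main obstacle is a ball-containment step. Prelimit distances are measured in all of \(X\), and a \(g_i(t_0)\)-short path could in principle leave \(\psi_{i,t_0}(\Omega)\). I would handle this with a first-exit argument. Let \(\gamma\) be a curve from \(p_i\) of \(g_i(t_0)\)-length less than \(\tfrac32 D\). Suppose it leaves the set
\[
\psi_{i,t_0}\bigl(B_{\hat g_{t_0}}(p_\infty,3D)\bigr).
\]
Up to its first exit time the curve lies in the image of \(\overline\Omega\), so it pulls back to a curve starting at \(x_i\) and reaching \(\partial B_{\hat g_{t_0}}(p_\infty,3D)\). That pulled-back curve has \(\hat g_{t_0}\)-length at least \(3D-D/2=\tfrac52D\). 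Its \(g_i\)-length is therefore at least \((1+\varepsilon')^{-1}\tfrac52 D>\tfrac32 D\), which is a contradiction. Hence
\[
\overline{B_{g_i(t_0)}(p_i,D)}\subset B_{g_i(t_0)}(p_i,\tfrac32D)\subset\psi_{i,t_0}\bigl(B_{\hat g_{t_0}}(p_\infty,3D)\bigr)\Subset\psi_{i,t_0}(\Omega),
\]
which is the first chain of inclusions.

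The only remaining point is that \(\varepsilon'\) is controlled by the \(C^0\) part of the \(C^k\) closeness, which we already have from the smooth convergence on \(\overline\Omega\). Taking \(i\) large enough that both the \(C^k\) bound \(\varepsilon\) and this \(C^0\) bilipschitz control hold completes the proof.
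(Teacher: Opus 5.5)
Your proposal is correct and follows essentially the same route as the paper. Both arguments enlarge a ball about $p_\infty$ to a relatively compact smooth domain using completeness, take the $C^k$ and bilipschitz closeness from smooth convergence on $\overline\Omega$, and trap the prelimit $D$-ball with a first-exit length comparison.

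The only difference is cosmetic. The paper runs the first-exit argument from $\psi_{i,t_0}(p_\infty)$ with the additive margin $D+4$, using only $d_{g_i(t_0)}(p_i,\psi_{i,t_0}(p_\infty))<1$. This avoids writing $p_i$ itself as $\psi_{i,t_0}(x_i)$ with $x_i\to p_\infty$. That representation holds by construction for the adapted centres of \cref{surf:conofPhi}, but under the general hypothesis it needs the same short first-exit argument.
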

\begin{proof}
By hypothesis, \(U_i\) exhaust \(\widehat X\times(-\infty,0)\), and
\(\psi_i^*g_{i,t}\to\hat g_t\) smoothly on compact spacetime sets.

Completeness and Hopf--Rinow allow us to choose a connected smooth
domain \(\Omega\Subset\widehat X\) containing
\(\overline{B_{\hat g_{t_0}}(p_\infty,D+4)}\).  Choose \(\delta>0\)
with \([t_0-\delta,t_0+\delta]\Subset(-\infty,0)\).  Then
\[
 \mathcal C=\overline\Omega\times[t_0-\delta,t_0+\delta]\Subset U_i
\]
for all large \(i\).  Smooth convergence on \(\mathcal C\) gives the
required \(C^k\)-estimate and, after increasing \(i\),
\[
 (1-\vartheta)\hat g_{t_0}
 \le\psi_{i,t_0}^{*}g_i(t_0)
 \le(1+\vartheta)\hat g_{t_0},
 \qquad
 \sqrt{1-\vartheta}(D+4)>D+3.
\]
Set \(p_i^\sharp=\psi_{i,t_0}(p_\infty)\).  Correspondence convergence
gives \(d_{g_i(t_0)}(p_i,p_i^\sharp)<1\).

Every curve from \(p_i^\sharp\) to
\(X\setminus\psi_{i,t_0}(\Omega)\) has a first-exit segment whose
pullback joins \(p_\infty\) to \(\partial\Omega\).  Its length is at
least
\[
 \sqrt{1-\vartheta}\,
 d_{\hat g_{t_0}}(p_\infty,\partial\Omega)
 \ge\sqrt{1-\vartheta}(D+4)>D+3.
\]
Consequently,
\[
 d_{g_i(t_0)}\bigl(p_i,X\setminus\psi_{i,t_0}(\Omega)\bigr)>D+2,
\]
and hence
\[
 \overline{B_{g_i(t_0)}(p_i,D)}
 \Subset\psi_{i,t_0}(\Omega)
 \Subset\psi_{i,t_0}(U_{i,t_0}).
\]
\end{proof}

We next combine smooth exhaustion with a curvature bound already
available for the realized tangent-space model.  In the arbitrary-base-dimension
route this is \cref{surf:global-bounded-curvature}; in the direct surface
route it will be supplied by \cref{surf:2dKRS}.  The following lemma
isolates the buffered parabolic capture used in both routes.

\begin{lemma}
\label{lem:buffered-parabolic-capture}
Let
\[
 (X,g_i(s))\longrightarrow (Z,g_\infty(s)),\qquad s<0,
\]
converge smoothly in spacetime on an exhaustion of a complete smooth
limit, and suppose that \(p_i\to p_\infty\) in the correspondence at
time \(-1\).  Assume
\[
 K_0:=1+\sup_{Z\times[-1-1/8,-1]}|\Rm(g_\infty(s))|<\infty,
\]
choose
\[
 0<\rho\le \min\left\{\frac14,\frac1{\sqrt8},
 (4K_0)^{-1/2}\right\},
 \qquad I_\rho=[-1-\rho^2,-1],
\]
and fix an integer \(N\).  Suppose that
\[
 d_{g_i(-1)}(y_{i,k},p_i)\le R_N,
 \qquad 1\le k\le N,
\]
where \(R_N\) is independent of \(i\).  After an \(N\)-dependent
subsequence, one convergence image contains the worldline tracks over
\(I_\rho\) of all the balls \(B_{g_i(-1)}(y_{i,k},\rho)\).  On their
union,
\begin{equation}\label{eq:buffered-parabolic-curvature}
 |\Rm(g_i(s))|_{g_i(s)}\le 2K_0\le\rho^{-2}.
\end{equation}
The convergence domain and index threshold may depend on \(N\), but
\(\rho\) does not.
\end{lemma}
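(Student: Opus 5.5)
The plan is to run \cref{surf:locsmoothcon} once, at time \(-1\), with a radius that swallows every ball \(B_{g_i(-1)}(y_{i,k},\rho)\), then thicken the resulting domain in time over the short window \(I_\rho\). Curvature closeness on that spacetime compact set will give \eqref{eq:buffered-parabolic-curvature}. Set \(D:=R_N+1\). \Cref{surf:locsmoothcon} with \(t_0=-1\) and \(k=2\) produces a relatively compact smooth domain \(\Omega\Subset Z\), containing \(\overline{B_{g_\infty(-1)}(p_\infty,D+4)}\), such that for all large \(i\) (depending on \(N\))
\[
 \bigcup_{k=1}^N B_{g_i(-1)}(y_{i,k},\rho)\subset
 \overline{B_{g_i(-1)}(p_i,D)}\Subset\psi_{i,-1}(\Omega).
\]
The inclusion uses \(\rho\le1/4\) and the triangle inequality. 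The domain \(\Omega\), and hence the index threshold, depends on \(N\) through \(R_N\). This is the only \(N\)-dependence, and \(\rho\) is fixed beforehand from \(K_0\) alone.

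Next I thicken in time. The limit is complete and smooth, and \(\overline\Omega\times[-1-1/8,-1]\) is a compact spacetime set. Hence it lies in the convergence domains \(U_i\) for large \(i\), and \(\psi_i^*g_i(s)\to g_\infty(s)\) smoothly there, including two spatial derivatives. Since \(\rho\le1/\sqrt8\), we have \(I_\rho\subset[-1-1/8,-1]\). The convergence embeddings are time-preserving and follow worldlines, so the worldline track over \(I_\rho\) of any point of \(\psi_{i,-1}(\Omega)\) is \(\psi_i(\{x\}\times I_\rho)\) with \(x\in\overline\Omega\). In particular it lies inside the single convergence image \(\psi_i(\overline\Omega\times I_\rho)\). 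This gives the capture assertion.

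The curvature bound follows from \(C^2\) convergence. On \(\overline\Omega\times I_\rho\),
\[
 |\Rm(\psi_i^*g_i(s))|_{\psi_i^*g_i(s)}\longrightarrow|\Rm(g_\infty(s))|_{g_\infty(s)}\le K_0-1
\]
uniformly, so for large \(i\) the left side is at most \(K_0\le 2K_0\). The constraint \(\rho\le(4K_0)^{-1/2}\) gives \(2K_0\le 1/(2\rho^2)\le\rho^{-2}\).

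The one point needing care is the identification of worldlines in \(X\) with the images of \(\psi_i\). For the ordinary Ricci flow on the fixed manifold \(X\), worldlines are constant in space. Bamler's time-vector-compatible regular gauges (\cite[Lemma~9.33]{BamlerCompactness}) make \(\psi_i\) intertwine \(\partial_s\) with \(\partial_{\mathfrak t}\), up to an error that tends to zero smoothly on compact sets. If that compatibility is only asymptotic, I will enlarge \(\Omega\) by a unit collar. The drift of a worldline over a time interval of length at most \(1/8\) is then small in the pulled-back metric, so the tracks still stay inside \(\psi_i(\overline\Omega\times I_\rho)\).
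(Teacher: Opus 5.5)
Your proposal is correct and follows essentially the paper's route: you capture all the balls at time $-1$ inside one precompact convergence image via \cref{surf:locsmoothcon}, add a spatial buffer, use uniform convergence of the pulled-back metrics and time vectors on a compact spacetime set to keep the approximating worldlines inside by a first-exit argument, and conclude from smooth curvature convergence. The one adjustment is that exact intertwining of worldlines by $\psi_i$ is not available in general, so your collar-and-drift fallback is the step actually doing the work; it is the paper's continuous-dependence argument on the buffered domain $\mathcal U_N$, and the curvature bound should then be taken on the enlarged domain rather than on $\overline\Omega\times I_\rho$ (your single large domain, in place of extracting limits $y_{\infty,k}$, merely avoids the subsequence).
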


\begin{proof}
Apply \cref{surf:locsmoothcon} at time \(-1\), with radius
\(R_N+2\), to place
\(\overline{B_{g_i(-1)}(p_i,R_N+2)}\) in a fixed precompact convergence
image.  Pulling back the finitely many \(y_{i,k}\) and passing to a
further subsequence gives
\[
 y_{i,k}\longrightarrow y_{\infty,k}\in Z,
 \qquad 1\le k\le N.
\]
Since \(\rho\le\frac14\), smooth convergence at time \(-1\) also places
the pullback of \(B_{g_i(-1)}(y_{i,k},\rho)\) in the compact set
\[
 E_{\infty,k}:=
 \overline{B_{g_\infty(-1)}(y_{\infty,k},2\rho)}.
\]

Let \(\Phi_{s,-1}\) be the limiting worldline map, and set
\begin{equation}\label{eq:buffered-limit-parabolic-compactum}
 \mathcal K_N:=
 \bigcup_{k=1}^N
 \left\{(\Phi_{s,-1}(x),s):
 x\in E_{\infty,k},\ s\in I_\rho\right\}.
\end{equation}
Completeness and Hopf--Rinow make every \(E_{\infty,k}\) compact, so
\(\mathcal K_N\) is compact.  Choose a precompact smooth spacetime
domain \(\mathcal U_N\) containing it with
\begin{equation}\label{eq:buffered-spacetime-distance}
 \delta_N:=
 \inf_{s\in I_\rho}
 d_{g_\infty(s)}
 \bigl(\mathcal K_{N,s},\partial\mathcal U_{N,s}\bigr)>0.
\end{equation}

On \(\overline{\mathcal U_N}\), the pulled-back metrics and spacetime
vectors converge uniformly.  Continuous dependence for the worldline
ODE is therefore uniform for initial points in
\(\bigcup_kE_{\infty,k}\).  A first exit of an approximating track from
the convergence image would lie on \(\partial\mathcal U_{N,s}\) while
remaining \(o(1)\)-close to \(\mathcal K_{N,s}\), contradicting
\eqref{eq:buffered-spacetime-distance}.  Thus all the stated tracks are
captured throughout \(I_\rho\).  Smooth curvature convergence gives
\[
 |\Rm(g_i(s))|\le2K_0\le\rho^{-2}.
\]
Only the subsequence, domain, and index threshold were chosen after
\(N\); \(\rho\) was fixed beforehand.
\end{proof}

We next use horizontal transport to pass from actual
\(H_{2m+2}\)-centres to fibres whose labels move
a bounded distance in the rescaled base.  This is the common pointed
package used later both with $q_i=q$ and in the bounded-drift branch of
\cref{ti:sec:general-base}.

\begin{lemma}
\label{lem:general-horizontal-transport}
Let \(\xi=(\mu_t)_{t<1}\) be a fixed limiting point based at
\(q=q_\pi(\xi)\in Y^{\rm prod}\).  Let \(\tau_i\searrow0\), set
\(t_i=1-\tau_i\), and let \(p_i\) be an \(H_{2m+2}\)-centre of
\(\mu_{t_i}\) at backward scale \(\tau_i\) with respect to \(g(t_i)\).
Let
\(q_i\in Y^{\rm prod}\).  Fix a finite \(A\ge0\) and suppose
\[
 d_\eta(q_i,q)\le A\sqrt{\tau_i}.
\]
Then for all large \(i\) there is \(\bar p_i\in F_{q_i}\) satisfying
\begin{equation}\label{eq:general-centre-to-fibre}
 d_{\tau_i^{-1}g(t_i)}(p_i,\bar p_i)\le C_A.
\end{equation}
\end{lemma}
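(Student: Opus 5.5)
The plan is to go from \(p_i\) to its own fibre, and then move horizontally along a product slice to the fibre over \(q_i\), bounding each leg in the rescaled metric \(\tau_i^{-1}g(t_i)\).

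\emph{Step 1: locate the centre in the base.} By \cref{surf:lem:base-localization}\textup{(ii)}, applied with \(A=H_{2m+2}\) and \(T=1\), the centre satisfies
\[
 d_\eta(\pi(p_i),q)\le C_0\sqrt{\tau_i}.
\]
Combined with the hypothesis on \(q_i\) and the triangle inequality, this gives
\[
 d_\eta(\pi(p_i),q_i)\le (C_0+A)\sqrt{\tau_i}.
\]
In particular \(\pi(p_i)\) and \(q_i\) both converge to \(q\).

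\emph{Step 2: choose a region and coordinates.} Fix a principal product neighbourhood \(U_q\ni q\) with \(\pi^{-1}(U_q)\cong U_q\times\PP^1\), and choose nested neighbourhoods \(q\in V\Subset U_1\Subset U_q\). Take \(V\) to be a small coordinate ball on which \(d_\eta\) is uniformly equivalent to the Euclidean distance in the chart. For all large \(i\), both \(\pi(p_i)\) and \(q_i\) lie in \(V\).

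\emph{Step 3: transport horizontally.} Write \(p_i=(b_i,z_i)\) under the trivialization, where \(b_i=\pi(p_i)\). Let \(\gamma_i\subset V\) be the straight coordinate segment from \(b_i\) to \(q_i\); its \(\eta\)-length is at most \(C(C_0+A)\sqrt{\tau_i}\). Lift it to the product slice \(V\times\{z_i\}=P_q^{-1}(z_i)\cap\pi^{-1}(V)\) and set \(\bar p_i=(q_i,z_i)\in F_{q_i}\). The product-slice estimate \eqref{eq:FuZhang-local-slice} from \cref{prop:zariski-product-horizontal} holds uniformly in the fibre coordinate \(z\in\PP^1\). It gives
\[
 L_{g(t_i)}(\tilde\gamma_i)\le C_V^{1/2}L_\eta(\gamma_i)\le C\sqrt{\tau_i},
\]
where the factor \(2\) between \(\omega\) and \(g\) is absorbed into \(C\). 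Hence
\[
 d_{g(t_i)}(p_i,\bar p_i)\le C_A\sqrt{\tau_i},
\]
and rescaling by \(\tau_i^{-1}\) yields \eqref{eq:general-centre-to-fibre}.

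\emph{Main obstacle.} The only delicate point is uniformity. The slice estimate must hold for every fibre coordinate \(z_i\) simultaneously, which \cref{prop:zariski-product-horizontal} guarantees. The segment \(\gamma_i\) must also stay inside the fixed compact region \(V\), which holds for large \(i\) because its length tends to zero. The limiting point enters only through the base-localization estimate.
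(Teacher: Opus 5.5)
Your proof is correct and has the same structure as the paper's. Both locate the centre within \(O(\sqrt{\tau_i})\) of \(q\) in the base, lift a short base curve from \(\pi(p_i)\) to \(q_i\) with uniformly controlled \(g(t_i)\)-length, and then rescale. The only difference is the lift: the paper lifts a minimizing \(g_Y\)-geodesic along the \(g(t_i)\)-orthogonal complement of the vertical bundle and uses only the quotient bound \(S_{t}\le C\eta\), whereas you lift a coordinate segment into the product slice \(V\times\{z_i\}\) via \eqref{eq:FuZhang-local-slice}. Your version gives the explicit endpoint \((q_i,z_i)\), as in Step~2 of the proof of \cref{surf:TypeIonbundle}, but relies on the product structure rather than on the quotient estimate alone.
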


\begin{proof}
The automatic moment estimate for \(\mu_t\) and the defining centre
estimate are
\[
 \int_Xd_\eta(\pi(x),q)^2\,d\mu_{t_i}(x)\le C_\xi\tau_i,
 \qquad
 \int_Xd_{g(t_i)}(p_i,x)^2\,d\mu_{t_i}(x)
 \le H_{2m+2}\tau_i.
\]
The Schwarz estimate makes \(\pi:(X,g(t_i))\to(Y,g_Y)\) uniformly
Lipschitz.  For every \(x\),
\[
 d_\eta(\pi(p_i),q)^2
 \le C d_{g(t_i)}(p_i,x)^2+2d_\eta(\pi(x),q)^2.
\]
Integrating gives
\begin{equation}\label{eq:general-centre-base-distance}
 d_\eta(\pi(p_i),q)\le C_0\sqrt{\tau_i},
\end{equation}
and hence
\[
 d_\eta(\pi(p_i),q_i)\le(C_0+A)\sqrt{\tau_i}.
\]
Choose a strongly geodesically convex neighbourhood
\(q\in V\Subset U_q\).  On \(V\), the restricted ambient distance
\(d_\eta\) is uniformly equivalent to the intrinsic distance
\(d_{g_Y}\).  Hence,
after changing the constant,
\[
 d_{g_Y}(\pi(p_i),q_i)\le C_A\sqrt{\tau_i}.
\]
For large \(i\), the minimizing \(g_Y\)-geodesic \(\gamma_i\) from
\(\pi(p_i)\) to \(q_i\) lies in \(V\).  Lift it by the
\(g(t_i)\)-orthogonal complement of the vertical tangent bundle.
Properness of the submersion gives the lift on the whole interval, and
its endpoint \(\bar p_i\) lies in \(F_{q_i}\).  By the definition of the
quotient metric and \eqref{eq:fixed-fibre-SQ},
\[
 |\dot{\widetilde\gamma}_i|_{g(t_i)}^2
 =2S_{t_i}\!\left(
   \dot\gamma_i^{1,0},\overline{\dot\gamma_i^{1,0}}\right)
 \le C|\dot\gamma_i|_{g_Y}^2.
\]
Thus
\[
 L_{g(t_i)}(\widetilde\gamma_i)\le C_A\sqrt{\tau_i}.
\]
Rescaling lengths by \(\tau_i^{-1/2}\) proves
\eqref{eq:general-centre-to-fibre}.
\end{proof}

\begin{proposition}
\label{prop:general-bounded-drift-packing}
\label{prop:general-packing-diameter}
\label{prop:general-packing-package}
Fix a limiting point \(\xi=(\mu_t)_{t<1}\) based at
\(q\in Y^{\rm prod}\), let
\(\tau_i\searrow0\), and set
\[
 g_i(s)=\tau_i^{-1}g(1+\tau_i s).
\]
Choose \(H_{2m+2}\)-centres \(p_i\) of the time-\((-1)\) measures
\(\mu_{i,-1}:=\mu_{1-\tau_i}\) with respect to \(g_i(-1)\).
After passing to a subsequence realizing a tangent space at \(\xi\), fix
a finite \(A\ge0\) and any \(q_i\) satisfying
\begin{equation}\label{eq:general-bounded-drift-fibres}
 d_\eta(q_i,q)\le A\sqrt{\tau_i}.
\end{equation}
There is \(D_A<\infty\) such that
\begin{align}
 \operatorname{diam}_{g_i(-1)}F_{q_i}&\le D_A,
 \label{eq:general-packing-diameter}\\
 F_{q_i}&\subset B_{g_i(-1)}(p_i,D_A).
 \label{eq:general-packing-ball}
\end{align}
For every finite \(B\ge0\), there is \(C_{A,B}<\infty\) such that
\begin{equation}\label{eq:general-packing-enlarged-curvature}
 \sup_{B_{g_i(-1)}(p_i,D_A+B)}
 |\Rm(g_i(-1))|_{g_i(-1)}\le C_{A,B}.
\end{equation}
The constants may depend on the fixed limiting point and the realizing
subsequence, but not on \(i\).
\end{proposition}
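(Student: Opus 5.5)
Since the realizing subsequence converges smoothly to one of the two smooth models of \cref{cor:fixed-point-two-models}, which have bounded curvature on slabs by \cref{surf:global-bounded-curvature}, the plan is to work inside the smooth limit and argue by contradiction for the diameter bound. First, \cref{lem:general-horizontal-transport} gives points \(\bar p_i\in F_{q_i}\) with \(d_{g_i(-1)}(p_i,\bar p_i)\le C_A\). Hence \eqref{eq:general-packing-ball} follows from \eqref{eq:general-packing-diameter} after enlarging \(D_A\) by \(C_A\). Likewise, \eqref{eq:general-packing-enlarged-curvature} follows from \cref{surf:locsmoothcon} applied at time \(-1\) with radius \(D_A+B\): since \(p_i\) converges (by the adapted-centre construction, as in \cref{surf:conofPhi}), the ball \(B_{g_i(-1)}(p_i,D_A+B)\) lies in a fixed precompact convergence image. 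There the curvature of \(g_i(-1)\) is close to that of the limit, which is bounded by \(K_A\). The whole problem therefore reduces to \eqref{eq:general-packing-diameter}.

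For the diameter, I will use a packing argument with the fibre area. By \eqref{ti:eq:fibre-class}, every regular fibre has \(g_i(-1)\)-area exactly \(8\pi\). Suppose instead that \(\operatorname{diam}_{g_i(-1)}F_{q_i}\to\infty\) along a subsequence. The fibre is connected, so for any fixed \(N\) we can choose points \(y_{i,1},\dots,y_{i,N}\) on \(F_{q_i}\), starting from \(\bar p_i\) and moving along the fibre, that are pairwise \(g_i(-1)\)-separated by \(2\rho\) and lie within \(R_N:=C_A+2N\rho\) of \(p_i\). Here \(\rho\) is the scale of \cref{lem:buffered-parabolic-capture}, fixed before \(N\). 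That lemma then gives \(|\Rm|\le\rho^{-2}\) on all the balls \(B_{g_i(-1)}(y_{i,k},\rho)\), uniformly in \(i\) for this \(N\). Next I need a lower bound \(\Area(F_{q_i}\cap B_{g_i(-1)}(y_{i,k},\rho))\ge c\rho^2\), with \(c\) independent of \(N\). To get it, I will use that \(F_{q_i}\) is a complex curve, hence minimal, together with the monotonicity formula for minimal surfaces in a region of bounded curvature; injectivity radius is controlled because the balls lie in the smooth limit. The balls are disjoint, so \(Nc\rho^2\le8\pi\). Choosing \(N>8\pi/(c\rho^2)\) gives the contradiction, and hence a uniform \(D_A\).

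The main obstacle is carrying out this bound without circularity: the capture and curvature control for the \(N\) balls must come from the limit before \(N\) is chosen, and the fibre points must stay a bounded distance \(R_N\) from \(p_i\). This is exactly why \cref{lem:buffered-parabolic-capture} fixes \(\rho\) independently of \(N\) and allows only the subsequence to depend on \(N\). The one delicate check is the minimal-surface monotonicity constant: I will verify that it depends only on \(\rho\), \(K_0\) and the local injectivity radius of the limit model, all fixed before \(N\). As an alternative route to the area lower bound, one can use the convergence \(G_i\to\operatorname{pr}_{\C^m}\) of \cref{prop:local-coordinate-interface}: the fibre \(F_{q_i}\) is a level set of \(G_i\), which converges smoothly to a vertical slice of the product model, and such a slice has area density comparable to \(\rho^2\) in each ball.
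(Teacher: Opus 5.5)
Your argument is correct, but the packing step uses a different mechanism from the paper's. The paper packs ambient volume. It uses \cref{lem:buffered-parabolic-capture} to get $|\Rm|\le\rho^{-2}$ on backward parabolic neighbourhoods, applies Perelman's $\kappa$-noncollapsing to get $\Vol_{g_i(-1)}B(y_{i,k},\rho)\ge\kappa\rho^{2m+2}$, and then bounds the total by the Type-I tube volume of \cref{cor:type-I-tube-volume}. You pack fibre area instead: the exact cohomological area $8\pi$ of $F_{q_i}$ in $g_i(-1)$, together with a monotonicity lower bound $\Area(F_{q_i}\cap B(y_{i,k},\rho))\ge c\rho^2$ for the closed complex, hence minimal, curve.

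Your route needs only time-slice geometry. The parabolic part of the capture lemma and Perelman's theorem become superfluous, since \cref{surf:locsmoothcon} at radius $R_N+1$ already gives the time-$(-1)$ curvature bound. The injectivity-radius input is uniform in $N$ because both limit models $\C^{m+1}$ and $\C^m\times\PP^1$ are homogeneous with bounded curvature and positive injectivity radius; alternatively use volume plus Cheeger--Gromov--Taylor. Your route also dispenses with the tube-volume estimate. It is essentially the exact-area disk-packing mechanism the paper uses later in \cref{ti:prop:compact-local-fibre-diameter}; your level-set alternative is that argument, via \cref{ti:lem:uniform-level-disks}, almost verbatim. The paper's version, in turn, needs neither a monotonicity lemma nor any fibre-specific geometry.

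One small patch is needed. The $p_i$ in the statement are arbitrary $H_{2m+2}$-centres, not the adapted centres built in \cref{surf:conofPhi}, so you cannot simply assert that they converge in the correspondence. If $\widetilde p_i$ are points constructed as there, integrate $d(p_i,\widetilde p_i)^2\le2d(p_i,x)^2+2d(\widetilde p_i,x)^2$ against $\mu_{i,-1}$. This gives $d_{g_i(-1)}(p_i,\widetilde p_i)\le C$, and applying \cref{surf:locsmoothcon} about $\widetilde p_i$ with the radius enlarged by $C$ gives every capture you use. The paper's Step~1 achieves the same through a positive-mass chart and a first-exit argument.
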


\begin{proof}
\emph{Step 1: align the actual centres with the smooth tangent-space
model.}
Let \(Z\) be the time-\((-1)\) model and let \(\Psi_i\) be smooth
convergence embeddings on an exhaustion.  Denote the limiting tangent
flow and its distinguished measures by
\((\mathcal Z,g_\infty(s),\mu_{\infty,s})\), with
\(\mathcal Z_{-1}=Z\).  By
\cref{cor:fixed-point-two-models}, \(Z\) is globally smooth.  Choose a
connected compact smooth domain \(E\Subset Z\) with
\(\mu_{\infty,-1}(E)>0\).  Smooth convergence of the distinguished
densities gives \(\delta>0\) such that
\[
 \mu_{i,-1}(\Psi_i(E))\ge\delta.
\]
Since \(p_i\) is an actual centre,
\[
 \int_Xd_{g_i(-1)}(p_i,x)^2\,d\mu_{i,-1}(x)\le H_{2m+2}.
\]
Therefore some \(y_i\in\Psi_i(E)\) satisfies
\[
 d_{g_i(-1)}(p_i,y_i)\le
 R_0:=\sqrt{H_{2m+2}/\delta}.
\]
Choose a connected precompact smooth
\(\Omega\Subset Z\) with
\[
 d_{g_Z}(E,\partial\Omega)>R_0+4.
\]
On \(\overline\Omega\), the pulled-back metrics are uniformly
bilipschitz to \(g_Z\).  If \(p_i\notin\Psi_i(\Omega)\), a minimizing
curve from \(y_i\) to \(p_i\) has a first exit through
\(\Psi_i(\partial\Omega)\).  Pullback and the lower bilipschitz estimate
make that initial segment longer than \(R_0+1\), a contradiction.  The
same first-exit argument shows that the whole minimizing geodesic stays
in \(\Psi_i(\Omega)\).  Hence
\(\widetilde p_i:=\Psi_i^{-1}(p_i)\) lies in a fixed compact subset.
After a subsequence,
\begin{equation}\label{eq:packing-centre-alignment}
 \widetilde p_i\longrightarrow p_\infty\in Z,\qquad
 d_{g_i(-1)}(p_i,\Psi_i(p_\infty))\longrightarrow0.
\end{equation}

\emph{Step 2: move the centre to the bounded-drift fibre.}
\Cref{lem:general-horizontal-transport} gives
\(\bar p_i\in F_{q_i}\) with
\begin{equation}\label{eq:packing-horizontal-distance}
 d_{g_i(-1)}(p_i,\bar p_i)\le C_{h,A}.
\end{equation}

\emph{Step 3: choose the curvature radius before packing.}
By \cref{cor:fixed-point-two-models},
\[
 K_*:=\sup_{Z\times[-1-1/8,-1]}|\Rm(g_\infty(s))|<\infty.
\]
Put \(\widehat K_*=1+K_*\) and choose
\[
 0<\rho\le
 \min\left\{\frac14,\frac1{\sqrt8},
 (4\widehat K_*)^{-1/2}\right\}.
\]
This choice precedes and is independent of the packing number \(N\).
The buffered spacetime argument needed after the packing points are
chosen is supplied by \cref{lem:buffered-parabolic-capture}.

\emph{Step 4: pack the moving fibre.}
Suppose the diameters in \eqref{eq:general-packing-diameter} are
unbounded.  After passing to a subsequence, choose
$\widetilde y_i\in F_{q_i}$ with
\[
 d_{g_i(-1)}(\widetilde y_i,\bar p_i)\longrightarrow\infty.
\]
The fibre is connected, so the continuous function
$y\mapsto d_{g_i(-1)}(y,\bar p_i)$ has image containing the interval
from $0$ to this last distance.  Hence, for each fixed $N$ and all
sufficiently large $i$, there are
\(y_{i,1},\ldots,y_{i,N}\in F_{q_i}\) with
\[
 d_{g_i(-1)}(y_{i,k},\bar p_i)=k,\qquad
 d_{g_i(-1)}(y_{i,k},y_{i,\ell})\ge|k-\ell|.
\]
The radius-\(\rho\) balls are disjoint and, by
\eqref{eq:packing-horizontal-distance},
\[
 d_{g_i(-1)}(y_{i,k},p_i)\le N+C_{h,A},
 \qquad 1\le k\le N.
\]
For this fixed \(N\), apply
\cref{lem:buffered-parabolic-capture} with
\(R_N=N+C_{h,A}+1\).  Hence the worldline track of every required
fixed final-time radius-\(\rho\) ball satisfies
\eqref{eq:buffered-parabolic-curvature}.  The index threshold and the
further subsequence may depend on \(N\), but \(\rho\) does not.

We now verify the noncollapsing input in the unrescaled variables of the
\(T=1\) flow.  Put
\[
                       t_i=1-\tau_i,
                       \qquad r_i=\sqrt{\tau_i}\,\rho.
\]
The interval \([-1-\rho^2,-1]\) corresponds to
\[
 [t_i-r_i^2,t_i]
 =[1-\tau_i(1+\rho^2),1-\tau_i]\subset[0,1)
\]
for all large \(i\).  On the captured parabolic neighbourhood,
curvature scaling gives
\[
 |\Rm(g(1+\tau_i s))|_{g(1+\tau_i s)}
 =\tau_i^{-1}|\Rm(g_i(s))|_{g_i(s)}
 \le(\tau_i\rho^2)^{-1}=r_i^{-2}.
\]
Moreover,
\[
                      \frac{r_i^2}{t_i}
                      =\frac{\tau_i\rho^2}{1-\tau_i}
                      \longrightarrow0.
\]
Thus Perelman's no-local-collapsing theorem and its uniform
\(\kappa\)-noncollapsing formulation
\cite[Sections~4.1--4.2]{PerelmanEntropy} give
\begin{equation}\label{eq:general-packing-noncollapse}
 \Vol_{g_i(-1)}B_{g_i(-1)}(y_{i,k},\rho)
 \ge\kappa_{\rm nc}\rho^{2m+2}=:c_0>0.
\end{equation}
The constant \(c_0\) is independent of \(N,k,i\).

The Schwarz estimate makes
\[
 \pi:(X,g_i(-1))\longrightarrow(Y,\tau_i^{-1}g_Y)
\]
uniformly Lipschitz.  Since each \(y_{i,k}\) lies in
\(F_{q_i}\) and \eqref{eq:general-bounded-drift-fibres} holds, all the
balls lie in
\[
 \Omega_i(R_{A,\rho})
 =\pi^{-1}\!\left(B_\eta(q,R_{A,\rho}\sqrt{\tau_i})\right)
\]
for one \(R_{A,\rho}\) independent of \(N,k,i\).  By
\cref{cor:type-I-tube-volume},
\[
 \Vol_{g_i(-1)}\Omega_i(R_{A,\rho})\le C_{A,\rho}.
\]
Therefore
\[
 C_{A,\rho}\ge
 \sum_{k=1}^N\Vol_{g_i(-1)}B(y_{i,k},\rho)
 \ge Nc_0.
\]
Since \(N\) is arbitrary, this is impossible.  Hence the fibre diameters
are bounded.  Combining this bound with
\eqref{eq:packing-horizontal-distance} gives
\eqref{eq:general-packing-ball}.

Finally fix a finite \(B\ge0\).  Completeness and Hopf--Rinow give a
precompact tangent domain containing the closed
\((D_A+B+6)\)-ball about \(p_\infty\).  The first-exit argument and
\eqref{eq:packing-centre-alignment} place
\(B_{g_i(-1)}(p_i,D_A+B)\) inside its convergence image.  Smooth
curvature convergence gives
\eqref{eq:general-packing-enlarged-curvature}.
\end{proof}

\begin{remark}[One bounded anchor captures the whole fibre]
\label{rem:bounded-horizontal-anchor}
In the setting of \cref{prop:general-bounded-drift-packing}, it is
enough to find \(\bar p_i\in F_{q_i}\) such that
\[
 d_{g_i(-1)}(p_i,\bar p_i)\le A_0.
\]
Indeed, the Schwarz estimate and the centre localization of
\cref{lem:automatic-label} give
\[
 d_\eta(q_i,q)\le C(A_0+1)\sqrt{\tau_i}.
\]
Hence \cref{prop:general-bounded-drift-packing} yields uniform
whole-fibre diameter, pointed containment, and curvature bounds on
every fixed enlargement.  The horizontal lifts in
\cref{lem:general-horizontal-transport} provide precisely such anchors.
\end{remark}

\subsection{The exact-zero and fibre-area package}
\label{ti:sec:P1-area-capture}

\begin{remark}[Area normalization]
For every regular fibre \(F_y\), restriction of the evolving class,
the relative tangent sequence, and Whitney additivity give
\begin{equation}\label{ti:eq:P1-fibre-class}
 [\omega(t)]|_{F_y}=2\pi(1-t)c_1(\PP^1).
\end{equation}
Since \(\int_{\PP^1}c_1(\PP^1)=2\),
\begin{equation}\label{ti:eq:P1-area-kahler}
 \int_{F_y}\omega(t)=4\pi(1-t).
\end{equation}
Thus, in the real Ricci-flow convention \(g=2g_\omega\), for every
\(\tau\in(0,1]\),
\begin{equation}\label{ti:eq:P1-area-eight-pi}
 \Area_{\tau^{-1}g(1-\tau)}(F_y)
 =\tau^{-1}\,2\int_{F_y}\omega(1-\tau)=8\pi.
\end{equation}
This identity is a normalization check for fixed limiting points and is the
area input used in the secondary-scale contradiction in the proof of
\cref{ti:lem:moving-pole-fibre-capture}.
\end{remark}

Pointed smooth convergence alone does not say that the limiting zero set
is the limit of an \emph{exact} zero set.  We first record the local
bridge used below and in the secondary-scale argument of
\cref{ti:sec:general-base}.

The following is the standard parameter-dependent implicit-function
theorem in a tubular neighbourhood; for the compact isotopy statement,
see \cite[Theorem~20.2]{AbrahamRobbin}.
\begin{lemma}
\label{ti:lem:P1-zero-graphs}
Let \((U,h)\) be a smooth Riemannian manifold, and let
\(H_i,H:U\to\R^k\) be smooth maps with
\(H_i\to H\) in \(C^\infty_{\rm loc}\).  Suppose that \(0\) is a regular
value of \(H\), and set \(S=H^{-1}(0)\).
\begin{enumerate}[label=(\roman*),leftmargin=2.4em]
\item If \(K\Subset\operatorname{int}K^+\Subset S\) are compact
domains, then, for all large \(i\), \(H_i^{-1}(0)\) contains a small
normal graph \(K_i\) over \(K\), and \(K_i\to K\) smoothly.
\item If \(S\) is compact and is the entire zero set of \(H\) in a
precompact tubular neighbourhood \(U_0\Subset U\), then, for all large
\(i\), \(H_i^{-1}(0)\cap U_0\) contains a compact boundaryless normal
graph \(S_i\) over \(S\).  Moreover, \(S_i\to S\) smoothly and
\(S_i\cong S\).
\end{enumerate}
If Riemannian metrics \(h_i\) on \(U\) converge smoothly to \(h\), the
induced metrics and volume forms on these graphs converge smoothly.
\end{lemma}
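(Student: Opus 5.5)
The plan is to reduce both parts to the standard implicit function theorem applied in a tubular neighbourhood of \(S\). Since \(0\) is a regular value of \(H\), the set \(S\) is a smooth embedded submanifold of codimension \(k\). Let \(\nu S\) be its \(h\)-normal bundle, and let \(E:\nu S\supset\{|v|<\varepsilon\}\to U\) be the exponential map. After shrinking \(\varepsilon\) over the relevant compact set (\(K^+\) in (i), all of \(S\) in (ii)), \(E\) is a diffeomorphism onto a tubular neighbourhood. For \(x\in S\), choose a local orthonormal frame of \(\nu_xS\) and write points as \(E(x,v)\) with \(v\in\R^k\). Then consider
\[
 \Phi_i(x,v):=H_i(E(x,v)),\qquad \Phi(x,v):=H(E(x,v)).
\]
Regularity gives \(\Phi(x,0)=0\). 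It also shows that \(D_v\Phi(x,0)=dH_x|_{\nu_xS}\) is an isomorphism, and that its inverse is uniformly bounded on the compact set \(K^+\).

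For (i), apply the parameter-dependent (quantitative) implicit function theorem. This produces \(\varepsilon'>0\) and smooth sections \(v_i:K^+\to\nu S\) with \(\Phi_i(x,v_i(x))=0\) and \(|v_i|<\varepsilon'\). Moreover, these are the unique zeros in the \(\varepsilon'\)-tube. The key points are the following.
\begin{itemize}
\item The constants in the contraction-mapping proof depend only on \(C^1\) bounds of \(\Phi_i\) and on the inverse of \(D_v\Phi\); both are uniform because \(H_i\to H\) in \(C^1\) on the compact tube.
\item Differentiating the identity \(\Phi_i(x,v_i(x))=0\) repeatedly gives \(v_i\to0\) in \(C^\ell(K)\) for every \(\ell\), since \(\Phi_i\to\Phi\) in \(C^\infty\).
\item Uniqueness makes the local solutions from different frames agree.
\end{itemize}
This proves (i) with \(K_i:=E(\operatorname{graph}v_i|_K)\) and \(K_i\to K\) smoothly. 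The buffer \(K\Subset\operatorname{int}K^+\) is only used so that the tube over \(K\) stays inside a region where \(S\) is compactly contained.

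For (ii), take \(K=K^+=S\). There is no boundary now, so the graph \(S_i=E(\operatorname{graph}v_i)\) is a compact boundaryless submanifold, and the projection \(x\mapsto E(x,v_i(x))\) gives a diffeomorphism \(S\cong S_i\). Containment in \(U_0\) holds once \(\sup|v_i|\) is smaller than the tube radius, which is already inside \(U_0\). The statement only asks that \(H_i^{-1}(0)\cap U_0\) contain \(S_i\), so I do not need to exclude other zeros far from \(S\). Still, the hypothesis that \(S\) is the whole zero set in \(U_0\) lets me note the following: by compactness, \(|H|\ge c>0\) on \(\overline{U_0}\) minus the \(\varepsilon'\)-tube. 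Hence, for large \(i\), the zeros of \(H_i\) in \(U_0\) are exactly \(S_i\). For the compact isotopy formulation I would simply cite \cite[Theorem~20.2]{AbrahamRobbin}.

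The main obstacle is the uniformity step in (i)–(ii): making the implicit-function radius \(\varepsilon'\) and the \(C^\ell\) convergence uniform over a compact piece of \(S\). I would handle it with the explicit contraction estimate, using the uniform lower bound on the normal differential and uniform \(C^2\) bounds on \(\Phi_i\).

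Finally, for the metric statement, the embeddings \(\iota_i:=E(\cdot,v_i(\cdot))\) converge smoothly to the inclusion of \(S\) (respectively of \(K\)). Therefore \(\iota_i^*h_i\to\iota^*h\) smoothly, because \(h_i\to h\) smoothly and \(\iota_i\to\iota\) in \(C^\infty\). The induced volume forms, being smooth functions of the induced metrics, converge as well.
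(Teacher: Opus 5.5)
Your proposal is correct and takes the same approach as the paper, which gives no separate argument: it simply invokes the standard parameter-dependent implicit function theorem in a tubular neighbourhood of \(S\) and cites Abraham--Robbin for the compact isotopy statement, exactly as you do. One small caution concerns your optional aside in (ii): the bound \(|H|\ge c>0\) on \(\overline{U_0}\) minus the \(\varepsilon'\)-tube requires \(H\neq0\) on \(\partial U_0\), which the hypothesis does not supply unless you shrink \(U_0\) slightly; the lemma only asserts containment, so the proof is unaffected.
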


\subsection{The direct complex-surface closure}
\label{surf:sec:removal-curvature}

The removal argument and fixed-fibre packing proof through
\cref{surf:TypeIonbundle} assume \(m=1\), hence \(n=2\).  By
\cref{prop:limiting-package}\textup{(H6)}, the tangent model is a
K\"ahler orbifold shrinker with isolated singularities.  The
following direct argument is independent of the singular product theorem
used in the final arbitrary-base closure.  We write
\(\pi_t:=\Pi_t:\mathcal R_t\to\C\).

We first extend $\pi_t$ to the whole time-slice $\mathcal{X}_t$, which is
modeled on an orbifold K\"ahler shrinker with isolated singularities.
\begin{proposition}\label{surf:extofPhi}
For every \(t<0\), the map \(\pi_t:\mathcal R_t\to\C\) extends
holomorphically to \(\pi_t:\mathcal X_t\to\C\).
\end{proposition}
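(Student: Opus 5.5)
The approach is the Riemann extension theorem on the orbifold slice. By \cref{prop:limiting-package}\textup{(H6)}, each slice \(\mathcal X_t\) is a K\"ahler orbifold whose singular set \(\Sing\mathcal X_t\) consists of isolated points, and its analytic and metric topologies agree. It therefore suffices to show two things: that \(\pi_t\) is locally bounded near each singular point, and that it extends continuously there. Holomorphicity across the point then follows, either from the Riemann extension theorem on the normal space \(\mathcal X_t\) (\cite[Chapter~7, Section~4]{GrauertRemmertSheaves}) or, more concretely, in a local uniformizing chart \(\C^2/\Gamma\).

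\emph{Step 1: a Lipschitz bound.} I would take the Lipschitz bound from the Schwarz estimate. The construction in \cref{surf:conofPhi} is the relevant source: \eqref{surf:nablaPhiupb} gives \(|dW_i\circ d\pi|_{g_{i,t}}\le C\) uniformly in \(i\), because the normalized coordinate \(w\) is uniformly bilipschitz to \(g_Y\) on \(V\). Passing to the smooth limit gives \(|d\pi_t|_{\hat g_t}\le C\) on \(\mathcal R_t\). By \textup{(H1)}, \(d|_{\mathcal R_t}\) is the intrinsic length distance, so \(\pi_t\) is globally \(C\)-Lipschitz on \(\mathcal R_t\). It then extends uniquely and continuously to the metric completion \(\mathcal X_t\).

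\emph{Step 2: holomorphicity at a singular point.} Let \(x\in\Sing\mathcal X_t\), and pick an orbifold chart \(\phi:B\setminus\{0\}\to\mathcal X_t\), with \(B\subset\C^2\) a ball and \(\Gamma\subset U(2)\) finite acting freely on \(B\setminus\{0\}\). Here \(\phi\) is holomorphic for \(J\); this compatibility is supplied by the complex-structure extension in the argument for \textup{(H6)}. The function \(\pi_t\circ\phi\) is a bounded, \(\Gamma\)-invariant holomorphic function on \(B\setminus\{0\}\). By Hartogs (removable points in complex dimension two, or Riemann extension for bounded functions), it extends holomorphically across \(0\). The extension is \(\Gamma\)-invariant, so it descends to a holomorphic function on the orbifold chart, and it agrees with the continuous extension from Step 1. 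Since the singular points are isolated, these local extensions glue.

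\emph{Expected obstacle.} The main point needing care is Step 1 near the singular set. The uniform bound must pass to the limit on the regular locus without any loss coming from the base recentring \(b_i\). It must also be combined with the intrinsic-distance property, so that Lipschitz control along curves in \(\mathcal R_t\) gives continuity at orbifold points. Here the route is that \(\mathcal R_t\) minus a point is still path-connected, being of real codimension four. If intrinsic distance were not available, I would instead use the more elementary fact that a \(C^0\)-bounded holomorphic function on a punctured ball in \(\C^2\) extends. That requires only local boundedness, which follows from the gradient bound integrated along the radial segments of the chart.
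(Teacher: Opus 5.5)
Your proposal is correct and matches the paper's proof: lift \(\pi_t\) to the uniformizing chart \(B\setminus\{0\}\subset\C^2\), extend it across the origin by Hartogs, and descend the \(\Gamma\)-invariant extension to \(B/\Gamma\). Your Step~1, the Schwarz--Lipschitz bound and continuous extension to the completion, is correct but unnecessary for this statement, since Hartogs removes an isolated point in complex dimension two without any boundedness or continuity hypothesis; the paper uses that upper bound only afterwards, in \cref{surf:ruleoutsing}.
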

\begin{proof}
Fix an isolated orbifold point
\(a\in\mathcal X_t\setminus\mathcal R_t\).  A neighbourhood of \(a\)
has the form \(B/\Gamma\), where \(B\subset\C^2\) is a small ball and
\(\Gamma\subset U(2)\) is finite.  The regular part is
\((B\setminus\{0\})/\Gamma\), so \(\pi_t\) lifts to a holomorphic
function on
\[
B\setminus\{0\}.
\]
Since \(\dim_{\C}B=2\), Hartogs' extension theorem extends the lifted
function holomorphically across \(0\).  Its \(\Gamma\)-invariance
extends by the identity theorem, so it descends to \(B/\Gamma\).
Repeating this at every isolated singular point gives a holomorphic map
\[
\pi_t:\mathcal X_t\longrightarrow\C.
\]
\end{proof}

Next, we use the compact-local horizontal estimate
\cref{prop:zariski-product-horizontal} to rule out the isolated orbifold
singularities of $\mathcal X_t$ (which is also $\hat{X}$ under our
notation).
\begin{proposition}\label{surf:ruleoutsing}
For every \(t<0\), the space \(\mathcal X_t\) is smooth, and the
extended map \(\pi_t:\mathcal X_t\to\C\) is holomorphic.
\end{proposition}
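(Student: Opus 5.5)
The idea is to show that every isolated orbifold point $a\in\mathcal X_t\setminus\mathcal R_t$ is a regular point by producing, near $a$, holomorphic coordinates that come from the flow. The first coordinate is the extended map $\pi_t$ of \cref{surf:extofPhi}. A local orbifold quotient cannot carry two holomorphic functions that lift to a local biholomorphism of the uniformizing ball unless the group $\Gamma$ is trivial. Holomorphicity of the extended $\pi_t$ was already shown, so only smoothness of $\mathcal X_t$ needs proof.

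\emph{Step 1: a lower bound on $d\pi_t$.} Since $q\in Y^{\rm prod}$, \eqref{eq:fixed-fibre-SQ} gives the two-sided quotient bound $C^{-1}\eta^{-1}\le Q_t\le c^{-1}\eta^{-1}$ on the relevant preimage. By the scaling identity \eqref{eq:fixed-fibre-Gram-scaling}, the rescaled maps $W_i\circ\pi$ satisfy $c\le|d(W_i\circ\pi)|^2_{g_{i,t}}\le C$ with no power of $1-t_i$. Smooth convergence on regular charts then gives $c\le|d\pi_t|^2_{\hat g_t}\le C$ on all of $\mathcal R_t$.

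\emph{Step 2: contradiction at an orbifold point.} Lift $\pi_t$ to $\tilde\pi:B\to\C$, a $\Gamma$-invariant holomorphic function. The lifted orbifold metric $\tilde g$ is smooth on $B$, so $|d\tilde\pi|_{\tilde g}$ is continuous on $B$, and by Step 1 it is bounded below away from $0$. Hence $d\tilde\pi(0)\neq0$. On the other hand, $\Gamma\subset U(2)$ is a nontrivial group acting freely on $\C^2\setminus\{0\}$, so no nonzero vector of $\C^2$ is fixed by $\Gamma$. Invariance gives $d\tilde\pi(0)\circ\gamma=d\tilde\pi(0)$ for all $\gamma\in\Gamma$. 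The kernel of the nonzero covector $d\tilde\pi(0)$ is then a $\Gamma$-invariant line, and by unitarity its orthogonal complement is a $\Gamma$-invariant line as well. On that complement each $\gamma$ acts by a scalar, and invariance of $d\tilde\pi(0)$ forces that scalar to be $1$. This produces a nonzero fixed vector, a contradiction. So $\Gamma$ is trivial, $a$ is regular, and $\mathcal X_t$ is smooth.

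\emph{Main obstacle.} The difficulty lies in Step 1: the lower bound from $(\mathrm Q)$ must pass cleanly to the limit, uniformly on a punctured neighbourhood of $a$. This requires the regular convergence charts to exhaust $(B\setminus\{0\})/\Gamma$ and the centred maps to stay inside the product region $U_q$. Both facts follow from the localization used in \cref{surf:conofPhi} together with \cref{lem:regular-chart-localization}, and the Gram-matrix bound is scale invariant, so I expect this step to be routine once those points are checked.
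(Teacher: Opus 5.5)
Your proposal is correct and follows the paper's proof essentially step for step. You obtain a scale-invariant lower bound $|d\pi_t|^2_{\hat g_t}\ge c$ on all of $\mathcal R_t$ from the product-slice estimate of \cref{prop:zariski-product-horizontal}, after localizing compact regular sets over a fixed disk in $U_q$; continuity of the lifted orbifold metric then gives $d\tilde\pi(0)\neq0$, and $\Gamma$-invariance plus unitarity yields a nonzero vector fixed by $\Gamma$, contradicting freeness. Your invariant-line/orthogonal-complement argument is the coordinate-free form of the paper's normalization $d\tilde\pi_t(0)=dz_1$, which forces $\gamma=\mathrm{diag}(1,\lambda_\gamma)$; only the single function $\pi_t$ is needed, not the second coordinate suggested in your opening sentence.
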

\begin{proof}
We continue to use the notation from the proof of
\cref{surf:conofPhi}.  Thus we fix $t\in(-\infty,0)$, use the adapted
center $p_i$ constructed there, set $s_i=1+(1-t_i)t$ and
$b_i=\pi(p_i)$, and have $g_{i,t}=(1-t_i)^{-1}g(s_i)$.  Passing
\eqref{surf:nablaPhiupb} to the smooth limit on compact subsets of the
regular part \(\mathcal R_t\subset \mathcal X_t\), we obtain the uniform
estimate:
\[
|d\pi_t|_{\hat g_t,g_{\rm E}}^2\leq C
\]
on \(\mathcal R_t\). We next prove a uniform lower bound for the same
squared norm.
Fix a relatively compact coordinate disk \(q\in V\Subset U_q\), a
regular point \(x_\infty\in\mathcal R_t\), and a compact neighbourhood
\(K\Subset\mathcal R_t\) of \(x_\infty\).  Since \(\pi_t(K)\) is
compact in \(\C\), the corresponding points in the rescaled target
\((Y,(1-t_i)^{-1}g_Y,b_i)\) lie in a fixed rescaled compact ball for
all large \(i\).  In \((Y,g_Y)\), they therefore lie in a shrinking
neighbourhood of \(q\).  Hence the image under \(\pi\) of the
corresponding embedded copy of \(K\) lies in \(V\) for all large \(i\).
Apply \cref{prop:zariski-product-horizontal} on \(V\).  Equivalently,
use its product-slice estimate \eqref{eq:fixed-fibre-product-slice} on
each slice \(V\times\{z\}\).  It lifts every base tangent vector with
length at most \(C_V\) times its \(g_Y\)-length.  Thus
\(S_{s_i}\le C_Vg_Y\), and hence
\(Q_{s_i}\ge C_V^{-1}g_Y^{-1}\).  Since the
target is a curve, this is precisely the lower differential bound
\begin{equation}\label{surf:nablaPhilowb}
 |d\pi|^2_{g_{i,t},\,(1-t_i)^{-1}g_Y}
 =\operatorname{tr}_{g(s_i)}\pi^*g_Y
 \ge C_V^{-1}.
\end{equation}
The constant depends on \(q\) and the fixed trivialization, not on the
compact set \(K\).
Passing this estimate to the limit gives
\begin{equation}\label{surf:nablaPhilowb2}
 |d\pi_t|_{\hat g_t,g_{\rm E}}^2\ge C_V^{-1}
 \qquad\text{on }K.
\end{equation}
Since \(x_\infty\in\mathcal R_t\) was arbitrary, there exists
\(C=C(q)>0\) such that
\begin{equation}\label{surf:nablaPhib}
C^{-1}\leq |d\pi_t|_{\hat g_t,g_{\rm E}}^2\leq C.
\end{equation}

Now suppose that \(a\in\mathcal X_t\) is an isolated orbifold
singularity, modeled on \(B/\Gamma\), where \(B\subset\C^2\) is a
small ball and \(\Gamma\subset U(2)\) acts freely on
\(B\setminus\{0\}\).  The map \(\pi_t\) lifts to a
\(\Gamma\)-invariant holomorphic function on \(B\setminus\{0\}\), which
extends to a holomorphic function
\(\widetilde\pi_t:B\to\C\) by \cref{surf:extofPhi}.  The lifted
orbifold metric extends smoothly and positively across \(0\).
Consequently \eqref{surf:nablaPhib} and continuity give
\[
|d\widetilde\pi_t(0)|\geq c>0.
\]

Since \(\widetilde\pi_t\) is \(\Gamma\)-invariant,
\(\widetilde\pi_t(\gamma z)=\widetilde\pi_t(z)\) for every
\(\gamma\in\Gamma\).  Differentiating at \(0\) gives
\[d\widetilde\pi_t(0)\circ\gamma= d\widetilde\pi_t(0).\]
Thus \(d\widetilde\pi_t(0)\) is a nonzero \(\Gamma\)-invariant complex
linear functional on \(\C^2\).  Choose unitary coordinates so that
it is a nonzero scalar multiple of $dz_1$, and rescale the target
coordinate to obtain \(d\widetilde\pi_t(0)=dz_1\).  Then
\(dz_1\circ\gamma=dz_1\) for every \(\gamma\in\Gamma\).  Since
\(\gamma\in U(2)\), this forces \(\gamma\) to preserve the
\(z_1\)-coordinate.  Equivalently,
\(\gamma\) has the form
\(\gamma(z_1,z_2)=(z_1,\lambda_\gamma z_2)\) for some
\(\lambda_\gamma\in U(1)\).  If \(\lambda_\gamma\neq1\), then
\(\gamma\) fixes every point on the complex line \(\{z_2=0\}\),
contrary to the free action on \(B\setminus\{0\}\).  Thus every
\(\gamma\) is the identity, contradicting that \(a\) is singular.

Therefore \(\mathcal X_t\) has no isolated orbifold singularities.
\end{proof}

\begin{lemma}
\label{surf:2dKRS}
Every complete smooth complex two-dimensional shrinking gradient
K\"ahler--Ricci soliton has bounded curvature.
This follows from \cite[Theorem~1.2]{LiWangSurface}; see also
\cite[Theorem~4.4]{ConlonHallgrenMa}.
\end{lemma}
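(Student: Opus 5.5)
The plan is to reduce the lemma to a scalar-curvature bound and then cite the four-dimensional curvature-control theory for shrinkers. Let \((M^4,g,J,f)\) be a complete smooth shrinking gradient K\"ahler--Ricci soliton with \(\Rc+\nabla^2f=\tfrac12g\), normalized so that \(R+|\nabla f|^2=f\).

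\emph{Step 1: potential growth.} The Cao--Zhou estimates show that \(f\) grows like \(\tfrac14 d(o,\cdot)^2\) and that \(\Vol B(o,r)\le Cr^4\). Chen's lower bound also gives \(R\ge0\). These facts need neither a curvature bound nor the K\"ahler structure.

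\emph{Step 2: bounded scalar curvature.} This is the step I expect to be the main obstacle, and it is where the K\"ahler hypothesis is used. I would not redo it. Instead I would invoke \cite[Theorem~1.2]{LiWangSurface} directly. That argument uses the holomorphy of \(\nabla^{1,0}f\) and the structure of the level sets of \(f\) to conclude that \(R\) is bounded on complete shrinking K\"ahler surfaces. If one wanted an independent route, I would first try the following. Write the Ricci form as \(\rho=\tfrac12\omega-\sqrt{-1}\partial\bar\partial f\). Use the drift equation \(\Delta_fR=R-2|\Rc|^2\) together with the identity \(|\Rc|^2\ge R^2/4\), which holds in real dimension four in the K\"ahler case, to run a maximum-principle argument on \(R\) weighted by a power of \(f\) at infinity. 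The delicate point is ruling out growth of \(R\) along the soliton flow of \(\nabla f\), and Li--Wang control exactly this.

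\emph{Step 3: from \(R\) to \(\Rm\).} Once \(R\le C\) holds, apply the Munteanu--Wang theorem for four-dimensional gradient shrinkers. It states that bounded scalar curvature implies \(|\Rm|\le C'\), and its proof uses \(\Delta_f\Rm=\Rm+\Rm*\Rm\) and Step 1. This closes the lemma. The alternative reference \cite[Theorem~4.4]{ConlonHallgrenMa} packages Steps 2--3 in the K\"ahler surface setting, so either citation suffices. Completeness of the soliton is used in Step 1 (Hopf--Rinow and Cao--Zhou) and in the maximum-principle arguments at infinity. Smoothness is what allows the cited theorems to apply to the tangent models produced by \cref{surf:ruleoutsing}.
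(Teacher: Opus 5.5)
Your proposal is correct and takes the same approach as the paper, which proves the lemma only by citing \cite[Theorem~1.2]{LiWangSurface} (see also \cite[Theorem~4.4]{ConlonHallgrenMa}); your Steps~2--3 unpack that citation into Li--Wang's scalar-curvature bound followed by the Munteanu--Wang implication ``bounded $R$ $\Rightarrow$ bounded $|\Rm|$'' for four-dimensional shrinkers. Step~1 and the sketched maximum-principle route are not needed, and the inequality $|\Rc|^2\ge R^2/4$ you invoke there holds in every real four-dimensional Riemannian manifold by Cauchy--Schwarz, with no K\"ahler hypothesis.
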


We now use fixed-fibre packing to prove the joint diameter and curvature
estimate.
\begin{proposition}\label{surf:TypeIonbundle}
Assume $m=1$, so that $X$ is a complex surface.
There exists a constant $C=C(\omega_0,q)<\infty$ such that
\begin{equation}\label{surf:eq:diameter-curvature-conclusion}
 \operatorname{diam}_{g(t)}F_q\le C\sqrt{1-t},
 \qquad
 \sup_{x\in F_q}(1-t)|\Rm_{g(t)}|_{g(t)}(x)\le C
 \quad(0\le t<1),
\end{equation}
where the diameter is the extrinsic diameter in $(X,g(t))$.
\end{proposition}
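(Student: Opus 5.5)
We argue by contradiction, separately for the diameter and for the curvature, along sequences $t_i\nearrow1$, using the fixed anchored limiting point $\xi$ at $q$ built from poles at $x_0\in F_q$. Put $\tau_i=1-t_i$ and $g_i(s)=\tau_i^{-1}g(1+\tau_i s)$. If either quantity in \eqref{surf:eq:diameter-curvature-conclusion} were unbounded, we would obtain a sequence $\tau_i\searrow0$ with
\[
\tau_i^{-1/2}\operatorname{diam}_{g(1-\tau_i)}F_q\to\infty
\qquad\text{or}\qquad
\tau_i\sup_{F_q}|\Rm_{g(1-\tau_i)}|\to\infty .
\]
Passing to a subsequence, these rescalings realize a tangent space at $\xi$ (\cref{prop:limiting-package}). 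Since $m=1$, \cref{surf:ruleoutsing} shows that every time slice $\mathcal X_t$ is smooth. It is therefore a complete smooth complex two-dimensional shrinker, and \cref{surf:2dKRS} gives bounded curvature on $\hat X$. Self-similarity then yields the slab bound $K_A<\infty$ exactly as in \cref{surf:global-bounded-curvature}, and $K_A$ is fixed before any packing number is chosen.

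With this curvature input we rerun the packing argument of \cref{prop:general-bounded-drift-packing} with $q_i=q$ (so $A=0$), in the direct surface route. We choose $H_4$-centres $p_i$ of $\mu_{1-\tau_i}$ and align them with the smooth model by the first-exit argument. \Cref{lem:general-horizontal-transport} moves $p_i$ to $\bar p_i\in F_q$ at bounded rescaled distance. If the rescaled diameter of $F_q$ is unbounded, connectedness of $F_q$ gives $N$ points $y_{i,k}\in F_q$ at mutual distances at least $|k-\ell|$ and within distance $N+C$ of $p_i$.

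We fix $\rho$ from $K_A$ and apply \cref{lem:buffered-parabolic-capture}. This gives the curvature bound $\rho^{-2}$ on the parabolic tracks of the balls $B(y_{i,k},\rho)$, so Perelman's noncollapsing yields volume at least $c_0$ for each ball. Since $\pi$ is Lipschitz for $(1-t_i)^{-1}g_Y$, all these balls lie in the tube $\Omega_i(R)$, whose rescaled volume is bounded by \cref{cor:type-I-tube-volume}. Comparing gives $Nc_0\le C$ for every $N$, which is impossible. Hence $\operatorname{diam}_{g_i(-1)}F_q\le D$ and $F_q\subset B_{g_i(-1)}(p_i,D)$. Smooth convergence on the compact ball $\overline{B(p_\infty,D+1)}$, together with bounded curvature of the smooth limit, then gives $\sup_{F_q}|\Rm(g_i(-1))|\le C$. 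This contradicts both alternatives.

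A subtlety is that the constants arise from a contradiction over subsequences, so we must make sure they are uniform in $t$ and not merely subsequential. The contradiction sequence is arbitrary and the limiting point $\xi$ is fixed beforehand, so every sequence has a subsequence along which both quantities stay bounded. This yields one constant for $t$ close to $1$. On $[0,1-\delta]$ both estimates hold trivially by compactness and smoothness of the flow, and the resulting constant depends only on $\omega_0$ and $q$.

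The main obstacle is the curvature bound on the limit. \Cref{surf:global-bounded-curvature} relied on the arbitrary-$m$ splitting, whereas here it must come from \cref{surf:ruleoutsing} combined with \cref{surf:2dKRS}. We also need to check that the limit is complete and smooth on all of $\hat X$, so that $\rho$ can be chosen independently of $N$.
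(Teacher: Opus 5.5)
Your proposal is correct and follows essentially the same route as the paper. In both, the tangent model is made smooth by \cref{surf:ruleoutsing} and given bounded curvature by \cref{surf:2dKRS}; a horizontal anchor in $F_q$ then lets fixed-fibre packing against the Type-I tube volume bound the diameter, and the curvature blow-up points are captured in a smooth convergence chart. The only cosmetic difference is in the anchoring step: you use actual $H_4$-centres with first-exit alignment and the orthogonal horizontal lift of \cref{lem:general-horizontal-transport}, whereas the paper picks adapted centres inside a fixed regular chart and lifts along the product slice via \eqref{eq:fixed-fibre-product-slice}.
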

\begin{proof}
Fix once and for all the anchored limiting point
\(\xi=(\mu_t)_{t<1}\) introduced above.  If either estimate fails,
smoothness on compact time intervals gives \(t_i\nearrow1\), with
\(\tau_i:=1-t_i\), for which one of the following holds:
\[
 \operatorname{diam}_{\tau_i^{-1}g(t_i)}F_q\longrightarrow\infty,
 \tag{D}
\]
or there are \(x_i\in F_q\) such that
\[
 |\Rm_{\tau_i^{-1}g(t_i)}|(x_i)
 =\tau_i|\Rm_{g(t_i)}|(x_i)\longrightarrow\infty.
 \tag{C}
\]
Set
\[
 g_i(s)=\tau_i^{-1}g(1+\tau_i s),
 \qquad
 \nu_s^i=\mu_{1+\tau_i s},
 \qquad -\tau_i^{-1}\le s<0.
\]

\medskip
\noindent\textbf{Step 1: the tangent model and adapted centres.}
After passing to a subsequence, \cref{prop:limiting-package} gives a
tangent space modeled on a complete normal klt shrinking
K\"ahler--Ricci soliton
\[
 (\widehat X,\hat g_s,J_{\widehat X},f_{\widehat X}).
\]
Since \(m=1\), \cref{surf:ruleoutsing} makes \(\widehat X\) a smooth
complex surface.  By \cref{surf:2dKRS} and self-similarity,
\[
 K_0:=
 1+\sup_{\widehat X\times[-1-1/8,-1]}
 |\Rm_{\hat g_s}|_{\hat g_s}<\infty.
\]
This number may depend on the realized tangent space, which is harmless.

As in the proof of \cref{surf:conofPhi}, choose a fixed compact regular
coordinate ball of positive limiting measure.  Smooth density
convergence and the variance bound then give points \(p_i\), converging
in that chart to \(p_\infty\in\widehat X\), such that
\begin{equation}\label{surf:eq:adapted-centre-short}
 \int_Xd_{g_i(-1)}(p_i,y)^2\,d\nu^i_{-1}(y)\le C_c.
\end{equation}
Thus \cref{surf:locsmoothcon} captures every fixed
\(g_i(-1)\)-ball about \(p_i\) in a smooth convergence chart.  Before
choosing any packing number, fix
\begin{equation}\label{surf:eq:packing-radius-short}
 0<\rho\le
 \min\left\{\frac14,\frac1{\sqrt8},(4K_0)^{-1/2}\right\}.
\end{equation}

\medskip
\noindent\textbf{Step 2: a horizontal anchor in \(F_q\).}
The anchored limiting point is based at \(q\).  Applying
\cref{surf:lem:base-localization}\textup{(iii)} to
\eqref{surf:eq:adapted-centre-short} gives
\[
 d_{g_Y}(\pi(p_i),q)\le C\sqrt{\tau_i}.
\]
For large \(i\), write \(p_i=(q_i,z_i)\) in
\(\pi^{-1}(U_q)\simeq U_q\times\PP^1\), and put
\[
 a_i:=(q,z_i)\in F_q.
\]
The \(g_Y\)-geodesic from \(q_i\) to \(q\) lies in a fixed relatively
compact coordinate disk.  Lifting it in the horizontal product slice
and using \eqref{eq:fixed-fibre-product-slice} yields
\begin{equation}\label{surf:eq:horizontal-anchor-short}
 d_{g_i(-1)}(p_i,a_i)
 \le C\tau_i^{-1/2}d_{g_Y}(q_i,q)
 \le C_h.
\end{equation}

\medskip
\noindent\textbf{Step 3: fixed-fibre packing.}
We claim that, along the chosen sequence,
\begin{equation}\label{surf:eq:diameter-short}
 \operatorname{diam}_{g_i(-1)}F_q\le D
\end{equation}
for some \(D<\infty\).  Otherwise, after passing to a subsequence,
choose \(\widetilde x_i\in F_q\) with
\(d_{g_i(-1)}(\widetilde x_i,a_i)\to\infty\).  Since \(F_q\) is
connected, for every fixed integer \(N\) and all large \(i\) there are
points \(y_{i,1},\ldots,y_{i,N}\in F_q\) satisfying
\[
 d_{g_i(-1)}(y_{i,k},a_i)=k,
 \qquad
 d_{g_i(-1)}(y_{i,k},y_{i,\ell})\ge |k-\ell|.
\]
Consequently, the balls \(B_{g_i(-1)}(y_{i,k},\rho)\) are pairwise
disjoint and, by \eqref{surf:eq:horizontal-anchor-short},
\[
 d_{g_i(-1)}(y_{i,k},p_i)\le N+C_h,
 \qquad 1\le k\le N.
\]

For this fixed \(N\), apply
\cref{lem:buffered-parabolic-capture} with \(R_N=N+C_h+1\).  After a
further subsequence, the worldline tracks of these balls over
\([-1-\rho^2,-1]\) lie in one smooth convergence domain and satisfy
\[
 |\Rm_{g_i(s)}|_{g_i(s)}\le\rho^{-2}.
\]
Returning to the unrescaled flow, put \(r_i=\sqrt{\tau_i}\rho\).  The
preceding interval becomes \([t_i-r_i^2,t_i]\), the curvature bound
becomes \(r_i^{-2}\), and
\[
 \frac{r_i^2}{t_i}
 =\frac{\tau_i\rho^2}{1-\tau_i}\longrightarrow0.
\]
Perelman's no-local-collapsing theorem
\cite[Sections~4.1--4.2]{PerelmanEntropy} therefore gives, after
rescaling volumes by \(\tau_i^{-2}\), a constant
\(\kappa_{\rm nc}>0\), independent of \(N,k,i\), such that
\begin{equation}\label{surf:eq:noncollapse-short}
 \Vol_{g_i(-1)}B_{g_i(-1)}(y_{i,k},\rho)
 \ge\kappa_{\rm nc}\rho^4=:c_0>0.
\end{equation}

The parabolic Schwarz estimate gives
\[
 \operatorname{tr}_{g_i(-1)}
 \pi^*(\tau_i^{-1}g_Y)\le C.
\]
Since \(\pi(y_{i,k})=q\), all these balls lie in a fixed Type-I tube
\[
 \Omega_i=
 \pi^{-1}\!\left(B_{g_Y}(q,C_\rho\sqrt{\tau_i})\right),
\]
where \(C_\rho\) depends only on the Schwarz constant and the fixed
radius \(\rho\).
For all large \(i\), the shrinking base ball is contained in a fixed
relatively compact subset of \(U_q\).
By \cref{cor:type-I-tube-volume},
\[
 \Vol_{g_i(-1)}(\Omega_i)\le C_\Omega.
\]
The disjointness and \eqref{surf:eq:noncollapse-short} imply
\[
 C_\Omega\ge
 \sum_{k=1}^{N}
 \Vol_{g_i(-1)}B_{g_i(-1)}(y_{i,k},\rho)
 \ge Nc_0.
\]
Choosing \(N>C_\Omega/c_0\) gives a contradiction and proves
\eqref{surf:eq:diameter-short}.  In particular, \textup{(D)} is
impossible.

\medskip
\noindent\textbf{Step 4: capture of a curvature blow-up point.}
It remains to exclude \textup{(C)}.  Since \(x_i,a_i\in F_q\),
\eqref{surf:eq:diameter-short} and
\eqref{surf:eq:horizontal-anchor-short} give
\[
 d_{g_i(-1)}(x_i,p_i)
 \le d_{g_i(-1)}(x_i,a_i)+d_{g_i(-1)}(a_i,p_i)
 \le D+C_h.
\]
Apply \cref{surf:locsmoothcon} with radius \(D+C_h+1\).  The resulting
precompact smooth convergence chart contains \(x_i\), and smooth
curvature convergence there gives
\[
 \sup_i|\Rm_{g_i(-1)}|_{g_i(-1)}(x_i)<\infty,
\]
contradicting \textup{(C)}.

Thus neither normalized diameter nor normalized curvature can be
unbounded.  This proves
\eqref{surf:eq:diameter-curvature-conclusion}.
\end{proof}
\begin{remark}[Comparison of the two singularity-removal routes]
For \(m=1\), \cref{surf:ruleoutsing} proves smoothness directly and
independently of splitting at the limiting time.  Alternatively,
\cref{cor:local-full-rank-package} reduces the time-\((-1)\) model to
\(\C^m\times\Sigma\), where the complete normal curve \(\Sigma\) is
smooth and is either the Gaussian plane or the round sphere.  Both
products have bounded curvature on compact negative-time slabs.  For
\(m>1\), the product decomposition itself removes the possible
singularities and is used again in
\cref{surf:sec:exact-level-topology}.
\end{remark}

\subsection{The fixed-fibre closure in arbitrary base dimension}
\label{ti:sec:arbitrary-base-fixed-fibre}
\label{surf:sec:exact-level-topology}

\begin{proposition}
\label{surf:prop:exact-level-topology}
\label{ti:prop:P1-capture}
Let \(\xi\) be a fixed limiting point based at
\(q=q_\pi(\xi)\in Y^{\rm prod}\).  After forgetting the distinguished
conjugate heat measures, the underlying K\"ahler metric flow of every
tangent space at \(\xi\) is the shrinking cylinder
\[
 \left(\C^m\times\PP^1,
 g_{\rm E}+(-s)g_{S^2}\right)_{s<0},
 \qquad \Rc(g_{S^2})=\tfrac12g_{S^2}.
\]
More precisely, let \(\tau_i\searrow0\) realize such a tangent space and
let \(p_i\) be \(H_{2m+2}\)-centres of
\(\mu_{i,-1}=\mu_{1-\tau_i}\) with respect to
\(g_i(-1)=\tau_i^{-1}g(1-\tau_i)\) at backward scale one.
After passing to a further subsequence that still realizes the same
tangent space, there is \(D<\infty\) such that, for all sufficiently
large \(i\),
\[
 F_q\subset
 B_{\tau_i^{-1}g(1-\tau_i)}(p_i,D),
\]
and, for every finite \(A\ge0\), there is \(C_A<\infty\) such that
\[
 \sup_{B_{\tau_i^{-1}g(1-\tau_i)}(p_i,D+A)}
 |\Rm_{\tau_i^{-1}g(1-\tau_i)}|_{\tau_i^{-1}g(1-\tau_i)}
 \le C_A.
\]
\end{proposition}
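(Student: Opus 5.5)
The plan has three parts. First, \cref{cor:fixed-point-two-models} reduces the time-\((-1)\) model \(Z\) to \(\C^{m+1}\) or \(\C^m\times\PP^1\). Second, \cref{prop:general-bounded-drift-packing} gives the ball containment and the curvature bounds. Third, the exact-level identity \eqref{ti:eq:P1-exact-level} together with the area normalization \eqref{ti:eq:P1-area-eight-pi} rules out the Gaussian model.

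\emph{Containment and curvature.} Fix \(\tau_i\searrow0\) realizing a tangent space at \(\xi\), and let \(p_i\) be \(H_{2m+2}\)-centres. Apply \cref{prop:general-bounded-drift-packing} with \(q_i=q\) and \(A=0\). This yields \(D=D_0\) with \(F_q\subset B_{g_i(-1)}(p_i,D)\), and for each finite \(A\) the bound \eqref{eq:general-packing-enlarged-curvature} on \(B_{g_i(-1)}(p_i,D+A)\). These are exactly the two quantitative assertions of the statement. Along the way, the alignment \eqref{eq:packing-centre-alignment} gives \(\Psi_i^{-1}(p_i)\to p_\infty\in Z\). Then \cref{surf:locsmoothcon} with radius \(D+2\) puts \(F_q\) inside \(\Psi_i(\Omega)\) for a fixed precompact smooth domain \(\Omega\Subset Z\), on which \(\Psi_i^*g_i(-1)\to g_Z\) smoothly.

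\emph{Excluding the Gaussian model.} Use the \(q\)-centred normalized maps \(G_i=LF_i\) of \eqref{ti:eq:P1-map-convergence}. By \cref{prop:local-coordinate-interface} they converge smoothly on \(\overline\Omega\) to \(G=\operatorname{pr}_{\C^m}\), and \(G_i^{-1}(0)=F_q\) exactly. Hence \(\Psi_i^{-1}(F_q)\) is a compact connected holomorphic curve in \(\Omega\) contained in \(G_i^{-1}(0)\cap\Omega\). Its \(\Psi_i^*g_i(-1)\)-area is \(8\pi\) by \eqref{ti:eq:P1-area-eight-pi}, and its diameter is at most \(D\). Since \(G_i\to G\) in \(C^\infty(\overline\Omega)\), any limit point of these sets lies in \(G^{-1}(0)\cap\overline\Omega\), and the sets converge in Hausdorff sense, after a subsequence, to a compact subset \(S\subset G^{-1}(0)\). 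The differentials are uniformly of full rank, since the Gram matrix is bounded below. So on \(\Omega\) the level sets \(G_i^{-1}(0)\) are smooth graphs converging smoothly to \(G^{-1}(0)\cap\Omega\), as in \cref{ti:lem:P1-zero-graphs}(i). Because \(\Psi_i^{-1}(F_q)\) is a closed component of \(G_i^{-1}(0)\) lying well inside \(\Omega\), its limit \(S\) is a compact boundaryless component of \(G^{-1}(0)\), with area exactly \(8\pi\) by smooth convergence of the area forms.

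In the Gaussian model \(G^{-1}(0)=\C\) is connected and noncompact, so it has no compact boundaryless component and no such \(S\) exists. Therefore \(Z\cong\C^m\times\PP^1\), and the area \(8\pi\) is consistent with the round sphere with \(\Rc=\tfrac12 g\). The canonical-gauge flow from \cref{thm:singular-product} is then \(g_{\rm E}+(-s)g_{S^2}\) for \(s<0\). The Euclidean factor is static by \eqref{sp:eq:negative-time-product}, and the residual round sphere shrinks self-similarly with constant potential up to the \(\ell\) normalization.

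\emph{Main obstacle.} I expect the hard step to be showing that the limit of the exact zero sets is a whole compact component, rather than merely a compact subset of the plane \(\C\) in the Gaussian model. The argument has to use uniform transversality on a fixed domain containing all of \(F_q\), which the packing diameter bound supplies. It then has to show that a proper limit of closed curves, being a graph over its own limit, is closed in \(G^{-1}(0)\), so that openness and closedness force it to be a component. If graph-stability is delicate, a fallback is the monotonicity formula for holomorphic curves: the area of \(\Psi_i^{-1}(F_q)\) cannot concentrate, so the limit is a compact complex curve inside \(G^{-1}(0)\cong\C\), and any compact analytic curve in \(\C\) is a finite set, which contradicts its positive area \(8\pi\).
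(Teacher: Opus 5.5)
Your proposal is correct and follows essentially the paper's route: the two-model reduction of \cref{cor:fixed-point-two-models}, \cref{prop:general-bounded-drift-packing} with $q_i=q$ for the containment and curvature bounds, and the exact identity $G_i^{-1}(0)=F_q$ combined with \cref{ti:lem:P1-zero-graphs} to force the limiting zero slice $G^{-1}(0)=\{0\}\times\Sigma$ to be compact. The only difference is in that last step: you capture all of $F_q$ in one fixed chart and run an open--closed argument on the connected slice, which works because the zero set of $G_i\circ\Psi_i$ in that chart is exactly $\Psi_i^{-1}(F_q)$; the paper instead approximates arbitrary pairs $z,z_0\in G^{-1}(0)$ by points of $F_q$ and passes the fibre-diameter bound $D$ to the limit, and neither version needs the area normalization $8\pi$.
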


\begin{proof}
Fix an arbitrary tangent space at \(\xi\), together with a scale
sequence \(\tau_i\searrow0\) realizing it.  Any further subsequence
below still realizes the same tangent space.  Choose
\(H_{2m+2}\)-centres \(p_i\) of \(\mu_{i,-1}=\mu_{1-\tau_i}\) with
respect to \(g_i(-1)=\tau_i^{-1}g(1-\tau_i)\) at backward scale one.
Write \(Z\) for the time-\((-1)\) model.  A further subsequence used in
\cref{prop:general-packing-diameter} still realizes the same tangent
space.  By
\cref{cor:fixed-point-two-models}, \(Z\) is smooth.  When \(m=1\) and
\(\xi\) is the anchored limiting point fixed in
\cref{surf:sec:removal-curvature}, \cref{surf:ruleoutsing} gives an
independent, direct proof of the same conclusion.  Since
$F_q\cong\PP^1$, the residual shrinker classification together with
\cref{cor:local-full-rank-package,prop:local-coordinate-interface} gives
\begin{equation}\label{surf:eq:q-centred-splitting}
 Z\cong\C^m\times\Sigma,
 \qquad G=\operatorname{pr}_{\C^m},
 \qquad \Sigma\cong\C\ \hbox{or}\ \PP^1,
\end{equation}
where $\Sigma$ is complete.  Choose a coordinate neighbourhood
\(U_w\Subset U_q\) and injective holomorphic coordinates
\(w=(w^1,\ldots,w^m):U_w\to\C^m\) centred at \(q\).  For a fixed
\(L\in\mathrm{GL}(m,\C)\), depending on the chosen tangent-space
realization, define the \(q\)-centred maps
\begin{equation}\label{surf:eq:q-centred-maps}
 \mathcal G_i:=L\bigl(\tau_i^{-1/2}w\circ\pi\bigr):
 \pi^{-1}(U_w)\longrightarrow\C^m,
 \qquad \mathcal G_i\longrightarrow G
\end{equation}
converge smoothly on every fixed compact convergence chart, which lies
in \(\pi^{-1}(U_w)\) for all large \(i\).  Relative to this domain,
\begin{equation}\label{surf:eq:q-centred-exact-zero}
 \mathcal G_i^{-1}(0)=F_q,
 \qquad G^{-1}(0)=\{0\}\times\Sigma.
\end{equation}
These are not the $b_i$-centred maps of
\cref{surf:sec:setup-map}:
those maps are adapted to the pointed convergence and have zero set
$F_{b_i}$, whereas \eqref{surf:eq:q-centred-maps} preserves the exact
fixed fibre.

By \cref{prop:general-packing-diameter}, applied with \(q_i=q\), at the
corresponding original times $t_i=1-\tau_i$ there is a constant $D$
independent of $i$ such that
\begin{equation}\label{surf:eq:limiting-point-diameter}
 \operatorname{diam}_{\tau_i^{-1}g(t_i)}F_q\le D.
\end{equation}
Let $z,z_0\in S:=G^{-1}(0)$.  Choose compact connected domains
$K\Subset K^+\Subset S$ whose interiors contain both points.
Completeness of $Z$ and Hopf--Rinow make
$\overline{B_{g_Z}(z_0,D+5)}$ compact.  Choose a connected precompact
smooth convergence domain $\Omega\Subset Z$ whose interior contains this
ball and $K^+$ and whose boundary satisfies
\[
 d_{g_Z}(z_0,\partial\Omega)>D+4.
\]
Choose a smooth convergence embedding $\Psi_i$ on a neighbourhood of
$\overline\Omega$.  Put
$H_i=\mathcal G_i\circ\Psi_i$ and $H=G$.  Part~\textup{(i)} of
\cref{ti:lem:P1-zero-graphs}, with \(\C^m\) viewed as \(\R^{2m}\),
applies because \(G=\operatorname{pr}_{\C^m}\) makes \(0\) a regular
value.  It gives points $u_i,u_{0,i}$ on the exact-zero
graphs with
\[
 u_i\to z,
 \qquad u_{0,i}\to z_0,
 \qquad
 x_i:=\Psi_i(u_i),\ x_{0,i}:=\Psi_i(u_{0,i})\in F_q.
\]
Thus \eqref{surf:eq:limiting-point-diameter} gives
\begin{equation}\label{surf:eq:approximating-zero-distance}
 d_{\tau_i^{-1}g(t_i)}(x_i,x_{0,i})\le D.
\end{equation}

We justify the distance passage without presupposing compactness of $S$.
On $\overline\Omega$, the pulled-back metrics converge smoothly and are
bilipschitz with constants tending to one.  For large $i$, a minimizing
$\tau_i^{-1}g(t_i)$-geodesic from $x_{0,i}$ to $x_i$ cannot leave
$\Psi_i(\Omega)$: otherwise its first-exit segment would pull back to a
curve from a point near $z_0$ to $\partial\Omega$ of limiting length
greater than $D+1$, contradicting
\eqref{surf:eq:approximating-zero-distance}.  Pulling the whole geodesic
back and passing to the limit yields
\[
 d_Z(z,z_0)\le D.
\]
Since $z,z_0$ were arbitrary and the splitting is isometric,
$\operatorname{diam}_{g_\Sigma(-1)}\Sigma\le D$.  The complete surface
$\Sigma$ is therefore compact by Hopf--Rinow.  This excludes the Gaussian
alternative in \eqref{surf:eq:q-centred-splitting}, so
$\Sigma\cong\PP^1$.

Now $S=\{0\}\times\Sigma$ is compact.  Choose a precompact tubular
neighbourhood $U_0\Subset Z$ in which $S$ is the whole zero set of $G$.
Enlarge the convergence domain and, without changing notation, take
$\Psi_i$ to be defined on a neighbourhood of $\overline U_0$; reset
$H_i=\mathcal G_i\circ\Psi_i$ there.  Again \(0\) is a regular value of
\(H=G\), with target \(\C^m\simeq\R^{2m}\).
Part~\textup{(ii)} of \cref{ti:lem:P1-zero-graphs} gives a compact
boundaryless surface $S_i\subset U_0$, diffeomorphic to $S$, such that
\[
 \Psi_i(S_i)\subset F_q.
\]
The restriction $\Psi_i|_{S_i}$ is an embedding between real surfaces;
its image is open in $F_q$ by invariance of domain and closed because it
is compact.  Since $F_q\cong\PP^1$ is connected, the image is all of
$F_q$.

Finally, for the actual centres fixed before the realizing subsequence,
the alignment argument
\eqref{eq:packing-centre-alignment} and
\cref{prop:general-packing-package}, with \(q_i=q\), give a number
\(D<\infty\) for which the whole fibre lies in
\(B_{\tau_i^{-1}g(1-\tau_i)}(p_i,D)\).  For every finite \(A\ge0\),
\eqref{eq:general-packing-enlarged-curvature} gives a uniform curvature
bound on the radius-\(D+A\) ball.  This proves all assertions.
\end{proof}

We now combine the preceding ingredients to complete the fixed-fibre
theorem.

\begin{proposition}
\label{ti:prop:surface-base}
Let \(q\in Y^{\rm prod}\).
\begin{enumerate}[label=(\roman*),leftmargin=2.4em]
\item At every fixed limiting point based at \(q\), the underlying
K\"ahler metric flow of every tangent space is the smooth shrinking
cylinder
\[
 \bigl(\C^m\times\PP^1,
       g_{\rm E}+(-s)g_{S^2}\bigr)_{s<0},
 \qquad \Rc(g_{S^2})=\tfrac12g_{S^2}.
\]
\item The fixed fibre satisfies
\[
 \sup_{0\le t<1}\sup_{x\in F_q}
 (1-t)|\Rm(g(t))|_{g(t)}(x)<\infty.
\]
\item Fix \(x_0\in F_q\) and one anchored limiting point
\(\xi=(\mu_t)_{t<1}\) obtained from poles \((x_0,s_j)\),
\(s_j\nearrow1\).  For every \(\tau_i\searrow0\), choose
\(H_{2m+2}\)-centres \(p_i\) of
\(\mu_{i,-1}:=\mu_{1-\tau_i}\) with respect to
\(g_i(-1)=\tau_i^{-1}g(1-\tau_i)\) at backward scale one.  After a
realizing subsequence there is \(D<\infty\) such that
\[
 F_q\subset B_{\tau_i^{-1}g(1-\tau_i)}(p_i,D),
\]
and, for every finite \(A\ge0\), the curvature is uniformly bounded
on the radius-\(D+A\) ball.
\end{enumerate}
\end{proposition}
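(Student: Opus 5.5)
Parts (i) and (iii) are essentially already contained in \cref{surf:prop:exact-level-topology}, and I will obtain them directly from it. For (i), I take an arbitrary fixed limiting point \(\xi\) based at \(q\in Y^{\rm prod}\). \Cref{lem:automatic-label} makes its base point and Type-I moment automatic. \Cref{surf:prop:exact-level-topology} then identifies the underlying K\"ahler metric flow of every tangent space at \(\xi\) with the round shrinking cylinder. For (iii), the anchored limiting point from poles \((x_0,s_j)\) is a fixed limiting point based at \(\pi(x_0)=q\) by \cref{prop:automatic-anchored-point}. The containment \(F_q\subset B(p_i,D)\) and the curvature bounds on the radius-\(D+A\) balls are then exactly the second half of \cref{surf:prop:exact-level-topology}, which relies on \cref{prop:general-packing-package} with \(q_i=q\).

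The real content is (ii), and I will prove it by contradiction using the single anchored point \(\xi\) from (iii). Suppose the bound fails. Since the flow is smooth on \([0,t_*]\) for every \(t_*<1\), there are \(t_i\nearrow1\) and \(x_i\in F_q\) with \(\tau_i|\Rm(g(t_i))|(x_i)\to\infty\), where \(\tau_i=1-t_i\). I pass to a subsequence realizing a tangent space at \(\xi\) along the scales \(\tau_i\), and choose \(H_{2m+2}\)-centres \(p_i\) of \(\mu_{1-\tau_i}\). By (iii), after a further realizing subsequence, \(x_i\in F_q\subset B_{g_i(-1)}(p_i,D)\), and the curvature of \(g_i(-1)=\tau_i^{-1}g(t_i)\) is bounded by \(C_0\) on that ball. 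This gives \(\tau_i|\Rm(g(t_i))|(x_i)\le C_0\), which is the contradiction.

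The one point I must check is the quantifier structure. The constants \(D\) and \(C_A\) in (iii) depend on the realizing subsequence. The contradiction therefore has to extract the realizing subsequence from the bad sequence \(t_i\) itself, rather than using a uniform constant; the argument above does this, so the proof closes. This also explains why the resulting bound in (ii) depends on \(q\) and the anchored point, not uniformly on \(Y\); uniformity is the job of \cref{ti:sec:general-base}. For \(m=1\), the same conclusion alternatively follows from \cref{surf:TypeIonbundle}. The only potential obstacle I see is confirming that \cref{surf:prop:exact-level-topology} applies at every scale sequence and does not presuppose the cylinder; it does not, since it derives the cylinder from the exact-zero diameter argument.
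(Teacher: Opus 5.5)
Your proposal is correct and follows the paper's proof essentially step for step. Part (i) comes from the two-model classification together with the exact-level diameter argument of \cref{surf:prop:exact-level-topology}, and part (iii) from \cref{prop:general-packing-package} with \(q_i=q\); part (ii) argues by contradiction, using the bad times \(\tau_i=1-t_i\) themselves as the scales at the fixed anchored limiting point, and you correctly identify this choice of scales as the quantifier point on which the argument depends.
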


\begin{proof}
Fix first an arbitrary limiting point based at \(q\).  Its base point
and Type-I moment are automatic by \cref{ti:lem:label}.  For every
realizing scale sequence,
\cref{cor:fixed-point-two-models} gives the two smooth possibilities
\(\C^{m+1}\) and \(\C^m\times\PP^1\), with the \(q\)-centred projection
map.  The diameter and exact-level argument of
\cref{surf:prop:exact-level-topology} excludes the Gaussian possibility.
This proves (i).

For (ii), fix \(x_0\in F_q\) and an anchored limiting point
\(\xi=(\mu_t)_{t<1}\) as in \textup{(iii)}.  If the estimate failed,
there would be \(t_i\nearrow1\) and \(x_i\in F_q\) such that
\[
 (1-t_i)|\Rm(g(t_i))|(x_i)\longrightarrow\infty.
\]
Use the same scales \(\tau_i=1-t_i\), choose \(H_{2m+2}\)-centres of
\(\mu_{i,-1}=\mu_{1-\tau_i}\) with respect to \(g_i(-1)\), and pass to
a tangent space at this limiting point.  By
\cref{prop:general-packing-package}, with \(q_i=q\), the entire fibre is
contained in one fixed-radius pointed ball and the curvature is bounded
there.  Since
\[
 |\Rm(\tau_i^{-1}g(t_i))|(x_i)
 =\tau_i|\Rm(g(t_i))|(x_i),
\]
this contradicts the chosen sequence.  Smoothness on compact
subintervals supplies the estimate away from \(t=1\).

Finally apply \cref{prop:general-packing-package} to an arbitrary scale
sequence for this anchored limiting point, with \(q_i=q\).  It gives
the asserted whole-fibre containment and curvature bounds on every
fixed enlargement, proving \textup{(iii)}.
\end{proof}

\begin{remark}[The scale-separated branch in the moving-pole proof]
The strengthened packing proposition controls moving fibres whenever
their drift from the base label \(q_\pi(\xi)\) is
\(O(\sqrt{1-t_i})\).  It does not treat
\[
 (1-t_i)^{-1/2}d_\eta(q_i,q)\longrightarrow\infty.
\]
The two-scale argument in
\cref{ti:prop:moving-fibre} handles precisely the displayed branch by
passing first to a moving point-kernel limit and only then taking a
tangent space at the resulting limiting point.
\end{remark}

\paragraph{Return to the original time scale.}
The arguments in this section were written after the constant parabolic
normalization
\[
 \widetilde\omega(s)=T^{-1}\omega(Ts),\qquad
 \widetilde g(s)=T^{-1}g(Ts),\qquad
 \widetilde\eta=T^{-1}\eta.
\]
Undoing this normalization replaces $1-t$ by $T-t$ and leaves every
limiting-time rescaling unchanged:
\[
 \widetilde t_i:=\frac{t_i}{T},\qquad
 \widetilde\tau_i:=1-\widetilde t_i=\frac{T-t_i}{T},
 \qquad
 \widetilde\tau_i^{-1}
 \widetilde g(1+\widetilde\tau_i s)
 =(T-t_i)^{-1}g(T+(T-t_i)s).
\]
Moreover,
\[
 d_{\widetilde\eta}(q_i,q)\le A\sqrt{1-\widetilde t_i}
 \quad\Longleftrightarrow\quad
 d_\eta(q_i,q)\le A\sqrt{T-t_i}.
\]
Thus \cref{prop:general-bounded-drift-packing,ti:prop:surface-base}
hold in the original variables with
\[
 \tau_i=T-t_i,\qquad
 g_i(s)=\tau_i^{-1}g(T+\tau_i s),
\]
and with every occurrence of $1-t$ replaced by $T-t$.

\section{From fibrewise control to compact-local Type-I curvature}
\label{ti:sec:general-base}

Let
\[
                 \pi:X^{m+1}\longrightarrow Y^m
\]
be the limiting morphism of the semiample finite-time K\"ahler--Ricci
flow, and set $F_q=\pi^{-1}(q)$.  All local arguments take place in
relatively compact regular-base regions inside Zariski-open sets where
$\pi$ is biregularly a product with $\PP^1$ and $(\mathrm S)$ and
$(\mathrm Q)$ hold.  \Cref{prop:zariski-product-horizontal} supplies
both estimates on every such region; its upper horizontal bound is the
product-slice estimate \cite[Lemma~2.2]{FuZhang}.  The fibres are
connected.  We use the real normalization $g=2g_\omega$ and write
$\tau=T-t$.  \Cref{sec:surface-prototype} used the normalization $T=1$.
After the corresponding parabolic rescaling, its estimates hold here
with every occurrence of $1-t$ replaced by $T-t$.

The fixed-fibre and pointed-ball package
\cref{ti:prop:surface-base} proves, for every fixed $q$ in a
Zariski-open product region,
\begin{equation}\label{eq:short-fixed-fibre}
 \sup_{0\le t<T}\sup_{x\in F_q}
       (T-t)|\Rm(g(t))|_{g(t)}(x)<\infty.
\end{equation}
The constant in \eqref{eq:short-fixed-fibre} may a priori depend on $q$.
The issue in this section is the change in quantifiers from
\[
 \text{for every $q$ there is $C(q)$}
 \qquad\text{to}\qquad
 \text{there is $C_K$ for every $q\in K$}.
\]

\begin{proposition}
\label{ti:prop:moving-fibre}
Let \(K\Subset Y^{\rm rv}\) admit a finite cover by Zariski-open sets
\(U_\alpha\subset Y^{\rm rv}\) such that
\[
 \pi^{-1}(U_\alpha)\longrightarrow U_\alpha
 \quad\text{is biregularly isomorphic to}\quad
 U_\alpha\times\PP^1\longrightarrow U_\alpha.
\]
Then there is \(C_K<\infty\) such that
\begin{equation}\label{ti:eq:compact-local-TypeI}
 \sup_{0\le t<T}\sup_{x\in\pi^{-1}(K)}
 (T-t)|\Rm(g(t))|_{g(t)}(x)\le C_K.
\end{equation}
\end{proposition}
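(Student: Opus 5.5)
We argue by contradiction, using the two-scale scheme announced at the end of \cref{sec:surface-prototype}. Normalize \(T=1\). If \eqref{ti:eq:compact-local-TypeI} fails, smoothness on compact time intervals gives \(t_i\nearrow1\), \(q_i\in K\) and \(x_i\in F_{q_i}\) with \(\tau_i:=1-t_i\) and \(\tau_i|\Rm(g(t_i))|(x_i)\to\infty\). Passing to a subsequence, \(q_i\to q_\infty\in K\subset Y^{\rm prod}\). We fix one product chart \(U_\alpha\ni q_\infty\) with relatively compact \(V\ni q_\infty\). On \(V\), \cref{prop:zariski-product-horizontal} gives \((\mathrm S)\), \((\mathrm Q)\) and the product-slice bound with constants independent of \(i\). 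We split into two branches according to the rescaled drift \(\lambda_i:=\tau_i^{-1/2}d_\eta(q_i,q_\infty)\).

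\emph{Bounded-drift branch} (\(\lambda_i\le A\)). Fix an anchored limiting point \(\xi\) with pole in \(F_{q_\infty}\), using \cref{prop:automatic-anchored-point}. Its base is \(q_\infty\). We use the scales \(\tau_i\) and choose \(H_{2m+2}\)-centres \(p_i\). After a realizing subsequence, \cref{prop:general-bounded-drift-packing} (valid for the fixed point \(q_\infty\)) gives \(F_{q_i}\subset B_{g_i(-1)}(p_i,D_A)\) together with a uniform curvature bound on that ball. This contradicts the blow-up at \(x_i\).

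\emph{Scale-separated branch} (\(\lambda_i\to\infty\)). Here the fibre \(F_{q_i}\) is far from every fixed base point at scale \(\sqrt{\tau_i}\), so we recenter at \(q_i\) itself. Pick \(y_i\in F_{q_i}\) and a secondary time \(s_i\nearrow1\) with \(1-s_i=o(\tau_i)\). We take point conjugate kernels with poles \((y_i,s_i)\) and rescale by \(\tau_i\) about time \(t_i\). The plan is as follows.
\begin{enumerate}[label=(\alph*),leftmargin=2.4em]
\item Show that these rescaled kernel flows subconverge in \(\mathbb F\) to an ancient limit. This uses the uniform entropy bound (Bamler's Proposition~4.35) and \(H_{2m+2}\)-concentration.
\item Show that the limit is a K\"ahler shrinker satisfying the hypotheses of \cref{thm:general-kahler-shrinker-package}. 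The required two-level diagonal makes it a tangent space at a limiting point of a limiting flow, rather than of the original flow.
\item Pass the \(q_i\)-centred maps \(\tau_i^{-1/2}(w\circ\pi-w(q_i))\) to the limit, as in \cref{sp:prop:mapcompactness}. Uniform \((\mathrm S)\) and \((\mathrm Q)\) on \(V\) give a Lipschitz holomorphic map of full rank. The localization \cref{surf:lem:base-localization}(i) at the moving poles replaces the fixed base point.
\item Apply \cref{sp:thm:spectral-rigidity} and \cref{ti:prop:P1-models} to get \(Z\cong\C^m\times\Sigma\) with \(\Sigma\in\{\C,\PP^1\}\).
\item Exclude \(\Sigma\cong\C\) using exact-level graphs (\cref{ti:lem:P1-zero-graphs}) and the exact-area identity \eqref{ti:eq:P1-area-eight-pi}. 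Each zero fibre has rescaled area exactly \(8\pi\), whereas a large exhausting graph over a domain in the Gaussian plane \(\C\) would have area exceeding \(8\pi\). Hence \(\Sigma=\PP^1\), and \(F_{q_i}\) is captured as a compact normal graph.
\end{enumerate}
Smooth convergence in the capturing chart then bounds \(|\Rm(g_i(-1))|\) on all of \(F_{q_i}\), which is again a contradiction.

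\emph{Main obstacle.} The hardest step is (b)–(c): justifying the \(\mathbb F\)-limit and the verification of \cref{thm:general-kahler-shrinker-package} for moving poles. This requires uniformity of all constants (Schwarz, Fu–Zhang, entropy, concentration) in \(q_i\in V\), and a diagonal choice of \(s_i\) so that the limit is self-similar. To get self-similarity, I would first try taking \(s_i\) so that the kernel at time \(t_i\) is \(W_1\)-close to a limiting point \(\xi_i\) based at \(q_i\). I would then apply Bamler's tangent-flow theorem uniformly, via entropy monotonicity and almost-constancy of \(\mathcal N\) on the scales \([\tau_i/A,A\tau_i]\), which holds after a pigeonhole choice of scales.
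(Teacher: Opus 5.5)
The bounded-drift branch is exactly the paper's \cref{lem:bounded-drift-general} and is fine. The scale-separated branch, however, has a genuine gap at the point you flag, and your proposed repair does not close it.

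Step (d), and hence (e), relies on \cref{sp:thm:spectral-rigidity} and \cref{ti:prop:P1-models}. These need the scale-$\tau_i$ limit to be a shrinker: the half-mode argument uses the complete soliton flow, and the Bochner equality case uses $\Rc+\nabla^2f=\tfrac12g$. Bamler's self-similarity concerns one fixed $\xi\in M_T$ as the scale tends to zero, and it comes from $\mathcal N_\xi(\tau)\to\mathcal N_\xi(0)$. For moving limiting points $\xi_i$ based at $q_i$, at the prescribed scale $\tau_i$, nothing makes $\mathcal N_{\xi_i}$ almost constant on $[\tau_i/A,A\tau_i]$. A priori the limit is only an ancient metric flow.

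Pigeonholing a pinched window does not help. It moves the scale by a factor that must become unbounded, since you need $A\to\infty$ and pinching $\to0$. The slice you then control is no longer the one containing $(x_i,t_i)$.

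Suppose instead you pass to a secondary tangent, as your ``two-level diagonal'' suggests. Then the curvature bound you obtain lives at scale $\sigma_j\tau_{i(j)}$ with $\sigma_j\to0$, and it is perfectly compatible with $\tau_i|\Rm|(x_i)\to\infty$. If that slice sits at original time $t_{i(j)}-\sigma_j\tau_{i(j)}$, the fibre's rescaled area is $8\pi(1+\sigma_j)/\sigma_j$, not $8\pi$, so (e) as written is false there. Finally, with poles at $(y_i,s_i)$ you have lost all entropy information about the bad point.

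The missing idea is the one in \cref{ti:lem:moving-pole-fibre-capture}: base the point kernels at the bad points $(x_i,t_i)$ themselves. Entropy $\varepsilon$-regularity then gives $\mathcal N_{x_\infty}(0)<0$ on the first limit $\mathcal X$, which is never assumed to be a soliton.

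Parallelism of the base map comes not from a spectral gap but from Hessian dissipation. The conjugate-heat Bochner identity, with $|\Delta\varphi_i|\le C\sqrt{\tau_i}$ and the Fisher-information bound from the point-kernel estimate, gives $\int_{-1}^0\int|\nabla^2H|^2\,d\nu\,ds<\infty$ on $\mathcal X$. This integral scales to zero on a tangent flow of $\mathcal X$ at $x_\infty$, which is a genuine shrinker.

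The rest then follows in order:
\begin{enumerate}
\item The vanishing Hessian yields $Z\cong\C^m\times\Sigma$.
\item Negative entropy excludes $\Sigma\cong\C$.
\item Exact-zero capture identifies the whole fibre $F_{q_{i(j)}}$ with a graph over the round $\PP^1$ of area $8\pi$.
\item This contradicts the exact area $8\pi(1+\sigma_j)/\sigma_j\to\infty$ at that scale.
\end{enumerate}
So the final contradiction is an area mismatch at the secondary scale, not a curvature bound on a captured fibre at scale $\tau_i$.
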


\subsection{Moving fibres and the drift scale}

Suppose that $q_i\to q$, $t_i\nearrow T$, and put
\begin{equation}\label{eq:drift-ratio}
 \tau_i=T-t_i,
 \qquad
 \Lambda_i=\frac{d_\eta(q_i,q)^2}{\tau_i}.
\end{equation}
The fixed-fibre pointed-ball package controls the moving fibres when
$\Lambda_i$ is bounded.

\begin{lemma}
\label{lem:bounded-drift-general}
Fix \(q\) in a Zariski-open product region.  Suppose that
\[
 q_i\longrightarrow q,
 \qquad t_i\nearrow T,
 \qquad d_\eta(q_i,q)^2\le A(T-t_i),
 \qquad 0\le A<\infty.
\]
Then every choice of points \(x_i\in F_{q_i}\) satisfies
\begin{equation}\label{eq:bounded-drift-bad-general}
 \limsup_{i\to\infty}
 (T-t_i)|\Rm(g(t_i))|_{g(t_i)}(x_i)<\infty.
\end{equation}
\end{lemma}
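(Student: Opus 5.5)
The plan is to argue by contradiction and reduce to the bounded-drift packing package, \cref{prop:general-bounded-drift-packing}, applied at one fixed limiting point based at \(q\). Suppose that for some choice \(x_i\in F_{q_i}\) the quantity \((T-t_i)|\Rm(g(t_i))|(x_i)\) is unbounded. After passing to a subsequence, it tends to infinity. Fix once and for all, before any scales are chosen, an anchored limiting point \(\xi=(\mu_t)_{t<T}\) based at \(q\). For instance, take poles \((x_0,s_j)\) with \(x_0\in F_q\), using \cref{prop:automatic-anchored-point}. Its base point is automatically \(q\), and the Type-I moment holds by \cref{lem:automatic-label}. We do not need a limiting point based at \(q_i\). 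The drift hypothesis \(d_\eta(q_i,q)^2\le A(T-t_i)\) is exactly the bounded-drift condition \eqref{eq:general-bounded-drift-fibres}, with constant \(\sqrt A\), after undoing the normalization \(T=1\) as explained at the end of \cref{sec:surface-prototype}.

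Next, set \(\tau_i=T-t_i\) and \(g_i(s)=\tau_i^{-1}g(T+\tau_i s)\), and choose \(H_{2m+2}\)-centres \(p_i\) of \(\mu_{T-\tau_i}\) with respect to \(g_i(-1)\). We pass to a further subsequence realizing a tangent space at \(\xi\). Passing to subsequences preserves the divergence of the curvature along \(x_i\). Then \cref{prop:general-bounded-drift-packing} applies with this \(A\) and these \(q_i\), since \(q_i\in Y^{\rm prod}\) for large \(i\): the product region is Zariski open and \(q_i\to q\). It gives \(D_A\) with \(F_{q_i}\subset B_{g_i(-1)}(p_i,D_A)\), and, taking \(B=0\), a bound \(|\Rm(g_i(-1))|\le C_{A,0}\) on that ball. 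Since \(x_i\in F_{q_i}\) and
\[
 |\Rm(g_i(-1))|_{g_i(-1)}(x_i)=\tau_i|\Rm(g(t_i))|_{g(t_i)}(x_i),
\]
this contradicts the assumed divergence. Hence \eqref{eq:bounded-drift-bad-general} holds.

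The main point to check is the quantifier structure. The constants \(D_A\) and \(C_{A,B}\) in \cref{prop:general-bounded-drift-packing} may depend on the realized tangent space and subsequence. For that reason the argument must be run by contradiction along a single sequence, rather than by asserting a uniform bound directly; the conclusion is only a \(\limsup\), so this suffices. A second point to verify is that the packing proposition's hypotheses are phrased for the original sequence of scales \(\tau_i=T-t_i\) itself and not for an independently chosen sequence. This is the case, because the scales in that proposition are arbitrary and only the subsequence must realize a tangent space. Finally, we use that the cylindrical or Gaussian models of \cref{cor:fixed-point-two-models} supply the slab curvature bound that enters the packing proposition, so no further input is needed.
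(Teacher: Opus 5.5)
Your proposal is correct and follows the paper's proof: argue by contradiction along a single sequence, fix an anchored limiting point based at \(q\), choose \(H_{2m+2}\)-centres, pass to a subsequence realizing a tangent space, and use the whole-fibre containment and curvature bound of \cref{prop:general-bounded-drift-packing} at \(x_i\in F_{q_i}\). The paper reaches that same proposition through \cref{lem:general-horizontal-transport} and \cref{rem:bounded-horizontal-anchor}; invoking it directly with drift constant \(\sqrt A\), as you do, is equivalent.
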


\begin{proof}
Suppose the conclusion fails.  After passing to a subsequence,
\[
 (T-t_i)|\Rm(g(t_i))|_{g(t_i)}(x_i)\longrightarrow\infty.
\]
Fix an anchored limiting point \(\xi=(\mu_t)_{t<T}\) based at $q$, and
put $\tau_i=T-t_i$.  Choose \(H_{2m+2}\)-centres $p_i$ of
\(\mu_{t_i}\) at backward scale \(\tau_i\) with respect to \(g(t_i)\),
and then pass to a subsequence realizing a tangent space at \(\xi\).
Since
\[
                      d_\eta(q_i,q)\le\sqrt A\sqrt{\tau_i},
\]
\Cref{lem:general-horizontal-transport} gives a point
$\bar p_i\in F_{q_i}$ at uniformly bounded horizontal distance from
$p_i$ for the metric $g_i(-1)=\tau_i^{-1}g(t_i)$.  By
\cref{rem:bounded-horizontal-anchor}, the entire fibre, not merely
$\bar p_i$, lies in a fixed ball about $p_i$, and curvature is uniformly
bounded on a fixed enlargement of that ball.  Hence
\[
 \tau_i|\Rm(g(t_i))|(x_i)
 =|\Rm(g_i(-1))|(x_i)\le C_A,
\]
which is a contradiction.
\end{proof}

If \(\Lambda_i\to\infty\), choose a strongly geodesically convex
neighbourhood \(V'\Subset Y^{\rm prod}\) of \(q\).  For curves whose
projections remain in $V'$, the lower quotient estimate $(\mathrm S)$
gives
\begin{equation}\label{eq:scale-separated-distance}
 \dist_{\tau_i^{-1}g(t_i)}(F_q,F_{q_i})
 \ge c\tau_i^{-1/2}d_\eta(q_i,q)
 =c\sqrt{\Lambda_i}\longrightarrow\infty.
\end{equation}
If the projected curve leaves $V'$, its initial segment from $q$ to
$\partial V'$ has a fixed positive $\eta$-length and gives an even
larger lower bound after rescaling.  Consequently,
\eqref{eq:scale-separated-distance} holds even when a minimizing curve
leaves the product chart.  The bad fibre therefore escapes every
bounded region in the rescalings defining a tangent space at a limiting
point based at $q$, and it must be studied using point kernels based at
the moving bad points.

\subsection{The moving-pole fibre-capture lemma}

The next lemma gives the required moving-pole compactness.  It applies
to any moving bad sequence; \cref{lem:bounded-drift-general} reduces the
application below to the scale-separated case.

\begin{lemma}
\label{ti:lem:moving-pole-fibre-capture}
Let \(U\subset Y^{\rm rv}\) be a Zariski-open product region.  Suppose
\[
 q_i\longrightarrow q_\infty\in U,
 \qquad x_i\in F_{q_i},
 \qquad t_i\nearrow T.
\]
Set \(\tau_i=T-t_i\) and assume that
\begin{equation}\label{eq:short-moving-bad}
             \tau_i|\Rm(g(t_i))|_{g(t_i)}(x_i)\longrightarrow\infty.
\end{equation}
Choose nested coordinate sets
\[
 q_\infty\in V_0\Subset V_a\Subset V_b\Subset V\Subset U
\]
and holomorphic coordinates \(w=(w^1,\ldots,w^m)\) on \(V\).  After
discarding finitely many terms, assume that \(q_i\in V_0\).

Then there are \(\sigma_j\searrow0\) and a subsequence
\(i(j)\nearrow\infty\) such that the pointed flows
\begin{equation}\label{eq:short-double-flow}
 \widehat g_j(t)
 =\frac{1}{\sigma_j\tau_{i(j)}}
   g\bigl(t_{i(j)}+\sigma_j\tau_{i(j)}t\bigr),
 \qquad
 -\frac{t_{i(j)}}{\sigma_j\tau_{i(j)}}\le t\le0,
\end{equation}
have the following properties.
\begin{enumerate}[label=\textup{(\roman*)},leftmargin=2.4em]
\item Equipped with the point conjugate heat kernels based at
$(x_{i(j)},0)$, the flows converge in $\mathbb F$, and smoothly on the
full regular spacetime, to a connected noncollapsed K\"ahler shrinking
soliton \(\mathcal Z\).  Write \((Z,g_Z)=\mathcal Z_{-1}\) and
\[
 d\nu_{Z,-1}=(4\pi)^{-(m+1)}e^{-f_Z}\,dV_{g_Z}
\]
for its distinguished shrinker probability measure.  Its normalized
Nash entropy satisfies
\[
 \mathcal N(Z):=
 \int_{\Reg Z}f_Z\,d\nu_{Z,-1}-(m+1)<0.
\]

\item On the localized convergence charts, the maps
\begin{equation}\label{eq:short-double-map}
 \widehat H_j
 =\frac{(w-w(q_{i(j)}))\circ\pi}
        {\sqrt{\sigma_j\tau_{i(j)}}}
\end{equation}
converge smoothly on compact subsets of the regular spacetime to a map
$H_Z$.  Its time-$-1$ restriction extends uniquely to a globally
Lipschitz holomorphic map $H_Z:Z\to\C^m$ satisfying on $\Reg Z$
\begin{equation}\label{eq:short-parallel-map}
 \nabla^{g_Z}dH_Z=0,
 \qquad
 c|a|^2\le |d(a\cdot H_Z)|_{g_Z}^2\le C|a|^2
 \quad(a\in\C^m).
\end{equation}

\item The time-$-1$ space is smooth.  More precisely, there exists
\(L\in\mathrm{GL}(m,\C)\), with no target translation, such that
\begin{equation}\label{eq:short-cylinder}
 Z\cong\C^m\times\PP^1,
 \qquad LH_Z=\operatorname{pr}_{\C^m},
\end{equation}
where $\PP^1$ carries the round metric normalized by
$\Rc(g_{S^2})=\frac12g_{S^2}$.

\item The convergence is realized on a regular exhaustion by embeddings
$\Psi_j$ for which
\begin{equation}\label{eq:short-map-convergence}
 \Psi_j^*\widehat g_j(-1)\longrightarrow g_Z,
 \qquad
 \widehat H_j\circ\Psi_j\longrightarrow H_Z
\end{equation}
smoothly on compact subsets of \(Z=\Reg Z\).  For every \(U'\Subset Z\)
and all sufficiently large \(j\),
\begin{equation}\label{eq:short-exact-zero}
 \Psi_j(U')\cap\widehat H_j^{-1}(0)
 =\Psi_j(U')\cap F_{q_{i(j)}}.
\end{equation}
\end{enumerate}
\end{lemma}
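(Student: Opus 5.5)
The plan is a two-level diagonal argument: first take a moving point-kernel limit at the scale $\tau_i$, then take a tangent flow of that limit at its base point. \emph{First level.} Consider the flows $\tau_i^{-1}g(t_i+\tau_i t)$ with the point conjugate heat kernels based at $(x_i,0)$. Perelman's entropy bound, via Bamler's uniform pointed-entropy estimate \cite[Proposition~4.35]{BamlerStructure}, gives a uniform lower bound on $\mathcal N_{x_i,t_i}$ at all scales. Uniform $H_{2n}$-concentration then lets Bamler's compactness produce a subsequential $\mathbb F$-limit $(\mathcal W,\nu_{w_\infty;t})$.

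By \eqref{eq:short-moving-bad}, the curvature at the poles blows up in this scale. Bamler's $\varepsilon$-regularity therefore says that the pointed Nash entropy of $\mathcal W$ at $w_\infty$ cannot tend to $0$ as the scale shrinks. Hence, for some $\varepsilon_0>0$,
\[
 \lim_{\sigma\to0}\mathcal N^{\mathcal W}_{w_\infty}(\sigma)\le-\varepsilon_0 .
\]

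\emph{Second level.} Let $\sigma_j\searrow0$ and take a tangent flow of $\mathcal W$ at $w_\infty$ \cite[Theorem~2.37]{BamlerStructure}. A diagonal choice $i(j)\nearrow\infty$, with $\mathbb F$-distances controlled exactly as in \eqref{sp:eq:point-kernel-diagonal}, realizes this tangent flow directly as the limit of the flows \eqref{eq:short-double-flow} equipped with the kernels based at $(x_{i(j)},0)$. The limit $\mathcal Z$ is a noncollapsed metric soliton. Entropy convergence gives $\mathcal N(Z)\le-\varepsilon_0<0$, which is part~(i).

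The regular-chart convergence, the parallel complex structure and the Hallgren--Zhang analytic package follow from \HZanalyticref\ and the arguments behind \cref{sp:prop:limiting-package}. Those arguments use only point kernels, exact self-similarity and the entropy bound. In particular, \cref{sp:prop:automatic-scale} applies verbatim because $\mathcal Z$ is an exact soliton.

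\emph{Part (ii).} The plan follows \cref{sp:prop:mapcompactness}, with the base point $q_{i(j)}$ in place of $q$.
\begin{itemize}
\item[] \cref{surf:lem:base-localization}\textup{(i)}, applied to the point kernel at $(x_{i(j)},t_{i(j)})$ with backward scale $\sigma_j\tau_{i(j)}$, localizes the $H_{2n}$-centres in $\pi^{-1}(B_\eta(q_{i(j)},C\sqrt{\sigma_j\tau_{i(j)}}))\subset\pi^{-1}(V_a)$.
\item[] \cref{lem:ambient-schwarz} pushes every bounded regular chart into $\pi^{-1}(V_b)$ and bounds $\widehat H_j$ there.
\item[] The Gram identity \eqref{sp:eq:scalingGram} is scale-free. \Cref{prop:zariski-product-horizontal} on $V_b\Subset V$ supplies both $(\mathrm S)$ and $(\mathrm Q)$ there, giving two-sided Gram bounds.
\item[] Harmonicity and Schauder estimates yield smooth subconvergence to $H_Z$. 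The limit is Lipschitz, so it extends holomorphically over the normal space by Riemann extension.
\end{itemize}

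To obtain $\nabla dH_Z=0$, I would rerun \cref{sp:thm:spectral-rigidity}. The capacity cutoffs and the Friedrichs construction are intrinsic to $Z$. The sharp gap comes from \cref{sp:lem:pointpoincare}: for the point kernel at $(x_{i(j)},t_{i(j)})$ it gives the rescaled Poincar\'e constant $2$ at time $-1$, which passes to the limit as in \cref{sp:lem:tangentP}. The half-mode and Bochner steps then apply unchanged.

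\emph{Part (iii).} Apply \cref{ti:prop:P1-models} with $H=H_Z$. Since $\mathcal N(Z)<0$, only $\C^m\times\PP^1$ survives, with $LH_Z=\operatorname{pr}_{\C^m}$ and no translation.

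\emph{Part (iv).} Smoothness of $Z$ makes the exhaustion global. The identity \eqref{eq:short-exact-zero} is exact, because $w-w(q_{i(j)})$ vanishes on $\pi^{-1}(V)$ precisely on $F_{q_{i(j)}}$, $w$ being injective on $V$. It only needs $\Psi_j(U')\subset\pi^{-1}(V)$, which the localization in part~(ii) provides.

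The main obstacle is the first level: verifying that the strictly negative entropy really survives the diagonal. This needs a quantitative form of $\varepsilon$-regularity turning $\tau_i|\Rm|(x_i)\to\infty$ into $\mathcal N^{\mathcal W}_{w_\infty}(\sigma)\le-\varepsilon_0$ for all small $\sigma$. It also needs a uniform check that the spectral and analytic package of \cref{sec:spectral-splitting}, stated there for fixed limiting points, transfers to this point-kernel diagonal. I would first try the $\varepsilon$-regularity theorem \cite[Theorem~10.2]{BamlerStructure} applied at the poles, combined with monotonicity of $\mathcal N$ in the scale.
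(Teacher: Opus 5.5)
Your proposal is correct. Its skeleton matches the paper:
\begin{itemize}
\item a first moving-pole limit at scale $\tau_i$ with point kernels at $(x_i,0)$;
\item negative pointed entropy at the pole, from $\varepsilon$-regularity;
\item a diagonal realization of a tangent flow of that limit;
\item the residual classification \cref{ti:prop:P1-models};
\item exactness of the zero level from injectivity of $w$.
\end{itemize}
The genuine difference is how you make $H_Z$ parallel.

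\emph{The paper's route.} It first builds a map $H$ on the possibly singular first-level limit $\mathcal X$, using the cutoff $\chi$ and the base barriers. It then proves the spacetime Hessian dissipation bound \eqref{eq:short-dissipation} from the conjugate-heat Bochner identity. The error terms there are controlled by $|\Delta\varphi_i|\le C\sqrt{\tau_i}$ and the Fisher-information bound \eqref{eq:short-Fisher}. Finally, it kills the Hessian on the secondary tangent through the exact scale invariance \eqref{eq:short-Hessian-vanishing}. This needs a careful ordering of the double diagonal, \eqref{eq:short-map-error}, so that $\widehat H_j$ tracks the secondary rescalings $\sigma_j^{-1/2}H$.

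\emph{Your route.} You construct $H_Z$ directly on the diagonal, localizing centres by \cref{surf:lem:base-localization}(i) at backward scale $\sigma_j\tau_{i(j)}$. You then apply the static spectral rigidity \cref{sp:thm:spectral-rigidity} on $Z$, with the sharp gap coming from \cref{sp:lem:pointpoincare} for point kernels. This works, because the argument of \cref{sec:spectral-splitting} uses only the package (H1)--(H8), and each item transfers to your $\mathcal Z$:
\begin{itemize}
\item the diagonal is a point-kernel sequence of rescalings of the original smooth flow, so Bamler's uniform entropy bound holds at every centre and admissible scale;
\item the tangent flow is an exact metric soliton, so \cref{sp:prop:automatic-scale} applies;
\item $R\ge0$ and the finite second moment pass to the limit;
\item Hallgren--Zhang give the completeness and biholomorphic extension of the $\nabla f$-flow needed for the half-mode step.
\end{itemize}

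\emph{What each route buys.} Yours avoids the first-level map, the Fisher-information error analysis and the delicate map diagonal. The price is re-verifying (H4) and (H8) for a new tangent. The paper's route needs only items (a)--(b) of the Hallgren--Zhang package on $\mathcal Z$, and no spectral theory there.

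The step you flag as the main obstacle is in fact soft. Monotonicity gives the limit $\mathcal N_{x_\infty}(0)\le0$. If it were $0$, fix one small scale $\vartheta$ with entropy above $-\varepsilon/2$. Entropy convergence at that fixed scale, together with Bamler's $\varepsilon$-regularity at $(x_i,0)$ (Theorem~10.2 of his heat-kernel paper, not the structure paper), then bounds the curvature there and contradicts the blow-up. No uniform-in-scale quantitative version is needed. The identity $\mathcal N(Z)=\mathcal N_{x_\infty}(0)<0$ then follows from the tangent-entropy theorem.
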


\begin{proof}
We divide the proof into four steps.

\smallskip
\noindent\emph{Step 1: the first moving-pole limit and its negative
entropy.}
Define
\begin{equation}\label{eq:short-first-rescaling}
 \widetilde g_i(s)=\tau_i^{-1}g(t_i+\tau_i s),
 \qquad -t_i/\tau_i\le s\le0,
\end{equation}
and equip these flows with the point conjugate heat kernels $\nu_s^i$
based at $(x_i,0)$.  The comparison with Perelman's $\mu$-functional
and its monotonicity give a uniform scale-one pointed Nash-entropy lower
bound \cite[Proposition~4.35 and equation~(4.34)]{BamlerStructure}.
Bamler's variance estimate gives uniform $H_{2m+2}$-concentration
\cite[Corollary~3.8, equation~(3.9)]{BamlerHeat}; the corresponding
metric-flow pairs belong to Bamler's compactness class by
\cite[Lemma~7.3]{BamlerCompactness}.  Since
$t_i/\tau_i\to\infty$, the backward intervals exhaust
$(-\infty,0]$.  Metric-flow-pair compactness
\cite[Theorems~7.4 and~7.6]{BamlerCompactness} gives
\begin{equation}\label{eq:short-first-limit}
 (X,\widetilde g_i(s),\nu_s^i,x_i)
 \longrightarrow
 (\mathcal X,g_\infty(s),\nu_{x_\infty;s},x_\infty).
\end{equation}
Write $\mathcal R^{\mathcal X}$ for the regular spacetime of
$\mathcal X$ and $\mathcal R_s^{\mathcal X}$ for its time-$s$ slice.
The convergence is smooth on the regular spacetime by
\cite[Theorems~2.5 and~2.14]{BamlerStructure}.  The parallel complex
structures converge there to a K\"ahler structure by the
parallel-tensor argument of
\cite[Theorem~2.5 and its proof]{HallgrenJian}.

The distinguished limiting kernel has a smooth density
\[
 d\nu_{x_\infty;s}=v_{\infty,s}\,dV_{g_\infty(s)}
\]
on the full regular spacetime, and
\begin{equation}\label{eq:short-density-package}
 v_{\infty,s}>0,
 \qquad
 \nu_{x_\infty;s}(\mathcal X_s\setminus\mathcal R_s^{\mathcal X})=0,
 \qquad
 \int_{\mathcal R_s^{\mathcal X}}v_{\infty,s}\,dV=1.
\end{equation}
Bamler's structure theorem identifies the smooth-convergence locus
with the regular spacetime \cite[Theorem~2.5]{BamlerStructure}, while
the distinguished-flow conclusion gives smooth density convergence
there \cite[Theorem~9.31(a)]{BamlerCompactness}.  The heat-kernel
description gives zero singular mass; full support and the strong
minimum principle give positivity; and the limiting conjugate flow has
total mass one.  See
\cite[the paragraph preceding Theorem~2.10 and the paragraph containing
equation~(2.1)]{BamlerStructure}.

Entropy convergence \cite[Theorem~2.10]{BamlerStructure} and
monotonicity give a constant $C_N<\infty$ such that
\begin{equation}\label{eq:short-entropy-window}
 -C_N\le\mathcal N_{x_\infty,0}(\vartheta)\le0,
 \qquad 0<\vartheta\le1,
\end{equation}
and hence a finite limit
\[
 \mathcal N_{x_\infty}(0)
 :=\lim_{\vartheta\searrow0}
     \mathcal N_{x_\infty,0}(\vartheta)>-\infty.
\]
Moreover,
\begin{equation}\label{eq:short-negative-entropy}
                       \mathcal N_{x_\infty}(0)<0.
\end{equation}
Let \(\varepsilon_{2m+2}>0\) be the dimensional constant in Bamler's
entropy \(\varepsilon\)-regularity theorem
\cite[Theorem~10.2]{BamlerHeat}.  If
\eqref{eq:short-negative-entropy} failed, choose a fixed small
\(\vartheta>0\) so that
$\mathcal N_{x_\infty,0}(\vartheta)>-\varepsilon_{2m+2}/2$.
Entropy convergence
then gives
$\mathcal N^{\widetilde g_i}_{x_i,0}(\vartheta)>-\varepsilon_{2m+2}$
for all large \(i\).  The entropy \(\varepsilon\)-regularity theorem
bounds the curvature scale at
$(x_i,0)$ from below by a positive multiple of $\sqrt\vartheta$.  This
contradicts
\[
 |\Rm(\widetilde g_i(0))|(x_i)
 =\tau_i|\Rm(g(t_i))|(x_i)\longrightarrow\infty.
\]

\smallskip
\noindent\emph{Step 2: the centred base map and Hessian dissipation.}
Choose $\chi\in C_c^\infty(V)$ with $\chi=1$ on $V_b$ and set
\begin{equation}\label{eq:short-first-map}
 H_i=\tau_i^{-1/2}
       \bigl[\chi\,(w-w(q_i))\bigr]\circ\pi.
\end{equation}
Here the bracket is defined on \(V\), extended by zero off \(V\), and
then pulled back to \(X\); its pullback is smooth because
\(\supp\chi\Subset V\).
Choose smooth base barriers $\rho_i$, constant outside $V_b$ and
vanishing only at $q_i$, such that, uniformly in $i$,
\begin{equation}\label{eq:short-barrier}
 c\,d_\eta(\,\cdot\,,q_i)^2\le\rho_i
 \le C\,d_\eta(\,\cdot\,,q_i)^2,
 \qquad
 |\Delta_{g(t)}(\rho_i\circ\pi)|\le C.
\end{equation}
The construction in \cref{def:squared-distance-barrier}, uniformly for
\(q_i\in V_0\), and \cref{ti:cor:basefunction} give these barriers and
the Laplacian bound.  Put
\[
 u_i:=\tau_i^{-1}\rho_i\circ\pi.
\]
Scaling gives
\[
 |\Delta_{\widetilde g_i(s)}u_i|
 =|\Delta_{g(t_i+\tau_i s)}(\rho_i\circ\pi)|\le C.
\]
Since \(\nu_s^i\rightharpoonup\delta_{x_i}\) as \(s\uparrow0\) and
\(u_i(x_i)=0\), conjugate-heat duality gives
\[
 \int_Xu_i\,d\nu_s^i
 =\int_s^0\!\int_X\Delta_{\widetilde g_i(r)}u_i\,d\nu_r^i\,dr
 \le C(-s).
\]
Thus, for every fixed \(A>0\) and all sufficiently large \(i\),
\begin{equation}\label{eq:short-barrier-moment}
 \int_X\tau_i^{-1}\rho_i\circ\pi\,d\nu_s^i\le C(-s),
 \qquad -A\le s<0.
\end{equation}

Let $p_{i,s}$ be an $H_{2m+2}$-centre for $\nu_s^i$ at backward scale
$-s$.  Apply Markov's inequality to
\eqref{eq:short-barrier-moment} and to the variance estimate at
$p_{i,s}$.  Choose the two implicit radii so that each exceptional set
has $\nu_s^i$-mass less than $1/4$.  Their complements intersect at a
point $y_{i,s}$ satisfying
\[
 d_{\widetilde g_i(s)}(y_{i,s},p_{i,s})\le C\sqrt{-s},
 \qquad
 d_\eta(\pi(y_{i,s}),q_i)\le C\sqrt{\tau_i(-s)}.
\]
For $s$ in a fixed compact negative-time interval, the second estimate
puts $\pi(y_{i,s})$ in $V_a$ for large $i$.  If a minimizing geodesic
from $y_{i,s}$ to $p_{i,s}$ first left $\pi^{-1}(V_b)$, the portion
whose projection crosses from $V_a$ to $\partial V_b$ would have,
by $(\mathrm S)$, $\widetilde g_i(s)$-length at least
\[
 c\tau_i^{-1/2}
 \dist_\eta(\overline V_a,Y\setminus V_b),
\]
contradicting the preceding $O(\sqrt{-s})$ upper bound.  The geodesic
therefore stays over $V_b$, and applying $(\mathrm S)$ once more gives
\begin{equation}\label{eq:short-centre-label}
 d_\eta(\pi(p_{i,s}),q_i)^2\le C\tau_i(-s).
\end{equation}
The same first-exit argument shows that every fixed pointed ball lies
over $V_b$ for large $i$.  To localize a compact regular spacetime chart
in \eqref{eq:short-first-limit}, cover it by finitely many regular
subballs with uniformly positive limiting kernel mass.  Smooth density
convergence and the centre second-moment bound place a point of each
approximating subball within $O(\sqrt{-s})$ of $p_{i,s}$, while smooth
metric convergence controls its diameter.  The first-exit estimate
localizes each subball, hence the whole chart.  Thus $\chi=1$ on every
chart under consideration.  On these localized charts,
conditions $(\mathrm S)$ and $(\mathrm Q)$ give the two-sided Gram estimate
\begin{equation}\label{eq:short-Gram}
 c|a|^2\le|d(a\cdot H_i)|_{\widetilde g_i(s)}^2
 \le C|a|^2,
 \qquad a\in\C^m.
\end{equation}
The same centre argument gives $|H_i(p_{i,s})|\le C\sqrt{-s}$.
On the localized charts \(\chi=1\), so each component of \(H_i\) is
harmonic.  The centre bound and gradient estimate give local \(C^0\)
bounds, and interior elliptic estimates on the smoothly converging
regular charts give \(C^\infty_{\rm loc}\)-compactness.  Consequently,
the maps converge smoothly on negative-time regular
charts to a stationary holomorphic harmonic map
\[
                  H:\mathcal R^{\mathcal X}\longrightarrow\C^m.
\]
The pointwise comparison $|H_i|^2\le C\tau_i^{-1}\rho_i\circ\pi$
and \eqref{eq:short-density-package} give
\begin{equation}\label{eq:short-limit-moment}
 \int_{\mathcal R_s^{\mathcal X}}|H|^2\,d\nu_{x_\infty;s}\le C(-s).
\end{equation}

Let $\varphi_i$ be a real or imaginary component of $H_i$.  Scaling and
the base-function estimate \cref{ti:cor:basefunction} give
\begin{equation}\label{eq:short-map-errors}
 |\nabla\varphi_i|\le C,
 \qquad
 |\Delta\varphi_i|\le C\sqrt{\tau_i}.
\end{equation}
Writing $d\nu_s^i=v_{i,s}\,dV_{\widetilde g_i(s)}$, the point-kernel
potential is defined by
\[
 v_{i,s}=(4\pi(-s))^{-(m+1)}e^{-f_{i,s}}.
\]
The point-kernel estimate
\cite[Proposition~5.13, equation~(5.14)]{BamlerHeat} gives
\[
 (-s)\int_X\bigl(|\nabla f_{i,s}|^2
                 +R_{\widetilde g_i(s)}\bigr)\,d\nu_s^i
 \le m+1.
\]
The scalar-curvature maximum principle on the original compact flow
gives $R_{g(t)}\ge-C$, and hence
$R_{\widetilde g_i(s)}\ge-C\tau_i$.  Since
$\nabla\log v_{i,s}=-\nabla f_{i,s}$, the last two estimates give
\begin{equation}\label{eq:short-Fisher}
 \int|\nabla\log v_{i,s}|^2\,d\nu_s^i
 \le\frac{C}{-s}+C\tau_i,
 \qquad -1\le s<0.
\end{equation}
For $E_i(s)=\int_X|\nabla\varphi_i|^2\,d\nu_s^i$, we now record the
error term in the conjugate-heat Bochner identity explicitly.  The
function $\varphi_i$ is independent of the rescaled time $s$, while
$\partial_s\widetilde g_i=-2\Rc(\widetilde g_i)$.  Hence
\begin{equation}\label{eq:short-pointwise-Bochner}
 (\partial_s-\Delta)|\nabla\varphi_i|^2
 =-2|\nabla^2\varphi_i|^2
  -2\langle\nabla\varphi_i,\nabla\Delta\varphi_i\rangle.
\end{equation}
For every smooth function $u$, conjugate-heat duality gives
\begin{equation}\label{eq:short-conjugate-duality}
 \frac{d}{ds}\int_Xu\,d\nu_s^i
 =\int_X(\partial_s-\Delta)u\,d\nu_s^i.
\end{equation}
For completeness, this identity follows from
\[
 \partial_sv_{i,s}=-\Delta v_{i,s}+R v_{i,s},
 \qquad
 \partial_s dV_{\widetilde g_i(s)}=-R\,dV_{\widetilde g_i(s)},
\]
which imply
$\partial_s(d\nu_s^i)=-(\Delta v_{i,s})dV_{\widetilde g_i(s)}$.
Likewise, because $\varphi_i$ is time independent,
$\partial_s|\nabla\varphi_i|^2
=2\Rc(\nabla\varphi_i,\nabla\varphi_i)$.  Therefore
\begin{align*}
 E_i'(s)
 &=2\int_X\Rc(\nabla\varphi_i,\nabla\varphi_i)\,d\nu_s^i
   -\int_X|\nabla\varphi_i|^2\Delta v_{i,s}\,dV\\
 &=2\int_X\Rc(\nabla\varphi_i,\nabla\varphi_i)\,d\nu_s^i
   -\int_X\Delta|\nabla\varphi_i|^2\,d\nu_s^i.
\end{align*}
The Ricci term in the scalar Bochner formula cancels the first term in
the last display, leaving exactly
\eqref{eq:short-pointwise-Bochner} integrated against $d\nu_s^i$.
Applying \eqref{eq:short-conjugate-duality} to
$u=|\nabla\varphi_i|^2$ and integrating by parts on the compact
manifold $X$, we obtain
\begin{align}
 E_i'(s)
 &=-2\int|\nabla^2\varphi_i|^2\,d\nu_s^i+\mathcal E_i(s),
 \label{eq:short-Bochner}\\
 \mathcal E_i(s)
 &:=2\int(\Delta\varphi_i)^2\,d\nu_s^i \notag\\
 &\quad
 +2\int\Delta\varphi_i
   \langle\nabla\varphi_i,\nabla\log v_{i,s}\rangle\,d\nu_s^i.
 \label{eq:short-Bochner-error-definition}
\end{align}
Indeed, the only term not already displayed in
\eqref{eq:short-pointwise-Bochner} is
\begin{align*}
 -2\int_X\langle\nabla\varphi_i,
       \nabla\Delta\varphi_i\rangle v_{i,s}\,dV
 &=2\int_X(\Delta\varphi_i)^2\,d\nu_s^i\\
 &\quad
 +2\int_X\Delta\varphi_i
   \langle\nabla\varphi_i,\nabla\log v_{i,s}\rangle\,d\nu_s^i,
\end{align*}
which is precisely \eqref{eq:short-Bochner-error-definition}.
Since $\nu_s^i$ is a probability measure, \eqref{eq:short-map-errors}
and Cauchy--Schwarz, followed by \eqref{eq:short-Fisher}, yield
\begin{align}
 |\mathcal E_i(s)|
 &\le C\tau_i+C\sqrt{\tau_i}(-s)^{-1/2}.
 \label{eq:short-Bochner-error}
\end{align}
Here the first term comes from
$2\int(\Delta\varphi_i)^2\,d\nu_s^i$, and the second follows from
\[
 \left|\int\Delta\varphi_i
  \langle\nabla\varphi_i,\nabla\log v_{i,s}\rangle\,d\nu_s^i\right|
 \le C\sqrt{\tau_i}
 \left(\frac{C}{-s}+C\tau_i\right)^{1/2}.
\]
Thus the error is integrable at $s=0$.  Integrating
\eqref{eq:short-Bochner} over $[-1,-\varepsilon]$, using
$E_i(-\varepsilon)\ge0$ and $E_i(-1)\le C$, and then letting
$\varepsilon\searrow0$, gives
\begin{equation}\label{eq:short-uniform-dissipation}
 \int_{-1}^{0}\int_X|\nabla^2\varphi_i|^2\,d\nu_s^i\,ds\le C.
\end{equation}
Smooth metric, map, and density convergence on compact regular
spacetime charts, followed by lower semicontinuity and exhaustion,
therefore gives, after summing over the real and imaginary components,
\begin{equation}\label{eq:short-dissipation}
 \int_{-1}^{0}\int_{\mathcal R_s^{\mathcal X}}
 |\nabla^2H|^2\,d\nu_{x_\infty;s}\,ds<\infty.
\end{equation}

\smallskip
\noindent\emph{Step 3: the secondary tangent, the quantitative
diagonal, and the round cylinder.}
Choose a tangent flow $\mathcal Z$ of $\mathcal X$ at $x_\infty$ at
scales $\sigma_j\searrow0$.  Bamler's tangent theorem makes it a metric
shrinking soliton
\cite[Theorem~2.6 and Addendum~2.7]{BamlerStructure}.  Let
 $\mathcal R^{\mathcal Z}$ denote its full negative-time regular
spacetime.  Let
$\mathcal X^{(j)}$ be the rescaling of the first limit by
$\sigma_j^{-1}$.  The entropy window \eqref{eq:short-entropy-window}
gives
\[
 \mathcal N^{\mathcal X^{(j)}}_{x_\infty,0}(1)
 =\mathcal N^{\mathcal X}_{x_\infty,0}(\sigma_j)\ge-C_N.
\]
At the fixed scale $\sigma_j$, entropy convergence transfers this lower
bound to a defining smooth first-level sequence.  Choose its index so
that the scale-one entropy is at least \(-C_N-1\).  The resulting
diagonal is uniformly \((1,C_N+1)\)-noncollapsed in the sense of
Chan--Ma--Zhang.  On each compact negative-time interval, their theorem
\cite[Theorem~1.2]{ChanMaZhangSmooth} gives smooth metric convergence
on the full regular spacetime of the prescribed tangent space.
Bamler's distinguished-flow theorem gives smooth
convergence of the densities there
\cite[Theorem~9.31(a)]{BamlerCompactness}, and parallel-tensor
compactness supplies the limiting K\"ahler structure.

Choose a regular spacetime exhaustion
$\mathcal K_j\Subset\mathcal R^{\mathcal Z}$ and secondary embeddings
\begin{equation}\label{eq:short-secondary-embeddings}
 \Phi_j:\mathcal K_j\longrightarrow\mathcal R^{\mathcal X},
 \qquad s\circ\Phi_j=\sigma_jt.
\end{equation}
On the secondary rescalings set
\begin{equation}\label{eq:short-secondary-density}
 g^{(j)}(t)=\sigma_j^{-1}g_\infty(\sigma_jt),
 \qquad \nu_t^{(j)}=\nu_{x_\infty;\sigma_jt},
 \qquad v_t^{(j)}=\sigma_j^{m+1}v_{\infty,\sigma_jt}.
\end{equation}
Then $d\nu_t^{(j)}=v_t^{(j)}dV_{g^{(j)}(t)}$.  The limiting secondary
density is smooth and positive on $\mathcal R^{\mathcal Z}$, has zero
singular mass, and has total mass one, by the same distinguished-kernel,
full-support, and strong-minimum-principle argument used in
\eqref{eq:short-density-package}.

The order of the double diagonal is important.  First fix
$\sigma_j$, $\mathcal K_j$, $\Phi_j$, and a required post-rescaling
accuracy $\varepsilon_j\searrow0$.  Let $\Theta_{i,j}$ be first-level
regular convergence embeddings on a neighbourhood of
$\Phi_j(\mathcal K_j)$.  Only then choose $i(j)$ so large that, after
composition with $\Phi_j$ and secondary rescaling, the metric,
time-vector, density, complex-structure, and $\mathbb F$ errors are at
most $\varepsilon_j$.  Put
\[
                       \Psi_j=\Theta_{i(j),j}\circ\Phi_j.
\]
On each fixed finite regular spacetime coordinate atlas,
$C_{\rm par}^k$ denotes the integer-order parabolic norm
\begin{equation}\label{eq:short-parabolic-norm}
 \|u\|_{C_{\rm par}^k(\mathcal K)}
 :=\sum_{2a+|\beta|\le k}
   \|\partial_t^a\nabla^\beta u\|_{C^0(\mathcal K)},
\end{equation}
with the analogous definition for tensors.  Thus one time derivative
counts as two spatial derivatives.  This notation records quantitative
smooth convergence on compact regular spacetime charts; it does not
mean merely topological local compactness.
On the fixed atlas of $\mathcal K_j$, impose at the same time
\begin{equation}\label{eq:short-map-error}
 \bigl\|H_{i(j)}\circ\Psi_j-H\circ\Phi_j\bigr\|
 _{C_{\rm par}^{j+1}(\mathcal K_j)}
 \le\varepsilon_j\sqrt{\sigma_j}
\end{equation}
before multiplying the maps by $\sigma_j^{-1/2}$.  We also choose
$i(j)$ so that
\begin{equation}\label{eq:short-entropy-tolerance}
 \left|
 \mathcal N^{\widetilde g_{i(j)}}_{x_{i(j)},0}(\sigma_j)
 -\mathcal N^{g_\infty}_{x_\infty,0}(\sigma_j)
 \right|\le\varepsilon_j.
\end{equation}
These choices produce the doubly rescaled flows
\eqref{eq:short-double-flow}.  To make the metric-flow-pair statement
explicit, define the rescaled point kernels and their densities by
\begin{equation}\label{eq:short-double-kernels}
 \widehat\nu_t^j:=\nu_{\sigma_jt}^{i(j)},
 \qquad
 d\widehat\nu_t^j=\widehat v_j(t)\,dV_{\widehat g_j(t)},
 \qquad
 \widehat v_j(t):=\sigma_j^{m+1}v_{i(j),\sigma_jt}.
\end{equation}
Thus, on the full regular spacetime, the diagonal converges smoothly in
its metrics, time vectors, complex structures, distinguished densities,
and normalized base maps; the pointed metric-flow pairs also converge
in $\mathbb F$.  Parabolic
invariance gives
\begin{equation}\label{eq:short-entropy-diagonal}
 \mathcal N^{\widehat g_j}_{x_{i(j)},0}(1)
 =\mathcal N^{\widetilde g_{i(j)}}_{x_{i(j)},0}(\sigma_j)
 \longrightarrow\mathcal N_{x_\infty}(0)<0.
\end{equation}
The diagonal is $H_{2m+2}$-concentrated by scale invariance, is uniformly
noncollapsed by \eqref{eq:short-entropy-diagonal}, and its backward
intervals exhaust $(-\infty,0]$ because
\[
              \frac{t_{i(j)}}{\sigma_j\tau_{i(j)}}\longrightarrow\infty.
\]
Bamler's intrinsic-completion theorem
\cite[Theorem~2.4(c)]{BamlerStructure} says that the regular-slice
distance is the finite intrinsic length distance and that its completion
is \(Z\).  Hence the regular slice is path connected and \(Z\) is
connected.  The quantitative diagonal above verifies the hypotheses of
\HZanalyticref, which supplies only the normal klt structure and
metric--analytic compatibility used below.  This describes the tangent
at this stage; its smooth product structure is proved below.

Let $d\nu_{Z,-1}$ be the distinguished probability measure on the
time-$-1$ slice and write its density as
\begin{equation}\label{eq:short-shrinker-density}
 d\nu_{Z,-1}=(4\pi)^{-(m+1)}e^{-f_Z}\,dV_{g_Z}.
\end{equation}
The normalized Nash entropy of this distinguished shrinker measure is
\begin{equation}\label{eq:short-shrinker-entropy-definition}
 \mathcal N(Z)
 :=\int_{\Reg Z}f_Z\,d\nu_{Z,-1}-(m+1).
\end{equation}
This notation is deliberately different from Perelman's
$\nu$-functional and from the conjugate measures $\nu_t$.  Bamler's
tangent-entropy theorem \cite[Theorem~2.11]{BamlerStructure}, together
with \eqref{eq:short-entropy-diagonal}, gives
\begin{equation}\label{eq:short-tangent-entropy}
 \mathcal N(Z)
 =\lim_{\vartheta\searrow0}
   \mathcal N^{g_\infty}_{x_\infty,0}(\vartheta)
 =\mathcal N_{x_\infty}(0)<0.
\end{equation}

Put $H^{(j)}=\sigma_j^{-1/2}H$ on the secondary rescalings.  The moment
bound becomes
\[
                  \int|H^{(j)}|^2\,d\nu_t^{(j)}\le C(-t).
\]
The limiting distinguished density is smooth and strictly positive on
compact regular charts, so this is a local unweighted $L^2$ bound.
Interior estimates give smooth local convergence to a holomorphic map
$H_Z$.  For $0<A<B<\infty$, exact Hessian scaling gives
\begin{align}
 &\int_{-B}^{-A}\int
 |\nabla^2H^{(j)}|^2\,d\nu_t^{(j)}\,dt \notag\\
 &\qquad=
 \int_{-B\sigma_j}^{-A\sigma_j}\int
 |\nabla^2H|^2\,d\nu_{x_\infty;s}\,ds
 \longrightarrow0.
 \label{eq:short-Hessian-vanishing}
\end{align}
Here \eqref{eq:short-dissipation} is used in the last step: the
integral of its $L^1$ integrand over the shrinking interval
$[-B\sigma_j,-A\sigma_j]$ tends to zero.

We spell out what \eqref{eq:short-Hessian-vanishing} says about the
limit.  If $\nabla dH_Z$ were nonzero at a regular spacetime point,
continuity would give a positive lower bound on a smaller precompact
parabolic cylinder.  The smooth positive limiting density also has a
positive lower bound there.  Smooth convergence of the metrics, maps,
and densities would then give a fixed positive lower bound for the
weighted spacetime $L^2$ norm of $\nabla dH^{(j)}$, contrary to
\eqref{eq:short-Hessian-vanishing}.  Hence
\begin{equation}\label{eq:short-Hessian-zero}
                         \nabla dH_Z=0
\end{equation}
on the full regular spacetime.  Componentwise, this means exactly that
the real Hessians of every real and imaginary component of $H_Z$
vanish; there is no distinction between this statement and ``the
Hessian of $H_Z$ is zero.''

The Gram estimate \eqref{eq:short-Gram} passes to the limit.  Its upper
bound makes $H_Z$ Lipschitz for the intrinsic length distance on
$\Reg Z$.  Bamler's intrinsic-completion theorem
\cite[Theorem~2.4(c)]{BamlerStructure} identifies this distance with
the restriction of the metric-space distance and identifies $Z$ with
the metric completion of $\Reg Z$.  Since $\C^m$ is complete,
$H_Z$ therefore has a unique Lipschitz extension to all of $Z$.  Each
component is locally bounded and holomorphic on $\Reg Z$.  The weak
Riemann extension theorem for the normal complex space supplied by
\HZanalyticref[\textup{(a)}]\ then extends it
uniquely as a holomorphic function across $\Sing Z$; see
\cite[Chapter~7, Section~4]{GrauertRemmertSheaves}.  The metric and
holomorphic extensions agree
by continuity and density.  This proves \eqref{eq:short-parallel-map}.

We now verify all hypotheses needed for the residual-model theorem.
Bamler's structure theory gives completeness, the intrinsic
regular-slice distance and metric completion, and the soliton equation
\cite[Theorems~2.4(c), 2.6 and Addendum~2.7]{BamlerStructure}.
The sequence-level theorem \HZanalyticref\ gives normality, klt
singularities, equality of the analytic and metric regular loci, the
bounded-potential K\"ahler current, and the local ambient lower bound.
Finally, \eqref{eq:short-shrinker-density} is the normalized finite-mass
distinguished soliton measure.  In particular, its potential satisfies
\begin{equation}\label{eq:short-soliton-normalization}
 \Rc(g_Z)+\nabla^2f_Z=\frac12g_Z
 \quad\hbox{on }\Reg Z,
 \qquad
 \int_{\Reg Z}e^{-f_Z}\,dV_{g_Z}=(4\pi)^{m+1}.
\end{equation}
The singular product theorem and residual-curve classification
\cref{thm:singular-product,ti:prop:P1-models} therefore give, after one
fixed complex-linear normalization $L\in\mathrm{GL}(m,\C)$ and without
target translation,
\[
 Z\cong\C^m\times\Sigma,
 \qquad LH_Z=\operatorname{pr}_{\C^m},
 \qquad
 \Sigma\cong\C\ \hbox{or}\ \PP^1.
\]
The first alternative is Gaussian and is excluded by the tangent
entropy identity \eqref{eq:short-tangent-entropy}.  Thus
\eqref{eq:short-cylinder} holds.  In particular, the intermediate
normal klt space supplied by the sequence-level package is now proved
to be the smooth K\"ahler product $\C^m\times\PP^1$.

\smallskip
\noindent\emph{Step 4: exact labels and exact zero levels.}
After the preceding extraction, refine the diagonal once more, without
changing the prescribed tangent space, so that
$\sigma_j^{-1/2}H\circ\Phi_j\to H_Z$ on $\mathcal K_j$ to the required
$C_{\rm par}^j$ accuracy.  Multiplication of
\eqref{eq:short-map-error} by $\sigma_j^{-1/2}$ leaves an error at most
$\varepsilon_j$.  Together with the metric diagonal, this proves
\eqref{eq:short-map-convergence}.  The localization argument puts
the composed charts inside $V_b\subset\{\chi=1\}$.  On these charts the
approximating maps are therefore exactly
\eqref{eq:short-double-map}, not merely cutoff approximations.  Since
$w$ is a coordinate system,
\[
 (w-w(q_{i(j)}))\circ\pi=0
 \quad\Longleftrightarrow\quad
 \pi=q_{i(j)}.
\]
This proves \eqref{eq:short-exact-zero}.  No target translation has
entered, and applying $L\in\mathrm{GL}(m,\C)$ does not change the zero set.
\end{proof}

\subsection{The area contradiction and applications}

\begin{proof}[Proof of \cref{ti:prop:moving-fibre}]
Suppose that \eqref{ti:eq:compact-local-TypeI} fails.  Smoothness on compact
subintervals gives $x_i\in\pi^{-1}(K)$ and $t_i\nearrow T$ such that,
with
\[
 q_i=\pi(x_i),
 \qquad \tau_i=T-t_i,
\]
one has
\begin{equation}\label{eq:short-global-bad}
                  \tau_i|\Rm(g(t_i))|(x_i)\longrightarrow\infty.
\end{equation}
After passing to a subsequence, $q_i\to q_\infty\in K$.  Pass to a
further subsequence on which
$d_\eta(q_i,q_\infty)^2/\tau_i$ is either bounded or tends to infinity.
The bounded case contradicts \cref{lem:bounded-drift-general}; hence we
are in the second case.
Choose a product chart containing $q_\infty$ and apply
\cref{ti:lem:moving-pole-fibre-capture}.

In the time-$-1$ slice of the secondary tangent, set
\[
                       S=\{0\}\times\PP^1.
\]
This is the compact zero set of $LH_Z$.  Choose precompact tubular
neighbourhoods
\[
                  S\subset U_0\Subset U_1\Subset\Reg Z
\]
so that $S$ is the entire zero set in $\overline U_1$.  The lower
differential bound in \eqref{eq:short-parallel-map} makes zero a regular
value.  On $U_1$ set
\[
 G_j=L\widehat H_j\circ\Psi_j,
 \qquad
 G=LH_Z=\operatorname{pr}_{\C^m}.
\]
Since \(G_j\to G\) smoothly, \(0\) is a regular value of \(G\), and
\(S\) is the entire zero set in \(\overline U_1\),
\cref{ti:lem:P1-zero-graphs}\textup{(ii)} gives compact embedded spheres
\[
 S_j\subset(L\widehat H_j\circ\Psi_j)^{-1}(0)\cap U_0
\]
that are normal graphs over \(S\) and converge smoothly to it.
Since \(L\) is invertible, \eqref{eq:short-exact-zero}, applied with
\(U'=U_0\), gives
\[
                       \Psi_j(S_j)\subset F_{q_{i(j)}}.
\]
The fibre $F_{q_{i(j)}}$ is a connected smooth $\PP^1$.  The image
$\Psi_j(S_j)$ is a compact embedded real two-manifold in that fibre.
Its inclusion is locally a diffeomorphism, so it is open in the fibre
by invariance of domain; it is closed by compactness.  Connectedness of
the fibre therefore gives
\begin{equation}\label{eq:short-whole-fibre}
                       \Psi_j(S_j)=F_{q_{i(j)}}.
\end{equation}
Let $\gamma_j:S\to S_j$ be the normal-graph parametrizations.  Smooth
metric convergence and smooth convergence of the graph sections give
\[
 \gamma_j^*\Psi_j^*\widehat g_j(-1)
 \longrightarrow g_Z|_S
 \quad\hbox{in }C^\infty(S).
\]
Together with \eqref{eq:short-whole-fibre}, this gives
\begin{align}
 \Area_{\widehat g_j(-1)}(F_{q_{i(j)}})
 &=\int_S dA_{\gamma_j^*\Psi_j^*\widehat g_j(-1)} \notag\\
 &\longrightarrow\int_SdA_{g_Z|_S}
 =\Area_{g_{S^2}}(\PP^1)=8\pi.
 \label{eq:short-area-finite}
\end{align}
Indeed, $\Rc(g_{S^2})=\frac12g_{S^2}$ means that its Gauss curvature
is $1/2$.  Thus Gauss--Bonnet gives
\[
 \frac12\Area_{g_{S^2}}(\PP^1)
 =2\pi\chi(\PP^1)=4\pi.
\]

On the other hand, restriction of the limiting class to every regular
fibre gives
$[\omega(t)]|_{F_q}=2\pi(T-t)c_1(\PP^1)$ and hence the exact identity
\begin{equation}\label{eq:short-exact-area}
                         \Area_{g(t)}(F_q)=8\pi(T-t).
\end{equation}
This is the general-$T$ form of
\cref{ti:eq:P1-fibre-class,ti:eq:P1-area-kahler}, with the additional
factor two from $g=2g_\omega$.
At time $t=-1$ in \eqref{eq:short-double-flow}, the original time is
$t_{i(j)}-\sigma_j\tau_{i(j)}$, and hence
\[
 T-\bigl(t_{i(j)}-\sigma_j\tau_{i(j)}\bigr)
 =\tau_{i(j)}(1+\sigma_j).
\]
Since the area of a real surface is multiplied by $\lambda$ when the
ambient metric is multiplied by $\lambda$, equations
\eqref{eq:short-double-flow} and \eqref{eq:short-exact-area} give
\begin{align}
 \Area_{\widehat g_j(-1)}(F_{q_{i(j)}})
 &=\frac{1}{\sigma_j\tau_{i(j)}}
   8\pi\tau_{i(j)}(1+\sigma_j) \notag\\
 &=8\pi\frac{1+\sigma_j}{\sigma_j}
 \longrightarrow\infty.
 \label{eq:short-area-diverges}
\end{align}
This contradicts \eqref{eq:short-area-finite} and proves the
proposition.
\end{proof}

\subsection{Sharp two-sided fibre diameter from Type-I regularity}
\label{ti:sec:diameter-from-typeI}

We now derive the sharp fibre-diameter estimate from the compact-local
Type-I curvature bound.  This step is independent of the limiting-entropy
uniformization argument.  At the Type-I scale, curvature control and
noncollapsing give uniform ambient bounded geometry.  The normalized
base maps then have uniformly regular two-dimensional levels, and the
exact fibre area converts their uniform local disks into a global
intrinsic-diameter bound.

We first isolate the quantitative level-set statement used below.

\begin{lemma}
\label{ti:lem:uniform-level-disks}
Let \(d\ge3\), \(1\le k<d\), \(r_b>0\), and \(\Lambda\ge1\).  There
exist constants
\[
 a_0=a_0(d,k,r_b,\Lambda)>0,
 \qquad
 R_0=R_0(d,k,r_b,\Lambda)<\infty
\]
with the following property.  Let \((M^d,h)\) be a Riemannian manifold
and \(p\in M\).  Suppose that \(\exp_p\) is a diffeomorphism from
\(B_{T_pM}(0,2r_b)\) onto its image and that, in these exponential
coordinates,
\begin{equation}\label{ti:eq:level-chart-bounds}
 \Lambda^{-1}g_{\rm E}
 \le \exp_p^*h\le\Lambda g_{\rm E},
 \qquad
 \|\exp_p^*h\|_{C^2(B(0,2r_b))}\le\Lambda.
\end{equation}
Let
\[
 H:\exp_p\bigl(B_{T_pM}(0,2r_b)\bigr)\longrightarrow\R^k
\]
be smooth, with \(H(p)=0\), and suppose that
\begin{equation}\label{ti:eq:level-map-C2}
 |dH|_h+|\nabla dH|_h\le\Lambda
\end{equation}
and
\begin{equation}\label{ti:eq:level-map-rank}
 \Lambda^{-1}|a|^2
 \le
 \left|\sum_{\alpha=1}^k a_\alpha dH^\alpha\right|_h^2
 \le\Lambda |a|^2
 \qquad(a\in\R^k).
\end{equation}
Then the component of \(H^{-1}(0)\) through \(p\) contains an embedded
closed \((d-k)\)-disk \(\mathcal D_p\) such that
\begin{align}
 \mathcal D_p
 &\subset
 B_{H^{-1}(0),\,h|_{H^{-1}(0)}}(p,R_0),
 \label{ti:eq:level-disk-intrinsic-ball}\\
 \Vol_{h|_{H^{-1}(0)}}(\mathcal D_p)
 &\ge a_0.
 \label{ti:eq:level-disk-volume}
\end{align}
The ball in \eqref{ti:eq:level-disk-intrinsic-ball} is intrinsic to the
regular level.
\end{lemma}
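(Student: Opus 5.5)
Work entirely in the exponential chart: pull everything back by \(\exp_p\) to \(B:=B_{T_pM}(0,2r_b)\subset\R^d\), with \(\tilde h=\exp_p^*h\) and \(\tilde H=H\circ\exp_p\). By \eqref{ti:eq:level-chart-bounds} the Christoffel symbols of \(\tilde h\) are bounded by \(C(d,\Lambda)\). Hence \eqref{ti:eq:level-map-C2} converts to Euclidean bounds \(|D\tilde H|+|D^2\tilde H|\le C_1(d,k,\Lambda)\). Likewise \eqref{ti:eq:level-map-rank} converts to a lower bound \(\sigma_{\min}(D\tilde H)\ge c_1(d,k,\Lambda)>0\) for the smallest singular value, uniformly on \(B\). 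The problem is thus a quantitative implicit function theorem in a fixed Euclidean ball, with all constants depending only on \((d,k,r_b,\Lambda)\).

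Next, at \(0\) split \(\R^d=E\oplus E^\perp\) with \(E=\ker D\tilde H(0)\), so \(\dim E=d-k\) and \(D\tilde H(0)|_{E^\perp}\) is invertible with inverse norm at most \(c_1^{-1}\). For \((x,y)\in E\oplus E^\perp\), the uniform \(C^2\) bound gives \(\|D_y\tilde H(x,y)-D_y\tilde H(0)\|\le C_1(|x|+|y|)\). Choose \(\delta=\delta(d,k,r_b,\Lambda)\le r_b/2\) so that \(C_1\cdot 2\delta\le c_1/2\). On \(\{|x|\le\delta,|y|\le\delta\}\) the map \(y\mapsto y-D_y\tilde H(0)^{-1}\tilde H(x,y)\) is then a \(\tfrac12\)-contraction. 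Shrinking \(\delta\) in the \(x\)-direction to \(\delta'=c\delta\), using \(|\tilde H(x,0)|\le C_1|x|\), this map sends the \(y\)-ball of radius \(\delta\) into itself. This produces a smooth graph \(y=\phi(x)\), \(|x|\le\delta'\), with \(\tilde H(x,\phi(x))=0\), \(\phi(0)=0\), and \(|D\phi|\le C_2\) by implicit differentiation.

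Define \(\mathcal D_p:=\exp_p(\{(x,\phi(x)):|x|\le\delta'\})\). This is an embedded closed \((d-k)\)-disk in \(H^{-1}(0)\) through \(p\), and it lies in the component through \(p\) since it is connected and contains \(p\). Its induced \(\tilde h\)-volume is at least \(\Lambda^{-(d-k)/2}\) times its Euclidean graph area, which is at least \(\omega_{d-k}\delta'^{\,d-k}\); this gives \(a_0\). For \eqref{ti:eq:level-disk-intrinsic-ball}, each point of \(\mathcal D_p\) is joined to \(p\) inside the level by the curve \(s\mapsto(sx,\phi(sx))\). Its \(h\)-length is at most \(\Lambda^{1/2}(1+C_2)\delta'\), so any \(R_0\) strictly larger than this works.

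The main obstacle is the bookkeeping in the second step: choosing \(\delta'\) so that the contraction closes uniformly using only the \(C^2\) bounds, rather than any pointwise control of \(H\) beyond \(H(p)=0\), and checking that \(\|\exp_p^*h\|_{C^2}\le\Lambda\) suffices for the Christoffel-symbol conversion between \(\nabla dH\) and \(D^2\tilde H\). The hypothesis \(d\ge3\) plays no role in the argument.
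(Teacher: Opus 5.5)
Your proposal is correct and follows essentially the paper's proof. Both arguments pull back to the exponential chart, split \(T_pM=\ker d\widetilde H_0\oplus(\ker d\widetilde H_0)^\perp\), and solve \(\widetilde H(u+\varphi(u))=0\) with the \(\tfrac12\)-contraction \(v\mapsto v-B^{-1}\widetilde H(u+v)\). They then bound the intrinsic radius by radial graph paths and the area from below by the metric comparison. The only cosmetic difference is that you close the self-mapping step with the linear bound \(|\widetilde H(x,0)|\le C_1|x|\) and a shrunken radius \(\delta'\), whereas the paper uses \(A(u)=0\) and Taylor's theorem.
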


\begin{proof}
Set
\[
 \widetilde H=H\circ\exp_p:
 B_{T_pM}(0,2r_b)\longrightarrow\R^k,
 \qquad A=d\widetilde H_0.
\]
By \eqref{ti:eq:level-map-rank}, \(A\) is surjective.  With respect to
the orthogonal decomposition
\[
 T_pM=E\oplus N,
 \qquad E=\ker A,
 \qquad N=E^\perp,
\]
the restriction \(B:=A|_N:N\to\R^k\) is an isomorphism and
\(\|B^{-1}\|\) is bounded in terms of \(\Lambda\).

The coordinate bounds
\cref{ti:eq:level-chart-bounds,ti:eq:level-map-C2} give
\[
 \|D^2\widetilde H\|_{C^0(B(0,r_b))}\le C(d,k,\Lambda).
\]
Choose \(0<\rho_1<r_b/10\), depending only on the displayed data, so
small that
\begin{equation}\label{ti:eq:level-contraction-choice}
 \|B^{-1}\|
 \sup_{|\zeta|\le4\rho_1}|D\widetilde H_\zeta-A|
 \le\frac14.
\end{equation}
For \(u\in E\), \(|u|\le2\rho_1\), consider the map on \(N\)
\[
 \mathcal T_u(v):=v-B^{-1}\widetilde H(u+v).
\]
After decreasing \(\rho_1\) once more, \(A(u)=0\), Taylor's theorem,
and \eqref{ti:eq:level-contraction-choice} show that \(\mathcal T_u\)
preserves \(B_N(0,\rho_1)\) and is \(1/2\)-Lipschitz.  It has a unique
fixed point \(v=\varphi(u)\), and the
fixed-point equation is precisely
\[
                         \widetilde H(u+\varphi(u))=0.
\]
The parameter-dependent contraction theorem gives a smooth map
\[
 \varphi:B_E(0,2\rho_1)\longrightarrow N,
 \qquad \varphi(0)=0,
 \qquad \|D\varphi\|_{C^0}\le C(d,k,\Lambda).
\]
Consequently
\[
 \mathcal D_p:=
 \left\{
  \exp_p\bigl(u+\varphi(u)\bigr):
  u\in E,\ |u|\le\rho_1
 \right\}
\]
is an embedded closed \((d-k)\)-disk in the zero level through \(p\).
The metric comparison in \eqref{ti:eq:level-chart-bounds} and the
bound for \(D\varphi\) show that radial graph paths have uniformly
bounded length, which gives
\eqref{ti:eq:level-disk-intrinsic-ball}.  The lower metric comparison
gives a uniform positive lower bound for the induced volume of the
graph over \(B_E(0,\rho_1)\), proving
\eqref{ti:eq:level-disk-volume}.
\end{proof}

\begin{proposition}
\label{ti:prop:compact-local-fibre-diameter}
Let \(K\Subset Y^{\rm rv}\) admit a finite cover by Zariski-open sets
\(U_\alpha\subset Y^{\rm rv}\) such that
\[
 \pi^{-1}(U_\alpha)\longrightarrow U_\alpha
 \quad\text{is biregularly isomorphic to}\quad
 U_\alpha\times\PP^1\longrightarrow U_\alpha.
\]
Then there are constants \(0<c_K\le C_K<\infty\) such that, for every
\(q\in K\) and every \(0\le t<T\),
\begin{equation}\label{ti:eq:compact-local-two-sided-diameter}
 c_K\sqrt{T-t}
 \le
 \operatorname{diam}_{(X,g(t))}F_q
 \le
 \operatorname{diam}_{(F_q,g(t)|_{F_q})}F_q
 \le C_K\sqrt{T-t}.
\end{equation}
The first diameter is measured by the ambient distance on \(X\), while
the last is intrinsic to the induced fibre metric.
\end{proposition}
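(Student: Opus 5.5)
We work at a fixed time \(t\), with \(\tau=T-t\) and \(h=\tau^{-1}g(t)\), and prove the three inequalities in order. The middle one is automatic, because ambient distance never exceeds intrinsic distance. For small \(t\), compactness of \([0,t_0]\) and smoothness of the flow give all bounds, so we may assume \(t\) is close to \(T\). Every constant will be uniform over \(q\in K\).

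\emph{Lower bound.} The exact area identity \eqref{eq:short-exact-area} gives \(\Area_h(F_q)=8\pi\). By \cref{ti:prop:moving-fibre} applied to a slightly larger compact \(K'\Supset K\) with the product property, \(|\Rm(h)|\le C_K\) on \(\pi^{-1}(K')\). Perelman's noncollapsing at scale \(\sqrt\tau\) gives \(\Vol_h B_h(x,r)\ge\kappa r^{2m+2}\) for \(r\le1\). Together these give a uniform lower bound on the \(h\)-injectivity radius (Cheeger--Gromov--Taylor). For the lower diameter bound, I would combine the bounded geometry with the level-set description of \(F_q\): suppose \(\operatorname{diam}_h F_q\) were tiny. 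Then \(F_q\) lies in an \(h\)-ball of radius \(\varepsilon\) about a point \(p\), and in harmonic coordinates of uniformly bounded geometry there the base map \(H=\tau^{-1/2}(w-w(q))\circ\pi\) has \(|dH|\) bounded above and below by \((\mathrm S)\), \((\mathrm Q)\). Its zero level through \(p\) therefore contains the disk \(\mathcal D_p\) of \cref{ti:lem:uniform-level-disks}, whose diameter is bounded below. This contradicts \(\operatorname{diam}_hF_q<\varepsilon\). Rescaling back gives \(c_K\sqrt{T-t}\).

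\emph{Upper bound.} To apply \cref{ti:lem:uniform-level-disks} with \(d=2m+2\) and \(k=2m\), I need uniform bounds on \(\nabla dH\). The Type-I curvature bound gives Shi-type derivative bounds on the time-\(t\) slice, after going back a parabolic time \(\tau/2\) in the flow. The exponential charts of radius \(2r_b\) then satisfy the \(C^2\) metric bounds \eqref{ti:eq:level-chart-bounds}. Each component of \(H\) is harmonic, \(|dH|_h\le C\), and the Gram bound \eqref{ti:eq:level-map-rank} holds by \cref{prop:zariski-product-horizontal}. Interior elliptic estimates in these charts then bound \(|\nabla dH|_h\). Hence every point \(p\in F_q\) carries an embedded disk \(\mathcal D_p\subset F_q\) of intrinsic radius at most \(R_0\) and induced area at least \(a_0\).

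Now take a maximal intrinsic \(2R_0\)-separated set \(\{p_k\}\) in \(F_q\). The disks \(\mathcal D_{p_k}\) are pairwise disjoint, so their number is at most \(8\pi/a_0\). The intrinsic \(4R_0\)-balls around the \(p_k\) cover \(F_q\), and \(F_q\) is connected. A chain argument therefore bounds the intrinsic diameter by \(8R_0\cdot 8\pi/a_0\). Rescaling gives \(C_K\sqrt{T-t}\).

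The main obstacle is obtaining the uniform \(C^2\) control of \(h\) and of \(H\) (the hypotheses \eqref{ti:eq:level-chart-bounds}, \eqref{ti:eq:level-map-C2}) from only the \(C^0\) Type-I curvature bound. I would try Shi's local estimates on the parabolic neighbourhood \(P(x,t;\sqrt\tau,-\tau/2)\), which lies over \(K'\) by the Schwarz lemma. Harmonic-coordinate \(C^{1,\alpha}\) control should suffice if the exponential-chart version is awkward. A secondary concern is that the disk \(\mathcal D_p\) stays inside the chart and inside the fibre component, but \(F_q\) is connected, so the zero level of \(H\) near \(p\) is exactly \(F_q\) there.
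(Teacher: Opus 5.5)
Your proposal is correct. The bounded-geometry setup and the upper bound are the paper's own argument. That means: Type-I curvature on an enlarged compact set, Perelman noncollapsing, Cheeger--Gromov--Taylor, Shi's local estimates, and interior elliptic estimates for the harmonic normalized base map with the two-sided Gram bound. Then comes the uniform level-disk lemma and an area count against $\Area_{h}(F_q)=8\pi$. The paper places its packing points along a minimizing geodesic of the induced fibre metric at spacing $3R_0$. This gives $L\le 24\pi R_0/a_0$ directly, without your maximal net and chain argument; the difference is only bookkeeping.

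Where you genuinely differ is the lower bound. The paper argues cohomologically. If $\operatorname{diam}_{(X,h_t)}F_q<r_0<i_0/2$, the whole fibre lies in a normal, hence contractible, ball on which the closed form $\Omega_t=2\tau^{-1}\omega(t)$ is exact. Stokes then gives $8\pi=\int_{F_q}\Omega_t=0$. That argument needs only the injectivity radius and the class identity, not the Shi/elliptic apparatus or the lower quotient bound $(\mathrm Q)$. Your route needs the full $C^2$ package and $(\mathrm Q)$, but it avoids the class computation.

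You should make one point precise in your route. The level-disk lemma as stated only gives containment of $\mathcal D_p$ in an intrinsic $R_0$-ball and area at least $a_0$. Neither bounds the ambient extent of the disk from below, so "whose diameter is bounded below" must come from the construction. There, $\mathcal D_p$ is the image under $\exp_p$ of the graph $\{u+\varphi(u): u\in E,\ |u|\le\rho_1\}$ with $\varphi(u)\in N=E^{\perp}$. Since $d_h(p,\exp_p v)=|v|$ inside the injectivity radius, the boundary points of $\mathcal D_p$ lie at ambient distance at least $\rho_1$ from $p$. This contradicts $\operatorname{diam}_{(X,h)}F_q<\rho_1$.

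Finally, $H^{-1}(0)$ coincides with $F_q$ on the chart ball because the coordinate map $w$ is injective and the Schwarz lemma places the ball over the coordinate domain. Connectedness of $F_q$ is not the reason.
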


\begin{proof}
Choose a compact set \(K^+\Subset Y^{\rm rv}\), still covered by
finitely many product regions, such that
\[
                         K\subset\operatorname{int}K^+.
\]
Refine this cover by finitely many holomorphic coordinate
neighbourhoods
\[
 V_\alpha\Subset W_\alpha\Subset O_\alpha
 \Subset U_{\beta(\alpha)},\qquad
 K\subset\bigcup_\alpha V_\alpha,
\]
and choose injective charts
\(w_\alpha:O_\alpha\to\C^m\).  All constants below are uniform over
this finite refinement.

\smallskip
\noindent\emph{Step 1: ambient bounded geometry at the Type-I scale.}
Put
\[
                         \tau=T-t,
 \qquad h_t=\tau^{-1}g(t).
\]
Applying \cref{ti:prop:moving-fibre} on \(K^+\) gives
\begin{equation}\label{ti:eq:diameter-TypeI-input}
 \sup_{\pi^{-1}(K^+)}|\Rm(h_t)|_{h_t}
 =\sup_{\pi^{-1}(K^+)}\tau|\Rm(g(t))|_{g(t)}
 \le C_{K^+}.
\end{equation}
For \(t\) sufficiently close to \(T\), consider
\[
 h_t(s):=\tau^{-1}g(t+\tau s),
 \qquad -\theta\le s\le0,
\]
where \(0<\theta<1\) is fixed.  Since
\(T-(t+\tau s)=\tau(1-s)\), the same Type-I estimate gives
\begin{equation}\label{ti:eq:diameter-parabolic-curvature}
 |\Rm(h_t(s))|\le \frac{C_{K^+}}{1-s}\le C_{K^+}
 \qquad(-\theta\le s\le0)
\end{equation}
wherever the projection remains in \(K^+\).

The parabolic Schwarz estimate gives
\[
                  \pi^*(\tau^{-1}\eta)\le C h_t(s).
\]
Consequently a fixed-radius \(h_t(s)\)-ball centered over \(K\)
projects into an \(\eta\)-ball of radius \(C\sqrt\tau\).  Since \(K\)
has positive distance from \(Y\setminus K^+\), all fixed-radius
parabolic balls used below lie over \(K^+\) once \(\tau\) is small.

Choose \(r_*>0\) so that \(4r_*^2<\theta\) and
\(C_{K^+}\le r_*^{-2}\).  For \(t\) sufficiently close to \(T\), we
also have \(t-4r_*^2\tau\ge0\).  After shrinking \(r_*\) once more, for every
\(x\in\pi^{-1}(K)\) the doubled backward parabolic neighbourhood
\[
 \bigcup_{-4r_*^2\le s\le0}
 B_{h_t(s)}(x,2r_*)\times\{s\}
\]
lies over \(K^+\) and has uniformly bounded curvature by
\eqref{ti:eq:diameter-parabolic-curvature}.  Perelman's
no-local-collapsing theorem
\cite[Sections~4.1--4.2]{PerelmanEntropy} therefore applies.  At the
original radius \(r_*\sqrt\tau\), it gives
\begin{equation}\label{ti:eq:diameter-noncollapse}
 \Vol_{h_t}B_{h_t}(x,r_*)
 \ge\kappa r_*^{2m+2}
\end{equation}
for \(x\in\pi^{-1}(K)\) and all sufficiently small \(\tau\).  The
curvature and volume bounds, together with the
Cheeger--Gromov--Taylor injectivity-radius estimate
\cite[Theorem~4.7]{CheegerGromovTaylor}, give
\begin{equation}\label{ti:eq:diameter-injectivity}
                       \inj_{h_t}(x)\ge i_0>0.
\end{equation}
Choose
\[
 0<r_b<\min\{r_*/8,i_0/4\}.
\]
Local Shi derivative estimates \cite{ShiDerivatives}, applied to the
preceding rescaled parabolic neighbourhoods, give uniform curvature
derivative bounds on the smaller \(r_b\)-balls.  Together with
\eqref{ti:eq:diameter-injectivity}, these bounds provide one
\(\Lambda\ge1\), independent of \(x\) and \(t\), for which the
exponential coordinates have uniform \(C^2\)-bounds on \(B(0,4r_b)\);
in particular, \eqref{ti:eq:level-chart-bounds} holds on
\(B(0,2r_b)\).

\smallskip
\noindent\emph{Step 2: uniform local fibre disks.}
Fix \(q\in V_\alpha\) and \(p\in F_q\).  In the corresponding product
region, the fixed-radius ball needed below lies in
\(\pi^{-1}(W_\alpha)\) for all sufficiently small \(\tau\).  This
follows from the Schwarz estimate and the uniform positive distance
from \(\overline V_\alpha\) to \(Y\setminus W_\alpha\).  On
\(\pi^{-1}(O_\alpha)\) define
\begin{equation}\label{ti:eq:normalized-local-base-map}
 \mathcal H_{q,t}
 :=\tau^{-1/2}
 \bigl(w_\alpha-w_\alpha(q)\bigr)\circ\pi.
\end{equation}
Regard its real and imaginary components as a map to \(\R^{2m}\).
Since \(h_t^{-1}=\tau g(t)^{-1}\), the scale factors cancel in the Gram
matrix of \(d\mathcal H_{q,t}\).  The two quotient estimates
\(c\eta\le S_t\le C\eta\) and compactness of the finite coordinate
cover therefore give
\begin{equation}\label{ti:eq:normalized-map-uniform-rank}
 c_0|a|^2
 \le
 \left|d(a\mathbin{\cdot}\mathcal H_{q,t})\right|_{h_t}^2
 \le C_0|a|^2
 \qquad(a\in\R^{2m}).
\end{equation}

Every component of \(\mathcal H_{q,t}\) is harmonic because the map is
holomorphic.  Since \(\mathcal H_{q,t}(p)=0\), the upper differential
bound in \eqref{ti:eq:normalized-map-uniform-rank} gives a uniform
\(C^0\)-bound on \(B_{h_t}(p,4r_b)\).  Interior elliptic estimates in
the bounded-geometry charts from Step~1 then give
\begin{equation}\label{ti:eq:normalized-map-Hessian}
              |\nabla d\mathcal H_{q,t}|_{h_t}\le C_1
\end{equation}
on \(B_{h_t}(p,2r_b)\).  On that ball, injectivity of \(w_\alpha\)
gives
\begin{equation}\label{ti:eq:local-exact-fibre}
        \mathcal H_{q,t}^{-1}(0)=F_q
\end{equation}
there.  Applying \cref{ti:lem:uniform-level-disks} with
\(d=2m+2\) and \(k=2m\) produces, for every \(p\in F_q\), an embedded
disk \(\mathcal D_{p,t}\subset F_q\) and constants \(a_0,R_0>0\),
independent of \(q,p,t\), such that
\begin{align}
 \mathcal D_{p,t}
 &\subset B_{(F_q,h_t|_{F_q})}(p,R_0),
 \label{ti:eq:fibre-local-disk-ball}\\
 \Area_{h_t}(\mathcal D_{p,t})&\ge a_0.
 \label{ti:eq:fibre-local-disk-area}
\end{align}

\smallskip
\noindent\emph{Step 3: the ambient lower diameter bound.}
The K\"ahler form associated with the real metric \(h_t\) is
\(\Omega_t=2\tau^{-1}\omega(t)\).  Restriction of the evolving class
to \(F_q\cong\PP^1\) gives
\begin{equation}\label{ti:eq:normalized-fibre-area-lower-step}
              \int_{F_q}\Omega_t=\Area_{h_t}(F_q)=8\pi.
\end{equation}
Choose \(0<r_0<i_0/2\).  If
\(\operatorname{diam}_{(X,h_t)}F_q<r_0\), then, after fixing
\(p\in F_q\), the entire fibre lies in the normal ball
\(B_{h_t}(p,r_0)\).  On this contractible ball the closed form
\(\Omega_t\) is exact.  Stokes' theorem would give
\[
                       8\pi=\int_{F_q}\Omega_t=0,
\]
which is impossible.  Hence
\begin{equation}\label{ti:eq:normalized-ambient-lower}
             \operatorname{diam}_{(X,h_t)}F_q\ge r_0.
\end{equation}

\smallskip
\noindent\emph{Step 4: the intrinsic upper diameter bound.}
The same class calculation gives the exact normalized area
\begin{equation}\label{ti:eq:normalized-fibre-area}
                         \Area_{h_t}(F_q)=8\pi.
\end{equation}
Let \(x,y\in F_q\), and let
\(\gamma:[0,L]\to F_q\) be a minimizing unit-speed geodesic for the
induced metric, joining \(x\) to \(y\).  Put
\[
 N=\left\lfloor\frac{L}{3R_0}\right\rfloor,
 \qquad p_j=\gamma(3jR_0),
 \qquad j=0,\ldots,N.
\]
The intrinsic balls
\(B_{(F_q,h_t|_{F_q})}(p_j,R_0)\) are pairwise disjoint: since
\(\gamma\) is minimizing,
\[
 d_{(F_q,h_t|_{F_q})}(p_j,p_\ell)
 =3|j-\ell|R_0.
\]
Hence the
disks \(\mathcal D_{p_j,t}\) from Step~2 are pairwise disjoint, and
\cref{ti:eq:fibre-local-disk-area,ti:eq:normalized-fibre-area} give
\[
                         (N+1)a_0\le8\pi.
\]
Since \(L<3R_0(N+1)\), it follows that
\[
 L\le\frac{24\pi R_0}{a_0}.
\]
The points \(x,y\) were arbitrary, and therefore
\begin{equation}\label{ti:eq:normalized-intrinsic-upper}
 \operatorname{diam}_{(F_q,h_t|_{F_q})}F_q
 \le D_0:=\frac{24\pi R_0}{a_0}.
\end{equation}
Combining \eqref{ti:eq:normalized-ambient-lower} and
\eqref{ti:eq:normalized-intrinsic-upper} with the elementary inequality
between ambient and intrinsic distance gives
\begin{equation}\label{ti:eq:normalized-two-sided-diameter}
 r_0
 \le\operatorname{diam}_{(X,h_t)}F_q
 \le\operatorname{diam}_{(F_q,h_t|_{F_q})}F_q
 \le D_0
\end{equation}
for \(q\in K\) and all sufficiently small \(\tau\).

\smallskip
\noindent\emph{Step 5: scaling and the initial time interval.}
Because \(h_t=\tau^{-1}g(t)\), distances satisfy
\(d_{h_t}=\tau^{-1/2}d_{g(t)}\).  Thus
\eqref{ti:eq:normalized-two-sided-diameter} proves the desired estimate
on the near-\(T\) interval \([T-\tau_0,T)\).

On the compact time interval \([0,T-\tau_0]\), the metrics \(g(t)\)
form a smooth family.  In the finite product refinement, smoothness on
\(\overline{V_\alpha}\times\PP^1\times[0,T-\tau_0]\) uniformly bounds
the induced fibre metrics above by fixed product metrics and gives a
uniform intrinsic upper bound.  In every product region choose two
distinct constant fibre
sections.  Their ambient \(g(t)\)-distance is a positive continuous
function on \(\overline{V_\alpha}\times[0,T-\tau_0]\), and hence has a
positive minimum.  Taking the minimum over the finite cover yields
constants \(e_K,D_K>0\) such that
\[
 e_K
 \le\operatorname{diam}_{(X,g(t))}F_q
 \le\operatorname{diam}_{(F_q,g(t)|_{F_q})}F_q
 \le D_K
\]
for \(q\in K\) and \(0\le t\le T-\tau_0\).  Since
\(\sqrt{\tau_0}\le\sqrt{T-t}\le\sqrt T\) on this interval, choose
\(c_K\le e_K/\sqrt T\) and \(C_K\ge D_K/\sqrt{\tau_0}\), enlarging the
near-\(T\) constants if necessary.  The compact-time and near-\(T\)
estimates then prove
\eqref{ti:eq:compact-local-two-sided-diameter}.
\end{proof}

\subsection{Proofs of the main theorems and cylindrical corollary}
\label{ti:sec:main-proofs}

\begin{proof}[Proof of \cref{thm:intro-type-I}]
By GAGA, \(E\) is algebraic, and \(\PP(E)\to Y\) is biregularly
isomorphic to a product
over a Zariski-open neighbourhood of every point of $Y$.  Compactness
gives a finite product cover, and the bundle projection has
$Y^{\rm rv}=Y$.  Apply the moving-fibre estimate
\cref{ti:prop:moving-fibre} with $K=Y$.  Since $\pi^{-1}(Y)=X$, the
resulting estimate is exactly
\eqref{ti:eq:P1-regular-TypeI-front}.
Apply
\cref{ti:prop:compact-local-fibre-diameter} with \(K=Y\).  Its
conclusion is exactly
\eqref{ti:eq:P1-fibre-diameter-front}.
\end{proof}

\begin{proof}[Proof of \cref{ti:cor:projective-P1}]
Let $\xi$ be a fixed limiting point.  By
\cref{lem:automatic-label}, it is based at a unique point
$q=q_\pi(\xi)\in Y$, with no further hypothesis.  After forgetting the
distinguished conjugate heat measures, assertion \textup{(i)} of
\cref{ti:prop:surface-base} identifies the underlying K\"ahler metric
flow of every tangent space at $\xi$ with the round shrinking cylinder
\eqref{ti:eq:intro-cylinder}.
\end{proof}
{\footnotesize
\begin{sloppypar}

\end{sloppypar}
}

\end{document}